\newenvironment{customTheorem}[1]
  {\innercustomTheorem}
  {\endinnercustomTheorem}
\newenvironment{customCorollary}[1]
  {\innercustomCorollary}
  {\endinnercustomCorollary}
\newenvironment{customDefinition}[1]
{\innercustomDefinition}
{\endinnercustomDefinition}
\DeclareMathOperator\supp{supp}
\newcommand{\nabb}{\mbox{$\nabla \mkern-13mu /$\,}}
\numberwithin{equation}{section}
\newtheorem{definition}{Definition}[section]
\newtheorem{remark}{Remark}[section]
\newtheorem{lemma}{Lemma}[section]
\newtheorem{theorem}{Theorem}[section]
\newtheorem{proposition}{Proposition}[section]
\newtheorem{corollary}{Corollary}[section]
\newtheorem*{rough version}{Rough Version}
\newtheorem*{theorem*}{Theorem}
\newtheorem*{corollary*}{Corollary}
\newenvironment{sketch proof}{\proof}{\endproof}
\title[Relatively non-degenerate estimates]{Relatively non-degenerate integrated decay estimates\\ on subextremal Kerr--de~Sitter}
\author{Georgios Mavrogiannis}
\address{Department of Mathematics, Rutgers University, New Brunswick, NJ 08903 USA.}
\email{gm758@math.rutgers.edu}
\date\today
\begin{document}

\begin{abstract}
We study the Klein--Gordon equation~$\Box\psi-\mu^2_{\textit{KG}}\psi=0$ on subextremal Kerr--de~Sitter black hole backgrounds with parameters~$(a,M,l)$, where~$l^2=\frac{3}{\Lambda}$. We prove a `relatively non-degenerate integrated decay estimate' assuming an appropriate mode stability statement for real frequency solutions of Carter’s radial ode. Our results, in particular, apply unconditionally in the very slowly rotating case~$|a|\ll M,l$, and in the case where~$\psi$ is axisymmetric. Exponential decay for~$\psi$ to a constant is a consequence of this estimate.

To prove our result, we introduce a novel pseudodifferential commutation operator~$\mathcal{G}$ that generalizes our previous purely physical space commutation~\cite{mavrogiannis} and we use it in conjunction with the Morawetz estimate of our companion~\cite{mavrogiannis4}. This pseudodifferential operator is defined using Fourier decomposition with respect to time frequencies $\omega$ and azimuthal frequencies $m$, but does not require Carter's full separation. 
\end{abstract}

\maketitle

{
  \hypersetup{linkcolor=black}
  \tableofcontents
}

\section{Introduction}\label{sec: intro}

Let~$(\mathcal{M},g_{a,M,l})$, with~$l^2=\frac{3}{\Lambda}$, be a Kerr--de~Sitter black hole spacetime, where the metric in Boyer--Lindquist coordinates~(see~\cite{Carter2}) takes the form
\begin{equation}\label{eq: prototype metric in BL coordinates}
    \begin{aligned}
        g_{a,M,l} =& \frac{\rho^2}{\Delta}dr^{2}+\frac{\rho^2}{\Delta_{\theta}}d \theta^{2}+\frac{\Delta_{\theta}(r^{2}+a^{2})^{2}-\Delta a^{2}\sin^{2}\theta }{\Xi ^{2}\rho^2}\sin^{2}\theta d \varphi^{2}-2\frac{\Delta_{\theta}(r^{2}+a^{2})-\Delta}{\Xi\rho^2} a \sin^{2}\theta d\varphi dt\\
        &	\qquad -\frac{\Delta -\Delta_{\theta}a^{2}\sin^{2}\theta}{\rho^2}dt^{2},
    \end{aligned}
\end{equation}
with~$\Xi=1+\frac{a^2}{l^2},~\Delta_\theta= 1+\frac{a^2}{l^2}\cos^2\theta,~\rho^2=r^2+a^2\cos^2\theta$, where 
\begin{equation}\label{eq: prototype Delta}
    \Delta=(r^2+a^2)\left(1-\frac{r^2}{l^2}\right)-2Mr.
\end{equation}
In the non-rotating case~$a=0$ the Kerr--de~Sitter metric~\eqref{eq: prototype metric in BL coordinates} reduces to the Schwarzschild--de~Sitter metric
\begin{equation}\label{eq: prototype metric of SdS}
	g_{M,l}=-\left(1-\frac{2M}{r}-\frac{\Lambda}{3}r^2\right) dt^2 +\left(1-\frac{2M}{r}-\frac{\Lambda}{3}r^2\right)^{-1} dr^2 + r^2 d\sigma_{\mathbb{S}^2},
\end{equation}
where~$d\sigma_{\mathbb{S}^2}$ is the standard metric of the unit sphere.

In our previous~\cite{mavrogiannis} we studied the wave equation
\begin{equation}\label{eq: sec: intro, eq 1}
\Box_{g_{M,l}}\psi=0
\end{equation}
on the Schwarzschild--de~Sitter black hole spacetime~\eqref{eq: prototype metric of SdS}. Specifically, we proved a `relatively non-degenerate integrated energy decay estimate' by commuting~\eqref{eq: sec: intro, eq 1} with the vector field~$\mathcal{G}=r\sqrt{1-\frac{2M}{r}-\frac{\Lambda}{3}r^2}\partial_r$, here written in specially chosen hyperboloidal coordinates. Note that Holzegel--Kauffman~\cite{gustav} initially studied an analogue of the vector field~$\mathcal{G}$ in the~$\Lambda=0$ Schwarzschild case, and proved a Morawetz estimate for the wave equation with small first order terms. Recently they extended their work in the subextremal Kerr case~\cite{gustav2}. A relatively non degenerate integrated energy decay estimate controls a spacetime bulk term with degeneration at top order from an initial boundary flux term with the exact same degeneration, see already estimate~\eqref{eq: SdS relatively non degenerate} in Section~\ref{subsec: sec: intro, subsec 1}. Thus, the bulk control is non-degenerate relative to the boundary control. To prove this estimate we used in addition a Morawetz estimate of Dafermos--Rodnianski~\cite{DR3}.

In the present paper, we prove a `relatively non-degenerate' integrated energy decay estimate~(Theorem~\ref{main theorem, relat. non-deg}) in the Kerr--de~Sitter context. Specifically, we study solutions of the Klein--Gordon equation 
\begin{equation}\label{eq: kleingordon}
    \Box_{g_{a,M,l}} \psi -\mu_{\textit{KG}}^2\psi=0,
\end{equation}
with mass~$\mu_{\textit{KG}}^2\geq 0$ (the case~$\mu_{\textit{KG}}=0$ will be called simply the `wave equation') on a Kerr--de~Sitter spacetime~\eqref{eq: prototype metric in BL coordinates} with subextremal parameters~$(a,M,l)$, under the following condition
\begin{equation*}
	\begin{aligned}
		&\text{(MS):~mode stability on the real axis for Carter's radial ode holds on a curve in subextremal}\\
		&	\qquad\quad \text{parameter space connecting}~(a,M)~\text{to the subextremal Schwarzschild--de~Sitter family.}
	\end{aligned}
\end{equation*}
The very slowly rotating case~$|a|\ll M,l$ is in fact included in the condition~(MS), as shown in our companion~\cite{mavrogiannis4}, where the smallness depends on~$\mu_{KG}$. Theorem~\ref{main theorem, relat. non-deg} also applies unconditionally to axisymmetric solutions of the Klein--Gordon equation~\eqref{eq: kleingordon} for all~$\mu^2_{KG}\geq 0$, in the full subextremal range of parameters.

To prove the theorem we introduce a novel pseudodifferential commutation operator~$\mathcal{G}$, see already Section~\ref{subsec: sec: intro: subsec 3}, for the entire subextremal range of Kerr--de~Sitter black holes, thus generalizing our previous work~\cite{mavrogiannis} on Schwarzschild--de~Sitter, and use it in conjunction with a Morawetz estimate proved in our companion~\cite{mavrogiannis4}. An immediate Corollary of our~`relatively non-degenerate' estimate~(Theorem~\ref{rough: thm relat. non-deg}) is the exponential decay for the solutions of the Klein--Gordon equation~(Corollary~\ref{rough: cor: thm relat. non-deg, cor 1}).

In contrast to our previous~\cite{mavrogiannis}, the commutation with~$\mathcal{G}$ is frequency dependent, see Section~\ref{sec: G}, i.e. the operator~$\mathcal{G}$ is pseudodifferential and is defined applying the Fourier transform. The pseudodifferential aspect of~$\mathcal{G}$ only depends on the time frequency~$\omega$ and the aximuthal frequency~$m$, which correspond to the Killing vector fields~$\partial_t,\partial_{\varphi}$ respectively, but it does not require Carter's full separation. As a result, in addition to initial boundary fluxes on the right hand side, the `relatively non-degenerate' estimate of Theorem~\ref{rough: thm relat. non-deg} contains an averaged bulk term, which is again however suitably localized in spacetime. There terms arise from cutoffs and pseudodifferential commutation error terms. Because~$\mathcal{G}$ is defined without recourse to Carter's separation, these terms take a form which can be bounded by standard commutator estimates.

In addition, we prove the analogue of Theorem~\ref{rough: thm relat. non-deg} for an inhomogeneous wave equation (Theorem~\ref{thm: main thm extended region}). In our forthcoming~\cite{mavrogiannis3} we use Theorem~\ref{thm: main thm extended region} to give an elementary proof of stability and exponential decay of the solutions of appropriate quasilinear wave equations on a Kerr--de~Sitter black hole background.

\subsection{Previous works}

For a discussion of decay results for linear and non-linear wave equations on (anti)-de~Sitter or asymptotically flat black hole backgrounds see our~\cite{mavrogiannis,mavrogiannis1}. We simply list some important references here.

For the study of various linear equations on black hole backgrounds with $\Lambda>0$ see~\cite{bony,zworski,Dyatlov1,Dyatlov2,vasy1,vasy2,DR2,petersen2021wave,casals2021hidden,hintz2021mode,volker,AllenFangLinear}. For the wave and Klein--Gordon equations in the case $\Lambda\le 0$ see for instance~\cite{whiting,Shlapentokh_Rothman_2014_mode_stability,tatarutohaneanuKerr,LindbladTohaneanuKerr,holzegel1,holzegel2,holzegel3,holzegel4,vasy6}.

Note that we will refer to the works~\cite{gustav2,petersen2021wave,casals2021hidden,Dyatlov2,hintz2021mode} further down the paper.

\subsection{The `relatively non-degenerate' estimate of~\cite{mavrogiannis} on Schwarzschild--de~Sitter}\label{subsec: sec: intro, subsec 1}

The present paper generalizes the commutation with
\begin{equation}\label{eq: prototype vector field SdS}
\mathcal{G}=r\sqrt{1-\frac{2M}{r}-\frac{\Lambda}{3}r^2}\frac{\partial}{\partial r}
\end{equation}
of our previous~\cite{mavrogiannis}, which we here review. The vector field~\eqref{eq: prototype vector field SdS} is written in `special' hyperboloidal coordinates~$(\bar{t},r,\theta,\varphi)$ on Schwarzschild--de~Sitter which were specifically chosen to make the form of the vector field~\eqref{eq: prototype vector field SdS} simple. In particular, with our choice,~$\mathcal{G}$ is orthogonal to the timelike Killing field~$\partial_{\bar{t}}$ at~$r=3M$ and vanishes continuously (but not differentiably) at the horizons~$\mathcal{H}^+,~\bar{\mathcal{H}}^+$. Note that in the standard Schwarzschild--de~Sitter coordinates~$(t,r,\theta,\varphi)$ the vector field~\eqref{eq: prototype vector field SdS} takes the form 
\begin{equation}\label{eq: prototype vector field SdS 2}
	\mathcal{G}=G_1(r)\partial_{r^\star}+ G_2(r)\partial_t,
\end{equation}
where~$r^\star$ is the tortoise coordinate and
\begin{equation*}
G_1(r)=\frac{r}{\sqrt{1-\frac{2M}{r}-\frac{\Lambda}{3}r^2}},\qquad G_2(r)= \frac{r}{\sqrt{1-\frac{2M}{r}-\frac{\Lambda}{3}r^2}}\left(1-\frac{3M}{r}\right)\frac{1}{\sqrt{1-9M^2\Lambda}}\sqrt{1+\frac{6M}{r}},
\end{equation*}
see~\cite{mavrogiannis}.

In~\cite{mavrogiannis} we studied the wave equation~(\eqref{eq: kleingordon} with~$\mu^2_{\textit{KG}}=0$) on the Schwarzschild--de~Sitter spacetime. Specifically, we commuted the wave equation with the vector field~$\mathcal{G}$, see~\eqref{eq: prototype vector field SdS}, which produced only one top order term, which had a `good' sign. We applied an energy estimate for the equation satisfied by~$\mathcal{G}\psi$, with multiplier~$\partial_{\bar{t}}\mathcal{G}\psi$, and used the Morawetz estimate of Dafermos--Rodnianski, see~\cite{DR2}. In view of this `good' sign, this led to a `relatively non-degenerate' energy estimate 
\begin{equation}\label{eq: SdS relatively non degenerate}
\begin{aligned}
\int_{\{\bar{t}=\tau_2\}}\mathcal{E}(\mathcal{G}\psi,\psi) +\int\int_{\{\tau_1\leq \bar{t}\leq \tau_2\}} \mathcal{E}(\mathcal{G}\psi,\psi)  \lesssim \int_{\{\bar{t}=\tau_1\}} \mathcal{E}(\mathcal{G}\psi,\psi),
\end{aligned}
\end{equation}
for all~$0\leq\tau_1\leq \tau_2$, where 
\begin{equation}
\begin{aligned}
\mathcal{E}(\mathcal{G}\psi,\psi)\sim (\partial_{\bar{t}}\mathcal{G}\psi)^2+\left(1-\frac{2M}{r}-\frac{\Lambda}{3}r^2\right)(\partial_r\mathcal{G}\psi)^2+|\slashed{\nabla}\mathcal{G}\psi|^2+(\partial_{\bar{t}}\psi)^2+(\partial_r\psi)^2+|\slashed{\nabla}\psi|^2.
\end{aligned}
\end{equation}
The energy estimate of~\eqref{eq: SdS relatively non degenerate} is `relatively non-degenerate' in the sense that in view of the coarea formula~$	\int\int_{\{\tau_1 \leq \bar{t}\leq \tau_2\}} \mathcal{E}(\mathcal{G}\psi,\psi)\sim \int_{\tau_1}^{\tau_2}d\tau\int_{\{\bar{t}=\tau\}}\mathcal{E}(\mathcal{G}\psi,\psi)$ the boundary terms on the right hand side are precisely comparable to the bulk term on the left hand side.

Exponential decay followed immediately as a Corollary of~\eqref{eq: SdS relatively non degenerate}. 

Note that in~\cite{gustav}, Holzegel--Kauffman introduced a vector field conciding with~\eqref{eq: prototype vector field SdS 2} in the limit~$\Lambda=0$, and used it to prove energy estimates for the wave equation with small first order error terms.

\subsection{A review of the results of our companion~\cite{mavrogiannis4}}\label{subsec: sec: intro, subsec 2}

In our companion~\cite{mavrogiannis4} we prove boundedness and a degenerate Morawetz estimate for solutions of the Klein--Gordon equation~\eqref{eq: kleingordon}, after assuming~(MS). We here review the Morawetz estimate of our companion~\cite{mavrogiannis4}, where for the precise statement see already Theorem~\ref{main theorem 1}.

The projection into Fourier space allows us to capture the phenomena of trapping and superadiance at the level of modes and moreover allows us to rigorously formulate our mode stability condition~(MS). Moreover, we discuss Carter's separation of variables for Kerr--de~Sitter.

\subsubsection{The~\texorpdfstring{$(t,\varphi)$}{g}~Fourier projections}\label{subsubsec: subsec: sec: intro, subsec 2, subsubsec 0}

Let~$(a,M,l)$ be subextremal. Let~$\Psi$ be a sufficiently regular with~$\text{supp}\Psi\subset \{0\leq t^\star<\infty\}$, where for any~$\tau\geq 0$ the leaves~$\{t^\star=\tau\}$ connect the event horizon~$\mathcal{H}^+$ with the cosmological horizon~$\bar{\mathcal{H}}^+$, see Section~\ref{subsec: sec: preliminaries, subsec 2}. We define the Fourier projection
\begin{equation}\label{eq: subsubsec: subsec: sec: intro, subsec 2, subsubsec 0, eq 1}
	\mathcal{F}_{\omega,m}(\Psi)(r,\theta) 	= \int_{\mathbb{R}}\int_0^{2\pi}e^{-i\omega t}e^{im\varphi} \Psi (t,r,\theta,\varphi) d\varphi dt,
\end{equation}
where
\begin{equation}\label{eq: subsubsec: subsec: sec: intro, subsec 2, subsubsec 0, eq 2}
	\omega\in\mathbb{R},\qquad m\in\mathbb{Z}
\end{equation}
are the Fourier frequencies with respect to the Boyer--Lindquist coordinates~$t,\varphi$ respectively, see already Section~\ref{sec: carter separation, radial}.

\subsubsection{The Hawking--Reall vector fields}

We denote as
\begin{equation}
	K_+=\partial_t+\frac{a\Xi}{r_+^2+a^2}\partial_{\varphi},\qquad \bar{K}_+=\partial_t+\frac{a\Xi}{\bar{r}_+^2+a^2}\partial_{\varphi}
\end{equation}
the Hawking--Reall vector fields of the event horizon~$\mathcal{H}^+$ and the cosmological horizon~$\bar{\mathcal{H}}^+$ respectively.

\subsubsection{The superradiant frequencies}

Τhe~$\bar{K}^+$ energy flux along the event horizon~$\mathcal{H}^+$ is given by
\begin{equation}\label{eq: subsubsec: subsec: sec: intro, subsec 2, subsubsec 1.1, eq 1}
	\int_{\mathcal{H}^+} K\psi \overline{\bar{K}^+\psi} =\int_{\mathcal{H}^+} \left(\partial_t\psi+\omega_+\partial_{\varphi}\psi\right) \cdot  \overline{\left(\partial_t\psi+\bar{\omega}_+\partial_{\varphi}\psi\right)}.
\end{equation}
In particular, if we consider a solution of the form~$\psi=e^{-i\omega t}e^{im\varphi} \psi_0(r,\theta)$ then the sign of~\eqref{eq: subsubsec: subsec: sec: intro, subsec 2, subsubsec 1.1, eq 1} is determined by the sign of
\begin{equation}
	\left(\omega-\frac{am\Xi}{r_+^2+a^2}\right) \left(\omega-\frac{am\Xi}{\bar{r}_+^2+a^2}\right).
\end{equation}

The superradiant frequencies of Kerr--de~Sitter are the following 
\begin{equation}\label{eq: subsubsec: subsec: sec: intro, subsec 2, subsubsec 2.1, eq 1}
	\mathcal{SF}=\Bigl\{(\omega,m):~am\omega\in \left(\frac{a^2m^2\Xi}{\bar{r}_+^2+a^2},\frac{a^2m^2\Xi}{r_+^2+a^2}\right)\Bigr\},~~\text{or equivalently}~~	\left(\omega-\frac{am\Xi}{r_+^2+a^2}\right)\left(\omega-\frac{am\Xi}{\bar{r}_+^2+a^2}\right) < 0,
\end{equation}
where~$r_+,\bar{r}_+$ correspond respectively to the event horizon~$\mathcal{H}^+$ and the cosmological horizon~$\bar{\mathcal{H}}^+$.

\subsubsection{The radial ode in Kerr--de~Sitter}\label{subsubsec: subsec: sec: intro, subsec 2, subsubsec 1}

Following~\cite{Carter}, also see~\cite{holzegel3}, we decompose the solution of the Klein--Gordon equation~\eqref{eq: kleingordon} into Fourier modes~\eqref{eq: subsubsec: subsec: sec: intro, subsec 2, subsubsec 0, eq 2} via~\eqref{eq: subsubsec: subsec: sec: intro, subsec 2, subsubsec 0, eq 1} and then further into spheroidal harmonics. Remarkably, Carter's separation of variables for the wave operator implies that the radial part~$u$ of the separated solution, for the Klein--Gordon equation~\eqref{eq: kleingordon}, satisfies the ode
\begin{equation}\label{eq: subsec: sec: intro, subsec 2, eq 1}
	u^{\prime\prime}+(\omega^2-V)u=H
\end{equation}
where~$^\prime=\frac{d}{dr^\star}$, and~$r^\star$ is the tortoise coordinate, see Section~\ref{subsec: tortoise coordinate} and~$H$ is a cut-off inhomogeneity. The potential~$V$ is real and takes the form
\begin{equation}\label{eq: subsec: sec: intro, subsec 2, eq 2}
V=V_0+V_{\textit{SL}}+V_{\mu_{\textit{KG}}},\qquad V_0-\omega^2=\frac{\Delta}{(r^2+a^2)^2}\left(\lambda^{(a\omega)}_{m\ell}+(a\omega)^2-2m\omega a\Xi\right)-\left(\omega-\frac{am\Xi}{r^2+a^2}\right)^2,
\end{equation}
where~$V_{\textit{SL}},V_{\mu_{\textit{KG}}}$ are frequency independent terms (the latter is associated with the Klein--Gordon mass) see Section~\ref{sec: carter separation, radial}, and~$\lambda^{(a\omega)}_{m\ell}\in\mathbb{R}$ are the eigenvalues of the spherical part of Carter's separation of variables, which are indexed by~$\ell$.

\subsubsection{Energy identity and mode stability in Kerr--de~Sitter}\label{subsubsec: subsec: sec: intro, subsec 2, subsubsec 1.1}

We say that a classical solution~$u$ of the radial inhomogeneous ode~\eqref{eq: subsec: sec: intro, subsec 2, eq 1} satisfies outgoing boundary conditions at~$r^\star=-\infty$,~$r^\star=+\infty$ if the following hold respectively
\begin{equation}\label{eq: subsec: sec: intro, subsec 2, eq 2.1}
	u^\prime=-i\left(\omega-\frac{am\Xi}{r_+^2+a^2}\right) u,\qquad u^\prime=i\left(\omega-\frac{am\Xi}{\bar{r}_+^2+a^2}\right) u,
\end{equation}
see already Section~\ref{subsec: energy identity}.

For a classical solution~$u$ of~\eqref{eq: subsec: sec: intro, subsec 2, eq 1} that satisfies both the outgoing boundary conditions~\eqref{eq: subsec: sec: intro, subsec 2, eq 2.1} we multiply the radial ode~\eqref{eq: subsec: sec: intro, subsec 2, eq 1} with~$\bar{u}$ and after integration by parts we obtain
\begin{equation}\label{eq: subsec: sec: intro, subsec 2, eq 3}
\left(\omega-\frac{am\Xi}{\bar{r}_+^2+a^2}\right)|u|^2(\infty)+\left(\omega-\frac{am\Xi}{r_+^2+a^2}\right)|u|^2(-\infty)=\Im (\bar{u}H).
\end{equation}

For the superradiant frequencies~\eqref{eq: subsubsec: subsec: sec: intro, subsec 2, subsubsec 2.1, eq 1}, the left hand side of the energy identity~\eqref{eq: subsec: sec: intro, subsec 2, eq 3} is not coercive. Therefore, we cannot conclude from~\eqref{eq: subsec: sec: intro, subsec 2, eq 3} that there are no modes on the real axis. At present, it is not known whether mode stability holds for the solutions of the wave equation~$\mu^2_{KG}=0$, or the conformally invariant wave equation~$\mu^2_{KG}=\frac{2\Lambda}{3}$, on Kerr--de~Sitter in the full subextremal case in the spirit of~\cite{whiting,Shlapentokh_Rothman_2014_mode_stability}. Therefore, we will need to appeal to condition~(MS)~(we do not expect that mode stability holds for all~$\mu^2_{KG}>0$, see~\cite{Shlapentokh_Rothman_2014}). The very slowly rotating case~$|a|\ll M$ is in fact included in~(MS), where the smallness depends on~$\mu_{KG}$. Note, moreover, the partial mode stability result of Casals--Teixeira~da~Costa~\cite{casals2021hidden} and the recent result of Hintz~\cite{hintz2021mode}, which we will also discuss later.

\subsubsection{The boundedness and Morawetz estimates of~\cite{mavrogiannis4}}\label{subsubsec: subsec: sec: intro, subsec 2, subsubsec 3}

To prove our Morawetz estimate of~\cite{mavrogiannis4}, quoted here as Theorem~\ref{main theorem 1}, the work~\cite{mavrogiannis4} followed the strategy of~\cite{DR2} and constructed fixed frequency multipliers for the radial ode~\eqref{eq: subsec: sec: intro, subsec 2, eq 1}. Note that two main features of the geometry of Kerr--de~Sitter, trapping of null geodesics and superradiance, were also the main difficulties for proving the frequency localized estimates of~\cite{mavrogiannis4}.

In Theorem~\ref{main theorem 1}, we control from initial data the following quantity 
\begin{equation}\label{eq: subsubsec: subsec: sec: intro, subsec 2, subsubsec 3, eq 1}
	\begin{aligned}
		&	\int_{\mathbb{R}}d\omega\sum_{m,\ell} \Big( \int_{r_+}^{r_++\epsilon}\frac{1}{\Delta^2}|u^\prime+i(\omega-\omega_+m)u|^2dr +\int_{\bar{r}_+-\epsilon}^{\bar{r}_+}\frac{1}{\Delta^2}|u^\prime-(\omega-\bar{\omega}_+m)u|^2dr \\
		&	\qquad\qquad\qquad+ \int_{r_+}^{\bar{r}_+} |u^\prime|^2dr+\left(1-\frac{r_{\textit{trap}}(\omega,m,\ell)}{r}\right)^2(\omega+\lambda^{(a\omega)}_{m\ell})|u|^2dr\Big)
	\end{aligned}
\end{equation}
for a sufficiently small~$\epsilon>0$, where~$\omega_+=\frac{a\Xi}{r_+^2+a^2},~\bar{\omega}_+=\frac{a\Xi}{\bar{r}_+^2+a^2}$ and where~$u$ is the fixed frequency solution of the radial ode~\eqref{eq: subsec: sec: intro, subsec 2, eq 1}.

Note that the integrand of~\eqref{eq: subsubsec: subsec: sec: intro, subsec 2, subsubsec 3, eq 1} admits a degeneration in accordance with the obstructions of trapped null geodesics, see~\cite{sbierski,ralston}. However, note that the precise degeneration of the integrand of~\eqref{eq: subsubsec: subsec: sec: intro, subsec 2, subsubsec 3, eq 1} is at the frequency dependent values
\begin{equation}\label{eq: subsec: sec: introduction, sec 0, eq 1}
	r_{\textit{trap}}(\omega,m,\ell). 
\end{equation}

Furthermore, the boundedness estimate for solutions of the Klein--Gordon equation~\eqref{eq: kleingordon} follows by using the Morawetz estimate in conjunction with some extra Fourier based arguments.

\subsection{The operator~\texorpdfstring{$\mathcal{G}$}{g}}\label{subsec: rough version of theorem 3, exponential decay}\label{subsec: sec: intro: subsec 3}

Having reviewed the necessary background, we now turn to discuss the generalization of the vector field~\eqref{eq: prototype vector field SdS} to the Kerr--de~Sitter spacetime, which is an operator that we also denote as~$\mathcal{G}$. Our generalized~$\mathcal{G}$ must be here pseudodifferential, depending on the frequencies~$\omega,m$~(as it also must capture the phenomenon of trapping on Kerr--de~Sitter) but independent of the Carter frequency~$\ell$.

Let~$(\omega,m)\in\mathbb{R}\times\mathbb{Z}$. First, we define 
\begin{equation}\label{eq: subsec: rough version of theorem 3, g2, 2}
\mathcal{T}(\omega,m,r)=\frac{(r^2+a^2)^2}{\Delta}\left(\omega-\frac{am\Xi}{r^2+a^2}\right)^2
\end{equation}
and 
\begin{equation}\label{eq: subsec: rough version of theorem 3, g2, 3}
\begin{aligned}
g_1(r^\star(r))\:\dot{=}\:\frac{r^2+a^2}{\sqrt{\Delta}},\qquad \left(g_2(\omega,m,r)\right)^2		= \mathcal{T}(\omega,m,r)-\min_{r\in[r_+,\bar{r}_+]}\mathcal{T}(\omega,m,r),\\
\end{aligned}
\end{equation}
where for~$\Delta$ see~\eqref{eq: prototype Delta}, with~$\Xi=1+\frac{a^2}{l^2}$. Note that for the superradiant frequencies~$(\omega,m)\in \mathcal{SF}$ we have~$\min_{r\in [r_+,\bar{r}_+]}\mathcal{T}(\omega,m,r)=0$.

Now, for any~$(\omega,m)\in\mathbb{R}\times\mathbb{Z}$ we denote as 
\begin{equation}\label{eq: subsec: rough version of theorem 3, g2, 4}
	r_{\textit{crit}}(\omega,m).
\end{equation}
to be the unique critical point of~$\mathcal{T}(\omega,m,r)$, see already Propositions~\ref{prop: subsec: sec: G, subsec 1, prop 0},~\ref{prop: subsec: sec: G, subsec 1, prop -1}. Note that for the borderline non-superradiant frequencies
\begin{equation}
	am\omega=\frac{a^2m^2\Xi}{r_+^2+a^2},\qquad am\omega=\frac{a^2m^2\Xi}{\bar{r}_+^2+a^2},
\end{equation}
we have~$r_{\textit{crit}}(\omega,m)=r_+,r_{\textit{crit}}(\omega,m)=\bar{r}_+$.

\begin{remark}(Connection of~$\mathcal{G}$ with trapping)
 We note that when a trapped null geodesic exists, for certain conserved quantities~$E,L$ associated with~$\omega$ and~$m$ respectively, then the $r$-coordinate of the geodesic is~$r_{crit}(E,L)$, where for~$r_{crit}(\cdot,\cdot)$ see~\eqref{eq: subsec: rough version of theorem 3, g2, 4}. Therefore, our operator~$\mathcal{G}$ will indeed be connected to trapping. Specifically, note that in the Schwarzschild--de~Sitter case we have~$r_{crit}(\omega,m)=3M$ for all~$(\omega,m)\in \mathbb{R}\times\mathbb{Z}$. 
\end{remark}

\begin{definition}\label{def: sec: G, def 0.1}
	We define a signed square root~$g_2(\omega,m,r)$ of~$g_2^2(\omega,m,r)$ defined by~\eqref{eq: subsec: rough version of theorem 3, g2, 3} as follows. For the frequencies~$am\omega>\frac{a^2m^2\Xi}{r_+^2+a^2}$ we define 
	\begin{equation}\label{eq: subsec: rough version of theorem 3, g2, 4.1}
		g_2(\omega,m,r)=
		\begin{cases}
			-\sqrt{g_2^2(\omega,m,r)},	\quad r\geq r_{\textit{crit}}(\omega,m)\\
			+\sqrt{g_2^2(\omega,m,r)}, \quad r\leq r_{\textit{crit}}(\omega,m).
		\end{cases}
	\end{equation}
	For the superradiant frequencies~$\mathcal{SF}$ we define 
	\begin{equation}\label{eq: subsec: rough version of theorem 3, g2, 4.2}
		g_2(\omega,m,r)=-\sqrt{g_2^2} =-\left|\omega-\frac{am\Xi}{r^2+a^2}\right|\frac{r^2+a^2}{\sqrt{\Delta}}.
	\end{equation}
	For the frequencies~$am\omega<\frac{a^2m^2\Xi}{\bar{r}_+^2+a^2}$ we define 
	\begin{equation}\label{eq: subsec: rough version of theorem 3, g2, 4.3}
		g_2(\omega,m,r)=
		\begin{cases}
			+\sqrt{g_2^2(\omega,m,r)},	\quad r\geq r_{\textit{crit}}(\omega,m)\\
			-\sqrt{g_2^2(\omega,m,r)}, \quad r\leq r_{\textit{crit}}(\omega,m).
		\end{cases}
	\end{equation}
	For the borderline non-superradiant frequencies~$am\omega=\frac{a^2m^2\Xi}{r_+^2+a^2},am\omega=\frac{a^2m^2\Xi}{\bar{r}_+^2+a^2}$ we define respectively
	\begin{equation}\label{eq: subsec: rough version of theorem 3, g2, 4.4}
		g_2(\omega,m,r)=-\sqrt{g_2^2}= -|am\Xi| \frac{r^2-r_+^2}{(r_+^2+a^2)\sqrt{\Delta}},\qquad g_2(\omega,m,r)= -\sqrt{g_2^2}=-|am\Xi| \frac{\bar{r}_+^2-r^2}{(\bar{r}_+^2+a^2)\sqrt{\Delta}}. 
	\end{equation}	
	In the Schwarzschild--de~Sitter case~$a=0$ we define~$g_2=g_1\frac{1}{\sqrt{1-9M^2\Lambda}}\left(1-\frac{3M}{r}\right)\sqrt{1+\frac{6M}{r}}$, as in our previous~\cite{mavrogiannis}. For a graphic presentation of~$g_2(\omega,m,r)$ see already Figure~\ref{fig: g2}.
\end{definition}

We now present the rough definition of the operator~$\mathcal{G}$, which is a generalization of the vector field~$\mathcal{G}$ of~\cite{mavrogiannis}. For the precise definition of~$\mathcal{G}$ see already Section~\ref{sec: G}.

\begin{customDefinition}{\ref{def: sec: G, def 1}}[rough version]\label{rough def: g2}
Let~$g_1(r),~g_2(\omega,m,r)$ be as in~\eqref{eq: subsec: rough version of theorem 3, g2, 3} and Definition~\ref{def: sec: G, def 0.1}. We define an operator~$\mathcal{G}$ of the form 
\begin{equation}\label{eq: subsec: sec: intro, 2, eq 0}
\begin{aligned}
\mathcal{G}& \:\dot{=}\:g_1(r^\star)\partial_{r^\star}+ i\textit{Op}\left(g_2\right),
\end{aligned}
\end{equation}
where~$\textit{Op}(g_2)$ is a Fourier multiplier operator defined as follows 
\begin{equation}
	\left(\textit{Op}(g_2)\Psi\right)(r;t,\varphi,\theta)=\frac{1}{\sqrt{2\pi}}\int_{\mathbb{R}}d\omega \sum_{m\in \mathbb{Z}} e^{i\omega t} e^{-im\varphi} g_2(\omega,m,r) \mathcal{F}_{\omega,m}(\Psi)(r,\theta),
\end{equation}
where for~$\mathcal{F}_{\omega,m}$ see~\eqref{eq: subsubsec: subsec: sec: intro, subsec 2, subsubsec 0, eq 1}. 
\end{customDefinition}

\begin{remark}\label{rem: subsec: rough version of theorem 3, g2, rem 1}
For the superradiant frequencies~$\mathcal{SF}$ we note that there exists an~$r_s\in (r_+,\bar{r}_+)$ such that~$\omega=\frac{am\Xi}{r_s^2+a^2}$.  Then, the function~$g_2(\frac{am\Xi}{r_s^2+a^2},m,\cdot)$ is not differentiable at~$r=r_s$. For a graphic representation of this non-differentiability see already Figure~\ref{fig: g2}. Furthermore, for any fixed~$r\in (r_+,\bar{r}_+)$ and for any any fixed~$m$ such that~$(\omega,m)\in \mathcal{SF}$ we have that~$g_2(\cdot,m,r)$ is not a differentiable function either. Therefore, in order to be able to use pseudodifferential commutations, we will approximate~$\mathcal{G}$ by a more regular version, which we denote as~$\widetilde{\mathcal{G}}$, see already Definition~\ref{def: proof thm 3 exp decay, def 1}. The pseudodifferential operator~$\widetilde{\mathcal{G}}$ belongs in the class of operators satisfying the assumptions of the classical Coifman--Meyer commutation Lemma~\ref{lem: sec: appendix, pseudodifferential com, lem 0}. Introducing the more regular operator~$\widetilde{\mathcal{G}}$ is essential in the pseudodifferential estimates of Section~\ref{sec: proof of Thm: rel-nondeg}. 
\end{remark}

\begin{remark}
The operator~$\mathcal{G}$ of Definition~\ref{def: sec: G, def 1} can also be defined in the asymptotically flat case~$\Lambda=0$. In the high frequency limit our relevant~$\Lambda=0$ operator is related to but differs from the relevant commutator employed in the recent work of Holzegel--Kauffman~\cite{gustav2}. 
\end{remark}

We note finally that our commutation with~$\mathcal{G}$, see~\eqref{eq: subsec: sec: intro, 2, eq 0}, in the high frequency limit, connects with the previous works~\cite{zworski,burq2,ikawa,nonnenmacher,hintz4,Dyatlov3,Dyatlov4}.

\subsection{The function~\texorpdfstring{$g_2(\omega,m,r)$}{g} graphically}\label{subsec: sec: intro: subsec 3.1}

In Figure~\ref{fig: g2} we graph the function~$g_2(\omega,m,r)$, see Definition~\ref{rough def: g2}, for the values 
\begin{equation}\label{eq: subsec: sec: intro: subsec 3.1, eq 1}
	am\omega> \frac{a^2m^2\Xi}{r_+^2+a^2},\quad am\omega= \frac{a^2m^2\Xi}{r_+^2+a^2},\quad am\omega\in \left(\frac{a^2m^2\Xi}{\bar{r}_+^2+a^2},\frac{a^2m^2\Xi}{r_+^2+a^2}\right),\quad am\omega=\frac{a^2m^2\Xi}{\bar{r}_+^2+a^2},\quad am\omega<\frac{a^2m^2\Xi}{\bar{r}_+^2+a^2}
\end{equation}
respectively, where for~$r_{\textit{crit}}(\omega,m)$ see~\eqref{eq: subsec: rough version of theorem 3, g2, 4}.
\begin{figure}[htbp]
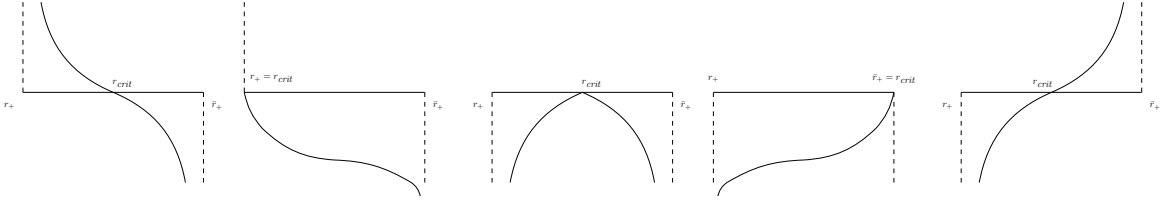

\begin{multicols}{5}
\includegraphics[scale=0.6]{NS1.g2.jpg}\par
\includegraphics[scale=0.6]{S1.g2.jpg}\par 
\includegraphics[scale=0.6]{S2.g2.jpg}\par
\includegraphics[scale=0.6]{S3.g2.jpg}\par
\includegraphics[scale=0.6]{NS2.g2.jpg}\par
\end{multicols}
\caption{The function~$g_2(\omega,m,r)$ for the values~\eqref{eq: subsec: sec: intro: subsec 3.1, eq 1} respectively}
\label{fig: g2}
\end{figure}

\subsection{A bulk integral associated with~\texorpdfstring{$g_1\frac{d}{dr^\star}+ig_2$}{g}}\label{subsec: sec: intro: subsec 3.2}

As mentioned already, we note that in the definition of the pseudodifferential operator~$\mathcal{G}$ only refers to frequencies~$\omega,m$ and not the full Carter separation of variables~\eqref{eq: subsec: sec: intro, subsec 2, eq 1}. We will employ extensively in this paper, however, the formalism of the Carter separation, since we can use the fixed freqency multiplier formalism of~\cite{mavrogiannis4} and borrow all relevant formulas (see already~\eqref{eq: subsec: sec: intro: subsec 3.2, eq 0}). In addition, we need the Morawetz estimate with the sharp degeneration at~$r_{trap}(\omega,m,\ell)$~(and not simply with a degeneration at an open interval), where~$\ell$ is the Carter frequency, see Theorem~\ref{main theorem 1}. Furthermore, the Carter separation also allows us to formulate mode stability.

Let~$g_1,g_2$ be as in~\eqref{eq: subsec: rough version of theorem 3, g2, 3} and Definition~\ref{def: sec: G, def 0.1}, and let
\begin{equation}\label{eq: subsec: sec: intro: subsec 3.2, eq 0}
v=\left(g_1\frac{d}{dr^\star}+ig_2\right)u
\end{equation}
where~$u$ is the fixed frequency Fourier projection of~$\sqrt{r^2+a^2}\chi\psi$ and~$\chi$ is an appropriate cut-off. We have the following Parseval identity
\begin{equation}
	\int_{r_+}^{\bar{r}_+}\int_\mathbb{R}  \sum_{m,\ell}|v|^2drd\omega= 	\int_{r_+}^{\bar{r}_+}\int_\mathbb{R}\int_{\mathbb{S}^2} |\mathcal{G}\left(\sqrt{r^2+a^2}\chi\psi\right)|^2 r^2dr dt d\sigma,
\end{equation}
where~$d\sigma$ is the standard metric of the unit sphere.

The main point of our choice of~$\mathcal{G}$ is to control the following fixed frequency expression
\begin{equation}\label{eq: subsec: sec: intro: subsec 3.2, eq 1}
\int_{r_+}^{\bar{r}_+} \left(\frac{1}{\Delta}\left( \omega-\frac{am\Xi}{r^2+a^2} \right)^2 |v|^2+\tilde{\lambda}|v|^2+ |v^\prime|^2 +  \frac{1}{\Delta}|v^\prime+i(\omega-\omega_+m) v|^2+\frac{1}{\Delta}|v^\prime-(\omega-\bar{\omega}_+m) v|^2\right) dr,
\end{equation}
where~$\omega_+=\frac{a\Xi}{r_+^2+a^2},~\bar{\omega}_+=\frac{a\Xi}{\bar{r}_+^2+a^2}$ and~$\tilde{\lambda}$ is associated with the eigenvalues of the angular ode, see already Section~\ref{sec: carter separation, radial}. Note that in contrast to the expression~\eqref{eq: subsubsec: subsec: sec: intro, subsec 2, subsubsec 3, eq 1}, the expression~\eqref{eq: subsec: sec: intro: subsec 3.2, eq 1} does not degenerate at top order at trapping~$r=r_{\textit{trap}}$ \underline{with respect to~$v$.}

The key ingredient in controlling the quantity~\eqref{eq: subsec: sec: intro: subsec 3.2, eq 1} is to prove that passing to~$v$ defined by the commutation~\eqref{eq: subsec: sec: intro: subsec 3.2, eq 0} produces a favorable term for all~$(\omega,m)\in\mathbb{R}\times\mathbb{Z}$. Motivated by the relevant approach of our previous~\cite{mavrogiannis} and in view of the fact that in Kerr--de~Sitter there does not exist a global timelike Killing vector field, for non-superradiant frequencies~$(\omega,m)\in(\mathcal{SF})^c$ we multiply the commuted equation, the equation satisfied by~$v$, with the multiplier~$(|\omega|+|am\Xi|)\bar{v}$ and we generate the term 
\begin{equation}
	2\frac{g_2^\prime}{g_1}|v|^2
\end{equation}
which controls the first term of~\eqref{eq: subsec: sec: intro: subsec 3.2, eq 1}. Also, note from Figure~\ref{fig: g2} that for the non-superradiant frequencies the function~$g_2$ is monotonic. To generate the remaining terms of~\eqref{eq: subsec: sec: intro: subsec 3.2, eq 1} we use the ode satisfied by~$v$, see already Lemma~\ref{prop: equation of v}. 

The treatment of the superradiant frequencies~$(\omega,m)\in\mathcal{SF}$ is slightly more elaborate. Note that for~$(\omega,m)\in \mathcal{SF}$ there exists an~$r_s(\omega,m)\in (r_+,\bar{r}_+)$ such that~$am\omega=\frac{a^2m^2\Xi}{r_s^2+a^2}$. Specifically, we multiply the equation satisfied by~$v$ with a multiplier of the form~$f_s(\omega,m,r) |am\Xi| \bar{v}$, where the smooth function~$f_s(\omega,m,r)$ degenerates as follows~$f_s(\omega,m,r_s)=0$. Then, we obtain an estimate where the LHS degenerates at the value~$r_s$. In order to `cover' that degeneration we need to appeal to a quantitative manifestation of the fact that superradiant frequencies are not trapped, see also our companion~\cite{mavrogiannis4}, and also see~\cite{DR2} for the manifestation of that phaenomenon in the asymptotically flat Kerr geometry.

We emphasize here that although we will use the formalism of Carter's separation to produce our multiplier estimates, all of our multipliers will be independent of the Carter frequency~$\ell$.

All lower order terms produced are controlled by the Morawetz estimate of our companion~\cite{mavrogiannis4}, see Theorem~\ref{main theorem 1}, since this estimate controls the quantity~\eqref{eq: subsubsec: subsec: sec: intro, subsec 2, subsubsec 3, eq 1}, in conjunction with a Poincare type estimate, see already Lemma~\ref{lem: control of the law order derivatives with v}.

\subsection{The time cut-offs and the energy density}\label{subsec: sec: intro: subsec 4}

Let~$(t^\star,r,\theta^\star,\varphi^\star)$ denote the Kerr--de~Sitter star coordinates, for the precise definition see already Section~\ref{subsec: sec: preliminaries, subsec 2}, where note that the leaves~$\{t^\star=\tau\}$ connect the event horizon~$\mathcal{H}^+$ with the cosmological horizon~$\bar{\mathcal{H}}^+$.

In view of the pseudodifferential nature of the operator~$\mathcal{G}$, see Definition~\ref{rough def: g2}, to make the statement of the main Theorem~\ref{main theorem, relat. non-deg} to be of local nature we need to first apply appropriate cut-offs. Let~$T>0$ and~$T< T+\tau_1< 2T+\tau_1< \tau_2$. We choose a smooth cut-off that satisfies the following
\begin{equation}\label{eq: subsec: sec: intro, 3, eq 1}
\chi^{(T)}_{\tau_1,\tau_2}(t^\star)=
\begin{cases}
\text{1,} &\quad\{\tau_1+T\leq t^\star\leq \tau_2-T\}\\
\text{0,} &\quad\{t^\star\leq \tau_1\}\cup\{t^\star\geq \tau_2\}, \\
\end{cases}
\end{equation}
which we often simply denote as~$\chi_{\tau_1,\tau_2}$.

Let~$T\geq 3$, let
\begin{equation}\label{eq: subsec: sec: intro, 3, eq 1.1}
	\tilde{T}=T^2-2T-1
\end{equation}
and let~$\tilde{T}\leq \tau_1$. We define
\begin{equation}\label{eq: subsec: sec: intro, 3, eq 1.2}
\chi_+ (t^\star)=\chi^{(T)}_{\tau_1,\tau_1+T^2}(t^\star),\qquad \chi_{-}(t^\star)=\chi^{(T)}_{\tau_1,\tau_1+T^2}\left(t^\star+\tilde{T}\right),
\end{equation}
also see Figure~\ref{fig: cut-offs -1}, where note that we supress the dependence of~$\chi_\pm$ on~$T$ and~$\tau_1$. 
\begin{figure}[htbp]
	\centering
	\includegraphics[scale=1]{functions0.jpg}
	\caption{The cut-offs~\eqref{eq: subsec: sec: intro, 3, eq 1.2}}
	\label{fig: cut-offs -1}
\end{figure}

Let~$\mathcal{G}_\chi=\chi\mathcal{G}\chi$, where~$\mathcal{G}$ is as in~\eqref{eq: subsec: sec: intro, 2, eq 0} and~$\chi$ is either~$\chi_+$ or~$\chi_-$, see~\eqref{eq: subsec: sec: intro, 3, eq 1.2}. We define the energy density
\begin{equation}\label{eq: new energy}
\mathcal{E}\left(\mathcal{G}_{\chi}\psi,\psi\right)\:\dot{=}\:\mathbb{T}(\partial_{t^\star}+\frac{a\Xi}{r^2+a^2}\partial_{\varphi^\star},n)[\mathcal{G}_{\chi}\psi]+\mathbb{T}(n,n)[\psi],
\end{equation}
where~$\mathbb{T}$ is the energy momentum tensor of the Klein--Gordon equation, see already Section~\ref{sec: preliminaries}, and~$n$ is the unit normal to the foliation~$\{t^\star=\tau\}$. The vector field
\begin{equation}
\partial_{t^\star}+\frac{a\Xi}{r^2+a^2}\partial_{\varphi^\star}
\end{equation}
is globally timelike except at the horizons~$\mathcal{H}^+,\bar{\mathcal{H}}^+$ where it is null, see already Lemma~\ref{lem: causal vf E,1}. Moreover, note that the energy density~\eqref{eq: new energy} satisfies
\begin{equation}
\mathcal{E}(\mathcal{G}_\chi\psi,\psi)\sim \left|\partial_{t^\star}\mathcal{G}_\chi\psi\right|^2+\Delta \left|Z^\star\mathcal{G}_\chi\psi\right|^2+|\slashed{\nabla}\mathcal{G}_\chi\psi|^2+|\partial_{t^\star}\psi|^2+|Z^\star\psi|^2+|\slashed{\nabla}\psi|^2,
\end{equation}
where for~$\Delta$ see~\eqref{eq: prototype Delta}, and for the regular vector field~$Z^\star$ see already Section~\ref{subsec: boldsymbol partial r}.

\subsection{The rough version of Theorem~\ref{main theorem, relat. non-deg}}\label{subsec: sec: intro: subsec 5}

We present the rough version of our `relatively non-degenerate' estimate, where for the detailed version of Theorem~\ref{main theorem, relat. non-deg} see already Section~\ref{sec: main theorems}.

\begin{customTheorem}{\ref{main theorem, relat. non-deg}}[rough version]\label{rough: thm relat. non-deg}

Let~$(a,M,l)$ be subextremal Kerr--de~Sitter black hole parameters and let~$\mu^2_{KG}\geq 0$. Let~$T\geq 3$ be sufficiently large and let~$\tilde{T}< \tau_1<\tau_1+T< \tau_2$, where for~$\tilde{T}$ see~\eqref{eq: subsec: sec: intro, 3, eq 1.1}. Assume that
\begin{equation*}
	\begin{aligned}
		&\text{(MS):~mode stability on the real axis for Carter's radial ode holds on a curve in subextremal}\\
		&	\qquad\quad \text{parameter space connecting}~(a,M)~\text{to the subextremal Schwarzschild--de~Sitter family},
	\end{aligned}
\end{equation*}
and let~$\psi$ be a solution of the Klein--Gordon equation~\eqref{eq: kleingordon} on the subextremal Kerr-de~Sitter black hole with parameters~$(a,M,l)$.

Then, the following `relatively non-degenerate' integrated decay energy estimate holds
\begin{equation}\label{eq: rough: thm relat. non-deg, eq 1}
\begin{aligned}
& \int_{\{t^\star=\tau_2\}} \mathbb{T}(n,n)[\psi]+\int\int_{\{\tau_1+T\leq t^\star\leq \tau_2\}} \mathcal{E}\left(\mathcal{G}_{\chi_+}\psi,\psi\right) \lesssim \int_{\{t^\star=\tau_1\}} \mathbb{T}(n,n)[\psi]+\frac{1}{T}\int\int_{\{\tau_1\leq t^\star\leq \tau_1+T\}} \mathcal{E}(\mathcal{G}_{\chi_{-}}\psi,\psi),\\
\end{aligned}    
\end{equation}
where for the smooth cut-offs~$\chi_+,\chi_{-}$ see~\eqref{eq: subsec: sec: intro, 3, eq 1.2} and Figure~\ref{fig: cut-offs -1}, and for the operator~$\mathcal{G}$ see~\eqref{eq: subsec: sec: intro, 2, eq 0}. The constant implicit in~\eqref{eq: rough: thm relat. non-deg, eq 1} is independent of~$T$.

For axisymmetric solutions~$\psi$ of the Klein--Gordon equation we obtain unconditionally a physical space version of~\eqref{eq: rough: thm relat. non-deg, eq 1} for all subextremal black hole parameters~$(a,M,l)$ and all masses~$\mu_{KG}^2\geq 0$. 
\end{customTheorem}

 \begin{remark}\label{rem: subsec: sec: intro: subsec 5, rem 1}
 	For fixed~$(a,M,l)$, the constants implicit in~$\lesssim$, in the RHS of the estimate~\eqref{eq: rough: thm relat. non-deg, eq 1}, blow up in the limit~$\mu_{KG}\rightarrow 0$ as well as in the limit~$a\rightarrow 0$. Note, however, that $\mu_{KG}=0$ or~$a=0$ are indeed allowed in the theorem, i.e. the constant for $\mu_{KG}=0$ or for~$a=0$ are finite. It might be possible to modify our proof, see specifically Section~\ref{subsec: sec: proof of Theorem 2, subsec 8}~(for the issue in the limit~$\mu^2_{KG}\rightarrow 0$) and Section~\ref{subsec: sec: proof of Theorem 2, subsec 5.5}~(for the issue in the limit~$a\rightarrow 0$), in order to obtain uniform estimates near~$\mu^2_{KG}=0$ and near~$a=0$, which would be more desirable. 
 \end{remark}

 \begin{remark}
 	As mentioned in our companion~\cite{mavrogiannis4} the very slowly rotating case~$|a|\ll M,l$ is in fact included in the condition~(MS), where the smallnes depends on~$\mu_{KG}$. Therefore, Theorem~\ref{main theorem, relat. non-deg} holds for very slowly rotating Kerr--de~Sitter black holes, as well. In view of the recent work of Hintz~\cite{hintz2021mode}, for~$\mu^2_{KG}=0$, we have that subetremal Kerr--de~Sitter black hole parameters with sufficiently small cosmological constant~$0<|a|<M\ll l$ are in fact included in the set~(MS), where the constant implicit in~$\ll$ depends on the difference~$M-|a|$. Therefore, the result of Theorem~\ref{main theorem, relat. non-deg} holds also in this case.
 \end{remark}

\begin{remark}
In fact we can control the stronger quantity~\eqref{eq: subsec: sec: intro: subsec 3.2, eq 1}, summed over frequencies, on the left hand side of our main estimate~\eqref{eq: rough: thm relat. non-deg, eq 1}.
\end{remark}

\begin{remark}
Unlike the `relatively non-degenerate' estimate~\eqref{eq: SdS relatively non degenerate} in Schwarzschild--de~Sitter, the estimate of Theorem~\ref{rough: thm relat. non-deg} exhibits an additional averaged bulk term on the right hand side. This relates to the fact that the operator~$\mathcal{G}$, see Definition~\ref{rough def: g2}, is of pseudodifferential nature. Nonetheless, because of the time translational relation of the cut-offs~$\chi_-,~\chi_+$ and the weight~$\frac{1}{T}$, the estimate of Theorem~\ref{rough: cor: thm relat. non-deg, cor 1} can similarly be immediately iterated in consecutive spacetime slabs of fixed time length in order to prove exponential decay in energy for the solutions of the Klein--Gordon equation, see Corollary~\ref{rough: cor: thm relat. non-deg, cor 1}. 
\end{remark}

Exponential decay is an immediate Corollary by iterating the result of Theorem~\ref{rough: thm relat. non-deg} in consecutive spacetime slabs of fixed time length.

\begin{customCorollary}{\ref{cor: main theorem, relat. non-deg, cor 3}}[rough version]\label{rough: cor: thm relat. non-deg, cor 1}
Let the assumptions of Theorem~\ref{rough: thm relat. non-deg} be satisfied.

For~$T\geq 0$ sufficiently large and for~$\chi_{\textit{data}}(t^\star)=\chi_{0,T^2}(t^\star)$ we have
\begin{equation}
\begin{aligned}
&	\int_{\{t^\star=\tau\}} \mathbb{T}(n,n)[\psi]  \lesssim \Bigg(\int_{\{t^\star=0\}}  \mathbb{T}(n,n)[\psi]+   \int_{\supp \chi_{\textit{data}}}d\tau^\prime  \int_{\{t^\star=\tau^\prime\}} \mathcal{E}(\mathcal{G}_{\chi_{\textit{data}}}\psi,\psi)\Bigg) \cdot e^{-c\tau},\\
&\qquad\qquad\qquad\qquad\qquad\qquad\qquad \sup_{\{t^\star=\tau\}}|\psi-\underline{\psi}|	\lesssim \sqrt{E}\cdot e^{-c \tau},
\end{aligned}
\end{equation}
where~$E$ is an appropriate integral quantity defined on~$\{t^\star=0\}$ and~$\underline{\psi}$ is a constant satisfying~$|\underline{\psi}|\leq  |\underline{\psi}(0)|+C\sqrt{E}$ if~$\mu_{KG}=0$ and~$\underline{\psi}\equiv 0$ if~$\mu^2_{KG}>0$. 
\end{customCorollary}

\begin{proof}
	See Section~\ref{sec: cor: main theorem, relat. non-deg, cor 3}. 
\end{proof}

\begin{remark}
	Note that we do not explicitly give the dependence of~$\underline{\psi}$ on initial data. Specifically, we note that the constant~$\underline{\psi}$ is obtained from a limiting argument, see already Section~\ref{sec: cor: main theorem, relat. non-deg, cor 3}. 
\end{remark}

\begin{remark}
For a `relatively non-degenerate' estimate for the inhomogeneous wave equation see already Theorem~\ref{thm: main thm extended region} in Section~\ref{sec: main theorems}. Note that the estimate of Theorem~\ref{thm: main thm extended region} is used in our forthcoming~\cite{mavrogiannis3} to prove global non-linear stability and exponential decay for the solutions of the quasilinear wave equation on the subextremal Kerr--de~Sitter black hole background, if the condition~(MS) holds. 
\end{remark}

\subsection{Acknowledgments} The author would like to thank Mihalis Dafermos for introducing him to this problem, for his invaluable suggestions and for his uncountably infinite patience. Preliminary version of the results of the current paper were outlined in my thesis~\cite{mavrogiannisthesis}. Previous versions of the operator~$\mathcal{G}$ were inspired by conversations with G.~Moschidis, Y.~Shlapentokh-Rothman and D.~Gajic, to whom the author acknowledges their vital assistance. Moreover, the author would like to thank G. Moschidis for numerous conversations on various mathematical topics and for very enlightening discussions on the estimates of Section~\ref{sec: proof of Thm: rel-nondeg}. Finally, the author would like to acknowledge many helpful discussions with Renato Velozo Ruiz, particularly on issues of trapping.

\section{Preliminaries}\label{sec: preliminaries}

In order for the present paper to be read independently, we repeat some necessary definitions and known results that have already been presented in our companion~\cite{mavrogiannis4}.

\subsection{The subextremal parameters}\label{subsec: delta polynomial}

We define the subextremal set of black hole parameters. 

\begin{definition}\label{def: subextremality, and roots of Delta}
	Let~$l>0$. We define the set
	\begin{equation}\label{eq: subextremal set}
	\mathcal{B}_{l}\:\dot{=}\:\{ (a,M)\in \mathbb{R}\times(0,\infty):~ \Delta ~\textrm{attains four distinct real roots} \},
	\end{equation}
	where for~$\Delta$ see~\eqref{eq: prototype Delta}.

	If $l>0$ and $(a,M)\in\mathcal{B}_l$ we say that the black hole parameters $(a,M,l)$ correspond to a subextremal Kerr--de~Sitter black hole and denote as 
	\begin{equation}
	\bar{r}_-(a,M,l)\quad<\quad 0\quad\leq \quad r_-(a,M,l)\quad<\quad r_+(a,M,l)\quad<\quad \bar{r}_+(a,M,l)
	\end{equation}
	the four distinct real roots of the $\Delta$ polynomial. Note that in the Schwarzschild--de~Sitter case~$(a=0)$ we have~$r_-=0$. 
\end{definition}

For~$(a,M,l)$ be subextremal Kerr--de~Sitter black hole parameters, see Definition~\ref{def: subextremality, and roots of Delta}. Then, we have the following graphic representation of the polynomial~$\Delta$. 
\begin{figure}[htbp]
	\centering
	\includegraphics[scale=0.6]{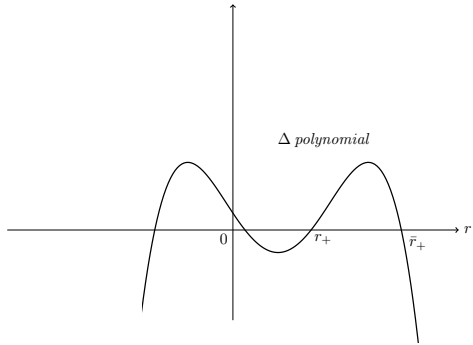}
	\caption{The $\Delta$ polynomial with 4 roots}
	\label{fig: Delta}
\end{figure}

We have the following

\begin{lemma}\label{lem: sec: properties of Delta, lem 1, derivatives of Delta}
	Let~$l>0$ and~$(a,M)\in\mathcal{B}_l$. Then, the following hold 
	\begin{equation}\label{eq: lem: sec: properties of Delta, lem 1, derivatives of Delta, eq 1}
	\begin{aligned}
	&   \frac{d\Delta}{dr}=2r-\frac{4}{l^{2}}r^{3}-a^{2}\frac{2}{l^{2}}r-2M,\qquad \frac{d^2\Delta}{dr^2}=2-a^{2}\frac{2}{l^{2}}-\frac{12}{l^{2}}r^{2},\qquad \frac{d^3\Delta}{dr^3}=-\frac{24}{l^2}r \\  
	\end{aligned}
	\end{equation}
	\begin{equation}\label{eq: lem: sec: properties of Delta, lem 1, derivatives of Delta, eq 3}
	\frac{d\Delta}{dr}(r_+)>0,\quad \frac{d\Delta}{dr}(\bar{r}_+)<0,\quad \frac{d}{dr}\left(\frac{\Delta}{(r^2+a^2)^2}\right)(r_+)>0,\quad \frac{d}{dr}\left(\frac{\Delta}{(r^2+a^2)^2}\right)(\bar{r}_+)<0,
	\end{equation}
	where for~$\Delta$ see~\eqref{eq: prototype Delta}. 
\end{lemma}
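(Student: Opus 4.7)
\medskip

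\noindent\textbf{Proof plan for Lemma~\ref{lem: sec: properties of Delta, lem 1, derivatives of Delta}.}
The identities in~\eqref{eq: lem: sec: properties of Delta, lem 1, derivatives of Delta, eq 1} are purely algebraic. Expanding~\eqref{eq: prototype Delta} gives
\[
\Delta(r) = -\frac{1}{l^2}r^4 + \left(1 - \frac{a^2}{l^2}\right)r^2 - 2Mr + a^2,
\]
and successive differentiation with respect to~$r$ produces the three formulas claimed. This is a one-line computation that I would simply write out and quote.

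For the four sign statements in~\eqref{eq: lem: sec: properties of Delta, lem 1, derivatives of Delta, eq 3}, the plan is to exploit that, since $(a,M)\in\mathcal{B}_l$, the quartic $\Delta$ factorizes as
\[
\Delta(r) = -\tfrac{1}{l^2}(r-\bar r_-)(r-r_-)(r-r_+)(r-\bar r_+),
\]
with the four ordered simple roots $\bar r_-<0\le r_-<r_+<\bar r_+$ of Definition~\ref{def: subextremality, and roots of Delta}. Because the leading coefficient is negative and all roots are simple, the sign of $\Delta$ alternates across them; checking at a single reference point, for instance $\Delta(0)=a^2\ge 0$ which lies in $(\bar r_-,r_-)$, pins down that $\Delta>0$ on $(r_+,\bar r_+)$ and $\Delta<0$ on the two adjacent intervals $(r_-,r_+)$ and $(\bar r_+,+\infty)$. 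Hence at the simple root $r_+$, $\Delta$ transitions from negative to positive, giving $\Delta'(r_+)>0$, and at $\bar r_+$ it transitions from positive to negative, giving $\Delta'(\bar r_+)<0$. This is precisely the first pair of inequalities in~\eqref{eq: lem: sec: properties of Delta, lem 1, derivatives of Delta, eq 3}.

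The remaining two inequalities about $\frac{d}{dr}\bigl(\Delta/(r^2+a^2)^2\bigr)$ then reduce to the first pair by the quotient rule. Explicitly,
\[
\frac{d}{dr}\!\left(\frac{\Delta}{(r^2+a^2)^2}\right) = \frac{\Delta'(r)\,(r^2+a^2) - 4r\,\Delta(r)}{(r^2+a^2)^3},
\]
and at any root of $\Delta$ the second term in the numerator vanishes, so the expression collapses to $\Delta'(r)/(r^2+a^2)^2$, whose sign coincides with that of $\Delta'(r)$. Evaluating at $r_+$ and $\bar r_+$ and invoking the signs of $\Delta'$ just obtained yields the last two inequalities. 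I do not foresee any real obstacle here: the first identities are a differentiation exercise, and the sign statements are a standard consequence of the simple-root factorization of $\Delta$ together with one sign check to fix the alternation.
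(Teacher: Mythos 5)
Your proof is correct, and the paper itself does not supply a proof of this lemma (it is stated without proof in the preliminaries, presumably regarded as elementary or deferred to the companion paper), so there is no paper argument to compare against. The differentiation formulas are a straightforward expansion of $\Delta(r)=-\tfrac{1}{l^2}r^4+\bigl(1-\tfrac{a^2}{l^2}\bigr)r^2-2Mr+a^2$, and your sign argument via the factorization $\Delta=-\tfrac{1}{l^2}(r-\bar r_-)(r-r_-)(r-r_+)(r-\bar r_+)$ with simple roots, followed by the quotient-rule reduction at the zeros of $\Delta$, is exactly the natural route.

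One small point worth tightening: you fix the alternation of signs by evaluating $\Delta(0)=a^2$ and placing $0\in(\bar r_-,r_-)$, but when $a=0$ one has $r_-=0$, so $0$ is itself a root and not an interior point of that interval. It is cleaner to pin down the alternation from the negative leading coefficient of the quartic, which gives $\Delta<0$ on $(\bar r_+,\infty)$ directly, then alternate inward to get $\Delta>0$ on $(r_+,\bar r_+)$; this handles $a=0$ and $a\ne 0$ uniformly and avoids the edge case entirely. With that substitution the argument is airtight.
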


\subsection{The Kerr--de~Sitter manifold}\label{subsec: sec: preliminaries, subsec 2}

We define the Kerr--de~Sitter manifold to be 
\begin{equation}
\mathcal{M}=[r_+,\bar{r}_+]_r\times\mathbb{R}_{t^\star}\times\mathbb{S}^2_{(\theta^\star,\varphi^\star)},
\end{equation}
where we term the coordinates~$(t^\star,r,\theta^\star,\varphi^\star)$ Kerr--de~Sitter star coordinates, see our companion~\cite{mavrogiannis4} for their definition. We define the event horizon and the cosmological horizon respectively as 
\begin{equation}
	\mathcal{H}^+=\{r=r_+,~t^\star> -\infty\},\qquad \bar{\mathcal{H}}^+=\{r=\bar{r}_+,~t^\star> -\infty\}.
\end{equation}
Note that the hypersurfaces~$\mathcal{H}^+,\bar{\mathcal{H}}^+$ are boundaries of the manifold~$\mathcal{M}$. The spacelike hypersurfaces~$\{t^\star=c\}$ connect~$\mathcal{H}^+$ with~$\bar{\mathcal{H}}^+$. We refer the reader to the Carter--Penrose diagram of Figure~\ref{fig: The penroseKdS}.

\begin{figure}[htbp]
	\centering
	\includegraphics[scale=0.9]{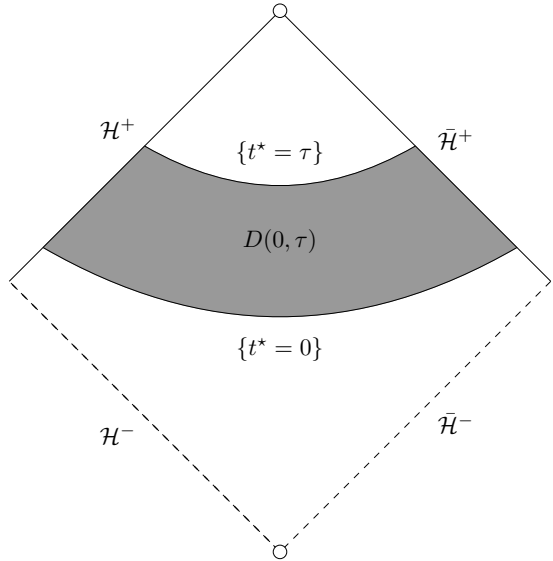}
	\caption{Carter-Penrose diagram of Kerr--de~Sitter}
	\label{fig: The penroseKdS}
\end{figure}

\subsection{The metric in Boyer--Lindquist coordinates}\label{subsec: boyer Lindquist coordinates}

To define the metric on $\mathcal{M}$ it is useful to recall first the Boyer Lindquist coordinates~$(t,r,\theta,\varphi)$ on the Kerr--de~Sitter manifold from our companion~[Section 2,~\cite{mavrogiannis4}]. Specifically, note that~$r$ coincides with the~$r$ of the Kerr--de~Sitter star coordinates above and~$\theta=\theta^\star$. We need the following definition.

\begin{definition}\label{def: delta+, and other polynomials}
	Let~$l>0$ and~$(a,M)\in \mathcal{B}_{l}$. Then, we define
	\begin{equation}
		\begin{aligned}
			\rho^2 \:\dot{=}&\:r^{2}+a^{2}\cos^{2}\theta,\qquad \Delta_{\theta}\:\dot{=}\:1+\frac{a^{2}}{l^{2}} \cos^{2}\theta,\qquad \Xi\:\dot{=}\:1+\frac{a^{2}}{l^{2}}. 
		\end{aligned}
	\end{equation}
\end{definition}

We now define the Kerr--de~Sitter metric in Boyer--Lindquist coordinates 

\begin{definition}\label{def: metric in boyer lindquist coordinates}
	Let~$l>0$ and~$(a,M)\in\mathcal{B}_l$. We define the Kerr--de~Sitter metric as 
\begin{equation}\label{eq: metric in boyer lindquist coordinates}
	\begin{aligned}
		g_{a,M,l} &	= \frac{\rho^2}{\Delta}dr^{2}+\frac{\rho^2}{\Delta_{\theta}}d \theta^{2}+\frac{\Delta_{\theta}(r^{2}+a^{2})^{2}-\Delta a^{2}\sin^{2}\theta }{\Xi ^{2}\rho^2}\sin^{2}\theta d \varphi^{2}   -2\frac{\Delta_{\theta}(r^{2}+a^{2})-\Delta}{\Xi\rho^2} a \sin^{2}\theta d\varphi dt\\
		&\qquad\qquad -\frac{\Delta -\Delta_{\theta}a^{2}\sin^{2}\theta}{\rho^2}dt^{2},
	\end{aligned}
\end{equation}
where for $\Delta$ see~\eqref{eq: prototype Delta}, and for~$\Xi,\rho^2,\Delta_\theta$ see Definition~\ref{def: delta+, and other polynomials}. The metric~\eqref{eq: metric in boyer lindquist coordinates} extends smoothly to~$\mathcal{M}$ and moreover the metric depends smoothly on~$a,M$ for fixed~$l$.

The vector field~$W=\partial_t+\frac{a\Xi}{r^2+a^2}\partial_{\varphi}$ is timelike away from the horizons, see already Lemma~\ref{lem: causal vf E,1}. We time orient~$\mathcal{M}$ by the vector~$W$.  
\end{definition}

\begin{remark}
	We have that~$\partial_t=\partial_{t^\star}$ and~$\partial_\varphi=\partial_{\varphi^\star}$. Thus, these Boyer Lindquist coordinate vector fields extend regularly at~$\mathcal{H}^+$ and~$\bar{\mathcal{H}}^+$, despite the fact that the coordinates themselves do not. 
\end{remark}

\subsection{A causal vector field for the entire subextremal Kerr--de~Sitter family}

We present Lemma~\ref{lem: causal vf E,1}, proved in our companion~\cite{mavrogiannis4}, which states that the vector field
\begin{equation}
	W=\partial_t+\frac{a\Xi}{r^2+a^2}\partial_{\varphi}
\end{equation}
is timelike in~$\mathcal{M}$ except at the horizons~$\mathcal{H}^+,\bar{\mathcal{H}}^+$ where it is null. By Definition~\ref{def: metric in boyer lindquist coordinates}, this then is future directed with respect to the time orientation it determines.

\begin{lemma}\label{lem: causal vf E,1}
	Let~$l>0$ and~$(a,M)\in\mathcal{B}_l$. The vector field
	\begin{equation}\label{eq: lem: causal vf E,1, eq 1}
	W\:\dot{=}\:\partial_{t}+\frac{a\Xi}{r^2+a^2} \partial_{\varphi},  
	\end{equation}
	is timelike in~$\{r_+<r<\bar{r}_+\}$ and null on the event horizon~$\mathcal{H}^+$ and on the cosmological horizon~$\bar{\mathcal{H}}^+$. Note that~\eqref{eq: lem: causal vf E,1, eq 1} is also timelike for~$\Lambda=0$. 
	
	Moreover, for axisymmetric solutions~$\psi$ of the Klein--Gordon equation~\eqref{eq: kleingordon} the following holds 
	\begin{equation}
	\rm{K}^{\partial_{t^\star}+\frac{a\Xi}{r^2+a^2} \partial_{\varphi^\star}}[\psi]=0,
	\end{equation}
	where for~$\rm{K}^X$ see already Section~\ref{sec: divergence theorem}.
\end{lemma}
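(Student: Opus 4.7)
My approach is direct computation in Boyer--Lindquist coordinates, exploiting the fact that the metric~\eqref{eq: metric in boyer lindquist coordinates} is block-diagonal in the $(r,\theta)$ versus $(t,\varphi)$ split and that all its components are independent of $t$ and $\varphi$.

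For the timelike/null statement, I will compute $g(W,W)$ directly from
\begin{equation*}
g(W,W) = g_{tt} + 2\,\frac{a\Xi}{r^{2}+a^{2}}\,g_{t\varphi} + \Big(\frac{a\Xi}{r^{2}+a^{2}}\Big)^{2} g_{\varphi\varphi}.
\end{equation*}
Substituting the explicit components from~\eqref{eq: metric in boyer lindquist coordinates}, the three $\Delta_\theta$-contributions carry coefficients $+a^{2}\sin^{2}\theta$, $-2a^{2}\sin^{2}\theta$, $+a^{2}\sin^{2}\theta$ and cancel identically; the surviving $\Delta$-terms collapse into a perfect square via the identity $(r^{2}+a^{2})-a^{2}\sin^{2}\theta = r^{2}+a^{2}\cos^{2}\theta = \rho^{2}$. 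I anticipate the compact formula
\begin{equation*}
g(W,W) = -\,\frac{\Delta\,\rho^{2}}{(r^{2}+a^{2})^{2}}.
\end{equation*}
Since $\rho^{2}>0$ everywhere on $\mathcal{M}$, while by Lemma~\ref{lem: sec: properties of Delta, lem 1, derivatives of Delta} together with Figure~\ref{fig: Delta} we have $\Delta>0$ on $\{r_+<r<\bar{r}_+\}$ and $\Delta=0$ at $r=r_+,\bar{r}_+$, the vector field $W$ is timelike in the interior and null on $\mathcal{H}^+\cup\bar{\mathcal{H}}^+$. The identical formula handles the $\Lambda=0$ case with $\Delta = r^{2}-2Mr+a^{2}>0$ on $(r_+,\infty)$.

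For the axisymmetric statement, set $f(r)=\frac{a\Xi}{r^{2}+a^{2}}$, so that $W=\partial_t+f(r)\partial_\varphi$. Since $\partial_t$ and $\partial_\varphi$ are Killing and $\partial_\varphi g_{\mu\nu}\equiv 0$, the Lie derivative formula collapses to
\begin{equation*}
(\mathcal{L}_W g)_{\mu\nu} = f'(r)\,\bigl(\delta^{r}_{\mu}\,g_{\nu\varphi} + \delta^{r}_{\nu}\,g_{\mu\varphi}\bigr),
\end{equation*}
and contracting with the symmetric Klein--Gordon energy--momentum tensor gives $\mathrm{K}^{W}[\psi] = \tfrac{1}{2}\mathbb{T}^{\mu\nu}[\psi](\mathcal{L}_W g)_{\mu\nu} = f'(r)\,\mathbb{T}^{r}{}_{\varphi}[\psi]$. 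Because the Boyer--Lindquist inverse metric has $g^{r\alpha}=0$ for $\alpha\neq r$, one obtains $\mathbb{T}^{r}{}_{\varphi}=g^{rr}\,\mathbb{T}_{r\varphi}$; and since $g_{r\varphi}=0$, the only surviving piece of the energy--momentum tensor is $\mathbb{T}_{r\varphi}=\partial_r\psi\,\partial_\varphi\psi$. This vanishes identically for axisymmetric $\psi$, hence $\mathrm{K}^{W}[\psi]\equiv 0$. The passage from $(t,\varphi)$ to the Kerr--de~Sitter star coordinates $(t^\star,\varphi^\star)$ in the statement of the lemma is harmless since $\partial_{t^\star}=\partial_t$ and $\partial_{\varphi^\star}=\partial_\varphi$.

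The only mildly delicate point is the algebraic bookkeeping in the first step: one must carry through the exact cancellation of the $\Delta_\theta$ contributions and recognize the remainder as $-\Delta\rho^{2}/(r^{2}+a^{2})^{2}$. Once that identity is in hand, the timelike/null dichotomy is immediate from the sign properties of $\Delta$ recorded in Lemma~\ref{lem: sec: properties of Delta, lem 1, derivatives of Delta}, and the axisymmetric assertion follows from the single Lie derivative computation above.
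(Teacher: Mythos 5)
Your proposal is correct and complete, and since the paper itself gives no proof of this lemma (it simply cites the companion paper), your direct computation supplies exactly what is omitted. The identity $g(W,W) = -\Delta\rho^2/(r^2+a^2)^2$ is indeed what the metric components~\eqref{eq: metric in boyer lindquist coordinates} yield after the $\Delta_\theta$ cancellation and the perfect square you describe, and the sign analysis of $\Delta$ then gives the causal character immediately. The Lie-derivative argument in the second half is also sound: because $W$ is a linear combination of Killing fields with an $r$-dependent coefficient, only the $f'(r)\,\delta^r_{(\mu}g_{\nu)\varphi}$ term survives in $\mathcal{L}_W g$, and since you contract against the \emph{full} Klein--Gordon tensor $\mathbb{T}^{\mu\nu}$, the trace term in the paper's formula for $\mathrm{K}^X$ (which would pick up a $\mu^2_{KG}|\psi|^2\,\Tr\pi$ contribution) is automatically accounted for; it vanishes because $g^{\mu\nu}(\mathcal{L}_W g)_{\mu\nu} = 2f'\delta^r_\varphi = 0$. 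The remaining piece $f'\,g^{rr}\partial_r\psi\,\partial_\varphi\psi$ dies under axisymmetry, exactly as you state. The only suggestion is cosmetic: it might be worth stating explicitly that $\nabla_\mu W^\mu=0$, which makes transparent why neither the $-\frac{1}{2}g_{\mu\nu}|\nabla\psi|^2$ part of $\mathbb{T}$ nor the Klein--Gordon mass contributes, rather than leaving that to be inferred from the block structure of the inverse metric.
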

\begin{proof}
	See our companion~\cite{mavrogiannis4}.
\end{proof}

\subsection{The regular vector field \texorpdfstring{$Z^\star$}{Z}}\label{subsec: boldsymbol partial r}

Let~$l>0$ and~$(a,M)\in\mathcal{B}_l$. Let~$\partial_r|_{star}$ be the coordinate vector field of the modified Kerr--de~Sitter star coordinates~$(t^\star,r,\theta^\star,\varphi^\star)$, see~\cite{mavrogiannis4}, and~$\partial_r|_{\textit{BL}}$ be the coordinate vector field of the Boyer--Lindquist coordinates~$(t,r,\theta,\varphi)$, see Definition~\cite{mavrogiannis4}.

For a sufficiently small
\begin{equation}
\epsilon_{\textit{hyb}}(a,M,l)>0,
\end{equation}
we choose a smooth cut-off that satisfies the following
\begin{equation}
\chi_{\textit{hyb}}(r)=
\begin{cases}
0,\:& r\in (r_+,r_++\epsilon_{\textit{hyb}})\cup (\bar{r}_+-\epsilon_{\textit{hyb}},\bar{r}_+),\\
1,\: & r\in(r_++2\epsilon_{\textit{hyb}},\bar{r}_+-2\epsilon_{\textit{hyb}}). 
\end{cases}    
\end{equation}

Then, we define the hybrid vector field 
\begin{equation}\label{eq: subsec: boldsymbol partial r, eq 1}
Z^\star= (1-\chi_{\textit{hyb}}(r))\partial_r|_{\textit{star}}+\chi_{\textit{hyb}}(r) \partial_{r}|_{\textit{BL}}.
\end{equation}

The vector field $Z^\star$ coincides with the coordinate vector field $\partial_r|_{BL}$, of Boyer--Lindquist coordinates, in the region~$[R_-,R_+]$, where for~$R_\pm$ see Theorem~\ref{main theorem 1}.  The significance of~$Z^\star$ will be that the derivatives~$(Z^\star\psi)^2$ do \texttt{not} degenerate in the integrand of the Morawetz estimate of Theorem~\ref{main theorem 1}.

\subsection{Hawking--Reall vector fields}\label{subsec: Hawking-Reall v.f.}

We define the null normals of the event horizon~$\mathcal{H}^+$ and the cosmological horizon~$\bar{\mathcal{H}}^+$ respectively as the Hawking--Reall vector fields
\begin{equation}\label{eq: Hawking vector field on the event horizon}
K^+=\partial_{t^\star}+\frac{a\Xi}{r_{+}^{2}+a^{2}}\partial_{\varphi^\star},\qquad \bar{K}^+=\partial_{t^\star}+\frac{a\Xi}{\bar{r}_{+}^{2}+a^{2}}\partial_{\varphi^\star}.
\end{equation}

\subsection{Volume forms and normals}\label{subsec: admissible hypersurfaces}

We denote by~$n$ the normals of the foliation~$\{t^\star=\tau\}$. Moreover, when convenient we denote the normal vector fields~\eqref{eq: Hawking vector field on the event horizon} also as~$n$. The context will make it clear what is meant.

We use the notation~$\slashed{g},~\slashed{\nabla},~d\sigma_{\mathbb{S}^2}$ to denote respectively the induced~$g_{a,M,l}$ metric on the~$\mathbb{S}^2$ factors of~$\mathcal{M}$, the covariant derivative with respect to~$\slashed{g}$ and the standard metric of the unit sphere. Then, we have
\begin{equation}
\slashed{g}= r^2v(r,\theta)d\sigma_{\mathbb{S}^2},
\end{equation} 
with~$v(r,\theta)\sim 1$ in~$\mathcal{M}$, where the unnecessary weights in~$r$ are included to compare with the asymptotically flat~$\Lambda=0$ Kerr black hole.

The spacetime volume form and the volume form of hypersurfaces~$\{t^\star=c\}$ are given respectively by
\begin{equation}\label{eq: subsec: volume forms of spacelike hypersurfaces, eq 0}
dg= v(r,\theta)dt^\star drd\slashed{g}=v(r,\theta)\frac{\Delta}{r^{2}+a^{2}}dt^\star dr^\star d\slashed{g}=v(r,\theta)dtdr d\slashed{g},\qquad dg_{\{t^\star=\tau\}}= v(r,\theta) dr d\theta^\star d\varphi^\star,
\end{equation}
where~$\rho^2=r^2+a^2\cos^2\theta$ and for~$\Delta$ see~\eqref{eq: prototype Delta}, with~$v(r,\theta)\sim 1$. We define the volume forms of the null hypersurfaces~$\mathcal{H}^+,\bar{\mathcal{H}}^+$ respectively as 
\begin{equation}
dg=(K^+)^\flat\wedge dg_{\mathcal{H}^+},\qquad dg=(\bar{K}^+)^\flat \wedge dg_{\bar{\mathcal{H}}^+},
\end{equation}
where~$\flat$ here is the flat-musical isomorphism with respect to the Kerr--de~Sitter metric, see~\eqref{eq: prototype metric in BL coordinates}, and~$K^+,\bar{K}^+$ are the Hawking--Reall vector fields of Section~\ref{subsec: Hawking-Reall v.f.}.

\subsection{Causal domains}\label{subsec: causal domains}

We define the following causal spacetime domains.

\begin{definition}\label{def: causal domain, def 1}
	We define
	\begin{equation}
	D(\tau_1,\tau_2)\:\dot{=}\: \{\tau_1\leq t^\star\leq \tau_2\} ,\qquad     D(\tau_1,\infty)\:\dot{=}\: \{t^\star\geq \tau_1\},
	\end{equation}
	where the spacelike hypersurfaces~$\{t^\star=\tau\}$ connect the event horizon~$\mathcal{H}^+$ with the cosmological horizon~$\bar{\mathcal{H}}^+$, see Figure~\ref{fig: The penroseKdS}, or~\cite{mavrogiannis4} for their precise definition. 
\end{definition}

In our Theorem \ref{thm: main thm extended region} we will need the following extended manifold

\begin{definition}\label{def: causal domain, def 2}
	Let~$l>0$ and~$(a,M)\in\mathcal{B}_l$. Let~$\delta>0$ be sufficiently small. We define the extended manifold 
	\begin{equation}
	\mathcal{M}_{\delta}=[r_+-\delta,\bar{r}_++\delta]_r\times \mathbb{R}_{t^\star}\times\mathbb{S}^2_{(\theta^\star,\varphi^\star)},
	\end{equation}
with respect to the Kerr--de~Sitter star coordinates~$(t^\star,r,\theta^\star,\varphi^\star)$ see Section~\ref{subsec: sec: preliminaries, subsec 2}. The Kerr--de~Sitter metric~\eqref{eq: metric in boyer lindquist coordinates} extends smoothly to~$\mathcal{M}_\delta$ by its analytic expression near~$r_+,\bar{r}_+$ in Kerr--de~Sitter star coordinates~(see~\cite{mavrogiannis4}). Note that the hypersurfaces~$\{t^\star=c\}$ are spacelike. We define the spacelike boundaries of the manifold~$\mathcal{M}_\delta$ to be 
	\begin{equation}
	\mathcal{H}^+_\delta=\{r=r_+-\delta,\:t^\star> -\infty\},\qquad \bar{\mathcal{H}}^+_\delta=\{r=\bar{r}_++\delta,\: t^\star> -\infty\}.
	\end{equation}
	Now, we define the causal domains
	\begin{equation}
	D_\delta(\tau_1,\tau_2)\:\dot{=}\: \{\tau_1 \leq t^\star\leq \tau_2\}\subset \mathcal{M}_\delta,\qquad  D_\delta(\tau_1,+\infty)\:\dot{=}\: \{t^\star\geq \tau_1\}\subset \mathcal{M}_\delta. 
	\end{equation}
\end{definition}

\begin{remark}
	We will use the~$\delta$-extended manifolds of Definition~\ref{def: causal domain, def 2} in Theorem~\ref{thm: main thm extended region}. 
\end{remark}

\subsection{Coarea formula}\label{subsec: coarea formula}

Let~$f$ be a continuous non-negative function. Then, note the following holds 
\begin{equation}\label{eq: subsec: coarea formula, eq 1}
\int\int_{D(\tau_1,\tau_2)} \frac{1}{r}\: f dg \sim  \int _{\tau_1}^{\tau_2}d\tau \int_{\{t^\star=\tau\}} f dg_{\{t^\star=\tau\}}. 
\end{equation}
(We have included the~$r$ factor so that the constant implicit in~$\sim$ does not degenerate as~$l=\infty$).

\subsection{The energy momentum tensor and the divergence theorem}\label{sec: divergence theorem}

\begin{definition}\label{def: sec: preliminaries, def 1}
	Let $g$ be a smooth Lorentzian metric. The energy momentum tensor associated to a solution of the inhomogeneous Klein--Gordon equation
	\begin{equation}
		\Box_g\psi-\mu^2_{\textit{KG}}\psi=F,
	\end{equation}
	where~$F$ is a sufficiently regular function, is defined as
	\begin{equation}
	\mathbb{T}_{\mu\nu}[\psi]\:\dot{=}\:\partial_{\mu}\psi\partial_{\nu}\psi-\frac{1}{2}g_{\mu\nu}\Big(|\nabla\psi|_g^2 +\mu_{\textit{KG}}^2|\psi|^2 \Big),
	\end{equation}
	where~$|\nabla\psi|^2_g=g^{\gamma\delta}\partial_{\gamma}\psi\partial_{\delta}\psi$. The energy current, with respect to a vector field~$X$, is defined as
	\begin{equation}
	J^{X}_{\mu}[\psi]\:\dot{=}\: \mathbb{T}_{\mu\nu}[\psi]X^{\nu}.
	\end{equation}
	Note that for~$X,N$ everywhere causal future directed vector fields, the following holds 
	\begin{equation}
	J^X_\mu[\psi]N^\mu=\mathbb{T}(X,N)[\psi]\geq 0.
	\end{equation}
	
	The divergence of the energy current is 
	\begin{equation}
	\nabla^{\mu}J^{X}_{\mu}[\psi] =X(\psi)\cdot F +\rm{K}^X[\psi], 
	\end{equation}
	where 
	\begin{equation}
	\begin{aligned}
	\rm{K}^X[\psi]	= \frac{1}{2}\mathbb{T}_{\mu\nu}\:^{(X)}\pi^{\mu\nu}=\frac{1}{2}\left(\partial_\mu\psi\partial_\nu\psi-\frac{1}{2}g_{\mu\nu}|\nabla\psi|^2\right)\pi^{\mu\nu} -\frac{1}{2}\mu^2_{\textit{KG}}|\psi|^2 \Tr \: ^{(X)}\pi,\qquad 
	^{(X)}\pi^{\mu\nu}		=\frac{1}{2}\big( \nabla^{\mu}X^{\nu}+\nabla^{\nu}X^{\mu}  \big).
	\end{aligned}
	\end{equation}
\end{definition}

The following is the divergence theorem.

\begin{lemma}\label{lem: divergence theorem}
	Let~$l>0$ and~$(a,M)\in\mathcal{B}_l$. Let~$0\leq \tau_1\leq \tau_2$. Let~$\psi$ satisfy the inhomogeneous Klein--Gordon equation 
	\begin{equation}
	\Box_g\psi-\mu_{\textit{KG}}^2\psi=F,
	\end{equation}
	on~$D(\tau_1,\tau_2)$ where~$g=g_{a,M,l}$.

	Then, the following holds
	\begin{equation}\label{eq: lem: divergence theorem, eq 1}
	\begin{aligned}
	&	\int_{\{t^\star=\tau_2\}}J^{X}_{\mu}[\psi]n^{\mu}+\int_{\mathcal{H}^{+}\cap D(\tau_1,\tau_2)}J^{X}_{\mu}[\psi]n^{\mu}+\int_{\bar{\mathcal{H}}^{+}\cap D(\tau_1,\tau_2)}J^{X}_{\mu}[\psi]n^{\mu}   +\int\int_{D(\tau_1,\tau_2)}\rm{K}^X[\psi]\\
	&	\quad\quad\quad\quad\quad=\int_{\{t^\star=\tau_1\}} J^{X}_{\mu}[\psi]n^{\mu}-\int\int_{D(\tau_1,\tau_2)} X\psi\cdot F.
	\end{aligned}
	\end{equation}
	Note that the integrals and normals of~\eqref{eq: lem: divergence theorem, eq 1} are to be understood with respect to the volume forms and normals of Sections~\ref{subsec: admissible hypersurfaces},~\ref{subsec: Hawking-Reall v.f.}. A similar statement holds for~$D_\delta(\tau_1,\tau_2)$. 
\end{lemma}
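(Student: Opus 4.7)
The plan is to derive the identity~\eqref{eq: lem: divergence theorem, eq 1} by a direct application of the classical Stokes theorem to the one-form dual to $J^X[\psi]$ on the compact region $D(\tau_1,\tau_2)$, with the divergence of $J^X$ computed using the inhomogeneous Klein--Gordon equation.

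First, I would establish the pointwise divergence identity $\nabla^\mu J^X_\mu[\psi] = X\psi \cdot F + \mathrm{K}^X[\psi]$. This is the standard computation already recorded in Definition~\ref{def: sec: preliminaries, def 1}: expanding $\nabla^\mu(\mathbb{T}_{\mu\nu}X^\nu) = (\nabla^\mu \mathbb{T}_{\mu\nu})X^\nu + \mathbb{T}_{\mu\nu}\nabla^\mu X^\nu$, one uses $\nabla^\mu \mathbb{T}_{\mu\nu}[\psi] = (\Box_g\psi - \mu^2_{\textit{KG}}\psi)\partial_\nu\psi = F\partial_\nu\psi$ (by the field equation) and the symmetry of $\mathbb{T}$ to convert $\mathbb{T}_{\mu\nu}\nabla^\mu X^\nu$ into $\mathbb{T}_{\mu\nu}\:^{(X)}\pi^{\mu\nu}/2$, that is, the deformation tensor contraction appearing in $\mathrm{K}^X$. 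This identity holds pointwise for sufficiently regular $\psi$ on $D(\tau_1,\tau_2)$.

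Next, I would apply Stokes' theorem on the oriented manifold-with-corners $D(\tau_1,\tau_2)$. Its boundary consists of four pieces: the two spacelike hypersurfaces $\{t^\star = \tau_1\}$ and $\{t^\star = \tau_2\}$, together with the null hypersurfaces $\mathcal{H}^+\cap D(\tau_1,\tau_2)$ and $\bar{\mathcal{H}}^+\cap D(\tau_1,\tau_2)$. On the spacelike pieces the induced measure is $dg_{\{t^\star=\tau\}} = v(r,\theta)\,dr\,d\theta^\star\,d\varphi^\star$ as in Section~\ref{subsec: admissible hypersurfaces}, and the outward unit normal is the future-directed $n$ on $\{t^\star=\tau_2\}$ and the past-directed on $\{t^\star=\tau_1\}$ (which accounts for the sign flip placing the $\tau_1$ flux on the right-hand side). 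On the null horizons the induced volume forms are the $dg_{\mathcal{H}^+}, dg_{\bar{\mathcal{H}}^+}$ defined via $(K^+)^\flat \wedge dg_{\mathcal{H}^+}=dg$ and analogously for $\bar{\mathcal{H}}^+$, so that the corresponding null normal $n$ in the flux term coincides with the Hawking--Reall vector fields $K^+$ and $\bar{K}^+$ respectively, as specified in Section~\ref{subsec: Hawking-Reall v.f.}. Contracting $J^X$ with these normals yields exactly the boundary integrands written in~\eqref{eq: lem: divergence theorem, eq 1}, and integrating the divergence identity over $D(\tau_1,\tau_2)$ with respect to $dg$ from Section~\ref{subsec: admissible hypersurfaces} gives the spacetime terms $\int\int \mathrm{K}^X[\psi]$ and $\int\int X\psi\cdot F$, the latter moved to the right-hand side.

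The only subtlety, and the most delicate step to justify, is applying Stokes on a region whose boundary contains null pieces (where one cannot speak of a unit normal in the Riemannian sense); this is handled by the standard convention adopted above, namely defining the null flux via the transverse null generator through $(K^+)^\flat \wedge dg_{\mathcal{H}^+} = dg$, so that the boundary integrand $J^X_\mu n^\mu$ on $\mathcal{H}^+, \bar{\mathcal{H}}^+$ is interpreted as $J^X(K^+)$ and $J^X(\bar K^+)$ respectively. Given that the signs and orientations are fixed by this convention as in Section~\ref{subsec: admissible hypersurfaces}, the identity follows. The verbatim analogue on $D_\delta(\tau_1,\tau_2)$ is identical, simply replacing the horizons $\mathcal{H}^+, \bar{\mathcal{H}}^+$ by the spacelike boundary components $\mathcal{H}^+_\delta, \bar{\mathcal{H}}^+_\delta$ of Definition~\ref{def: causal domain, def 2}, on which the normals and volume forms are now of standard spacelike type.
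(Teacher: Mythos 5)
The paper states Lemma \ref{lem: divergence theorem} without giving a proof (it is cited as the standard divergence theorem for energy currents, with the pointwise divergence identity already recorded in Definition \ref{def: sec: preliminaries, def 1}). Your proof supplies exactly what a reader would reconstruct: the two-step argument of (i) the Leibniz expansion of $\nabla^\mu(\mathbb{T}_{\mu\nu}X^\nu)$ combined with $\nabla^\mu\mathbb{T}_{\mu\nu}=F\,\partial_\nu\psi$ from the field equation, and (ii) Stokes' theorem on the manifold-with-corners $D(\tau_1,\tau_2)$ with the two spacelike faces contributing opposite signs and the null horizon fluxes understood with the Hawking--Reall normals and the transverse volume-form convention of Section \ref{subsec: admissible hypersurfaces}. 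This is correct and is precisely the argument the paper is implicitly invoking; there is no alternate route to compare against.

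One minor bookkeeping remark, which traces back to the paper rather than to you: with the convention $^{(X)}\pi^{\mu\nu}=\tfrac12(\nabla^\mu X^\nu+\nabla^\nu X^\mu)$ stated in Definition \ref{def: sec: preliminaries, def 1}, the symmetry of $\mathbb{T}$ gives $\mathbb{T}_{\mu\nu}\nabla^\mu X^\nu=\mathbb{T}_{\mu\nu}\,^{(X)}\pi^{\mu\nu}$ (no extra $\tfrac12$), whereas the paper's displayed formula for $\mathrm{K}^X$ carries an additional prefactor $\tfrac12$. You have simply mirrored the paper's notation, so nothing is wrong in your argument; but if you were writing this from scratch it would be worth fixing the normalization of $\pi$ (or of $\mathrm{K}^X$) so the two agree.
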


For a further study of currents related to partial differential equations, see the monograph of Christodoulou~\cite{christodoulou}.

\subsection{Time cutoffs}\label{subsec: sec: carter separation, subsec 1}

Let~$T\geq 3$ and~$T\leq T+\tau_1\leq \tau_2-T\leq \tau_2$. We choose smooth cut-offs that will satisfy the following
\begin{equation}\label{eq: subsec: sec: carter separation, subsec 1, eq 2}
\chi^{(T)}_{\tau_1,\tau_2}(t^\star) \:=\: 
\begin{cases}
\text{1,} &\quad\{\tau_1+T\leq t^\star\leq \tau_2-T\}\\
\text{0,} &\quad\{t^\star\leq \tau_1\}\cup\{t^\star\geq \tau_2\}, \\
\end{cases}
\qquad \eta^{(T)}_{\tau_1}(t^\star) \:=\: 
\begin{cases}
\text{1,} &\quad\{t^\star\geq T+\tau_1\}\\
\text{0,} &\quad\{t^\star\leq \tau_1\}\\
\end{cases}
\end{equation}
which we often simply denote as~$\chi_{\tau_1,\tau_2},~\eta_{\tau_1}$ respectively. We also require that~$|Z^\star\eta|+|\partial_{t^\star}\eta|+|\slashed{\nabla}\eta|\leq  \frac{B}{T}$ and~$|\partial_{t^\star}^2\eta|\leq \frac{B}{T^2}$, with respect to the Kerr--de~Sitter--star coordinates, for a constant~$B$ independent of~$T,\tau_1,\tau_2$. Also see Figure~\ref{fig: cut-off}. 

\begin{figure}[htbp]
	\centering
	\includegraphics[scale=1]{functions-1.jpg}
	\caption{The cut-off~\eqref{eq: subsec: sec: carter separation, subsec 1, eq 2}}
	\label{fig: cut-off}
\end{figure}

We define the cut-offed functions 
\begin{equation}\label{eq: subsec: sec: carter separation, subsec 1, eq 3}
\psi_{\tau_1,\tau_2}(t^\star,r,\varphi^\star,\theta)\dot{=}\:\chi^{(T)}_{\tau_1,\tau_2}(t^\star)\psi(t^\star,r,\varphi^\star,\theta), \qquad \psi_{\tau_1}(t^\star,r,\varphi^\star,\theta)\dot{=}\:\eta^{(T)}_{\tau_1}(t^\star)\psi(t^\star,r,\varphi^\star,\theta).
\end{equation}

For~$T\geq 3$, let~$\tilde{T}\leq \tau_1$, where for~$\tilde{T}$ see~\eqref{eq: subsec: sec: intro, 3, eq 1.1}, and define
\begin{equation}
\chi_+=\chi^{(T)}_{\tau_1,\tau_1+T^2}(t^\star),\qquad \chi_{-}(t^\star)=\chi^{(T)}_{\tau_1,\tau_1+T^2}\left(t^\star+\tilde{T}\right),
\end{equation}
also see Figure~\ref{fig: cut-offs -1}. We also require that~$|\frac{d}{dt^\star}\chi_+|+|\frac{d}{dt^\star}\chi_-|\leq  \frac{B}{T}$.

\subsection{Tortoise coordinate}\label{subsec: tortoise coordinate}
We define the tortoise coordinate 
\begin{equation}\label{eq: tortoise coordinate}
\frac{dr^\star}{dr}=\frac{r^{2}+a^{2}}{\Delta(r)},
\end{equation}
where for~$\Delta(r)$ see~\eqref{eq: prototype Delta}, with~$r^\star(r_{\Delta,\textit{frac}})=0$, where 
\begin{equation}
r_{\Delta,\textit{frac}}\in (r_+,\bar{r}_+)
\end{equation}
is the unique local maximum of
\begin{equation}
\frac{\Delta}{(r^2+a^2)^2},
\end{equation}
also see our companion~\cite{mavrogiannis4}. Note that~$r^\star(r)$ is a diffeomorphism mapping~$(r_+,\bar{r}_+)\rightarrow (-\infty,+\infty)$

Note that we will often use the primed notation
\begin{equation}
^\prime=\frac{d}{dr^\star}
\end{equation}
for the~$r^\star$-derivative. Finally, for a value~$\alpha\in[r_+,\bar{r}_+]$, we will often use the notation 
\begin{equation}
\alpha^\star=r^\star (\alpha),
\end{equation}
where note~$r_+^\star=-\infty,~\bar{r}_+^\star=+\infty$.

\subsection{Poincare--Wirtinger inequality}\label{subsec: poincare wirtigner}

Note the following Lemma

\begin{lemma}
	Assume that~$\psi$ is a smooth function on the manifold~$\mathcal{M}$. Then, for any~$\tau\geq 0 $ we have the following 
	\begin{equation}
	\int_{\{t^\star=\tau\}} |\psi-\underline{\psi}(\tau)|^2\leq B \int_{\{t^\star=\tau\}} J^n_\mu[\psi]n^\mu 
	\end{equation}
	where~$\underline{\psi}(\tau)=\frac{1}{|\{t^\star=\tau\}|}\int_{\{t^\star=\tau\}}\psi$ and~$|\{t^\star=\tau\}|=\int_{\{t^\star=\tau\}} 1$ with respect to the volume forms of Section~\ref{subsec: admissible hypersurfaces}.
\end{lemma}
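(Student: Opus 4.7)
The plan is to reduce the inequality to the classical Poincar\'e--Wirtinger inequality on a compact Riemannian manifold, and then to verify that the right hand side controls the $L^2$-norm of the tangential gradient on the leaf.

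First I would observe that, in the Kerr--de~Sitter star coordinates~$(t^\star,r,\theta^\star,\varphi^\star)$ of Section~\ref{subsec: sec: preliminaries, subsec 2}, each leaf~$\{t^\star=\tau\}$ is diffeomorphic to the compact manifold~$[r_+,\bar{r}_+]\times\mathbb{S}^2$, with boundary on the two horizon spheres. Since~$\partial_{t^\star}$ is Killing and tangent to no leaf, the induced Riemannian metrics~$g_{\{t^\star=\tau\}}$ are all isometric to the fixed Riemannian manifold~$(\{t^\star=0\},g_{\{t^\star=0\}})$, and the pull-back sends the measure~$dg_{\{t^\star=\tau\}}$ to~$dg_{\{t^\star=0\}}$. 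It thus suffices to prove the inequality for~$\tau=0$, with a single constant~$B$ that is then automatically independent of~$\tau$.

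On the compact Riemannian manifold~$(\{t^\star=0\},g_{\{t^\star=0\}})$, the standard Poincar\'e--Wirtinger inequality yields a constant~$C>0$ such that
\begin{equation*}
	\int_{\{t^\star=0\}} |\psi-\underline{\psi}(0)|^2 \, dg_{\{t^\star=0\}} \leq C \int_{\{t^\star=0\}} |\nabla_{\{t^\star=0\}} \psi|^2_{g_{\{t^\star=0\}}} \, dg_{\{t^\star=0\}},
\end{equation*}
where~$\nabla_{\{t^\star=0\}}$ is the intrinsic gradient. This is a completely standard fact for compact Riemannian manifolds with boundary; it is proved by a contradiction/compactness argument using Rellich--Kondrachov.

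It remains to dominate the right hand side by~$\int_{\{t^\star=0\}} J^n_\mu[\psi] n^\mu$. Since~$n$ is the future-directed unit normal to the spacelike leaf, at any point we may decompose~$\nabla\psi = -(n\psi)\,n + \nabla_{\{t^\star=0\}}\psi$, and a direct calculation from Definition~\ref{def: sec: preliminaries, def 1} gives
\begin{equation*}
	\mathbb{T}(n,n)[\psi] = \tfrac{1}{2}(n\psi)^2 + \tfrac{1}{2}|\nabla_{\{t^\star=0\}}\psi|^2_{g_{\{t^\star=0\}}} + \tfrac{1}{2}\mu_{\textit{KG}}^2 |\psi|^2 \geq \tfrac{1}{2}|\nabla_{\{t^\star=0\}}\psi|^2_{g_{\{t^\star=0\}}}.
\end{equation*}
Combining this with the Poincar\'e--Wirtinger inequality above gives the claim on~$\{t^\star=0\}$, and then by time-translation on every leaf. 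The only mildly delicate point is that one must check that the induced metric is indeed uniformly Riemannian up to and including the two boundary spheres on the horizons, which follows from the fact that the leaves~$\{t^\star=\tau\}$ are smooth spacelike hypersurfaces of~$(\mathcal{M},g_{a,M,l})$ by the construction of the star coordinates.
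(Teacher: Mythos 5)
The paper states this lemma without proof (it is presented as a standard fact in Section~\ref{subsec: poincare wirtigner}), so there is no paper proof to compare against; the question is simply whether your argument is correct, and it is. You correctly combine the three standard ingredients: time-translation invariance via the Killing vector~$\partial_{t^\star}=\partial_t$, the classical Poincar\'e--Wirtinger inequality on a fixed compact Riemannian manifold with boundary (via Rellich--Kondrachov), and the algebraic identity
\begin{equation*}
\mathbb{T}(n,n)[\psi]=\tfrac12(n\psi)^2+\tfrac12|\nabla_{\Sigma}\psi|^2_{g_{\Sigma}}+\tfrac12\mu_{\textit{KG}}^2|\psi|^2,
\end{equation*}
which I have checked: with the sign convention~$g(n,n)=-1$ and the decomposition~$\nabla\psi=-(n\psi)\,n+\nabla_{\Sigma}\psi$, one gets~$|\nabla\psi|^2_g=-(n\psi)^2+|\nabla_{\Sigma}\psi|^2$, and plugging into Definition~\ref{def: sec: preliminaries, def 1} yields exactly this formula. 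Two small points you glossed over but which do not affect the conclusion: (i) the measure~$dg_{\{t^\star=\tau\}}=v(r,\theta)\,dr\,d\theta^\star\,d\varphi^\star$ used to define~$\underline{\psi}(\tau)$ and the integrals is not literally the induced Riemannian volume form, but since~$v\sim1$ the two measures are mutually comparable with constants depending only on~$(a,M,l)$, and the Poincar\'e--Wirtinger inequality is stable under such a change of measure (recall~$\int|\psi-\underline{\psi}_\mu|^2d\mu\le\int|\psi-\underline{\psi}_\nu|^2d\mu$ for any constant shift); (ii) you invoke the unit-normal normalization of~$n$, which the paper does not make fully explicit, but any fixed uniformly timelike normalization gives a~$\tau$-independent constant by compactness of the leaves, so the estimate holds with some~$B=B(a,M,l)$ in any case.
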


\section{The~\texorpdfstring{$(t,\varphi)$}{g} Fourier projections and Carter's separation in Kerr--de~Sitter}\label{sec: carter separation, radial}

Let~$l>0$,~$(a,M)\in\mathcal{B}_l$ and~$\mu^2_{KG}\geq0$. In the present Section we present the Fourier projections and Carter's separation of variables we will use in the present paper.

\subsection{Fourier decomposition in the~$t,\varphi$ directions}\label{subsec: sec: carter separation, radial, subsec 1}

Let~$\omega\in \mathbb{R},m\in\mathbb{Z}$ and let~$(t,r,\theta,\varphi)$ be the Boyer--Lindquist coordinates. Suppose that~$\Psi:\mathcal{M}\rightarrow\mathbb{R}$ is a smooth function with~$\supp \Psi \subset \{0\leq t^\star<\infty\}$. We define the following Fourier transform
\begin{equation}\label{eq: subsec: sec: carter separation, radial, eq -2}
	\begin{aligned}
		\mathcal{F}_{\omega,m}(\Psi)(r,\theta) &	= \int_{\mathbb{R}}\int_0^{2\pi}e^{-i\omega t} e^{im\varphi}\Psi (t,r,\theta,\varphi) d\varphi dt,
	\end{aligned}
\end{equation}
where note that the integral~\eqref{eq: subsec: sec: carter separation, radial, eq -2} converges pointwise.

Throughout this paper we will use the Fourier trasform~\eqref{eq: subsec: sec: carter separation, radial, eq -2} with respect to the Boyer--Lindquist coordinates~$t,\varphi$, for test functions~$\Psi$ whose support is a subset of~$\{0\leq t^\star<\infty\}$. For the Kerr--de~Sitter star coordinate~$t^\star$ see Section~\ref{subsec: sec: preliminaries, subsec 2}. The Fourier decomposition in the~$t,\varphi$ directions is also used to define the pseudodifferential operator~$\mathcal{G}$, see Definition~\ref{rough def: g2}, that is the central object of the present paper.

\subsection{The superradiant frequencies}\label{subsec: superradiant frequencies}

An explicit computation shows that the~$\bar{K}^+$ energy flux along the event horizon~$\mathcal{H}^+$ is given by
\begin{equation}\label{eq: subsec: superradiant frequencies, eq 0}
	\int_{\mathcal{H}^+} K_+\psi \overline{\bar{K}^+\psi} =\int_{\mathcal{H}^+} \left(\partial_t\psi+\omega_+\partial_{\varphi}\psi\right) \cdot  \overline{\left(\partial_t\psi+\bar{\omega}_+\partial_{\varphi}\psi\right)},
\end{equation}
where for the Hawking--Reall vector fields~$K^+,\bar{K}^+$ see Section~\ref{subsec: Hawking-Reall v.f.}. In particular, if we consider a solution of the form~$\psi=e^{-i\omega t}e^{im\varphi} \psi_0(r,\theta)$ then the sign of~\eqref{eq: subsec: superradiant frequencies, eq 0} is formally determined by the sign of
\begin{equation}
	\left(\omega-\frac{am\Xi}{r_+^2+a^2}\right) \left(\omega-\frac{am\Xi}{\bar{r}_+^2+a^2}\right).
\end{equation}

Therefore, the superradiant frequencies are defined to be the frequencies for which~\eqref{eq: subsec: superradiant frequencies, eq 0} is negative. Namely, the superradiant frequencies are 
\begin{equation}\label{eq: subsec: superradiant frequencies, eq 1}
	\mathcal{SF}:=\Bigl\{(\omega,m):~\left(\omega-\frac{am\Xi}{r_+^2+a^2}\right) \left(\omega-\frac{am\Xi}{\bar{r}_+^2+a^2}\right) < 0 \Bigr\},
\end{equation}
where note that for the frequencies~\eqref{eq: subsec: superradiant frequencies, eq 1} the left hand side of~\eqref{eq: subsec: energy identity, eq 1} is not coercive. 

We note that the superradiant frequency set~\eqref{eq: subsec: superradiant frequencies, eq 1} is also equal to
\begin{equation}
	\Bigl\{(\omega,m):~ a m \omega\in \left(\frac{a^2m^2\Xi}{\bar{r}_+^2+a^2},\frac{a^2m^2\Xi}{r_+^2+a^2}\right) \Bigr\}.
\end{equation}

\subsection{Carter's separation}\label{subsec: sec: carter separation, radial, subsec 3}

Let~$\ell\in \mathbb{Z}_{\geq |m|}$ and let~$S^{(a\omega),\mu_{KG}}_{m\ell},\lambda^{(a\omega),\mu_{KG}}_{m\ell}$ be respectively the spheroidal harmonics and their eigenvalues, as discussed in our companion~\cite{mavrogiannis4}. For brevity we denote them simply as
\begin{equation}\label{eq: subsec: sec: carter separation, radial, eq -1}
	S^{(a\omega)}_{m\ell},\qquad \lambda^{(a\omega)}_{m\ell},
\end{equation}
dropping the dependence on~$\mu_{KG}$.

We need the following definition

\begin{definition}\label{def: subsec: sec: carter separation, radial, def 1}
	Let~$l>0$,~$(a,M)\in\mathcal{B}_l$ and~$\mu^2_{KG}\geq 0$. We fix~$0 < \tau_1<\tau_2<\infty$. Let~$\chi(t^\star)$ be any smooth cut-off such that 
	\begin{equation}\label{eq: subsec: sec: carter separation, radial, eq 0}
		\supp \chi \subset \{\tau_1\leq t^\star \leq \tau_2\}. 
	\end{equation}
	
	Let~$F$ be a sufficiently regular function. Let~$\psi$ be smooth solution of the inhomogeneous Klein--Gordon equation 
	\begin{equation}
			\Box_{g_{a,M,l}}\psi-\mu^2_{KG}\psi =F
	\end{equation}
	holds. Then, for
\begin{equation}\label{eq: subsec: sec: carter separation, radial, eq 0.1}
	\psi_{\chi}=\chi \psi
\end{equation}
we have
\begin{equation}\label{eq: subsec: sec: carter separation, radial, eq 1}
	\Box_{g}\psi_{\chi}-\mu_{\textit{KG}}^2\psi_{\chi}=F_0,\qquad F_0 \:\dot{=}\:2\nabla^{b}\chi  \nabla_{b}\psi+\psi \Box_{g}(\chi)+\chi F,
\end{equation}
and for all~$(\omega,m,\ell)$ we define the following smooth Fourier transformations
\begin{equation}\label{eq: def: subsec: sec: carter separation, radial, def 1, eq 3}
	\begin{aligned}
		\Psi^{(a\omega)}_{m\ell} (r)	&	= \int_{\mathbb{R}}\int_{\mathbb{S}^2} e^{-i\omega t}e^{im\varphi}S^{(a\omega)}_{m\ell} \cdot \psi_\chi(t,r,\theta,\varphi)d\sigma_{\mathbb{S}^2}dt,\\
		(F_0)^{(a\omega)}_{m\ell} (r)	&	= \int_{\mathbb{R}}\int_{\mathbb{S}^2} e^{-i\omega t}e^{im\varphi}S^{(a\omega)}_{m\ell} \cdot F_0(t,r,\theta,\varphi)d\sigma_{\mathbb{S}^2}dt,
	\end{aligned}
\end{equation}
where~$d\sigma_{\mathbb{S}^2}$ is the usual volume form on the unit sphere. The integrals on the RHS of~\eqref{eq: def: subsec: sec: carter separation, radial, def 1, eq 3} converge in a pointwise sense for~$r_+<r<\bar{r}_+$. 
\end{definition}

We have the following Proposition

\begin{proposition}[Carter's separation]\label{prop: Carters separation, radial part}
	
Let~$l>0$ and~$(a,M)\in\mathcal{B}_l$ and~$\mu^2_{KG}\geq 0$. Let~$\psi,F$ be smooth functions in~$\{0\leq t^\star<\infty\}$ and such that the inhomogeneous Klein--Gordon equation
\begin{equation}
	\Box_{g_{a,M,l}}\psi-\mu^2_{KG}\psi=F
\end{equation}
holds.

Let~$\chi,\psi_{\chi},F_0,\Psi^{(a\omega)}_{m\ell}$ be as in Definition~\ref{def: subsec: sec: carter separation, radial, def 1}, where specifically recall that~$\supp\chi \subset \{0\leq t^\star<\infty\}$. Then, for all~$(\omega,m,\ell)\in\mathbb{R}\times\mathbb{Z}\times\mathbb{Z}_{\geq |m|}$ the rescaled radial fixed frequency function 
	\begin{equation}
	u^{(a\omega)}_{m\ell}(r)=\sqrt{r^2+a^2}\Psi^{(a\omega)}_{m\ell}(r)
	\end{equation}
	is smooth in~$r\in (r_+,\bar{r}_+)$, and moreover it satisfies Carter's fixed frequency radial ode
	\begin{equation}\label{eq: ode from carter's separation}
	[u^{(a\omega)}_{ml}(r)]^{\prime\prime}+(\omega^{2}-V^{(a\omega)}_{ml}(r))u^{(a\omega)}_{ml}(r)=H^{(a\omega)}_{ml}(r), \qquad H^{(a\omega)}_{ml}=\frac{\Delta}{(r^2+a^2)^{3/2}}\left(\rho^2 F_0\right)^{(a\omega)}_{ml}(r),
	\end{equation}
with~$^\prime=\frac{d}{dr^\star}$, see Section~\ref{subsec: tortoise coordinate}.
	
	 The potential~$V$ can we written as 
	\begin{equation}\label{eq: the potential V}
	V\:\dot{=}\:V^{(a\omega)}_{m\ell}(r)=V_{\textit{SL}}(r)+\left(V_0\right)^{(a\omega)}_{m\ell}(r) +V_{\mu_{\textit{KG}}}(r),
	\end{equation}
	where
	\begin{equation}\label{eq: the potentials V0,VSl,Vmu}
	\begin{aligned}
	V_{\textit{SL}}
	&   =(r^{2}+a^{2})^{-1/2}\left(\frac{d}{dr^\star}\right)^2(\sqrt{r^{2}+a^{2}})\\ &=-\Delta^{2}\frac{3r^{2}}{(r^{2}+a^{2})^{4}}+\Delta\frac{-5\frac{r^{4}}{l^{2}}+3r^{2}(1-\frac{a^{2}}{l^{2}})-4Mr+a^{2}}{(r^{2}+a^{2})^{3}}, \\
	V_0 &   =\frac{\Delta(\lambda^{(a\omega)}_{m\ell}+\omega^{2}a^{2})-\Xi^{2}a^{2}m^{2}-2m\omega a\Xi(\Delta -(r^{2}+a^{2}))}{(r^{2}+a^{2})^{2}}\\
	&=\frac{\Delta}{(r^2+a^2)^2}(\lambda^{(a\omega)}_{m\ell}+a^2\omega^2-2am\omega\Xi)+\omega^2-\left(\omega-\frac{am\Xi}{r^2+a^2}\right)^2,\\
	V_{\mu_{\textit{KG}}}	&=\mu_{\textit{KG}}^2\frac{\Delta}{r^2+a^2}. 
	\end{aligned}
	\end{equation}

	Moreover, the following boundary conditions hold 
	\begin{equation}\label{eq: BC}
		\begin{aligned}
			&   \frac{d u}{d r^\star}=-i(\omega-\omega_+m) u,\qquad \frac{d u}{d r^\star}=i(\omega-\bar{\omega}_+ m)u,
		\end{aligned}
	\end{equation}
	at~$r^\star=-\infty,~r^\star=+\infty$ respectively, in the sense
	\begin{equation}
		\lim_{r^\star\rightarrow-\infty}\left(u^\prime (r^\star) +i(\omega-\omega_+m) u(r^\star) \right)=0,\qquad 	\lim_{r^\star\rightarrow +\infty}\left(u^\prime (r^\star) -i(\omega-\bar{\omega}_+m) u(r^\star) \right)=0 
	\end{equation}
	 where~$\omega_+=\frac{a\Xi}{r_+^2+a^2},~\bar{\omega}_+=\frac{a\Xi}{\bar{r}_+^2+a^2}$. Finally, we note that~$|u|^2(\pm \infty)=\lim_{r^\star\rightarrow\pm\infty}|u|^2(r^\star)$ are well defined.
\end{proposition}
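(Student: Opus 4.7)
The plan is to follow Carter's classical separation of variables applied to the cut-offed function $\psi_\chi$, which has compact support in $t^\star$ by~\eqref{eq: subsec: sec: carter separation, radial, eq 0}, and then to rescale by $\sqrt{r^2+a^2}$ to put the resulting radial ode into tortoise form. First, I would record the inverse metric $g_{a,M,l}^{-1}$ in Boyer--Lindquist coordinates from~\eqref{eq: metric in boyer lindquist coordinates} and compute the volume form $\sqrt{-g}=\rho^2\sin\theta/\Xi$. Multiplying the inhomogeneous Klein--Gordon equation $\Box_g\psi_\chi-\mu^2_{KG}\psi_\chi=F_0$ by $\rho^2$ (and using that $\partial_t,\partial_\varphi$ are Killing) yields Carter's well-known split into an operator in $(r,\theta)$ plus separated pieces, schematically
\begin{equation*}
\partial_r(\Delta\partial_r\psi_\chi)+\mathcal L_\theta\psi_\chi-\frac{((r^2+a^2)\partial_t+a\Xi\partial_\varphi)^2}{\Delta}\psi_\chi+\frac{\Delta_\theta(a\sin^2\theta\partial_t+\Xi\partial_\varphi)^2}{\Delta_\theta^2\sin^2\theta}\psi_\chi-\mu^2_{KG}\rho^2\psi_\chi=\rho^2 F_0,
\end{equation*}
where $\mathcal L_\theta$ is the angular operator whose eigenfunctions on $\mathbb S^2$ are exactly the spheroidal harmonics $S^{(a\omega)}_{m\ell}$ with eigenvalue $\lambda^{(a\omega)}_{m\ell}$.

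Next I would take the Fourier transform in $(t,\varphi)$ as in~\eqref{eq: def: subsec: sec: carter separation, radial, def 1, eq 3}, using that $\psi_\chi$ has compact $t^\star$-support (hence $t$-support on fixed $r$-compacts) so the integrals converge pointwise and the Fourier transform exchanges $\partial_t\leftrightarrow i\omega$, $\partial_\varphi\leftrightarrow -im$. Projecting the resulting equation against $S^{(a\omega)}_{m\ell}$ and using the eigenfunction identity, the $(r,\theta)$ operator collapses to a pure radial ode for $\Psi^{(a\omega)}_{m\ell}$:
\begin{equation*}
(\Delta\,\Psi')'+\Bigl[\frac{((r^2+a^2)\omega-a\Xi m)^2}{\Delta}-(\lambda^{(a\omega)}_{m\ell}+a^2\omega^2-2am\omega\Xi)-\mu^2_{KG}(r^2+a^2)\Bigr]\Psi^{(a\omega)}_{m\ell}=\tfrac{\Delta}{r^2+a^2}\bigl(\rho^2 F_0\bigr)^{(a\omega)}_{m\ell},
\end{equation*}
where now $'=d/dr$. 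I would then substitute $u=\sqrt{r^2+a^2}\,\Psi$ and change variable from $r$ to $r^\star$ via~\eqref{eq: tortoise coordinate}. A short computation, using $\tfrac{d}{dr^\star}=\tfrac{\Delta}{r^2+a^2}\tfrac{d}{dr}$, converts $(\Delta\Psi')'$ into $\tfrac{r^2+a^2}{\Delta}\bigl(u''-u\cdot (r^2+a^2)^{-1/2}(\sqrt{r^2+a^2})''\bigr)$ after a cancellation; this is precisely where the $V_{SL}$ term in~\eqref{eq: the potentials V0,VSl,Vmu} appears. Dividing through by the common prefactor and regrouping terms gives exactly~\eqref{eq: ode from carter's separation} with the decomposition $V=V_{SL}+V_0+V_{\mu_{KG}}$ from~\eqref{eq: the potentials V0,VSl,Vmu}, the $\omega^2-\bigl(\omega-\tfrac{am\Xi}{r^2+a^2}\bigr)^2$ piece arising naturally when one separates the $\omega^2$ from $V_0$. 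Smoothness of $u$ on $(r_+,\bar r_+)$ is immediate since $\Delta>0$, $r^2+a^2>0$ there and $\psi_\chi$ is smooth.

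Finally, for the outgoing boundary conditions~\eqref{eq: BC}, the crucial observation is that $\psi_\chi$ is by hypothesis smooth up to and including the horizons $\mathcal H^+,\bar{\mathcal H}^+$ in Kerr--de~Sitter star coordinates. The coordinate change from $(t,\varphi)$ to $(t^\star,\varphi^\star)$ near $\mathcal H^+$ has the schematic form $t=t^\star+T_+(r)$, $\varphi=\varphi^\star+\Phi_+(r)$ with $T_+'(r),\Phi_+'(r)\to\infty$ in such a way that $dT_+/dr^\star\to 1$, $d\Phi_+/dr^\star\to\omega_+=a\Xi/(r_+^2+a^2)$ as $r\to r_+$ (and analogously at $\bar{\mathcal H}^+$, with sign flip and $\bar\omega_+$). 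Inserting this into the definition of $\mathcal F_{\omega,m}$ in~\eqref{eq: subsec: sec: carter separation, radial, eq -2} shows that near $r_+$ the integrand is $e^{-i(\omega-\omega_+ m)T_+(r)}$ times a smooth function of $(t^\star,r,\theta,\varphi^\star)$, so after changing variables and differentiating in $r^\star$ the leading behaviour of $u$ is proportional to $e^{-i(\omega-\omega_+m)r^\star}$ modulo terms that vanish in the limit $r^\star\to-\infty$, and the stated limit of $u'+i(\omega-\omega_+m)u$ follows; symmetrically at $\bar{\mathcal H}^+$. The existence of $|u|^2(\pm\infty)$ then follows from the first-order asymptotic equation that $u$ satisfies at the horizons. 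The main technical obstacle I anticipate is precisely this last step: carefully checking uniform convergence in $r$ of the Fourier integrals near the horizons so that the formal computation of the boundary asymptotics, and the subsequent identification of the leading outgoing coefficient, is rigorously justified; this likely requires using the smoothness of $\psi_\chi$ in star coordinates together with integration by parts in $t$ to produce $\omega$-decay of the Fourier transform uniformly as $r^\star\to\pm\infty$.
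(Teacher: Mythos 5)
The paper does not actually give a proof of this proposition; it is a recalled classical result (the text points to Carter's separation~\cite{Carter}, Holzegel--Kauffman~\cite{holzegel3}, and the companion paper~\cite{mavrogiannis4} for the details), so there is no internal proof to compare your argument against step by step. That said, your proposal reproduces the standard derivation correctly: multiply the equation for $\psi_\chi$ by $\rho^2$, use the Killing fields to Fourier transform in $(t,\varphi)$ (justified pointwise because $\psi_\chi$ has compact $t^\star$-support at each fixed $r$), project against spheroidal harmonics, rescale by $\sqrt{r^2+a^2}$, and pass to the tortoise coordinate — which is exactly how $V_{SL}$, $V_0$ and $V_{\mu_{KG}}$ arise. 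The one step you should be careful about, and which you correctly flag as the technical crux, is the derivation of the outgoing boundary conditions~\eqref{eq: BC} from smoothness of $\psi_\chi$ in Kerr--de~Sitter star coordinates: the precise signs in $t=t^\star+T_+(r)$, $\varphi=\varphi^\star+\Phi_+(r)$ depend on the conventions for the star coordinates and the Fourier transform~\eqref{eq: subsec: sec: carter separation, radial, eq -2} (the paper's phase is $e^{-i\omega t}e^{+im\varphi}$, and the mode decomposition uses $e^{+i\omega t}e^{-im\varphi}$), and the statement that $dT_+/dr^\star\to 1$ as opposed to $-1$ should be verified directly from the companion paper's definition of $t^\star$ rather than assumed. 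The remaining observation — that $u'+i(\omega-\omega_+m)u\to 0$ at $r^\star=-\infty$ and the analogue at $+\infty$ both follow once the leading phase factor is isolated, with the limits $|u|^2(\pm\infty)$ existing because the first-order radial ODE for the outgoing component has bounded coefficients near the horizons — is the right way to finish.
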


\begin{remark}
	Note that in Proposition~\ref{prop: Carters separation, radial part} we may also cut-off the solution of the Klein--Gordon equation with a cut-off that does not vanish in the future of~$\{t^\star=\tau_2\}$. However, in order to do that we need to restrict to a class of solutions to Klein--Gordon that are `sufficiently integrable' in an~$L^2$ sense, see our companion~\cite{mavrogiannis4}. 
\end{remark}

We need the following definition 
\begin{definition}\label{def: subsec: sec: frequencies, subsec 3, def 0}
	Let~$l>0$,~$(a,M)\in\mathcal{B}_l$ and~$\mu^2_{KG}\geq 0$. Let~$\omega\in \mathbb{R},m\in\mathbb{Z},\ell\in \mathbb{Z}_{\geq |m|}$. Let~$\lambda^{(a\omega)}_{m\ell}$ be as in~\eqref{eq: subsec: sec: carter separation, radial, eq -1}. Then, we define the following
	\begin{equation}\label{eq: def: subsec: sec: frequencies, subsec 3, def 0, eq 1}
	\tilde{\lambda}^{(a\omega)}_{m\ell}=\lambda^{(a\omega)}_{m\ell}+a^2\omega^2.
	\end{equation}
	We will often drop the frequency dependence and denote~\eqref{eq: def: subsec: sec: frequencies, subsec 3, def 0, eq 1} simply as~$\tilde{\lambda}$.
\end{definition}

\subsection{Parseval identities}\label{subsec: sec: carter separation, radial, subsec 4}

We have the following Proposition

\begin{proposition}
	Let~$l>0$,~$(a,M)\in\mathcal{B}_l$. Morever, let~$\chi,\psi,\psi_\chi$ be as in Definition~\ref{def: subsec: sec: carter separation, radial, def 1}. Then, we have the following Parseval identities
	\begin{equation}
		\begin{aligned}
			&  \int_{\mathbb{S}^2}\int_\mathbb{R}|\partial_t(\psi_\chi)|^2 \sin\theta d\theta d\varphi dt=\int_\mathbb{R}\sum_{m,\ell}\omega^2|\Psi^{(a\omega)}_{m,\ell}|^2 d\omega,\\
			&  \int_{\mathbb{S}^2}\int_\mathbb{R}|\partial_{\varphi}(\psi_\chi)|^2 \sin\theta d\theta d\varphi dt=\int_\mathbb{R}\sum_{m,\ell}m^2|\Psi^{(a\omega)}_{m,\ell}|^2 d\omega,\\
			&  \int_{\mathbb{S}^2}\int_\mathbb{R}|\partial_{r^\star}(\sqrt{r^2+a^2}\psi_\chi)|^2 \sin\theta d\theta d\varphi dt=\int_\mathbb{R}\sum_{m,\ell}|\partial_{r^\star}\left(\sqrt{r^2+a^2}\Psi^{(a\omega)}_{m,\ell}\right)|^2 d\omega,\\
			& \int_{\mathbb{S}^2}\int_\mathbb{R} |\psi_\chi|^2 \sin\theta d\theta d\varphi dt =\int_{\mathbb{R}}\sum_{m,\ell}|\Psi^{(a\omega)}_{m\ell}|^2 d\omega,
		\end{aligned}
	\end{equation}
	for any~$r_+< r < \bar{r}_+$, and
	\begin{equation}
		\begin{aligned}
			\int_{\mathbb{R}}\sum_{m,\ell}\lambda^{(a\omega)}_{m\ell}|\Psi^{(a\omega)}_{m,\ell}|^2d\omega =& \int_{\mathbb{S}^2}\int_\mathbb{R}|^{d\sigma_{\mathbb{S}^2}}\nabla(\psi_\chi)|_{d\sigma_{\mathbb{S}^2}}^2\sin\theta d\theta d\varphi dt -a^2\int_{\mathbb{S}^2}\int_\mathbb{R}|\partial_t (\psi_\chi)|^2\cos^2\theta\sin\theta d\theta d\varphi dt,
		\end{aligned}
	\end{equation}
	for any~$r_+< r < \bar{r}_+$, where~$^{d\sigma_{\mathbb{S}^2}}\nabla$ is the covariant derivative of~$d\sigma_{\mathbb{S}^2}$ and~$d\sigma_{\mathbb{S}^2}$ is the standard metric of the unit sphere. 
\end{proposition}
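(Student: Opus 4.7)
The plan is to combine three classical tools: the Plancherel theorem for the $t$-Fourier transform on $\mathbb{R}$, Parseval's identity for the $\varphi$-Fourier series on $[0,2\pi)$, and the orthonormality and completeness of the spheroidal harmonics $\{S^{(a\omega)}_{m\ell}\}_{\ell\geq|m|}$ for each fixed $(a\omega,m)$, together with the fact that $S^{(a\omega)}_{m\ell}(\theta)e^{-im\varphi}$ diagonalizes the Carter angular operator $-\Delta_{\mathbb{S}^2} - a^2\omega^2\cos^2\theta$ with eigenvalue $\lambda^{(a\omega)}_{m\ell}$ (this being the defining property of $\lambda^{(a\omega)}_{m\ell}$ recalled from~\cite{mavrogiannis4}).

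First I would justify the Fourier inversion
\[
\psi_\chi(t,r,\theta,\varphi) \;=\; \frac{1}{(2\pi)^2}\int_\mathbb{R}\sum_{m,\ell} e^{i\omega t} e^{-im\varphi} S^{(a\omega)}_{m\ell}(\theta)\, \Psi^{(a\omega)}_{m\ell}(r)\, d\omega,
\]
with absolute convergence in all relevant senses. This is legitimate because $\psi_\chi$ is smooth and compactly supported in $t^\star$ (and hence in $t$ on every compact $r$-subset of $(r_+,\bar{r}_+)$); repeated integration by parts in $t$ and $\varphi$ in the definition of $\Psi^{(a\omega)}_{m\ell}$, together with the angular eigenvalue equation for $S^{(a\omega)}_{m\ell}$ to extract arbitrary powers of $\lambda^{(a\omega)}_{m\ell}$ in $\theta$, yields arbitrary polynomial decay of $\Psi^{(a\omega)}_{m\ell}(r)$ in $(\omega,m,\ell)$. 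Once this is in place, the four basic identities — for $|\psi_\chi|^2$, $|\partial_t\psi_\chi|^2$, $|\partial_\varphi\psi_\chi|^2$, and $|\partial_{r^\star}(\sqrt{r^2+a^2}\psi_\chi)|^2$ — follow at once: Plancherel in $t$, Parseval in $\varphi$, and spheroidal orthonormality give the $L^2$ identity; the weights $\omega^2$ and $m^2$ appear because $\partial_t$ and $\partial_\varphi$ multiply the Fourier symbol by $i\omega$ and $-im$; and the radial identity follows from the observation that $\partial_{r^\star}$ and multiplication by $\sqrt{r^2+a^2}$ both commute with the $(t,\varphi,\theta)$-Fourier projection.

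The fifth identity, involving $\lambda^{(a\omega)}_{m\ell}$, is the step I expect to require the most bookkeeping. The approach is to substitute the above Fourier inversion into the left-hand side, use that for each fixed $\omega$
\[
\int_{\mathbb{S}^2} |\nabla_{\mathbb{S}^2}(S^{(a\omega)}_{m\ell}e^{-im\varphi})|^2 \,d\sigma_{\mathbb{S}^2} \;-\; a^2\omega^2\int_{\mathbb{S}^2}\cos^2\theta\,|S^{(a\omega)}_{m\ell}e^{-im\varphi}|^2 \,d\sigma_{\mathbb{S}^2} \;=\; \lambda^{(a\omega)}_{m\ell}
\]
by multiplying the angular eigenvalue equation by $\overline{S^{(a\omega)}_{m\ell}e^{-im\varphi}}$ and integrating by parts on the round sphere, and then to apply Plancherel in $t$ to convert the resulting $\int a^2\omega^2\sum_{m,\ell}|\Psi^{(a\omega)}_{m\ell}|^2\,d\omega$ back into the physical-space quantity $a^2\int|\partial_t\psi_\chi|^2\cos^2\theta \sin\theta\,d\theta d\varphi dt$, which is precisely the correction term on the right-hand side. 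The main care is in tracking signs and weights so that the $a^2\omega^2\cos^2\theta$ correction produced by the Carter angular operator lines up exactly, with the correct sign, with the $\cos^2\theta$-weighted $|\partial_t\psi_\chi|^2$ term; granted this matching, the identity drops out immediately.
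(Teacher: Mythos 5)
The paper states this Proposition without a proof, so there is no paper argument to compare against; but your strategy is evidently the intended one and is sound. The first four identities are as you say: Plancherel in $t$, Parseval in the $\varphi$-Fourier series, and completeness/orthonormality of $\{S^{(a\omega)}_{m\ell}\}_{\ell\geq|m|}$ at fixed $\omega$, combined with the observation that $\partial_t$, $\partial_\varphi$ multiply symbols by $i\omega$, $-im$ and that $\partial_{r^\star}$ and $\sqrt{r^2+a^2}$ commute with the $(t,\varphi,\theta)$-projection because $r$ is a parameter. Your convergence discussion (rapid decay in $(\omega,m,\ell)$ from integration by parts plus the angular eigenvalue equation) correctly justifies the inversion and the manipulations.

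Two points are worth tightening. First, for the fifth identity you state only the diagonal quadratic-form evaluation $\langle A_\omega S^{(a\omega)}_{m\ell}e^{-im\varphi}, S^{(a\omega)}_{m\ell}e^{-im\varphi}\rangle = \lambda^{(a\omega)}_{m\ell}$; what is actually used is the full spectral-decomposition identity $\langle A_\omega f, f\rangle_{L^2(\mathbb{S}^2)} = \sum_{m,\ell}\lambda^{(a\omega)}_{m\ell}|\langle f, S^{(a\omega)}_{m\ell}e^{-im\varphi}\rangle|^2$, which requires in addition that the cross-terms vanish; this does follow from orthonormality and the eigenvalue equation, but it should be said. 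Second, and more substantively: you write the angular identity in its Kerr form, with the round-sphere Laplacian and a $-a^2\omega^2\cos^2\theta$ potential. In Kerr--de~Sitter, Carter's angular ODE carries $\Delta_\theta$ and $\Xi$ factors (which is why $\slashed{\nabla}$ appearing elsewhere in the paper is the spheroidal, not round, gradient), so the precise diagonal identity depends on the companion's normalization of $\lambda^{(a\omega)}_{m\ell}$. The stated Proposition uses the round-sphere gradient $^{d\sigma_{\mathbb{S}^2}}\nabla$, so the companion's convention evidently absorbs those factors into $\lambda^{(a\omega)}_{m\ell}$ — but this is something to verify against the companion's definition of the spheroidal ODE, not merely a sign-and-weights check as you describe it; the $\Delta_\theta,\Xi$ structure changes the form of the quadratic form, not just its coefficients.
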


\subsection{Energy identity for smooth outgoing solutions of~\eqref{eq: ode from carter's separation}}\label{subsec: energy identity}

Note the following

\begin{proposition}
	Let~$l>0$,~$(a,M)\in \mathcal{B}_l$ and~$\mu^2_{KG}\geq 0$. Let~$\omega\in \mathbb{R},m\in\mathbb{Z},\ell\in \mathbb{Z}_{\geq |m|}$. Let~$u$ be a smooth solution of Carter's radial ode~\eqref{eq: ode from carter's separation} which moreover satisfies the boundary conditions~\eqref{eq: BC}. Then, we have the following
	\begin{equation}\label{eq: subsec: energy identity, eq 1}
	\left(\omega-\frac{am\Xi}{\bar{r}_+^2+a^2}\right)|u|^2(\infty)+\left(\omega-\frac{am\Xi}{r_+^2+a^2}\right)|u|^2(-\infty)=\Im (\bar{u}H).
	\end{equation}
\end{proposition}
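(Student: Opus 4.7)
The plan is to derive the identity by the standard ``multiplier'' technique applied directly to the radial ODE~\eqref{eq: ode from carter's separation}, with the boundary terms evaluated via the outgoing boundary conditions~\eqref{eq: BC}. This is the fixed-frequency analogue of the familiar energy conservation identity obtained from a Killing multiplier.

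First, I would multiply both sides of~\eqref{eq: ode from carter's separation} by $\bar{u}$ and integrate over $r^\star \in (-\infty,+\infty)$ (so the statement ``$\Im(\bar{u}H)$'' on the right-hand side of~\eqref{eq: subsec: energy identity, eq 1} is understood as $\Im\!\int \bar{u}H\, dr^\star$). This yields
\begin{equation*}
\int_{-\infty}^{+\infty}\bar{u}\,u^{\prime\prime}\,dr^\star + \int_{-\infty}^{+\infty}(\omega^2-V)|u|^2\,dr^\star = \int_{-\infty}^{+\infty}\bar{u}H\,dr^\star.
\end{equation*}
Next, I would take imaginary parts of both sides. Since the potential $V$ in~\eqref{eq: the potential V}--\eqref{eq: the potentials V0,VSl,Vmu} is real and $\omega\in\mathbb{R}$, the second term on the left is real and drops out entirely, leaving the identity
\begin{equation*}
\Im\!\int_{-\infty}^{+\infty}\bar{u}\,u^{\prime\prime}\,dr^\star = \Im\!\int_{-\infty}^{+\infty}\bar{u}H\,dr^\star.
\end{equation*}

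Then I would integrate by parts in $r^\star$: $\int \bar{u}u^{\prime\prime}\,dr^\star = [\bar{u}u^\prime]_{-\infty}^{+\infty} - \int |u^\prime|^2\,dr^\star$. The term $\int|u^\prime|^2\,dr^\star$ is manifestly real and so contributes nothing to the imaginary part. Hence
\begin{equation*}
\Im\bigl[\bar{u}u^\prime\bigr]_{-\infty}^{+\infty} = \Im\!\int_{-\infty}^{+\infty}\bar{u}H\,dr^\star.
\end{equation*}
It now remains to evaluate the boundary terms using the outgoing conditions~\eqref{eq: BC}. At $r^\star=+\infty$, substituting $u^\prime = i(\omega - \bar\omega_+ m)u$ gives $\bar{u}u^\prime = i(\omega-\bar\omega_+m)|u|^2(+\infty)$, so $\Im(\bar{u}u^\prime)|_{+\infty} = (\omega-\frac{am\Xi}{\bar{r}_+^2+a^2})|u|^2(+\infty)$. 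At $r^\star=-\infty$, substituting $u^\prime = -i(\omega-\omega_+ m)u$ gives $\bar{u}u^\prime = -i(\omega-\omega_+m)|u|^2(-\infty)$, so $\Im(\bar{u}u^\prime)|_{-\infty} = -(\omega-\frac{am\Xi}{r_+^2+a^2})|u|^2(-\infty)$. Subtracting and rearranging produces precisely~\eqref{eq: subsec: energy identity, eq 1}.

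The only nontrivial point — and the one I would check carefully — is the justification of integration by parts on the unbounded interval, together with the fact that the boundary limits $|u|^2(\pm\infty)$ actually exist as finite numbers and that $\bar{u}u^\prime$ has a genuine limit at $\pm\infty$ (not just the modulus squared). The proposition already asserts the existence of $|u|^2(\pm\infty)$; combined with the outgoing conditions~\eqref{eq: BC} (which, as stated there, hold as genuine limits), these imply $\bar{u}u^\prime$ itself converges at both endpoints. Convergence of the bulk integrals is ensured because $H$ vanishes outside the support of the cutoff $\chi$, so the right-hand side is an integral over a compact $r^\star$-interval (after observing that $\chi$ has compact $t^\star$-support but $H$ is supported for $r^\star$ in a bounded set thanks to the $\Delta/(r^2+a^2)^{3/2}$ prefactor and the finite range of $\chi$ in spacetime), while the potential term is handled by the fact that $V\to$ constant exponentially at the horizons so $(\omega^2-V)|u|^2$ is integrable once one uses the outgoing conditions; no delicate analysis is needed beyond what is already encoded in the statement of Proposition~\ref{prop: Carters separation, radial part}.
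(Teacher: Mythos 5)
Your proof is correct and takes essentially the same approach as the paper: multiply the radial ode by $\bar{u}$, integrate over $r^\star$, take imaginary parts so the $|u^\prime|^2$ and $(\omega^2-V)|u|^2$ contributions drop, and evaluate the remaining boundary term $\Im[\bar{u}u^\prime]_{-\infty}^{+\infty}$ using the outgoing conditions~\eqref{eq: BC}. The paper's own proof is a one-line version of exactly this computation.
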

\begin{proof}
This can also be found in our companion~\cite{mavrogiannis}. We multiply the radial ode~\eqref{eq: ode from carter's separation} with~$\bar{u}$ and after integration by parts we obtain the desired result, in view of the outgoing boundary conditions~\eqref{eq: BC}. 
\end{proof}

\subsection{The fixed frequency current~\texorpdfstring{$Q^h$}{h}}

We will need the following current

\begin{definition}\label{def: currents}
	Let~$l>0$,~$(a,M)\in\mathcal{B}_l$ and~$\mu^2_{KG}\geq 0$. Let~$\omega\in \mathbb{R},m\in\mathbb{Z},\ell\in \mathbb{Z}_{\geq |m|}$. Let~$v:\mathbb{R}\rightarrow \mathbb{C}$ be a~$C^0(-\infty,+\infty)$ function and let~$v,v^\prime$ be piecewise~$C^1(-\infty,\infty)$. Moreover, assume that~$u$ is a distributional solution of the inhomogeneous Carter's radial ode~$v^{\prime\prime}(r^\star)+(\omega^2-V)v(r^\star)=H$, where for the potential~$V$ see Proposition~\ref{prop: Carters separation, radial part}, and the inhomogeneity~$H$ is a distribution.

	Then, given a smooth function~$h:\mathbb{R}\rightarrow \mathbb{R}$, we define the current
	\begin{equation}\label{eq: def: currents, eq 1}
	\begin{aligned}
	Q^{h}[v] &\:\dot{=}\:h \Re(v^\prime\bar{v})-\frac{1}{2}h^\prime|v|^2. \\
	\end{aligned}
	\end{equation} 
\end{definition}

The following identity holds in a distributional sense
	\begin{equation}
	\begin{aligned}
	(Q^{h}[v])^\prime &= h|v^\prime|^2  + \Big( h(V-\omega ^{2}) -\frac{1}{2} h^{\prime\prime}\Big)|v|^2+h \Re(v\bar{H}). \\
	\end{aligned}
	\end{equation}

\section{The Morawetz estimates of~\cite{mavrogiannis4}}\label{sec: morawetz estimate}

Before presenting the main results of~\cite{mavrogiannis4} that we need in the present paper, see already Theorems~\ref{main theorem 1}, we need first to discuss some preliminary definitions and notation that have already been presented in~\cite{mavrogiannis4}.

\subsection{Preliminaries}

We need the following definition

\begin{definition}
	Let~$l>0$ and~$\mu^2_{KG}\geq 0$.
	We define the sets
	\begin{equation}\label{eq: sec: main theorems, eq 1.2}
		\widetilde{\mathcal{MS}}_{l,\mu_{KG}}=\{(a,M)\in\mathcal{B}_l: \omega\in \mathbb{R},m\in\mathbb{Z},\ell\in\mathbb{Z}_{\geq |m|},\omega\neq \omega_+ m,\omega\neq \bar{\omega}_+ m ~~|\mathcal{W}^{-1}(a,M,l,\mu^2_{KG},\omega,m,\ell)|<+\infty\},
	\end{equation}
	where~$\omega_+ =\frac{a\Xi}{r_+^2+a^2},\bar{\omega}_+=\frac{a\Xi}{\bar{r}_+^2+a^2}$, and 
	\begin{equation}
		\mathcal{B}_{0,l} = \{(0,M)\in\mathcal{B}_l\} = \{(0,M):~0<\frac{M^2}{l^2}<\frac{1}{27}\},
	\end{equation}
	where recall from our companion~\cite{mavrogiannis4} that~$\mathcal{B}_{0,l}\subset \widetilde{\mathcal{MS}}_l$.
	
	We define the following set
	\begin{equation}\label{eq: sec: main theorems, eq 2}
		\begin{aligned}
			\mathcal{MS}_{l,\mu_{KG}}	
		\end{aligned}
	\end{equation}
	to be the connected component of~$\mathcal{B}_{0,l}$ in the set~$\widetilde{\mathcal{MS}}_{l,\mu_{KG}}$ with the standard euclidean topology.
\end{definition}

\subsection{The Morawetz estimate of~\cite{mavrogiannis4}}\label{subsec: sec: morawetz estimate, subsec 2}

Let~$T,\tau_1\geq 0$ and let~$\chi:\mathcal{M}\rightarrow \mathbb{R}$ be as follows 
\begin{equation}\label{eq: sec: main theorems, eq 3}
\chi=\eta^{(T)}_{\tau_1}\chi^2_{\tau_1,\tau_1+T^2}
\end{equation}
where for the smooth cut-offs~$\eta^{(T)}_{\tau_1},\chi_{\tau_1,\tau_1+T^2}$ see Section~\ref{subsec: sec: carter separation, subsec 1}. For convenience we will denote~$\eta_{\tau_1}^{(T)}$ as~$\eta$ and~$\chi_{\tau_1,\tau_1+T^2}$ as~$\chi_+$.

\begin{remark}
	Note we do not require that the cut-off~\eqref{eq: sec: main theorems, eq 3} vanishes in the future of~$\{t^\star\geq \tau_1+T^2\}$.
\end{remark}

We have

\begin{theorem}[Main Theorem of~\cite{mavrogiannis4}]\label{main theorem 1}
	Let~$l>0$,~$\mu^2_{\textit{KG}}\geq 0$ and
	\begin{equation*}
		(a,M)\in \mathcal{MS}_{l,\mu_{KG}},
	\end{equation*}
	where for~$\mathcal{MS}_{l,\mu_{KG}}$ see~\eqref{eq: sec: main theorems, eq 2}. 
	
	Then, there exist constants
		\begin{equation}
		C=C(a,M,l,\mu^2_{\textit{KG}})>0,\qquad r_+<R^-\leq R^+<\bar{r}_+
	\end{equation}
	and a frequency dependent parameter
	\begin{equation}
		r_{trap}(\omega,m,\ell)\in (R^-,R^+)\cup \{0\}
	\end{equation}
	such that the following holds.
	
	Let~$T>0,1\leq \tau_1<\tau_1+1\leq\tau_2$ and let~$\psi$ be a smooth solution of the inhomogeneous Klein--Gordon equation~$\Box_{g_{a,M,l}}\psi-\mu^2_{KG}\psi=F$ in~$D(\tau_1,\tau_2)$, and let~$u$ be the projection in frequencies of~$\sqrt{r^2+a^2}\chi\psi$, as in Proposition~\ref{prop: Carters separation, radial part}. For the cut-off~$\chi$ see~\eqref{eq: sec: main theorems, eq 3} and also see~[Remark 7.1,\cite{mavrogiannis4}].

	Then, we have the following 
	\begin{equation}\label{eq: main theorem 1, eq 1}
	\begin{aligned}
		&	\int_{\mathbb{R}}d\omega\sum_{m,
		\ell} \Bigg( \int_{r_+}^{r_++\epsilon}\frac{1}{\Delta^2}|u^\prime+i(\omega-\omega_+m)u|^2dr +\int_{\bar{r}_+-\epsilon}^{\bar{r}_+}\frac{1}{\Delta^2}|u^\prime-(\omega-\bar{\omega}_+m)u|^2dr \\
		&	\qquad\qquad\qquad+ \int_{r_+}^{\bar{r}_+} \left(1_{\{|m|>0\}}|u|^2+ \mu^2_{\textit{KG}}|u|^2+ |u^\prime|^2+\left(1-\frac{r_{\textit{trap}}(\omega,m,\ell)}{r}\right)^2(\omega^2+\lambda^{(a\omega)}_{m\ell})|u|^2\right)dr\Bigg) \\
		&	\qquad\qquad\leq C \int_{\{t^\star=\tau_1\}} J^n_{\mu}[\psi]n^{\mu}+C\int\int_{D(\tau_1,\tau_2)} \epsilon^\prime|\partial_t\psi|^2 +\epsilon^\prime|\partial_{\varphi}\psi|^2+(\epsilon^\prime)^{-1}|F|^2,
	\end{aligned}
	\end{equation}
	\begin{equation}\label{eq: main theorem 1, eq 2}
		\int_{\{t^\star=\tau_2\}} J^n_{\mu}[\psi] n^{\mu} \leq C\int_{\{t^\star=\tau_1\}}  J^n_{\mu} [\psi]n^{\mu}+C\int\int_{D(\tau_1,\tau_2)} \epsilon^\prime|\partial_t\psi|^2 +\epsilon^\prime|\partial_{\varphi}\psi|^2 +(\epsilon^\prime)^{-1}|F|^2,
	\end{equation}
	\begin{equation}\label{eq: main theorem 1, eq 3}
		\int_{\{t^\star=\tau_2\}}  J^n_{\mu}[\psi]n^{\mu}+|\psi|^2\leq C\int_{\{t^\star=\tau_1\}}  J^n_{\mu}[\psi]n^{\mu}+|\psi|^2+C\int\int_{D(\tau_1,\tau_2)} \epsilon^\prime|\partial_t\psi|^2 +\epsilon^\prime|\partial_{\varphi}\psi|^2 +(\epsilon^\prime)^{-1}|F|^2, 
	\end{equation}
	\begin{equation}\label{eq: main theorem 1, eq 4}
		\begin{aligned}
			&	\int_{\mathcal{H}^+\cap D(\tau_1,\infty)} J^N_\mu[\psi]n^\mu_{\mathcal{H}^+}+\int_{\bar{\mathcal{H}}^+\cap D(\tau_1,\infty)} J^N_\mu[\psi]n^\mu_{\bar{\mathcal{H}}^+}\\
			&	\qquad\qquad\leq  C\int_{\{t^\star=\tau_1\}}  
			J^n_\mu[\psi]n^\mu_{\{t^\star=0\}}+C\int\int_{D(\tau_1,\tau_2)} \epsilon^\prime|\partial_t\psi|^2 +\epsilon^\prime|\partial_{\varphi}\psi|^2+(\epsilon^\prime)^{-1}|F|^2 , 
		\end{aligned}
	\end{equation}
		\begin{equation}\label{eq: main theorem 1, eq 5}
		\begin{aligned}
			&	\int_{\mathcal{H}^+\cap D(\tau_1,\infty)} J^N_\mu[\chi\psi]n^\mu_{\mathcal{H}^+}+\int_{\bar{\mathcal{H}}^+\cap D(\tau_1,\infty)} J^N_\mu[\chi\psi]n^\mu_{\bar{\mathcal{H}}^+}\\
			&	\qquad\qquad\leq  C\int_{\{t^\star=\tau_1\}}  
			J^n_\mu[\psi]n^\mu_{\{t^\star=0\}}+|\psi|^2+C\int\int_{D(\tau_1,\tau_2)} \epsilon^\prime|\partial_t\psi|^2 +\epsilon^\prime|\partial_{\varphi}\psi|^2 +(\epsilon^\prime)^{-1}|F|^2, 
		\end{aligned}
	\end{equation}
	for a sufficiently small~$\epsilon(a,M,l)>0$ and for any~$0<\epsilon^\prime<1$, where~$\omega_+=\frac{a\Xi}{r_+^2+a^2},~\bar{\omega}_+=\frac{a\Xi}{\bar{r}_+^2+a^2}$. 
\end{theorem}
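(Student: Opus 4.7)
The plan is to follow the Dafermos--Rodnianski strategy of frequency-localized multipliers applied to Carter's radial ode \eqref{eq: ode from carter's separation}. I would first apply the time cut-off $\chi = \eta\chi_+^2$ to $\psi$ to produce $\psi_\chi$ satisfying an inhomogeneous Klein--Gordon equation with source $F_0$ supported on $\{\eta' \neq 0\}\cup\{\chi_+' \neq 0\}$, and then project via Carter's separation to obtain, for each $(\omega,m,\ell)$, the radial ode $u'' + (\omega^2 - V)u = H$ with outgoing boundary conditions \eqref{eq: BC}. The commutator terms carried in $F_0$ account, via a Young splitting and Plancherel, for the spacetime errors $\int\int \epsilon'(|\partial_t\psi|^2+|\partial_\varphi\psi|^2) + (\epsilon')^{-1}|F|^2$ on the right-hand side of \eqref{eq: main theorem 1, eq 1}--\eqref{eq: main theorem 1, eq 5}.

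Next I would partition frequency space into four regimes: bounded frequencies $|\omega|+\sqrt{\tilde\lambda}\lesssim 1$, non-superradiant angular-dominated frequencies $\tilde\lambda\gtrsim\omega^2$, non-superradiant time-dominated frequencies $\omega^2\gtrsim\tilde\lambda$, and superradiant frequencies $\mathcal{SF}$. For each non-superradiant high-frequency regime I would build a triple $(f,h,y)$ of multipliers and form a Morawetz-type current $Q^{f,h,y}[u]$, in the spirit of Definition \ref{def: currents}, so that the integrand of $(Q^{f,h,y}[u])'$ dominates $|u'|^2 + (1-r_{trap}/r)^2(\omega^2+\tilde\lambda)|u|^2 + \mu^2_{KG}|u|^2$. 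The function $f$ must change sign precisely at the unique critical point $r_{trap}(\omega,m,\ell)$ of the effective potential $V_0$, which is exactly the source of the trapping degeneration; this degeneration is sharp by \cite{sbierski,ralston}. A red-shift multiplier $y$ concentrated near $r_+$ and $\bar r_+$ then produces the horizon-flux quantities $\int\Delta^{-2}|u'\pm i(\omega-\omega_\pm m)u|^2$ from the boundary contributions of the integration by parts.

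For superradiant frequencies $(\omega,m)\in\mathcal{SF}$ the energy identity \eqref{eq: subsec: energy identity, eq 1} is no longer coercive on the horizon fluxes. I would use the structural fact that $\mathcal{SF}$ is quantitatively separated from trapping, and pair Carter's ode with a multiplier of the form $f_s(\omega,m,r)\bar u$ vanishing at the unique $r_s\in (r_+,\bar r_+)$ with $\omega = am\Xi/(r_s^2+a^2)$; the resulting degeneration at $r_s$ is covered by the axial control term $\mathbf{1}_{\{|m|>0\}}|u|^2$ appearing in the integrand of \eqref{eq: main theorem 1, eq 1}.

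The main obstacle is closure in the bounded frequency regime, where high-frequency multiplier analysis is inoperative. Here I would invoke condition (MS) through a continuity-compactness argument along a curve in $\mathcal{MS}_{l,\mu_{KG}}$ connecting $(a,M)$ to Schwarzschild--de~Sitter: mode stability on the real axis forbids eigenvalues of the relevant Wronskian operator $\mathcal W$, and a compactness argument in the parameters together with finiteness of $\mathcal W^{-1}$ on any compact frequency set yields a uniform bound, which combines with the high-frequency estimates to give \eqref{eq: main theorem 1, eq 1}. The boundedness estimate \eqref{eq: main theorem 1, eq 2} then follows from the divergence theorem for the everywhere causal vector field $W$ of Lemma~\ref{lem: causal vf E,1}, whose deformation $\mathrm{K}^W$ is absorbed by the Morawetz bulk away from the horizons and vanishes on them. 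Finally, \eqref{eq: main theorem 1, eq 3} is obtained by combining \eqref{eq: main theorem 1, eq 2} with Poincar\'e--Wirtinger when $\mu_{KG}=0$, and \eqref{eq: main theorem 1, eq 4}--\eqref{eq: main theorem 1, eq 5} from a standard commutation with a red-shift vector field transverse to $\mathcal H^+$ and $\bar{\mathcal H}^+$.
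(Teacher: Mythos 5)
Theorem~\ref{main theorem 1} is \emph{not proved in this paper}: it is imported verbatim from the companion~\cite{mavrogiannis4} and stated without a proof environment, so there is no ``paper's own proof'' to compare against. The only addendum the present paper makes is the remark immediately after the statement, namely that the fifth estimate~\eqref{eq: main theorem 1, eq 5} (the horizon flux for the cut-off quantity~$\chi\psi$) ``can easily be inferred'' from~\eqref{eq: main theorem 1, eq 1}--\eqref{eq: main theorem 1, eq 4}. You are therefore reconstructing the companion's argument, not this paper's.

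That said, your sketch is broadly consistent with the high-level description that the present paper gives of~\cite{mavrogiannis4} in Section~\ref{subsubsec: subsec: sec: intro, subsec 2, subsubsec 3}: cut off in time, pass to Carter's radial ode with outgoing boundary conditions, run the Dafermos--Rodnianski frequency-localized Morawetz multipliers~$(f,h,y)$ with a red-shift piece near the horizons, split the frequency space into trapped and non-trapped regimes, and close the bounded-frequency regime with a continuity/compactness argument resting on the mode-stability condition~(MS). Two specific points deserve comment. First, your assertion that the superradiant degeneration at~$r_s$ is ``covered by the axial control term~$\mathbf{1}_{\{|m|>0\}}|u|^2$'' is not the mechanism this paper uses in the analogous step of its own ode analysis: in Lemma~\ref{lem: for v, super} the degeneration near~$r_s$ is absorbed by the quantitative non-trapping inequality~$V_0(r)-\omega^2 \geq b\,\Delta\tilde\lambda$ valid on a neighborhood of~$r_s$, not by the zeroth-order axial term, and it is natural to expect the companion's proof to do the same. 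Second, your claim that~$\mathrm{K}^W$ ``vanishes on [the horizons]'' is only true in the axisymmetric case (Lemma~\ref{lem: causal vf E,1}); in general~$W$ is not Killing, and the deformation tensor bulk must be entirely controlled by the Morawetz spacetime term, which is the role of the~$\epsilon'(|\partial_t\psi|^2+|\partial_\varphi\psi|^2)$ error on the right-hand side. Neither of these points can be verified against the present document, since the actual proof lives in~\cite{mavrogiannis4}.
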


\begin{remark}
	The last estimate~\eqref{eq: main theorem 1, eq 5} was not included in our companion~\cite{mavrogiannis4}, but can easily be inferred from the estimates~\eqref{eq: main theorem 1, eq 1}-\eqref{eq: main theorem 1, eq 4}. 
\end{remark}

\subsection{A higher order Corollary}

A Corollary of Theorem~\ref{main theorem 1}, as proved in our companion~\cite{mavrogiannis4}, is the following

\begin{corollary}\label{cor: main theorem 1, cor 1}
	Let the assumptions of Theorem~\ref{main theorem 1} be satisfied. Then, for any~$j\geq 1$ there exists a constant
	\begin{equation}
		C=C(j,a,M,l,\mu^2_{\textit{KG}})>0,
	\end{equation}
	and there exists a sufficiently small~$\delta(a,M,l,\mu_{KG},j)>0$ such that we obtain
	\begin{equation}\label{eq: cor: main theorem 1, cor 1, eq 1}
		\begin{aligned}
			&	\int\int_{D_\delta(\tau_1,\infty)}  \sum_{1\le i_1+i_2\le j}|\nabb^{i_1}(\partial_{t^\star})^{i_2}\mathcal{P}_{trap}(\chi\psi)|^2+\int\int_{D_\delta(\tau_1,\infty)\setminus D(\tau_1,\infty)}\sum_{1\leq i_1+i_2+i_3\leq j} |\slashed{\nabla}^{i_1}\partial_t^{i_2}(Z^\star)^{i_3}\psi|^2 \\
			& \qquad\qquad +\int\int_{D_\delta(\tau_1,\infty)} \sum_{1\le i_1+i_2+i_3\le j-1}
			\left(|\nabb^{i_1}(\partial_{t^\star})^{i_2}(Z^\star)^{i_3+1}\psi|^2+|\nabb^{i_1}(\partial_{t^\star})^{i_2}(Z^\star)^{i_3}\psi|^2\right)  \\
			&	\qquad\qquad\qquad \leq C\int_{\{t^\star=\tau_1\}}  \sum_{0\leq i_1+i_2+i_3 \leq j}|\nabb^{i_1}(\partial_{t^\star})^{i_2}(Z^\star)^{i_3}\psi|^2
			\\
			&	\qquad\qquad\qquad\qquad +C\sum_{1\leq i_1\leq j}\int\int_{D_\delta(\tau_1,\infty)} \epsilon^\prime|\partial_t^{i_1}\psi|^2 +(\epsilon^\prime)^{-1} |\partial_t^{i_1-1}F|^2+ \epsilon^\prime|\partial_{\varphi}^{i_1}\psi|^2 + (\epsilon^\prime)^{-1}|\partial_{\varphi}^{i_1-1}F|^2,
		\end{aligned}
	\end{equation}
	\begin{equation}\label{eq: cor: main theorem 1, cor 1, eq 2}
		\begin{aligned}
			&	\int_{\mathcal{H}^+\cap D_\delta(\tau_1,\infty)} \sum_{0\leq i_1+i_2+i_3 \leq j}|\nabb^{i_1}(\partial_{t^\star})^{i_2}(Z^\star)^{i_3}\psi|^2+\int_{\bar{\mathcal{H}}^+\cap D_\delta(\tau_1,\infty)}\sum_{0\leq i_1+i_2+i_3 \leq j}|\nabb^{i_1}(\partial_{t^\star})^{i_2}(Z^\star)^{i_3}\psi|^2\\
			&	\qquad \leq  C\int_{\{t^\star=\tau_1\}} \sum_{0\leq i_1+i_2+i_3 \leq j}|\nabb^{i_1}(\partial_{t^\star})^{i_2}(Z^\star)^{i_3}\psi|^2 \\
			&	\qquad\qquad +C\sum_{1\leq i_1\leq j}\int\int_{D_\delta(\tau_1,\infty)} \epsilon^\prime|\partial_t^{i_1}\psi|^2 +(\epsilon^\prime)^{-1} |\partial_t^{i_1-1}F|^2+ \epsilon^\prime|\partial_{\varphi}^{i_1}\psi|^2 + (\epsilon^\prime)^{-1}|\partial_{\varphi}^{i_1-1}F|^2,
		\end{aligned}
	\end{equation}
	\begin{equation}\label{eq: cor: main theorem 1, cor 1, eq 3}
		\begin{aligned}
			&	\int_{\{t^\star=\tau_2\}} \sum_{0\leq i_1+i_2+i_3 \leq j}|\nabb^{i_1}(\partial_{t^\star})^{i_2}(Z^\star)^{i_3}\psi|^2\\
			&	\qquad \leq C\int_{\{t^\star=\tau_1\}} \sum_{0\leq i_1+i_2+i_3 \leq j}|\nabb^{i_1}(\partial_{t^\star})^{i_2}(Z^\star)^{i_3}\psi|^2\\
			&	\qquad\qquad+C\sum_{1\leq i_1\leq j}\int\int_{D_\delta(\tau_1,\infty)} \epsilon^\prime|\partial_t^{i_1}\psi|^2 +(\epsilon^\prime)^{-1} |\partial_t^{i_1-1}F|^2+ \epsilon^\prime|\partial_{\varphi}^{i_1}\psi|^2 + (\epsilon^\prime)^{-1}|\partial_{\varphi}^{i_1-1}F|^2, 
		\end{aligned}
	\end{equation}
	for any~$2\leq1+\tau_1\leq \tau_2$. We used the operator
	\begin{equation}
		\begin{aligned}
			\mathcal{P}_{\textit{trap}}[\Psi] &	= \frac{1}{\sqrt{2\pi}} \int_{\mathbb{R}}\sum_{m,\ell}\left|1-\frac{r_{\textit{trap}}}{r}\right|e^{i\omega t}\Psi^{(a\omega)}_{m\ell}(r)S^{(a\omega)}_{m\ell}(\theta)e^{- i m\phi}d\omega. 
		\end{aligned}
	\end{equation}
\end{corollary}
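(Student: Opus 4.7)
The plan is to prove Corollary~\ref{cor: main theorem 1, cor 1} by induction on $j$, with Theorem~\ref{main theorem 1} providing the base case $j=0$ (or $j=1$, depending on the convention), and the inductive step proceeding through a combination of Killing commutations, redshift commutations, and elliptic recovery from the equation. First, I would commute the inhomogeneous Klein--Gordon equation with the Killing vector fields $\partial_{t^\star}$ and $\partial_{\varphi^\star}$ up to total order $j$. Since both vector fields are Killing, we have $[\Box_g,\partial_{t^\star}]=[\Box_g,\partial_{\varphi^\star}]=0$, so $\partial_{t^\star}^{i_2}\partial_{\varphi^\star}^{i_3}\psi$ again satisfies an inhomogeneous Klein--Gordon equation with right hand side $\partial_{t^\star}^{i_2}\partial_{\varphi^\star}^{i_3}F$. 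Applying Theorem~\ref{main theorem 1} termwise to each of these commuted quantities, and summing, produces the tangential part of the estimates \eqref{eq: cor: main theorem 1, cor 1, eq 1}--\eqref{eq: cor: main theorem 1, cor 1, eq 3}, but only on the original domain $D(\tau_1,\infty)$ and only for the trapped quantity $\mathcal{P}_{\text{trap}}$ applied to $\partial_{t^\star}^{i_2}\partial_{\varphi^\star}^{i_3}(\chi\psi)$; since $\mathcal{P}_{\text{trap}}$ is defined through Fourier multipliers in $(\omega,m,\ell)$, it commutes with the Killing derivatives up to the cutoff $\chi$, and the resulting commutators can be absorbed using the boundedness estimate~\eqref{eq: main theorem 1, eq 2}.

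Next, I would invoke the redshift vector field $N$ of Dafermos--Rodnianski (adapted in our companion~\cite{mavrogiannis4}) to recover transversal derivatives $Z^\star$ at the event horizon $\mathcal{H}^+$ and at the cosmological horizon $\bar{\mathcal{H}}^+$, and to extend all estimates across the horizons into the $\delta$-enlarged manifold $\mathcal{M}_\delta$. Concretely, one commutes the Klein--Gordon equation with a suitably weighted version of $N$ (and with $\bar{N}$ for the cosmological horizon), applies the divergence theorem of Lemma~\ref{lem: divergence theorem} for an appropriately chosen causal domain inside $\mathcal{M}_\delta$, and uses the positivity of $\mathrm{K}^N$ in a neighborhood of each horizon (the quantitative redshift effect, which persists for subextremal parameters). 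The bulk terms generated away from the horizons are controlled by the tangential estimates already obtained. This step also takes advantage of the fact that $\partial_t=\partial_{t^\star}$ and $\partial_\varphi=\partial_{\varphi^\star}$ extend regularly across the horizons, so one only needs to generate new $Z^\star$ control through redshift, inductively recovering higher powers $(Z^\star)^{i_3}$ by repeated commutation and absorption of lower-order commutator error terms.

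Once all derivatives with at most one $Z^\star$ are controlled, I would use the Klein--Gordon equation itself as an elliptic/hyperbolic recovery tool: schematically, the coefficient of $(Z^\star)^2$ in $\Box_g$ (which is nondegenerate away from the horizons after one already has redshift control inside~$\mathcal{M}_\delta$) allows us to solve for $(Z^\star)^2\psi$ in terms of derivatives involving at most one $Z^\star$, together with $\partial_{t^\star}$, $\partial_{\varphi^\star}$, angular derivatives, and $F$. Iterating, each additional power of $Z^\star$ is traded for two fewer $Z^\star$'s plus tangential derivatives plus $F$-terms, so the full sum $\sum_{i_1+i_2+i_3\leq j}|\slashed\nabla^{i_1}(\partial_{t^\star})^{i_2}(Z^\star)^{i_3}\psi|^2$ is controlled by what we have already obtained, modulo inhomogeneity terms of the form $\sum_{i_1\leq j-2}|(Z^\star)^{i_1}F|^2$, which can in turn be written as tangential $F$-derivatives via the equation. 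The dependence of $\delta$ on $j$ arises because each redshift commutation requires a slightly smaller neighborhood of the horizons in which the positivity of the redshift current dominates the error terms produced by previous commutations.

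The main obstacle is the redshift step: one must check that after commuting $j$ times with $N$ the error terms (which involve products of lower-order derivatives with coefficients coming from $\mathcal{L}_N$ applied to the metric) can be absorbed either by the redshift positivity on the horizon itself or by the previously obtained tangential estimates in the bulk. This is delicate because the commutator structure at high order is not manifestly lower-order in the full derivative count, and one must also ensure that the trapping term $\mathcal{P}_{\text{trap}}$ continues to capture the correct frequency-dependent degeneration after commuting, which is clear for Killing commutations but requires an additional density argument for the redshift commutations (the latter only contribute away from trapping, so the degeneration is unchanged). The $F$-terms involving $\partial_t$ and $\partial_\varphi$ on the right hand sides of \eqref{eq: cor: main theorem 1, cor 1, eq 1}--\eqref{eq: cor: main theorem 1, cor 1, eq 3} appear precisely because Killing commutations produce no loss, whereas any $Z^\star$-derivative of $F$ has already been re-expressed via the equation.
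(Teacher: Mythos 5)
The paper itself does not contain a proof of this corollary: the text preceding the statement reads ``A Corollary of Theorem~\ref{main theorem 1}, as proved in our companion~\cite{mavrogiannis4}, is the following,'' so there is no in-paper argument to compare yours against. That said, your proposed route --- commuting the equation with the Killing fields $\partial_{t^\star},\partial_{\varphi^\star}$ and re-applying the degenerate Morawetz estimate of Theorem~\ref{main theorem 1} to the commuted system, recovering transversal $Z^\star$-derivatives near $\mathcal{H}^+$ and $\bar{\mathcal{H}}^+$ via a redshift commutation and thereby extending into $\mathcal{M}_\delta$, and then trading higher powers of $Z^\star$ for tangential and $F$-derivatives by reading the equation as an elliptic identity away from the horizons --- is precisely the standard Dafermos--Rodnianski template for upgrading a first-order integrated decay estimate to the higher-order form appearing here, and it is almost certainly what the companion paper does.

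One point deserves sharpening. Your remark that $\mathcal{P}_{\text{trap}}$ ``commutes with the Killing derivatives up to the cutoff $\chi$'' mixes two unrelated facts. For $\partial_{t^\star}$ and $\partial_{\varphi^\star}$ the commutation with the Fourier multiplier $|1-r_{\text{trap}}(\omega,m,\ell)/r|$ is \emph{exact}, since these operators are diagonal in $\omega$ and $m$; only the commutation with $\chi(t^\star)$ is nontrivial, and it is physical-space, producing error terms absorbed by the boundedness estimate~\eqref{eq: main theorem 1, eq 2}. By contrast, $\nabb$ does \emph{not} commute, even approximately, with $\mathcal{P}_{\text{trap}}$: the spheroidal harmonics $S^{(a\omega)}_{m\ell}$ are $\omega$-dependent, so their angular derivative is not proportional to themselves and there is no clean multiplier structure. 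The term $\nabb^{i_1}(\partial_{t^\star})^{i_2}\mathcal{P}_{\text{trap}}(\chi\psi)$ in~\eqref{eq: cor: main theorem 1, cor 1, eq 1} should therefore be read not as the result of commuting angular derivatives through $\mathcal{P}_{\text{trap}}$, but as a quantity controlled directly by the $\lambda^{(a\omega)}_{m\ell}$-weighted degenerate bulk in the frequency-localized version of~\eqref{eq: main theorem 1, eq 1}, after commuting the equation only with $\partial_{t^\star}^{i_2}$. With that adjustment your sketch captures the essential structure; the remaining delicacy you correctly flag, namely that the higher-order redshift commutator errors must be organized so that they stay lower order and can be absorbed by the already-obtained bulk away from the horizons, is indeed the place where care is required, and the $j$-dependence of $\delta$ that you identify is the correct price.
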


\section{The operator~\texorpdfstring{$\mathcal{G}$}{G}}\label{sec: G}

The main definition of this section is Definition~\ref{def: sec: G, def 1} where we define the operator~$\mathcal{G}$. However, first we need several preliminary definitions and lemmata to justify that~$\mathcal{G}$ is well defined.

\subsection{The definitions of~\texorpdfstring{$\mathcal{T}(\omega,m,r),g_1(r),g_2^2(\omega,m,r)$}{g}}

Note the following definition

\begin{definition}\label{def: sec: G, def 0}
	Let~$l>0$ and~$(a,M)\in\mathcal{B}_l$. Let~$(\omega,m)\in\mathbb{R}\times \mathbb{Z}$. We define 
	\begin{equation}\label{eq: sec: G, eq 0}
	\mathcal{T}(\omega,m,r)=\frac{(r^2+a^2)^2}{\Delta}\left(\omega-\frac{am\Xi}{r^2+a^2}\right)^2,
	\end{equation}
	and moreover we define 
	\begin{equation}\label{eq: sec: G, eq 1}
	\begin{aligned}
	g_1(r)\:\dot{=}\:\frac{r^2+a^2}{\sqrt{\Delta}},\qquad \left(g_2(\omega,m,r)\right)^2		\: \dot{=}\:  & \mathcal{T}(\omega,m,r)-\min_{r\in[r_+,\bar{r}_+]}\mathcal{T}(\omega,m,r).
	\end{aligned}
	\end{equation}
\end{definition}

\begin{remark}
	The function~$\mathcal{T}$, see~\eqref{def: sec: G, def 0}, has also been studied by Dyatlov for the case of the slowly rotating Kerr--de~Sitter black hole, see~\cite{Dyatlov4}, and more recently by Petersen--Vasy for the entire subextremal range of Kerr--de~Sitter black hole paremeters, see~\cite{petersen2021wave}. 
\end{remark}

In order to define~$g_2(\omega,m,r)$, see already Definition~\ref{def: subsec: sec: G, subsec 1, def 1}, we need first Propositions~\ref{prop: subsec: sec: G, subsec 1, prop 0},~\ref{prop: subsec: sec: G, subsec 1, prop -1}.

\subsection{The critical point~$r_{crit}(\omega,m)$}\label{subsec: sec: morawetz estimate, subsec 3}

In this Section, amongst other things, we will prove that for any~$(\omega,m)\in\mathbb{R}\times\mathbb{Z}$ the function~$\mathcal{T}(\omega,m,\cdot)$, see~\eqref{eq: sec: G, eq 0}, attains a unique critical point~$r_{crit}(\omega,m)$, a minimum.

First, we study~$\mathcal{T}(\omega,m,r)$, see~\eqref{eq: sec: G, eq 0}, for the superradiant frequencies.

\begin{proposition}\label{prop: subsec: sec: G, subsec 1, prop 0}
	Let~$l>0$ and~$(a,M)\in \mathcal{B}_l$. For the superradiant frequencies
	\begin{equation}\label{eq: lem: subsec: sec: G, subsec 1, lem 0, eq 0}
		(\omega,m)\in\mathcal{SF}
	\end{equation}
	the function~$\mathcal{T}(\omega,m,r)$, see~\eqref{eq: sec: G, eq 0}, satisfies~$\mathcal{T}(\omega,m,\cdot)\in C^\infty (r_+,\bar{r}_+)$, and it attains a unique critical point
	\begin{equation}
		r_{\textit{crit}}(\omega,m)\in (r_+,\bar{r}_+),
	\end{equation}
	a minimum where~$\frac{d\mathcal{T}}{dr}(r_{crit})=0$. The following holds
	\begin{equation*}
		r_{crit}(\omega,m)=r_s(\omega,m),
	\end{equation*}
 where~$am\omega=\frac{a^2m^2\Xi}{r_s^2+a^2}$.

	Moreover, we have that 
	\begin{equation}\label{eq: lem: subsec: sec: G, subsec 1, lem 0, eq 1}
		\left|\frac{d^2}{dr^2}\Big|_{r=r_{\textit{crit}}} \mathcal{T}\right| >0.
	\end{equation}	
\end{proposition}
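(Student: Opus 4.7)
My plan is to pin down the critical point by an explicit computation using the factored form of $\mathcal{T}$, reduce uniqueness to a positivity statement for a polynomial in $r$, and then verify~\eqref{eq: lem: subsec: sec: G, subsec 1, lem 0, eq 1} by a direct second-derivative evaluation.

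First I would rewrite $\mathcal{T}$ exploiting that superradiance forces $\omega\neq 0$ and $am\omega>0$. Setting $\alpha\:\dot{=}\:am\Xi/\omega$ and $P(r)\:\dot{=}\:(r^2+a^2)\omega-am\Xi=\omega(r^2+a^2-\alpha)$, we have $\mathcal{T}(\omega,m,r)=P(r)^2/\Delta(r)$. The superradiant condition~\eqref{eq: subsec: superradiant frequencies, eq 1} is equivalent to $\alpha\in(r_+^2+a^2,\bar{r}_+^2+a^2)$, so that $r_s\:\dot{=}\:\sqrt{\alpha-a^2}\in(r_+,\bar{r}_+)$ is the unique root of $P$ in $[r_+,\bar{r}_+]$ and satisfies $(r_s^2+a^2)\omega=am\Xi$, matching the formula in the statement. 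Since $\Delta>0$ on $(r_+,\bar{r}_+)$, $\mathcal{T}(\omega,m,\cdot)\in C^\infty(r_+,\bar{r}_+)$; moreover $\mathcal{T}\geq 0$, $\mathcal{T}(r_s)=0$, and $\mathcal{T}\to+\infty$ at both horizons (as $\Delta\to 0^+$ while $P(r_\pm)\neq 0$ for strictly superradiant frequencies). So $r_s$ is the global minimum of $\mathcal{T}$ on $(r_+,\bar{r}_+)$, hence in particular a critical point.

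For uniqueness I would differentiate to obtain
\[
\frac{d\mathcal{T}}{dr}(r)\;=\;\omega^2\,\frac{(r^2-r_s^2)\,g(r)}{\Delta(r)^2},\qquad g(r)\:\dot{=}\:4r\,\Delta(r)-(r^2-r_s^2)\,\Delta'(r),
\]
so every critical point of $\mathcal{T}$ on $(r_+,\bar{r}_+)$ other than $r_s$ must be a zero of the polynomial $g$. Using the factorization $\Delta=-l^{-2}\prod_i(r-r_i)$, $r_i\in\{\bar{r}_-,r_-,r_+,\bar{r}_+\}$, together with the Vieta identity $\sum_i r_i=0$ (forced by the vanishing $r^3$-coefficient of $\Delta$, cf.~Lemma~\ref{lem: sec: properties of Delta, lem 1, derivatives of Delta}), a partial-fractions computation yields the identity
\[
\frac{g(r)}{\Delta(r)}\;=\;\sum_i\frac{r_s^2-r_i^2}{r-r_i},\qquad r\in(r_+,\bar{r}_+).
\]
For $(r,r_s)\in(r_+,\bar{r}_+)^2$, the three summands indexed by $r_-,r_+,\bar{r}_+$ are positive, while the $\bar{r}_-$-summand is negative but controlled in magnitude through the Vieta relation $|\bar{r}_-|=r_-+r_++\bar{r}_+$. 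A convenient reduction is that the right-hand side is \emph{affine} in $r_s^2$ with slope $\Delta'(r)/\Delta(r)$, so by linearity its positivity on the open range $r_s^2\in(r_+^2,\bar{r}_+^2)$ reduces to positivity at the two boundary profiles $r_s^2=r_+^2$ and $r_s^2=\bar{r}_+^2$. At each boundary profile one of the four summands vanishes, and pulling out the common factor $(r-r_+)$ or $(r-\bar{r}_+)$ from numerator and denominator reduces the remaining expression to a manifestly positive rational function of $r$.

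The nondegeneracy~\eqref{eq: lem: subsec: sec: G, subsec 1, lem 0, eq 1} then follows by differentiating $\mathcal{T}=\omega^2(r^2-r_s^2)^2/\Delta$ twice at $r=r_s$: since $(r^2-r_s^2)|_{r=r_s}=0$ and $\tfrac{d}{dr}(r^2-r_s^2)^2|_{r=r_s}=0$, only the contribution $\tfrac{d^2}{dr^2}(r^2-r_s^2)^2\big|_{r=r_s}=8r_s^2$ survives, yielding
\[
\frac{d^2\mathcal{T}}{dr^2}(r_s)\;=\;\frac{8\,r_s^2\,\omega^2}{\Delta(r_s)}\;>\;0,
\]
which is strictly positive since $\omega\neq 0$, $r_s>0$, and $\Delta(r_s)>0$. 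The main obstacle will be the positivity step for the partial-fractions sum: while the favorable signs of the three dominant summands make positivity morally clear, carrying out the reduction to the two boundary profiles and verifying the residual inequality rests essentially on the subextremal ordering $\bar{r}_-<0\leq r_-<r_+<\bar{r}_+$ and on the Vieta identity, both of which are specific to the Kerr--de~Sitter geometry.
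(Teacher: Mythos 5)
Your approach is genuinely different from the paper's, and in several respects cleaner. You reduce uniqueness to the sign of $N_2(r;r_s)=4r\Delta-(r^2-r_s^2)\Delta'$ exactly as the paper does, but whereas the paper argues by a continuity/casework contradiction on the cubic $N_2$ (varying $r_s$ toward $\bar r_+$, analyzing where roots coalesce or escape), you write $\mathcal T'/\omega^2=(r^2-r_s^2)g(r)/\Delta^2$ with $g=N_2$, observe via $\sum_i r_i=0$ the genuinely useful partial-fractions identity
\[
\frac{g(r)}{\Delta(r)}=\sum_i\frac{r_s^2-r_i^2}{r-r_i}=4r+(r_s^2-r^2)\frac{\Delta'(r)}{\Delta(r)},
\]
and then exploit that this is affine in $s=r_s^2$ to reduce positivity on $(r_+^2,\bar r_+^2)$ to the two endpoints. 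I verified the identity and the reduction; and your two-line computation $\mathcal T''(r_s)=8r_s^2\omega^2/\Delta(r_s)>0$ is a cleaner route to~\eqref{eq: lem: subsec: sec: G, subsec 1, lem 0, eq 1} than the paper's observation that the unique zero of $\mathcal T'$ has multiplicity one.

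The gap is in the last step. After setting $r_s=r_+$ (say) and cancelling the common factor $(r-r_+)$ from $g$ and $\Delta$, the resulting expression is \emph{not} manifestly positive. In partial-fractions form it is $\sum_{i\neq r_+}\frac{r_+^2-r_i^2}{r-r_i}$: the $r_-$ and $\bar r_+$ summands are positive, but the $\bar r_-$ summand is strictly negative because $\bar r_-^2=(r_-+r_++\bar r_+)^2>\bar r_+^2>r_+^2$, and in the polynomial form the leading coefficient of the residual quadratic inherits the sign of $1-\tfrac{a^2}{l^2}-\tfrac{2r_+^2}{l^2}$, which is not fixed across the subextremal range, so no convexity shortcut is available either. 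Positivity genuinely requires a quantitative comparison. One way to close it: bound below $\frac{\bar r_+^2-r_+^2}{\bar r_+-r}>\bar r_++r_+$ (valid for $r>r_+$) and $\frac{r_+^2-r_-^2}{r-r_-}>0$, bound above the negative term using $r+|\bar r_-|>2r_++r_-+\bar r_+$ together with $\bar r_-^2-r_+^2=(r_-+\bar r_+)(r_-+2r_++\bar r_+)$, whence the negative contribution is $<r_-+\bar r_+<r_++\bar r_+$; this closes $r_s=r_+$, and the $r_s=\bar r_+$ case is symmetric. So the plan is sound and completable, but the ``manifestly positive'' claim understates the work remaining; that comparison is the crux, not a formality.
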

\begin{proof}
	
For~$(\omega,m)\in\mathcal{SF}$ we note that there exists an~$r_s\in(r_+,\bar{r}_+)$ such that 
\begin{equation}\label{eq: proof: lem: subsec: sec: G, subsec 1, lem 0, eq 1.0}
am\omega =\frac{a^2m^2\Xi}{r_s^2+a^2}. 
\end{equation}
Therefore, the derivative of~$\mathcal{T}$, see~\eqref{eq: sec: G, eq 0}, can be written as 
\begin{equation}\label{eq: proof: lem: subsec: sec: G, subsec 1, lem 0, eq 1}
\frac{d\mathcal{T}}{dr}=\frac{d}{dr}\left(\frac{(r^2+a^2)^2}{\Delta}\left(\omega-\frac{a\Xi}{r^2+a^2}m\right)^2\right)=(r^2-r_s^2)\frac{4r\Delta -(r^2-r_s^2)\frac{d\Delta}{dr}}{2\Delta\sqrt{\Delta}}\:\dot{=}\:(r^2-r_s^2)\frac{N_2(r;r_s)}{2\Delta\sqrt{\Delta}},
\end{equation}
where note that for any~$r_s\in (r_+,\bar{r}_+)$ the function~$N_2(r;r_s)$ is a polynomial of degree~$3$, namely
\begin{equation}\label{eq: proof: lem: subsec: sec: G, subsec 1, lem 0, eq 1.01}
N_2(r;r_s)= 2\left((1-\frac{a^2}{l^2}-2\frac{r_s^2}{l^2})r^3 -3M r^2 +(2a^2+r_s^2-\frac{a^2}{l^2}r_s^2)r-2Mr_s^2 \right).
\end{equation}

From the definition of~$N_2(r;r_s)$, see~\eqref{eq: proof: lem: subsec: sec: G, subsec 1, lem 0, eq 1}, we note that  
\begin{equation}\label{eq: proof: lem: subsec: sec: G, subsec 1, lem 0, eq 1.1}
N_2(r_+;r_s)=-\frac{d\Delta}{dr}(r_+)(r_+^2-r_s^2)>0,\qquad N_2(\bar{r}_+;r_s)= - \frac{d\Delta}{dr}(\bar{r}_+)(\bar{r}_+^2-r_s^2)>0.
\end{equation}

Moreover, we note that for~$r_s=\bar{r}_+$ the following hold
\begin{equation}\label{eq: proof: lem: subsec: sec: G, subsec 1, lem 0, eq 2}
N_2(\bar{r}_+;\bar{r}_+)=0,\quad \frac{d N_2}{dr}(\bar{r}_+;\bar{r}_+)=2\bar{r}_+ \frac{d\Delta}{dr}(\bar{r}_+)<0,\quad \frac{d^2 N_2}{dr^2}(\bar{r}_+;\bar{r}_+)= 6\frac{d\Delta}{dr}(\bar{r}_+)<0.
\end{equation}
Therefore, in view of~\eqref{eq: proof: lem: subsec: sec: G, subsec 1, lem 0, eq 1.1},~\eqref{eq: proof: lem: subsec: sec: G, subsec 1, lem 0, eq 2}, we conclude that the 3rd degree polynomial~$N_2(r;\bar{r}_+)$ attains three real roots~$r_1,r_2,\bar{r}_+$ that satisfy
\begin{equation}\label{eq: proof: lem: subsec: sec: G, subsec 1, lem 0, eq 3}
r_1\leq r_+ <\bar{r}_+< r_2
\end{equation}
and 
\begin{equation}\label{eq: proof: lem: subsec: sec: G, subsec 1, lem 0, eq 4}
	N_2(r;\bar{r}_+)\geq 0,~r\in [r_+,\bar{r}_+]
\end{equation}
where the only zero at~$[r_+,\bar{r}_+]$ is attained at~$r=\bar{r}_+$. Also, see Figure~\ref{fig: N2-0}.

\begin{figure}[htbp]
	\includegraphics[scale=0.6]{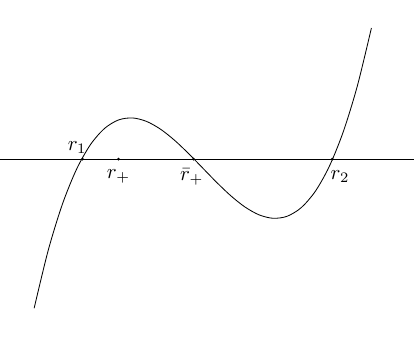}
	\caption{The function~$N_2(r;\bar{r}_+)$}
		\label{fig: N2-0}
\end{figure}

\underline{\texttt{For the sake of contradiction}} we assume that for some~$r_s=r_s(\omega,m)\in (r_+,\bar{r}_+)$, see~\eqref{eq: proof: lem: subsec: sec: G, subsec 1, lem 0, eq 1.0}, the polynomial~$N_2(r;r_s)$ attains two roots
\begin{equation}
r_1\leq r_2
\end{equation}
in the interval~$(r_+,\bar{r}_+)$. We note that~$N_2(r;r_s)$ cannot attain only one root in~$(r_+,\bar{r}_+)$ because then one of the inequalities~\eqref{eq: proof: lem: subsec: sec: G, subsec 1, lem 0, eq 1.1} would not be satisfied. In view of~\eqref{eq: proof: lem: subsec: sec: G, subsec 1, lem 0, eq 1.1} we note Figure~\ref{fig: N2-1} for the possible graphs of the function~$N_2(r;r_s)$.

\begin{figure}[htbp]
	\begin{multicols}{2}
		\includegraphics[scale=0.6]{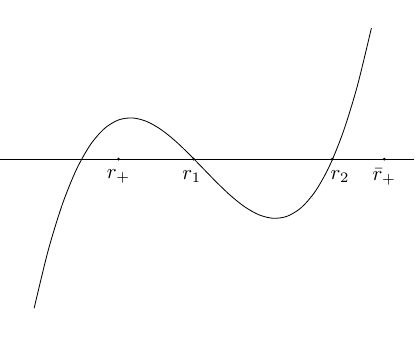}\par
		\includegraphics[scale=0.6]{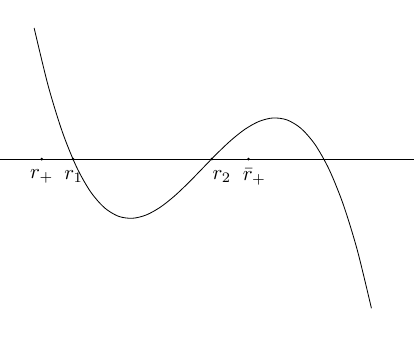}\par 
	\end{multicols}
	\caption{The possible functions $N_2(r;r_s)$ with two roots~$r_1<r_2$}
	\label{fig: N2-1}
\end{figure}

First, we note the following. Suppose that there exists a~$\tilde{r}\in (r_+,\bar{r}_+)$ such that~$N_2(\tilde{r};r_s)<0$,~(note that for~$r\in (r_1,r_2)$ we have~$N_2(r;r_s)<0$) and moreover~$\frac{d\Delta}{dr}(\tilde{r})\leq 0$. Then, in view of 
\begin{equation}
	N_2(\tilde{r};r_s) = 4\tilde{r}\Delta (\tilde{r})-\tilde{r}^2 \frac{d\Delta}{dr}(\tilde{r})+r_s^2 \frac{d\Delta}{dr}(\tilde{r})
\end{equation}
we note that for any~$r_s<\tilde{r}_s< \bar{r}_+$ we have that~$N_2(\bar{r}_+;\tilde{r}_s)<0$, which is a contradiction to~\eqref{eq: proof: lem: subsec: sec: G, subsec 1, lem 0, eq 2}, also see Figure~\ref{fig: N2-1}. Therefore, we have that for any~$	\tilde{r}\in (r_1,r_2) $
\begin{equation}\label{eq: proof: lem: subsec: sec: G, subsec 1, lem 0, eq 4.9}
\frac{d\Delta}{dr}(\tilde{r})>0.
\end{equation}

Then, in view of the properties of the function~$N_2(r;\bar{r}_+)$, see Figure~\ref{fig: N2-0}, and since the function~$N(\cdot;\cdot)$ is continuous in both its variables we obtain that there exists an~$r_{s,h}$ with~$r_s<r_{s,h}<\bar{r}_+$ such that the roots~$r_1(r_{s,h}),~r_2(r_{s,h})$ of the polynomial~$N_2(r;r_{s,h})$ satisfy either of the following
\begin{equation}\label{eq: proof: lem: subsec: sec: G, subsec 1, lem 0, eq 5}
	\begin{aligned}
		&	(1)~r_1 (r_{s,h})=r_2(r_{s,h})~\text{with}~r_1\in [r_+,\bar{r}_+]\\
		&	(2)~r_+<r_1(r_{s,h})<\bar{r}_+<r_2(r_{s,h})
	\end{aligned}
\end{equation}
also see Figure~\ref{fig: N2-2} for the possible graphs. 

\begin{figure}[htbp]
	\begin{multicols}{3}
		\includegraphics[scale=0.6]{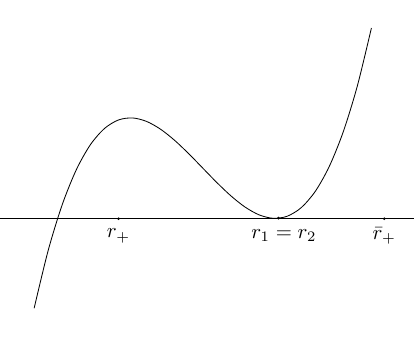}\par
		\includegraphics[scale=0.6]{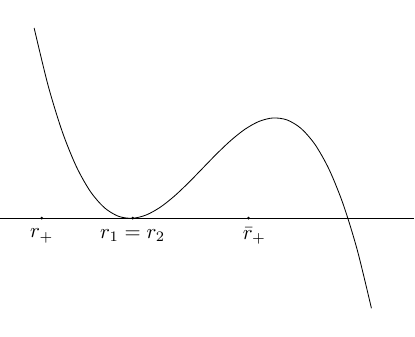}\par
		\includegraphics[scale=0.6]{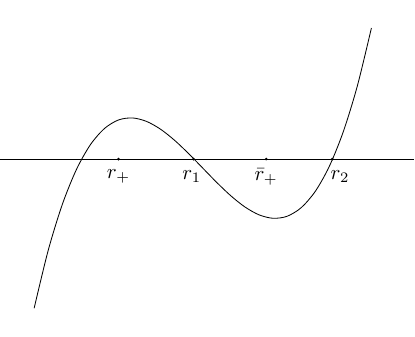}\par 
	\end{multicols}
	\caption{The function $N_2(r;r_{s,h})$ for the cases~$(1),~(2)$ of ~\eqref{eq: proof: lem: subsec: sec: G, subsec 1, lem 0, eq 5} respectively}
	\label{fig: N2-2}
\end{figure}

\boxed{For~the ~case~$(2)$} of~\eqref{eq: proof: lem: subsec: sec: G, subsec 1, lem 0, eq 5} we proceed as follows. We note from~\eqref{eq: proof: lem: subsec: sec: G, subsec 1, lem 0, eq 1.1} that~$N_2(\bar{r}_+;r_s)>0$ which is a \underline{\texttt{contradiction}}. So, in the case~$(2)$ of~\eqref{eq: proof: lem: subsec: sec: G, subsec 1, lem 0, eq 5} the polynomial~$N_2(r;r_s)$ attains no roots in~$(r_+,\bar{r}_+)$ for any~$r_s\in (r_+,\bar{r}_+)$.

\boxed{For ~the ~case~$(1)$} of~\eqref{eq: proof: lem: subsec: sec: G, subsec 1, lem 0, eq 5} we proceed as follows. We consider the sub-cases
\begin{equation}\label{eq: proof: lem: subsec: sec: G, subsec 1, lem 0, eq 6}
	\begin{aligned}
		&	(1.1)~\frac{d\Delta}{dr}(r_{s,h})\geq 0\\
		&	(1.2)~\frac{d\Delta}{dr}(r_{s,h})< 0,~~	1-\frac{a^2}{l^2}-2\frac{r_{s,h}^2}{l^2} \leq  0 \\
		&	(1.3)~\frac{d\Delta}{dr}(r_{s,h})< 0,~~	1-\frac{a^2}{l^2}-2\frac{r_{s,h}^2}{l^2} >  0 \\
	\end{aligned}
\end{equation}
and we will prove a contradiction for all of them.

$\bullet$ We consider the \underline{case~$(1.1)$ of~\eqref{eq: proof: lem: subsec: sec: G, subsec 1, lem 0, eq 6}}. Then, we calculate from the definition of~$N_2(r;r_s)$, see~\eqref{eq: proof: lem: subsec: sec: G, subsec 1, lem 0, eq 1}, that the following holds 
\begin{equation}
	\frac{dN_2}{dr}(r_{s,h})=4 \Delta (r_{s,h})+ 4r_s \frac{d\Delta}{dr}(r_{s,h})>0
\end{equation}
But then, also see the Figure~\ref{fig: N2-2}, the following holds~$r_{s,h}>r_1$. Therefore, we have that
\begin{equation}
	0=N_2(r_1;r_{s,h})= \left(4r\Delta(r)-(r^2-r_{s,h}^2)\frac{d\Delta}{dr}(r)\right)(r=r_1)>0
\end{equation}
where we also used~\eqref{eq: proof: lem: subsec: sec: G, subsec 1, lem 0, eq 4.9}. \underline{\texttt{This is a contradiction.}}

$\bullet$ We consider the \underline{case~$(1.2)$ of~\eqref{eq: proof: lem: subsec: sec: G, subsec 1, lem 0, eq 6}}. In view of the definition of~$N_2(r;r_{s,h})$, see~\eqref{eq: proof: lem: subsec: sec: G, subsec 1, lem 0, eq 1.01}, the graph of~$N_2(r;r_{s,h})$ is to be thought of as the second of Figure~\ref{fig: N2-2}. In this case we note that for all~$r_{s,h}<r_{s,h,2}\leq \bar{r}_+$ we have that~$1-\frac{a^2}{l^2}-2\frac{r_{s,h,2}^2}{l^2}<0$, which is a \underline{\texttt{contradiction}} to~\eqref{eq: proof: lem: subsec: sec: G, subsec 1, lem 0, eq 2}, also see Figure~\ref{fig: N2-0}.

$\bullet$ We consider the \underline{case~$(1.3)$ of~\eqref{eq: proof: lem: subsec: sec: G, subsec 1, lem 0, eq 6}}. In view of the definition of~$N_2(r;r_{s,h})$, see~\eqref{eq: proof: lem: subsec: sec: G, subsec 1, lem 0, eq 1.01}, the graph of~$N_2(r;r_{s,h})$ is to be thought of as the first graph of Figure~\ref{fig: N2-2}.

Then, in view of the subextremality, see also Figure~\ref{fig: Delta}, and since~$\frac{d\Delta}{dr}(\tilde{r})\geq 0$, see~\eqref{eq: proof: lem: subsec: sec: G, subsec 1, lem 0, eq 4.9}, we obtain that 
\begin{equation}\label{eq: proof: lem: subsec: sec: G, subsec 1, lem 0, eq 7}
	\tilde{r}\leq r_{s,h}.
\end{equation}
Moreover, we calculate
\begin{equation}
	\frac{d^2N_2}{dr^2}(r_{s,h})=\left(6\frac{d\Delta}{dr}\right)(r=r_{s,h}) <0.
\end{equation}
Therefore, we obtain also that~$r_{s,h}<\tilde{r}$, also see Figure~\ref{fig: N2-2}. This is a \underline{\texttt{contradiction}} to~\eqref{eq: proof: lem: subsec: sec: G, subsec 1, lem 0, eq 7}.

Therefore, via a contradiction argument we have concluded that the polynomial~$N_2(r;r_s)$ can only attain the zeros 
\begin{equation}
	r=r_+,\qquad r=\bar{r}_+
\end{equation}
respectively for the cases~$r_s=r_+,~r_s=\bar{r}_+$. We conclude that in~$[r_+,\bar{r}_+]$ the only critical point of the function~$\mathcal{T}(r,\omega,m)$ is attained at
\begin{equation}
	r=r_s.
\end{equation}

It is immediate to conclude inequality~\eqref{eq: lem: subsec: sec: G, subsec 1, lem 0, eq 1} for~$a m \omega\in \left(\frac{a^2 m^2\Xi}{\bar{r}_+^2+a^2},\frac{a^2 m^2\Xi}{r_+^2+a^2}\right)$ since~$\frac{d\mathcal{T}}{dr}$ attains a unique zero with multiplicity 1.
\end{proof}

Second, we study~$\mathcal{T}(\omega,m,r)$ for the non-superradiant frequencies

\begin{proposition}\label{prop: subsec: sec: G, subsec 1, prop -1}
	Let~$l>0$ and~$(a,M)\in \mathcal{B}_l$. For the non-superradiant frequencies
	\begin{equation}\label{eq: lem: subsec: sec: G, subsec 1, lem -1, eq 0}
		(\omega,m)\in (\mathcal{SF})^c
	\end{equation}
	the function~$\mathcal{T}(\omega,m,r)$, see~\eqref{eq: sec: G, eq 0}, satisfies~$\mathcal{T}(\omega,m,\cdot)\in C^\infty (r_+,\bar{r}_+)$. Moreover, the function~$\mathcal{T}(\omega,m,\cdot)$ attains a unique critical point
	\begin{equation}
		r_{\textit{crit}}(\omega,m)\in [r_+,\bar{r}_+],
	\end{equation}
	where~$\frac{d\mathcal{T}}{dr}(r_{crit})=0$. The cases~$r_{crit}=r_+,r_{crit}=\bar{r}_+$ correspond to the borderline superradiant cases~$am\omega=\frac{a^2m^2\Xi}{r_+^2+a^2}$ and~$am\omega=\frac{a^2m^2\Xi}{\bar{r}_+^2+a^2}$ respectively, in which case we have~$\mathcal{T}(\omega,m,\cdot)\in C^1[r_+,\bar{r}_+)$ and $\mathcal{T}(\omega,m,\cdot)\in C^1(r_+,\bar{r}_+]$ respectively.

	Moreover, for any~$(\omega,m)\in (\mathcal{SF})^c\setminus \{am\omega=\frac{a^2m^2\Xi}{r_+^2+a^2},am\omega=\frac{a^2m^2\Xi}{\bar{r}_+^2+a^2}\}$ we have that~$\mathcal{T}(\omega,m,\cdot) \in C^2(r_+,\bar{r}_+)$ and the following holds
	\begin{equation}\label{eq: lem: subsec: sec: G, subsec 1, lem -1, eq 1}
		\left|\frac{d^2}{dr^2}\Big|_{r=r_{\textit{crit}}} \mathcal{T}\right| >0.
	\end{equation} 
	For the borderline non superradiant frequencies~$am\omega=\frac{a^2m^2\Xi}{r_+^2+a^2}$ or~$am\omega=\frac{a^2m^2\Xi}{\bar{r}_+^2+a^2}$ the function~$\frac{d \mathcal{T}}{dr}$ is not differentiable in~$r$ at the points~$r=r_+$ and~$r=\bar{r}_+$, respectively. 
\end{proposition}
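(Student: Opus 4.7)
The plan is to mirror the strategy of Proposition~\ref{prop: subsec: sec: G, subsec 1, prop 0} above, tracking how the cubic $N_{2}(r;r_s)$ behaves when $r_s^{2}$ lies outside the superradiant range $(r_+^{2},\bar{r}_+^{2})$. Smoothness of $\mathcal{T}(\omega,m,\cdot)$ on the open interval $(r_+,\bar{r}_+)$ is immediate from $\Delta>0$, so the core of the argument concerns the unique critical point.

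For $\omega=0$ and $am\neq 0$ one has $\mathcal{T}=(am\Xi)^{2}/\Delta$, whose critical points coincide with those of $1/\Delta$; by Lemma~\ref{lem: sec: properties of Delta, lem 1, derivatives of Delta} there is a single zero of $\Delta'$ in $(r_+,\bar{r}_+)$, giving the unique maximum $r_{*}$ of $\Delta$. For $\omega\neq 0$ I set $r_{s}^{2}=am\Xi/\omega-a^{2}$ and borrow the factorization of the previous proposition,
\[
\frac{d\mathcal{T}}{dr}=\omega^{2}(r^{2}-r_{s}^{2})\,\frac{N_{2}(r;r_{s})}{\Delta^{2}},\qquad N_{2}(r;r_{s})=4r\Delta-(r^{2}-r_{s}^{2})\Delta'.
\]
Since $(\omega,m)\in(\mathcal{SF})^{c}$ yields either (A) $r_{s}^{2}\leq r_+^{2}$ or (B) $r_{s}^{2}\geq\bar{r}_+^{2}$, the factor $(r^{2}-r_{s}^{2})$ is sign-definite on $[r_+,\bar{r}_+]$, so the critical points in $(r_+,\bar{r}_+)$ are precisely the zeros of $N_{2}(\cdot;r_{s})$ there.

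Using $N_{2}(r_+;r_{s})=\Delta'(r_+)(r_{s}^{2}-r_+^{2})$ and $N_{2}(\bar{r}_+;r_{s})=\Delta'(\bar{r}_+)(r_{s}^{2}-\bar{r}_+^{2})$ together with $\Delta'(r_+)>0,~\Delta'(\bar{r}_+)<0$, I verify that in both (A) and (B) these endpoint values have opposite signs, giving at least one zero by the intermediate value theorem. For uniqueness I observe that any interior zero $r$ of $N_{2}(\cdot;r_{s})$ satisfies $r_{s}^{2}=r^{2}-4r\Delta/\Delta'$, which in case (A) forces $\Delta'(r)>0$, hence $r\in(r_+,r_{*})$, and in case (B) forces $\Delta'(r)<0$, hence $r\in(r_{*},\bar{r}_+)$. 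A continuation argument in $r_{s}^{2}$ then closes the uniqueness claim: at $r_{s}^{2}=r_+^{2}$ and $r_{s}^{2}=\bar{r}_+^{2}$ Proposition~\ref{prop: subsec: sec: G, subsec 1, prop 0} furnishes a single simple zero of $N_{2}$ at the corresponding endpoint and strict positivity elsewhere on $[r_+,\bar{r}_+]$; since $\partial_{r_{s}^{2}}N_{2}(r;\cdot)=\Delta'(r)$ has a definite sign on each of $(r_+,r_{*})$ and $(r_{*},\bar{r}_+)$, a second interior zero could appear only through a double root of $N_{2}(\cdot;r_{s})$, and this is excluded by combining the endpoint sign data, the restriction to the subintervals just identified, and the fact that $N_{2}$ is a cubic. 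Non-degeneracy $|\tfrac{d^{2}\mathcal{T}}{dr^{2}}(r_{\textit{crit}})|>0$ follows from the simplicity of the root of $N_{2}$ at $r_{\textit{crit}}$ together with $(r^{2}-r_{s}^{2})\neq 0$.

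At the borderline $am\omega=a^{2}m^{2}\Xi/(r_+^{2}+a^{2})$ one has $P(r)=\omega(r^{2}-r_+^{2})$; writing $\Delta(r)=(r-r_+)Q(r)$ with $Q$ a smooth cubic and $Q(r_+)=\Delta'(r_+)>0$ yields
\[
\mathcal{T}(r)=\frac{\omega^{2}(r+r_+)^{2}(r-r_+)}{Q(r)},
\]
which extends continuously to $[r_+,\bar{r}_+)$ with $\mathcal{T}(r_+)=0$ realizing the minimum; the symmetric statement holds at $\bar{r}_+$. The main obstacle is the uniqueness step in the non-borderline regime --- in particular, ruling out the emergence of a second interior zero of $N_{2}(\cdot;r_{s})$ as $r_{s}^{2}$ exits $[r_+^{2},\bar{r}_+^{2}]$ --- which requires careful coordination of the endpoint sign data with the restriction to the subintervals dictated by $\mathrm{sign}(\Delta')$ and the linearity $\partial_{r_{s}^{2}}N_{2}=\Delta'$.
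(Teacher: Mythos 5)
Your proposal takes a genuinely different route from the paper: you try to reuse the factorization
\[
\frac{d\mathcal{T}}{dr}=\omega^2(r^2-r_s^2)\,\frac{N_2(r;r_s)}{\Delta^2},\qquad r_s^2=\frac{am\Xi}{\omega}-a^2,
\]
from Proposition~\ref{prop: subsec: sec: G, subsec 1, prop 0}, allowing $r_s^2$ to leave $[r_+^2,\bar{r}_+^2]$ (or even be negative when $m=0$), whereas the paper deliberately switches to studying $\mathcal{T}^{-1}$ and a different cubic $N_1(r)$ precisely because $N_2$ is tractable in the superradiant regime but not here. The existence step and the localization (all interior zeros of $N_2(\cdot;r_s)$ forced into $(r_+,r_{\Delta,\textit{frac}})$ in case (A), into $(r_{\Delta,\textit{frac}},\bar{r}_+)$ in case (B)) are correct and cleanly stated; the endpoint evaluations $N_2(r_+;r_s)=\Delta'(r_+)(r_s^2-r_+^2)$, $N_2(\bar{r}_+;r_s)=\Delta'(\bar{r}_+)(r_s^2-\bar{r}_+^2)$ check out.

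However, the uniqueness step has a genuine gap, and you in fact flag it yourself as ``the main obstacle.'' The claim that ``a second interior zero could appear only through a double root $\ldots$ excluded by combining the endpoint sign data, the restriction to the subintervals just identified, and the fact that $N_2$ is a cubic'' does not close. In case (A), a double root $r_0\in(r_+,r_{\Delta,\textit{frac}})$ plus a simple root $r_1\in(r_+,r_{\Delta,\textit{frac}})$ is \emph{consistent} with all three of those facts: a cubic with positive leading coefficient, all three roots in that subinterval, $N_2(r_+;\rho)<0$, and $N_2(r_{\Delta,\textit{frac}};\rho)=4r_{\Delta,\textit{frac}}\Delta(r_{\Delta,\textit{frac}})>0$ is a perfectly admissible picture. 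Excluding it amounts to showing that the map $r\mapsto r^2 - 4r\Delta(r)/\Delta'(r)$ is injective on $(r_+,r_{\Delta,\textit{frac}})$ (equivalently that $4\Delta\Delta'+2r(\Delta')^2-4r\Delta\Delta''\neq 0$ there), and this is \emph{not} a consequence of the endpoint data, $\partial_{r_s^2}N_2=\Delta'$, and cubicity — it depends on finer properties of $\Delta$ that you have not invoked. The paper sidesteps exactly this difficulty by passing to $N_1$: there $N_1(r_+)N_1(\bar{r}_+)<0$ gives an odd root count, the discriminant of $N_1'$ controls the critical points of $N_1$ in one regime, and a sign computation on $N_1''(r_+)$ or $N_1''(\bar{r}_+)$ handles the rest. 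Your proof would need an analogous non-trivial input (e.g.\ a monotonicity lemma for the auxiliary map above, or a direct count of the critical points of $N_2(\cdot;r_s)$ in the relevant subinterval) to be complete.
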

\begin{proof}
First, it is clear from~\eqref{eq: sec: G, eq 0} that~$\mathcal{T}\in C^\infty(r_+,\bar{r}_+)$. For the borderline non superradiant frequencies~$am\omega=\frac{a^2m^2\Xi}{r_+^2+a^2},am\omega=\frac{a^2m^2\Xi}{\bar{r}_+^2+a^2}$ we have respectively
	\begin{equation}\label{eq: proof lem: subsec: sec: proof of exp decay, sign of g2, lem 1, eq 1}
		\begin{aligned}
			\frac{d\mathcal{T}}{dr}&	=(r^2-r_+^2)\frac{4r\Delta -(r^2-r_+^2)\frac{d\Delta}{dr}}{2\Delta\sqrt{\Delta}}\:\dot{=}\:(r^2-r_+^2)\frac{N_2(r;r_+)}{2\Delta\sqrt{\Delta}},\\
			\frac{d\mathcal{T}}{dr}&	=(r^2-\bar{r}_+^2)\frac{4r\Delta -(r^2-r_s^2)\frac{d\Delta}{dr}}{2\Delta\sqrt{\Delta}}\:\dot{=}\:(r^2-\bar{r}_+^2)\frac{N_2(r;\bar{r}_+)}{2\Delta\sqrt{\Delta}}.\\
		\end{aligned}
	\end{equation}
	It is easy to adapt the study of the polynomial~$N_2(r;r_s)$, of Proposition~\ref{prop: subsec: sec: G, subsec 1, prop 0}, to prove that~$N_2(r;r_+),N_2(r;\bar{r}_+)$ attain unique zeroes respectively at~$r_+,\bar{r}_+$. Moreover, it is obvious from~\eqref{eq: proof lem: subsec: sec: proof of exp decay, sign of g2, lem 1, eq 1} that the function~$\frac{d\mathcal{T}}{dr}$ is not differentiable for the frequencies~$am\omega=\frac{a^2m^2\Xi}{r_+^2+a^2},am\omega=\frac{a^2m^2\Xi}{\bar{r}_+^2+a^2}$ at the values~$r_+,\bar{r}_+$ respectively, but it is continous at~$r_+,\bar{r}_+$.

	We note that if~$\omega=0$ then
	\begin{equation}
		\mathcal{T}(0,m,r)= \frac{(am\Xi)^2}{\Delta},
	\end{equation} 
	and therefore the Lemma follows by the properties of~$\Delta$, see Lemma~\ref{lem: sec: properties of Delta, lem 1, derivatives of Delta} and the subextremality condition of Definition~\ref{def: subextremality, and roots of Delta}.  Therefore, in what follows we assume that~$\omega\not{=}0$.

	We study
	\begin{equation}
		(\mathcal{T}(\omega,m,r))^{-1}= \frac{\Delta}{(r^2+a^2)^2\left(\omega-\frac{am\Xi}{r^2+a^2}\right)^2}.
	\end{equation}

	For the non-superradiant frequencies~\eqref{eq: lem: subsec: sec: G, subsec 1, lem -1, eq 0} we note that for all~$r\in [r_+,\bar{r}_+]$ we have
	\begin{equation}\label{eq: proof lem: subsec: sec: proof of exp decay, sign of g2, lem 1, eq 2}
		(\omega(r^2+a^2)-am\Xi)^2>0. 
	\end{equation}
	We obtain that 
	\begin{equation}\label{eq: proof lem: subsec: sec: proof of exp decay, sign of g2, lem 1, eq 3}
		\begin{aligned}
			\frac{d}{dr}\frac{\Delta}{(r^2+a^2)^2\left(\omega-\frac{am\Xi}{r^2+a^2}\right)^2} &	= \frac{\omega \left((r^2+a^2)\frac{d\Delta}{dr}-4r\Delta\right)-am\Xi\frac{d\Delta}{dr}}{\left((r^2+a^2)\omega-am\Xi\right)^3}\\
			&	=2\frac{\left(-\omega\Xi+\frac{2am\Xi}{l^2}\right)r^3+3M\omega r^2+\left(am\Xi\frac{a^2}{l^2}-a^2\Xi\omega-am\Xi\right)r-M\omega a^2 +am\Xi M +am\Xi \frac{a^2}{l^2}}{\left((r^2+a^2)\omega-am\Xi\right)^3}\\
			& \dot{=}\:2\frac{N_1(r)}{\left((r^2+a^2)\omega-am\Xi\right)^3}.
		\end{aligned}
	\end{equation}
	First, from~\eqref{eq: proof lem: subsec: sec: proof of exp decay, sign of g2, lem 1, eq 3} we note that 
	\begin{equation}\label{eq: proof lem: subsec: sec: proof of exp decay, sign of g2, lem 1, eq 3.1}
		\frac{d^2N_1}{dr^2}= \omega \left(-6\frac{d\Delta}{dr}+\frac{\omega (r^2+a^2)-am\Xi}{\omega}\frac{d^3\Delta}{dr^3} \right).
	\end{equation}
	
	As mentioned earlier, in the non-superradiant frequencies~\eqref{eq: lem: subsec: sec: G, subsec 1, lem -1, eq 0} the inequality~\eqref{eq: proof lem: subsec: sec: proof of exp decay, sign of g2, lem 1, eq 2} holds, and therefore for all~$r\in[r_+,\bar{r}_+]$ the function
	\begin{equation}
		\omega (r^2+a^2)-am\Xi 
	\end{equation}
	has a sign. Therefore, by the first line of~\eqref{eq: proof lem: subsec: sec: proof of exp decay, sign of g2, lem 1, eq 3} and by the properties of the polynomial~$\Delta$, see Lemma~\ref{lem: sec: properties of Delta, lem 1, derivatives of Delta}, we conclude that 
	\begin{equation}\label{eq: proof lem: subsec: sec: proof of exp decay, sign of g2, lem 1, eq 4}
		N_1(r_+)\cdot N_1(\bar{r}_+)<0.
	\end{equation}
	Moreover, we calculate the following 
	\begin{equation}\label{eq: proof lem: subsec: sec: proof of exp decay, sign of g2, lem 1, eq 4.01}
		\frac{dN_1}{dr}(r)= 3(-\omega\Xi +2am\Xi\frac{1}{l^2})r^2 +6M\omega r -a^2\Xi\omega -am\Xi +am\Xi\frac{a^2}{l^2}, 
	\end{equation}
	where the discriminant of~\eqref{eq: proof lem: subsec: sec: proof of exp decay, sign of g2, lem 1, eq 4.01} is 
	\begin{equation}\label{eq: proof lem: subsec: sec: proof of exp decay, sign of g2, lem 1, eq 4.02}
		\text{discriminant of }\frac{dN_1}{dr}= (6M\omega)^2 -12 a\Xi^2 \left(\omega-2\frac{am}{l^2}\right)\left(a\omega +m-m\frac{a^2}{l^2}\right).
	\end{equation}
	Therefore, by using the discriminant~\eqref{eq: proof lem: subsec: sec: proof of exp decay, sign of g2, lem 1, eq 4.02} we note that the roots of~$\frac{dN_1}{dr}$ are the following two
	\begin{equation}\label{eq: proof lem: subsec: sec: proof of exp decay, sign of g2, lem 1, eq 4.03} 
		\begin{aligned}
			\tilde{r}_\pm &= \frac{6M\omega \mp \sqrt{(6M\omega)^2 -12 a\Xi^2 \left(\omega-2\frac{am}{l^2}\right)\left(a\omega +m-m\frac{a^2}{l^2}\right)}}{6\Xi\left(\omega-\frac{2ma}{l^2}\right)}\\
			&	=\frac{\omega}{\omega-\frac{2a m}{l^2}}\frac{M}{\Xi} \mp \frac{\sqrt{(6M\omega)^2 -12 a\Xi^2 \left(\omega-2\frac{am}{l^2}\right)\left(a\omega +m-m\frac{a^2}{l^2}\right)}}{6\Xi\left(\omega-\frac{2 a m}{l^2}\right)}.
		\end{aligned}
	\end{equation}
	We note from~\eqref{eq: proof lem: subsec: sec: proof of exp decay, sign of g2, lem 1, eq 4.03} that if~$\frac{\omega}{\omega-\frac{2a m}{l^2}}\leq 0$ then at least one of~$\tilde{r}_\pm$ is negative, and therefore the function~$\frac{dN_1}{dr}$ attains at most one root in~$[r_+,\bar{r}_+]$. It also follows that~\eqref{eq: lem: subsec: sec: G, subsec 1, lem -1, eq 1} holds since otherwise~$\frac{dN_1}{dr}$ would attain at least~$2$ roots in~$[r_+,\bar{r}_+]$.

	Therefore, without loss of generality we may assume that 
	\begin{equation}\label{eq: proof lem: subsec: sec: proof of exp decay, sign of g2, lem 1, eq 4.04}
		\frac{\omega}{\omega-\frac{2a m}{l^2}}>0.
	\end{equation}
	We study the two disjoint frequency ranges
	\begin{equation}\label{eq: proof lem: subsec: sec: proof of exp decay, sign of g2, lem 1, eq 4.1}
		\omega\left(\omega(r^2+a^2)-am\Xi\right)>0,\qquad \omega\left(\omega(r^2+a^2)-am\Xi\right)< 0
	\end{equation}
	which cover the entire range of non-superradiant frequencies, except for~$\omega=0$.

	In the first case of~\eqref{eq: proof lem: subsec: sec: proof of exp decay, sign of g2, lem 1, eq 4.1} we proceed as follows. In view of the assumption~\eqref{eq: proof lem: subsec: sec: proof of exp decay, sign of g2, lem 1, eq 4.04} we note from~\eqref{eq: proof lem: subsec: sec: proof of exp decay, sign of g2, lem 1, eq 3.1} and from Lemma~\ref{lem: sec: properties of Delta, lem 1, derivatives of Delta}~(properties of~$\Delta$) that the following holds
	\begin{equation}\label{eq: proof lem: subsec: sec: proof of exp decay, sign of g2, lem 1, eq 4.2}
		\omega\frac{d^2 N_1}{dr^2}(r_+)<0.
	\end{equation}
	Therefore, inequality~\eqref{eq: proof lem: subsec: sec: proof of exp decay, sign of g2, lem 1, eq 4} in conjunction with~\eqref{eq: proof lem: subsec: sec: proof of exp decay, sign of g2, lem 1, eq 4.2} implies that the third degree polynomial function~$N_1(r)$ attains exactly one root in~$(r_+,\bar{r}_+)$. Again, in view of that~$N_1(r)$ is a polynomial of degree~$3$ we also conclude~\eqref{eq: lem: subsec: sec: G, subsec 1, lem -1, eq 1}. Therefore, we conclude the result of the Lemma under the first case of~\eqref{eq: proof lem: subsec: sec: proof of exp decay, sign of g2, lem 1, eq 4.1}.

	In the second case of~\eqref{eq: proof lem: subsec: sec: proof of exp decay, sign of g2, lem 1, eq 4.1} we proceed as follows. In view of the assumption~\eqref{eq: proof lem: subsec: sec: proof of exp decay, sign of g2, lem 1, eq 4.04} we note from~\eqref{eq: proof lem: subsec: sec: proof of exp decay, sign of g2, lem 1, eq 3.1} and from Lemma~\ref{lem: sec: properties of Delta, lem 1, derivatives of Delta} that the following holds
	\begin{equation}\label{eq: proof lem: subsec: sec: proof of exp decay, sign of g2, lem 1, eq 4.3}
		\omega\frac{d^2 N_1}{dr^2}(\bar{r}_+) > 0.
	\end{equation}
	Therefore, inequality~\eqref{eq: proof lem: subsec: sec: proof of exp decay, sign of g2, lem 1, eq 4} in conjunction with~\eqref{eq: proof lem: subsec: sec: proof of exp decay, sign of g2, lem 1, eq 4.3} implies that the third degree polynomial function~$N_1(r)$ attains exactly one root in~$(r_+,\bar{r}_+)$. Again, in view of that~$N_1(r)$ is a polynomial of degree~$3$ we also conclude~\eqref{eq: lem: subsec: sec: G, subsec 1, lem -1, eq 1}. Therefore, we conclude the result of the Proposition under the second case of~\eqref{eq: proof lem: subsec: sec: proof of exp decay, sign of g2, lem 1, eq 4.1}. 
\end{proof}

\subsection{The definition of~\texorpdfstring{$g_2(\omega,m,r)$}{g} }\label{subsec: sec: morawetz estimate, subsec 4}

In view of Propositions~\ref{prop: subsec: sec: G, subsec 1, prop -1},~\ref{prop: subsec: sec: G, subsec 1, prop 0} the following is well defined

\begin{definition}\label{def: subsec: sec: G, subsec 1, def 1}
	Let~$l>0$ and~$(a,M)\in\mathcal{B}_l$. Let~$(\omega,m)\in\mathbb{R}\times\mathbb{Z}$. Let~$g_1(r),~g_2^2(\omega,m,r)$ be as in Definition~\ref{def: sec: G, def 0}. Moreover, let~$r_{\textit{crit}}(\omega,m)$ be the unique critical point of the function~$g_2^2(\omega,m,\cdot)$, as proved in Propositions~\ref{prop: subsec: sec: G, subsec 1, prop 0},~\ref{prop: subsec: sec: G, subsec 1, prop -1}.

	For the frequencies~$am\omega>\frac{a^2m^2\Xi}{r_+^2+a^2}$ we define 
	\begin{equation}
		g_2(\omega,m,r)=
		\begin{cases}
			-\sqrt{g_2^2},	\quad r\geq r_{\textit{crit}}(\omega,m)\\
			+\sqrt{g_2^2}, \quad r\leq r_{\textit{crit}}(\omega,m).
		\end{cases}
	\end{equation}

	For the superradiant frequencies~$(\omega,m)\in\mathcal{SF}$ we define 
	\begin{equation}
		g_2(\omega,m,r)=-\sqrt{g_2^2} =-\left|\omega-\frac{am\Xi}{r^2+a^2}\right|\frac{r^2+a^2}{\sqrt{\Delta}}.
	\end{equation}
	
	For the frequencies~$am\omega<\frac{a^2m^2\Xi}{\bar{r}_+^2+a^2}$ we define 
		\begin{equation}
	g_2(\omega,m,r)=
	\begin{cases}
	+\sqrt{g_2^2},	\quad r\geq r_{\textit{crit}}(\omega,m)\\
	-\sqrt{g_2^2}, \quad r\leq r_{\textit{crit}}(\omega,m).
	\end{cases}
	\end{equation}
	For the borderline non-superradiant frequencies~$am\omega=\frac{a^2m^2\Xi}{r_+^2+a^2},~am\omega=\frac{a^2m^2\Xi}{\bar{r}_+^2+a^2}$ we define respectively
	\begin{equation}
		g_2(\omega,m,r)= -\sqrt{g_2^2}=-|am\Xi| \frac{r^2-r_+^2}{(r_+^2+a^2)\sqrt{\Delta}},\qquad g_2(\omega,m,r)= -\sqrt{g_2^2}=-|am\Xi| \frac{\bar{r}_+^2-r^2}{(\bar{r}_+^2+a^2)\sqrt{\Delta}}. 
	\end{equation}	
	In the Schwarzschild--de~Sitter case~$a=0$ we define~$g_2=g_1\frac{-i\omega}{\sqrt{1-9M^2\Lambda}}\left(1-\frac{3M}{r}\right)\sqrt{1+\frac{6M}{r}}$, as in our previous~\cite{mavrogiannis}. For the function~$g_2(\omega,m,r)$ graphically see Figure~\ref{fig: g2}. 	
\end{definition}

\subsection{The definition of~\texorpdfstring{$\mathcal{G}$}{g}}\label{subsec: sec: morawetz estimate, subsec 5}

We define here the operator~$\mathcal{G}$, discussed in the introduction, as follows

\begin{customDefinition}{1}\label{def: sec: G, def 1}
Let~$l>0$ and~$(a,M)\in\mathcal{B}_l$. Let~$(\omega,m)\in\mathbb{R}\times\mathbb{Z}$.  Let the functions~$g_1,~g_2$ be as in Definitions~\ref{def: sec: G, def 0},~\ref{def: subsec: sec: G, subsec 1, def 1}.  For any~$r_+<r<\bar{r}_+ $ we define the operator
\begin{equation}
\mathcal{G}=g_1(r)\partial_{r^\star}+i\textit{Op}( g_2),
\end{equation}
such that the Fourier multiplier operator~$\textit{Op}(g_2)$ is defined as 
\begin{equation}
\left(\textit{Op}(g_2)\Psi\right)(r;t,\varphi,\theta)=\frac{1}{\sqrt{2\pi}}\int_{\mathbb{R}}d\omega \sum_{m\in \mathbb{Z}} e^{i\omega t} e^{-im\varphi} g_2(\omega,m,r) 
\mathcal{F}_{\omega,m}(\Psi)(r,\theta), 
\end{equation}
for any smooth~$\Psi:(r_+,\bar{r}_+)\times\mathbb{R} \times [0,2\pi)\times [0,\pi)\rightarrow \mathbb{C}$, where moreover~$\Psi$ is sufficiently integrable. For~$\mathcal{F}_{\omega,m}$ see~\eqref{eq: subsec: sec: carter separation, radial, eq -2}. For~$r\leq r_+$ and for~$r\geq \bar{r}_+$ we define~$\mathcal{G}\equiv 0$, where~$0$ here is an operator. 
\end{customDefinition}

For the Schwarzschild--de~Sitter case~$a=0$ we note that the pseudodifferential operator induced from Definition~\ref{def: sec: G, def 1} corresponds to the physical space vector field~$\mathcal{G}$ of our previous~\cite{mavrogiannis}.

If the solution of the Klein--Gordon equation~\eqref{eq: kleingordon} is axisymmetric, then we will define the alternative physical space vector field 
\begin{equation}
	\mathcal{G}:= g_1(r)\partial_{r^\star}+G_2(r)\partial_{t^\star},
\end{equation}
which we also denote as~$\mathcal{G}\big|_{m=0}$, where 
\begin{equation}
	G_2=
	\begin{cases}
		\sqrt{G_2^2},\qquad r\geq r_{\Delta,\textit{frac}}\\
		-\sqrt{G_2^2},\qquad r\leq r_{\Delta,\textit{frac}}
	\end{cases}
\end{equation}
with~$G_2^2=\frac{(r^2+a^2)^2}{\Delta}-\min_{[r_+,\bar{r}_+]}\frac{(r^2+a^2)^2}{\Delta}$, and~$r_{\Delta,\textit{frac}}$ is the unique critical point of the function~$\frac{\Delta}{(r^2+a^2)^2}$.

\subsection{The properties of the function~\texorpdfstring{$g_2(\omega,m,r)$}{g}}\label{subsec: sec: morawetz estimate, subsec 6}

In the following Lemma we discuss the differentiability of the function~$g_2(\omega,m,r)$, see Definition~\ref{def: subsec: sec: G, subsec 1, def 1}. We will use the following Lemma in the ode estimates of Section~\ref{sec: fixed frequency ode estimates}.

\begin{proposition}\label{prop: subsec: sec: G, subsec 1, prop 1}
	Let~$l>0$ and~$(a,M)\in\mathcal{B}_l$. Let the functions~$g_1,g_2$ be as in Definitions~\ref{def: sec: G, def 0},~\ref{def: subsec: sec: G, subsec 1, def 1} respectively. Then, the following hold.
	
	\begin{enumerate}
		\item For fixed~$(\omega,m)\in \mathbb{R}\times\mathbb{Z}$ we have~$g_2(\omega,m,\cdot)\in C^0(r_+,\bar{r}_+)$\\
		\item For fixed~$(\omega,m)\in (\mathcal{SF})^c$ we have that~$g_2(\omega,m,\cdot)\in C^1 (r_+,\bar{r}_+)$.
		\item For fixed~$(\omega,m)\in \mathcal{SF}$ we have that~$g_2(\omega,m,\cdot) \not\in C^1(r_+,\bar{r}_+)$. Specifically, the function~$g_2^\prime (\omega,m,\cdot)$ is piecewise~$C^0(r_+,\bar{r}_+)$ and is not differentiable only at the point~$r_s$ where~$am\omega =\frac{a^2m^2\Xi}{r_s^2+a^2}$, see Figure~\ref{fig: g2}.  
		\item For fixed~$m\in\mathbb{Z}$ and $r\in(r_+,\bar{r}_+)$ and for any~$\omega$ such that~$(\omega,m)\in (\mathcal{SF})^c$ we have~$g_2(\cdot,m,r)$ is differentiable. \\
		\item For fixed~$r\in(r_+,\bar{r}_+)$ and fixed~$m\in\mathbb{Z}$ the function~$g_2(\cdot,m,r)$ is not differentiable in~$\omega$ at the point~$\omega=\frac{am\Xi}{r^2+a^2}$.\\
		\item For a smooth function~$u$ satisfying the outgoing boundary conditions~\eqref{eq: BC} we obtain 
		\begin{equation}
			\left(g_1\frac{d}{dr^\star}+ig_2(\omega,m,r)\right)\Big|_{r^\star=\pm \infty}u=0.
		\end{equation}
			\item For the non-superradiant frequencies~$(\omega,m)\in (\mathcal{SF})^c$ we have 
		\begin{equation}\label{eq: lem: subsec: sec: G, subsec 1, lem 1, eq 5}
			\frac{1}{g_1}\left|\frac{d g_2}{d r^\star}\right| \geq b\left(\left|\omega-\frac{am\Xi}{r^2+a^2}\right|+\Delta \left(|\omega|+\frac{|am\Xi|}{r^2}\right) \right) ,
		\end{equation}
	\end{enumerate}
\end{proposition}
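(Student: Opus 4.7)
The plan is to exploit the identity $g_2^2(\omega,m,r) = \mathcal{T}(\omega,m,r) - \min_{r \in [r_+,\bar{r}_+]} \mathcal{T}(\omega,m,r)$, together with the critical-point analysis of Propositions~\ref{prop: subsec: sec: G, subsec 1, prop 0} and~\ref{prop: subsec: sec: G, subsec 1, prop -1}, to read off the regularity statements (1)--(5), to match asymptotics with the outgoing boundary conditions~\eqref{eq: BC} for (6), and then to attack (7) as the principal quantitative obstacle.

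For (1), continuity follows because $g_2^2$ is continuous and vanishes only at the unique critical point of $\mathcal{T}(\omega,m,\cdot)$; the sign convention in Definition~\ref{def: subsec: sec: G, subsec 1, def 1} is arranged to flip exactly at that zero. For (2), in the non-superradiant range the nondegeneracy $|d^2\mathcal{T}/dr^2(r_{crit})| > 0$ from Proposition~\ref{prop: subsec: sec: G, subsec 1, prop -1} yields the Taylor expansion
\[
g_2^2(r) = \tfrac12 \frac{d^2\mathcal{T}}{dr^2}(r_{crit})(r-r_{crit})^2 + O((r-r_{crit})^3),
\]
so $g_2$ factors as $(r-r_{crit})$ times a continuous nonvanishing function and is hence $C^1$; the borderline non-superradiant frequencies are handled by direct inspection of the explicit formulas in Definition~\ref{def: subsec: sec: G, subsec 1, def 1}. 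Claim (3) is immediate from the superradiant formula $g_2 = -|\omega - am\Xi/(r^2+a^2)|(r^2+a^2)/\sqrt{\Delta}$, where the absolute value of a smooth function with a simple zero at $r_s$ produces a corner; (5) is the direct analogue in the $\omega$ variable at fixed $r$.

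For (4), I apply the implicit function theorem to $(d/dr)\mathcal{T}(\omega, m, r_{crit}(\omega,m)) = 0$, using the nondegeneracy of $d^2\mathcal{T}/dr^2$ at $r_{crit}$, to conclude that $r_{crit}(\omega,m)$ depends smoothly on $\omega$ throughout the non-superradiant range. Hence $g_2^2(\omega,m,r) = \mathcal{T}(\omega,m,r) - \mathcal{T}(\omega,m,r_{crit}(\omega,m))$ is smooth in $\omega$; where $g_2^2 > 0$ the signed square root is smooth, and at any transitional value $\omega_\ast$ with $r_{crit}(\omega_\ast, m) = r$, Taylor expansion in $\omega$ shows $g_2^2 \approx c(\omega-\omega_\ast)^2$, and the sign convention in Definition~\ref{def: subsec: sec: G, subsec 1, def 1} swaps branches as $r_{crit}$ crosses $r$ precisely so that $g_2(\cdot, m, r)$ remains $C^1$ across $\omega_\ast$. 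For (6), on outgoing solutions at $r^\star = -\infty$ the expression $(g_1 \partial_{r^\star} + ig_2)u$ reduces to $i(g_2 - g_1(\omega - \omega_+m))u$, and the case-specific sign conventions across the five frequency strata are arranged precisely so that this combination vanishes in the limit $r \to r_+$; the analysis at $r^\star = +\infty$ is symmetric.

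The main obstacle is (7). Differentiating $g_2^2 = \mathcal{T} - \min \mathcal{T}$ and using $d/dr^\star = (\Delta/(r^2+a^2))\,d/dr$ gives
\[
\frac{1}{g_1}\frac{d g_2}{d r^\star} = \frac{\Delta^{3/2}}{2 g_2 (r^2+a^2)^2}\frac{d\mathcal{T}}{dr}.
\]
Near $r_{crit}$, the Taylor expansion above controls $|g_2|$ from above by $C|r-r_{crit}|$ with $C$ depending on $|\omega - am\Xi/(r_{crit}^2+a^2)|$, while the factorization of $d\mathcal{T}/dr$ along the lines of the proofs of Propositions~\ref{prop: subsec: sec: G, subsec 1, prop 0}--\ref{prop: subsec: sec: G, subsec 1, prop -1} (using that the cubic polynomials $N_1, N_2$ appearing there are bounded below uniformly in the non-superradiant range) yields a matching lower bound producing the $|\omega - am\Xi/(r^2+a^2)|$ term on the right of~\eqref{eq: lem: subsec: sec: G, subsec 1, lem 1, eq 5}. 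Away from $r_{crit}$, direct inspection of $\mathcal{T}$ and $d\mathcal{T}/dr$ recovers the $\Delta(|\omega|+|am\Xi|/r^2)$ term. The hardest step is obtaining the constant $b > 0$ uniformly over all non-superradiant $(\omega,m)$ up to the superradiant boundary, which requires careful book-keeping of the sub-extremal bounds on $\Delta$ from Lemma~\ref{lem: sec: properties of Delta, lem 1, derivatives of Delta}.
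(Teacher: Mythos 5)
Your proposal follows essentially the same route as the paper: items (1)--(6) are read off from Definition~\ref{def: subsec: sec: G, subsec 1, def 1} together with the critical-point analysis of Propositions~\ref{prop: subsec: sec: G, subsec 1, prop 0},~\ref{prop: subsec: sec: G, subsec 1, prop -1}, the Taylor expansion of $g_2^2$ around $r_{crit}$ using the nondegeneracy $|\frac{d^2}{dr^2}\mathcal{T}(r_{crit})|>0$, and the outgoing boundary conditions; and item (7) reduces to a direct computation separated into borderline and non-borderline cases. The paper's write-up of (1), (4), (5), (6) is a terse ``follows from the definition'' and of the non-borderline part of (7) is ``a trivial computation,'' so your version is in fact more explicit in places — the implicit-function-theorem argument for the $\omega$-smoothness of $r_{crit}$ in (4), and the identity $\frac{1}{g_1}\frac{dg_2}{dr^\star}=\frac{\Delta^{3/2}}{2g_2(r^2+a^2)^2}\frac{d\mathcal{T}}{dr}$ for (7) — but the underlying strategy is the same, and both treatments (yours and the paper's) leave the uniformity of the constant $b$ near the superradiant boundary, as well as the vanishing of the auxiliary term $g_1\bigl(u'+i(\omega-\omega_+m)u\bigr)$ at $\mathcal{H}^+$ in (6), at roughly the same level of detail.
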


\begin{proof}
	We obtain the following:
	\begin{enumerate}
			\item Follows from the definition of~$g_2$, see Definition~\ref{def: subsec: sec: G, subsec 1, def 1}. \\
			\item Follows from the definition of~$g_2$, see Definition~\ref{def: subsec: sec: G, subsec 1, def 1} for the non superradiant frequencies~$(\mathcal{SF})^c$. Specifically, since~$g_2^2(\omega,m,\cdot)\in C^\infty (r_+,\bar{r}_+)$, see Proposition~\ref{prop: subsec: sec: G, subsec 1, prop -1}, we use Taylor's theorem around the point~$r=r_{crit}$, for the cases~$r_{crit}\neq r_+,\bar{r}_+$ which correspond to the borderline superradiant cases~$am\omega=\frac{a^2m^2\Xi}{r_+^2+a^2},am\omega=\frac{a^2m^2\Xi}{\bar{r}_+^2+a^2}$ respectively. We obtain
			\begin{equation}
				g_2(\omega,m,r)= \pm (r-r_{crit}) \sqrt{ \frac{1}{2}\big| \frac{d^2}{dr^2}\big|_{r=r_{crit}}\left(g_2^2(\omega,m,r)\right)\big| +\mathcal{O}(r-r_{crit})}.
			\end{equation}
			Since~$\big|\frac{d^2}{dr^2}\big|_{r=r_{crit}}(g_2^2)\big|>0$, see Proposition~\ref{prop: subsec: sec: G, subsec 1, prop -1}, we conclude that indeed~$g_2$ is indeed~$C^1$ in a neighborgood of~$r_{crit}$. Away from~$r_{crit}$ the function~$g_2$ is obviously~$C^1$ from its definition.\\
			\item Follows from the definition of~$g_2$, see Definition~\ref{def: subsec: sec: G, subsec 1, def 1}. Specifically, we see that the function~$g_2(\omega,m,r)$ is not differentiable at the point~$r=r_{crit}=r_s$, where~$am\omega=\frac{a^2m^2\Xi}{r_s^2+a^2}$, see Proposition~\ref{prop: subsec: sec: G, subsec 1, prop 0}.  \\
		\item Follows from the definition of~$g_2$, see Definition~\ref{def: subsec: sec: G, subsec 1, def 1}. \\
		\item  Follows from the definition of~$g_2$, see Definition~\ref{def: subsec: sec: G, subsec 1, def 1}.	\\
		\item  Follows from the definitions of~$g_1,g_2$, see Definition~\ref{def: subsec: sec: G, subsec 1, def 1}, by also using the outgoing boundary condition~\eqref{eq: BC}. 
		\\
			\item We first note that for the borderline non-superradiant frequencies
		\begin{equation*}
			am\omega=\frac{a^2m^2\Xi}{r_+^2+a^2},\qquad am\omega=\frac{a^2m^2\Xi}{\bar{r}_+^2+a^2}
		\end{equation*}
		we have respectively 
		\begin{equation*}
			g_2(\omega,m,r)= -|am\Xi| \frac{r^2-r_+^2}{(r_+^2+a^2)\sqrt{\Delta}},\qquad g_2(\omega,m,r)= -|am\Xi| \frac{\bar{r}_+^2-r^2}{(\bar{r}_+^2+a^2)\sqrt{\Delta}}. 
		\end{equation*}
		see Definition~\ref{def: subsec: sec: G, subsec 1, def 1}, for which~\eqref{eq: lem: subsec: sec: G, subsec 1, lem 1, eq 5} indeed holds after a direct computation. Which indeed satisfies the lower bound~\eqref{eq: lem: subsec: sec: G, subsec 1, lem 1, eq 5}. To prove that the remaining frequencies satisfy the bound~\eqref{eq: lem: subsec: sec: G, subsec 1, lem 1, eq 5} is a trivial computation. The reader should also see Figure~\ref{fig: g2} for more intuition. 
	\end{enumerate}
\end{proof}

\subsection{Parseval identities with the operator~\texorpdfstring{$\mathcal{G}$}{g}}\label{subsec: sec: morawetz estimate, subsec 6.1}

Let~$g_1(r),g_2(\omega,m,r),\mathcal{G},\Psi$ be as in Definition~\ref{def: sec: G, def 1}. Let~$v$ be as follows 
\begin{equation*}
	v=(g_1\partial_{r^\star}+ig_2(\omega,m,r))u,
\end{equation*}
where~$u=\sqrt{r^2+a^2}\mathcal{F}_{\omega,m}(\Psi)$. For~$\mathcal{F}_{\omega,m}$ see~\eqref{eq: subsec: sec: carter separation, radial, eq -2}.

Then, we have the following Parseval identities
\begin{equation}
	\begin{aligned}
		\int_\mathbb{R}  \sum_{m,\ell}|v|^2d\omega &= 	\int_\mathbb{R}\int_{\mathbb{S}^2} |\mathcal{G}\left(\sqrt{r^2+a^2}\Psi\right)|^2  dt d\sigma,\\
		\int_\mathbb{R}  \sum_{m,\ell}\omega^2|v|^2d\omega&= \int_\mathbb{R}\int_{\mathbb{S}^2} |\partial_t\mathcal{G}\left(\sqrt{r^2+a^2}\Psi\right)|^2  dt d\sigma,\\
		\int_\mathbb{R}  \sum_{m,\ell}m^2|v|^2d\omega&= 	\int_\mathbb{R}\int_{\mathbb{S}^2} |\partial_\varphi\mathcal{G}\left(\sqrt{r^2+a^2}\Psi\right)|^2 dt d\sigma,\\
		\int_\mathbb{R}  \sum_{m,\ell}\tilde{\lambda}|v|^2d\omega&=\int_\mathbb{R}\int_{\mathbb{S}^2} |^{d\sigma}\nabla\mathcal{G}\left(\sqrt{r^2+a^2}\Psi\right)|^2 dt d\sigma,\\
		\int_{\mathbb{R}}\sum_{m,\ell}(r-r_+)^{-1}\left|v^\prime+i(\omega-\omega_+m) v\right|^2d\omega	&=\int_{\mathbb{R}}\int_{\mathbb{S}^2}(r-r_+)^{-1}\left|(\partial_{r^\star}-(\partial_t+\omega_+\partial_\varphi))\mathcal{G}(\sqrt{r^2+a^2}\Psi)\right|^2dtd\sigma,\\
		\int_{\mathbb{R}}\sum_{m,\ell}(r-\bar{r}_+)^{-1}\left|v^\prime-i(\omega-\bar{\omega}_+m) v\right|^2d\omega	&=\int_{\mathbb{R}}\int_{\mathbb{S}^2}(r-\bar{r}_+)^{-1}\left|(\partial_{r^\star}+(\partial_t+\bar{\omega}_+\partial_\varphi))\mathcal{G}(\sqrt{r^2+a^2}\Psi)\right|^2dtd\sigma,
	\end{aligned}
\end{equation}
for any~$r_+< r < \bar{r}_+$, where~$d\sigma$ is the standard metric of the unit sphere, and~$^{d\sigma}\nabla$ is the covariant derivative of~$d\sigma$. Recall that~$\omega_+=\frac{a\Xi}{r_+^2+a^2},\bar{\omega}_+=\frac{a\Xi}{\bar{r}_+^2+a^2}$.

\subsection{\textbf{The operator~\texorpdfstring{$\widetilde{\mathcal{G}}$}{g}} and its properties}\label{subsec: sec: morawetz estimate, subsec 7}

We define a more regular version of~$\mathcal{G}$, which we call~$\widetilde{\mathcal{G}}$, see already Definition~\ref{def: proof thm 3 exp decay, def 1}. The reason for introducing~$\widetilde{\mathcal{G}}$ is to be able to appeal to pseudodifferential estimates which require operators with sufficiently regular symbol, see Lemma~\ref{lem: sec: appendix, pseudodifferential com, lem 0}.

Specifically, recall from the definition of~$g_2(\omega,m,r)$, see Definition~\ref{def: subsec: sec: G, subsec 1, def 1}, that for superradiant frequencies we have
\begin{equation}
	g_2(\omega,m,r)=-\left|\omega-\frac{am\Xi}{r^2+a^2}\right| \frac{r^2+a^2}{\sqrt{\Delta}},
\end{equation}
and therefore, for fixed~$m$ and~$r$ the function~$g_2(\cdot,m,r)$ is not differentiable in~$\omega$.

\subsubsection{The function~$\widetilde{g_2}$}\label{subsubsec: subsec: sec: morawetz estimate, subsec 7, subsubsec 1}

Let~$l>0$,~$(a,M)\in\mathcal{B}_l$. We define a function
\begin{equation}\label{eq: subsubsec: subsec: sec: morawetz estimate, subsec 7, subsubsec 1, eq 1}
\tilde{\epsilon}:\mathbb{R}\times \mathbb{Z}\rightarrow\mathbb{R},
\end{equation}
such that~$\tilde{\epsilon}(\cdot,m)$ is smooth for all~$m\in\mathbb{Z}$, and the following hold:
\begin{equation}\label{eq: subsubsec: subsec: sec: morawetz estimate, subsec 7, subsubsec 1, eq 0}
	\tilde{\epsilon}(\omega,m)	 \equiv 0,\quad (\omega,m)\in (\mathcal{SF})^c,
\end{equation}
\begin{equation}\label{eq: subsubsec: subsec: sec: morawetz estimate, subsec 7, subsubsec 1, eq 0.1}
	0<|\tilde{\epsilon}(\omega,m)|\leq C,\quad  (\omega,m)\in\mathcal{SF},
\end{equation}
\begin{equation}\label{eq: subsubsec: subsec: sec: morawetz estimate, subsec 7, subsubsec 1, eq 0.2}
	|\partial_\omega^{i_1}\Delta_m^{i_2}\tilde{\epsilon}(\omega,m)| \leq  C(1+|\omega|)^{-i_1-i_2}(1+|m|)^{-i_1-i_2},\qquad (\omega,m)\in \mathcal{SF},
\end{equation}
for all~$0\leq i_1+i_2\leq 1$, where~$\Delta_m$ is the finite difference operator. The constant~$C=C(a,M,l)$ is independent of~$\omega,m,i_1,i_2$. 

Note that in the Schwarzschild--de~Sitter case~$a=0$ we have that~$\tilde{\epsilon}(\omega,m)\equiv 0$.

\begin{remark}
	We here give an example of a function that satisfies~\eqref{eq: subsubsec: subsec: sec: morawetz estimate, subsec 7, subsubsec 1, eq 0},~\eqref{eq: subsubsec: subsec: sec: morawetz estimate, subsec 7, subsubsec 1, eq 0.1},~\eqref{eq: subsubsec: subsec: sec: morawetz estimate, subsec 7, subsubsec 1, eq 0.2}. Let~$0\leq \lambda_1\leq \lambda_2$. We define the smooth bump function
	\begin{equation}
		f_{a,\lambda_1,\lambda_2}(x)=
		\begin{cases}
			(\lambda_2-\lambda_1) e^{\frac{1}{(x-\lambda_1)(x-\lambda_2)}},\quad \lambda_1\leq x\leq \lambda_2\\
			0,\quad \text{otherwise}.
		\end{cases}
	\end{equation}
	In the interval~$[\lambda_1,\lambda_2]$ we have the derivative
	\begin{equation}
		f^\prime_{\lambda_1,\lambda_2}(x)=-\frac{\left(\lambda _1-\lambda _2\right) e^{\frac{1}{\left(x- \lambda _1\right) \left(x-\lambda _2\right)}} \left( \left(\lambda _1+\lambda _2\right)-2 x\right)}{\left(x- \lambda _1\right){}^2 \left(x- \lambda _2\right){}^2}.
	\end{equation}

	Now, we give an example of a function that satisfies~\eqref{eq: subsubsec: subsec: sec: morawetz estimate, subsec 7, subsubsec 1, eq 0},~\eqref{eq: subsubsec: subsec: sec: morawetz estimate, subsec 7, subsubsec 1, eq 0.1},~\eqref{eq: subsubsec: subsec: sec: morawetz estimate, subsec 7, subsubsec 1, eq 0.2}. Let~$l>0$ and~$(a,M)\in \mathcal{B}_l$. We define the function
	\begin{equation}
		\tilde{\epsilon}(\omega,m)=
		\begin{cases}
			f_{\lambda_1,\lambda_2}\left(\left|\frac{\omega}{m}\right|\right),\qquad(\omega,m)\in\mathcal{SF}\\
			0,\qquad (\omega,m)\in (\mathcal{SF})^c
		\end{cases}
	\end{equation}
	where~$\lambda_1=\frac{|a|\Xi}{\bar{r}_+^2+a^2},~\lambda_2=\frac{|a|\Xi}{r_+^2+a^2}$. For the case~$m=0$ it is clear we can simply define~$\tilde{\epsilon}\equiv 0$. 
\end{remark}

Now, we define the following

\begin{definition}\label{def: proof thm 3 exp decay, def 0}
	Let~$l>0$ and~$(a,M)\in\mathcal{B}_l$. Let~$\tilde{\epsilon}(\cdot,\cdot)$ be as in~\eqref{eq: subsubsec: subsec: sec: morawetz estimate, subsec 7, subsubsec 1, eq 1}. We define the following function
	\begin{equation*}
		\begin{aligned}
			\widetilde{g_2}(\omega,m,r)	&	\equiv g_2(\omega,m,r),\qquad (\omega,m)\in (\mathcal{SF})^c,\\
			\widetilde{g_2}(\omega,m,r)	&	=	-\sqrt{g_1^2\left(\omega-\frac{a\Xi}{r^2+a^2}m\right)^2+\tilde{\epsilon}(\omega,m)},\qquad (\omega,m)\in\mathcal{SF}
		\end{aligned}
	\end{equation*}
	where for~$g_1(r),~g_2(\omega,m,r)$ see Definition~\ref{def: subsec: sec: G, subsec 1, def 1}. In the Schwarzschild--de~Sitter case~$a=0$ we define~$\widetilde{g}_2\equiv g_2$.
\end{definition}

\subsubsection{The definition of~\texorpdfstring{$\widetilde{\mathcal{G}}$}{g}}

In view of Definition~\ref{def: proof thm 3 exp decay, def 0} we define the more regular version of~$\mathcal{G}$, see Definition~\ref{def: sec: G, def 1}, as follows

\begin{definition}\label{def: proof thm 3 exp decay, def 1}
Let~$l>0$ and~$(a,M)\in \mathcal{B}_l$. Let~$g_1,\widetilde{g_2}$ be as in Definition~\ref{def: proof thm 3 exp decay, def 0}. For any~$r\in(r_+,\bar{r}_+)$ we define the following operator 
	\begin{equation}\label{eq: proof thm 3 exp decay, eq 1}
		\widetilde{\mathcal{G}}= g_1\partial_{r^\star}+i\textit{Op}(\widetilde{g_2}),
	\end{equation}
	such that the Fourier multiplier operator~$Op(\widetilde{g_2})$ is defined as 
	\begin{equation}
		\left(\textit{Op}(\widetilde{g_2})\Psi\right)(r;t,\varphi,\theta)=\frac{1}{\sqrt{2\pi}}\int_{\mathbb{R}}d\omega \sum_{m\in \mathbb{Z}} e^{i\omega t} e^{-im\varphi} \widetilde{g_2}(\omega,m,r) \mathcal{F}_{\omega,m}(\Psi)(r,\theta),
	\end{equation}
	for any smooth~$\Psi:(r_+,\bar{r}_+)\times\mathbb{R} \times [0,2\pi)\times [0,\pi)\rightarrow \mathbb{C}$, where moreover~$\Psi$ is sufficiently integrable. For~$\mathcal{F}_{\omega,m}$ see~\eqref{eq: subsec: sec: carter separation, radial, eq -2}. For~$r\leq r_+$ and for~$r\geq \bar{r}_+$ we define~$\widetilde{\mathcal{G}}\equiv 0$, where~$0$ here is an operator. 
\end{definition}

\subsubsection{Properties of the function~\texorpdfstring{$\widetilde{g_2}$}{PDFstring}}

We prove here several properties of~$\widetilde{g_2}$.

\begin{lemma}\label{lem: proof thm 3 exp decay, lem 0}
	Let~$l>0$ and~$(a,M)\in\mathcal{B}_l$. Let~$(\omega,m)\in\mathbb{R}\times\mathbb{Z}$. Let~$g_1,\widetilde{g_2}$ be as in Definition~\ref{def: proof thm 3 exp decay, def 0}. Then, the following hold 
	
	\begin{enumerate}
		\item For any~$(\omega,m)\in\mathbb{R}\times\mathbb{Z}$ we have~$\widetilde{g_2}(\omega,m,\cdot)\in C^1(r_+,\bar{r}_+)$.\\
		\item For any~$r\in (r_+,\bar{r}_+)$ and for any~$m\in\mathbb{Z}$, we have that~$\widetilde{g_2}(\cdot, m,r)\in C^1(\mathbb{R})$.  
		\item For a smooth function~$u$ that satisfies the outgoing boundary conditions~\eqref{eq: BC}  we have
		\begin{equation*}
			\left(g_1(r)\frac{d}{dr^\star}+i\widetilde{g_2}(\omega,m,r)\right)\Big|_{r^\star=\pm\infty} u=0. 
		\end{equation*}
		\item  For any~$r\in[r_+,\bar{r}_+]$ the symbols
		\begin{equation}\label{eq: lem: proof thm 3 exp decay, lem 0, eq 1}
			- g_1(r)i\left(\omega-\frac{am\Xi}{r_+^2+a^2}\right) + i\widetilde{g_2}(\omega,m,r),\qquad 	g_1(r)i\left(\omega-\frac{am\Xi}{\bar{r}_+^2+a^2}\right) + i\widetilde{g_2}(\omega,m,r)
		\end{equation} 
		belong in the classical symbol class 
		\begin{equation*}
			S^1=\{\sigma:\mathbb{R}\times \mathbb{Z}\rightarrow \mathbb{R}:\: \exists C>0~\forall 0\leq i_1+i_2\leq 1~ \left|\partial^{i_1}_\omega\Delta^{i_2}_m \sigma(\omega,m) \right|\leq C\left(1+|\omega|\right)^{1-i_1-i_2}\left(1+|m|\right)^{1-i_1-i_2} \},
		\end{equation*}
		where the constant~$C>0$ is independent of~$r$ and \underline{blows up} in the limit~$a\rightarrow 0$. Note that for~$a=0$ the symbols~\eqref{eq: lem: proof thm 3 exp decay, lem 0, eq 1} also belong in the symbol class~$S^1$, see the definition of~$\widetilde{g}_2$. See already Appendix~\ref{sec: appendix, pseudodifferential commutation} for the Coifman--Meyer commutation for operators in the above symbol class. \\
		\item For any~$r\in (r_+,\bar{r}_+)$ the symbol~$\widetilde{g_2}$ belongs in the symbol class~$S^1$, again for constants~$C(a)$ independent of~$r$, and \underline{blow up} in the limit~$a\rightarrow 0$. For~$a=0$ the symbol~$\widetilde{g}_2$ belongs in the symbol class~$S^1$\\
		\item There exists a constant~$C(a,M,l)>0$, such that~$0<C(0,M,l)<\infty$, such that for any~$r\in (r_+,\bar{r}_+)$ and for any~$(\omega,m)\in \mathbb{R}\times \mathbb{Z}$ we have 
		\begin{equation}\label{eq: lem: proof thm 3 exp decay, lem 0, eq 2}
			|\widetilde{g_2}(\omega,m,r)-g_2(\omega,m,r)|\leq C. 
		\end{equation}
	\end{enumerate}
	
\end{lemma}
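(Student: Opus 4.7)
The plan is to reduce each assertion to the corresponding property of $g_2$ from the preceding proposition, perturbed by the smooth correction $\tilde\epsilon$ which lives only on $\mathcal{SF}$ and vanishes to all orders at the boundary of $\mathcal{SF}$; all computations proceed directly from the explicit formula for $\widetilde{g_2}$. For parts (1) and (2), on $(\mathcal{SF})^c$ we have $\widetilde{g_2}\equiv g_2$, so the claims are inherited from the corresponding properties of $g_2$. On $\mathcal{SF}$, the radicand $g_1^2(\omega - am\Xi/(r^2+a^2))^2 + \tilde\epsilon(\omega,m)$ is strictly positive because $\tilde\epsilon(\omega,m)>0$ there, so $\widetilde{g_2}$ is $C^\infty$ in $r$ on $(r_+,\bar r_+)$ (resolving the failure of differentiability of $g_2$ at $r=r_s$) and $C^\infty$ in $\omega$ for fixed $r$ (the branch point at $\omega=am\Xi/(r^2+a^2)$ is now regularized). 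Matching across the borderline frequencies $am\omega\in\{a^2m^2\Xi/(r_+^2+a^2),\, a^2m^2\Xi/(\bar r_+^2+a^2)\}$ is $C^1$ because $\tilde\epsilon$ vanishes there to all orders (by the bump-function construction) and at those frequencies $g_2$ reduces exactly to $-g_1|\omega-am\Xi/(r^2+a^2)|$, matching the superradiant-side limit.

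For part (3), on $(\mathcal{SF})^c$ the identity $\widetilde{g_2}=g_2$ together with item 6 of the preceding proposition gives the claim immediately. On $\mathcal{SF}$, since $g_1\to+\infty$ at the horizons, the elementary identity $\sqrt{x^2+y}-x = y/(\sqrt{x^2+y}+x)$ applied with $x = g_1|\omega-am\Xi/(r^2+a^2)|$ and $y = \tilde\epsilon$ yields $\widetilde{g_2} = g_2 + O(1/g_1)$ uniformly as $r\to r_\pm$; combined with the vanishing of $(g_1\partial_{r^\star}+ig_2)u$ at the horizons and boundedness of $u$ there, the limit is zero. For parts (4) and (5), I will verify the $S^1$ bounds directly from the explicit formula: $\widetilde{g_2}$ grows at most like $(1+|\omega|)(1+|m|)$, uniformly in $r\in[r_+,\bar r_+]$, once one accounts for the cancellations at the horizons in the combinations $\mp ig_1(\omega-\omega_\pm m)+i\widetilde{g_2}$ (designed exactly as in the proof of (3) so that the $g_1$-divergence cancels). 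The $\partial_\omega$- and $\Delta_m$-differences produce bounded contributions upon using the built-in decay bounds on the finite differences of $\tilde\epsilon$ together with the extra $1/\sqrt{\cdot}$ factor from differentiating the square root; these combine to the claimed $S^1$ estimates. The constants degenerate as $a\to 0$ only through $\tilde\epsilon$, which is identically zero for $a=0$, so the constants remain finite in the Schwarzschild--de~Sitter limit.

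Part (6) is immediate from the inequality $\sqrt{x^2+y}-x\leq\sqrt{y}$, valid for $x,y\geq 0$, applied with $x = g_1|\omega-am\Xi/(r^2+a^2)|\geq 0$ and $y = \tilde\epsilon$: on $(\mathcal{SF})^c$ the difference is zero, while on $\mathcal{SF}$ one gets $|\widetilde{g_2} - g_2|\leq\sqrt{\tilde\epsilon(\omega,m)}\leq\sqrt{C}$ uniformly in $(r,\omega,m)$, where $C$ is the a priori upper bound on $\tilde\epsilon$ from its defining properties. The main obstacle in this lemma is part (4): carefully tracking the two competing $g_1$-divergences in $-ig_1(\omega-\omega_\pm m)$ and in $i\widetilde{g_2}$ as $r\to r_\pm$ and verifying that the stratum-dependent sign choices in the definition of $g_2$ force the leading singularities to cancel, so that the resulting symbols are genuinely of class $S^1$ uniformly in $r$ up to and including the horizons.
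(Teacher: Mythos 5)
Your proposal follows essentially the same route as the paper's proof: reduce each assertion to the positivity of the regularizing function~$\tilde\epsilon$ on~$\mathcal{SF}$, its smooth vanishing on~$(\mathcal{SF})^c$, and its built-in symbol-class decay, then differentiate the explicit formula for~$\widetilde{g_2}$ directly, and for the combined symbols of part~(4) exploit the cancellation of the leading~$g_1$-divergences at the horizons by splitting the frequency space into the same non-superradiant and (enlarged) superradiant regimes. The one place where your argument is genuinely cleaner is part~(6): the paper splits into~$|g_1(\omega-\Omega(r)m)|\leq 10$ and~$\geq 10$ and invokes Taylor's theorem in the large regime, whereas your elementary bound~$\sqrt{x^2+y}-x\leq\sqrt{y}$ for~$x,y\geq 0$, applied with~$x=g_1|\omega-am\Xi/(r^2+a^2)|$ and~$y=\tilde\epsilon(\omega,m)$, gives~$|\widetilde{g_2}-g_2|\leq\sqrt{\tilde\epsilon}\leq\sqrt{C}$ in one line, uniformly in~$(r,\omega,m)$, avoiding the case split entirely.
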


\begin{proof}
	First, we discuss the fifth statement. It is easy to see from the definition of~$\widetilde{g}_2$, see Definition~\ref{def: proof thm 3 exp decay, def 0}, that indeed it belongs in the symbol class~$S^1$. Furthermore, the constant in the Coifman Meyer commutations blows up in the limit~$a\rightarrow 0$ since we have 
	\begin{equation}
		\partial_\omega \widetilde{g}_2 =-\frac{2g_1^2\left(\omega-\frac{a\Xi m}{r^2+a^2}\right)+\partial_\omega \tilde{\epsilon}(\omega,m)}{2\sqrt{g_1^2\left(\omega-\frac{a\Xi}{r^2+a^2}m\right)^2+\tilde{\epsilon}(\omega,m)}},
	\end{equation}
	where we need to recall the definition of~$\tilde{\epsilon}(\omega,m)$ from Section~\ref{subsubsec: subsec: sec: morawetz estimate, subsec 7, subsubsec 1}. Note, however, that for~$a=0$ we have~$\widetilde{g}_2\equiv g_2$ and therefore we have that in the case~$a=0$ the symbol~$\widetilde{g}_2$ belongs in the symbol class~$S^1$ for~$r\in(r_+,\bar{r}_+)$.

	Now, we will address the result that the symbols~\eqref{eq: lem: proof thm 3 exp decay, lem 0, eq 1} belong in the symbol class~$S^1$, namely the second statement of the present Lemma. Then, we will prove the estimate~\eqref{eq: lem: proof thm 3 exp decay, lem 0, eq 2}. The remaining statements follow easily from the definition of~$\widetilde{g_2}$, see Definition~\ref{def: proof thm 3 exp decay, def 0}.

	To prove that the operators \eqref{eq: lem: proof thm 3 exp decay, lem 0, eq 1} belong in the symbol class~$S^1$ we proceed as follows. We only discuss the first symbol of~\eqref{eq: lem: proof thm 3 exp decay, lem 0, eq 1} as the proof for the second is the same.

	We write 
	\begin{equation}\label{eq: proof lem: proof thm 3 exp decay, lem 0, eq 1}
		\begin{aligned}
			&	- g_1(r)i\left(\omega-\frac{am\Xi}{r_+^2+a^2}\right) + i\widetilde{g_2}(\omega,m,r) = g_1(r)i R_1(\omega,m,r).
		\end{aligned}
	\end{equation}

	We recall from Propositions~\ref{prop: subsec: sec: G, subsec 1, prop 0},~\ref{prop: subsec: sec: G, subsec 1, prop -1} that for any frequency~$(\omega,m)\in\mathbb{R}\times\mathbb{Z}$ we have a unique critical point~$r_{crit}(\omega,m)$ of the function
	\begin{equation}
		\mathcal{T}(\omega,m,r)= \frac{(r^2+a^2)^2}{\Delta}\left(\omega-\frac{am\Xi}{r^2+a^2}\right)^2.
	\end{equation}

	In view of Definition~\ref{def: proof thm 3 exp decay, def 0}, for the non-superradiant frequencies~$am\omega > \frac{a^2m^2\Xi}{r_+^2+a^2}$ and~$am\omega < \frac{a^2m^2\Xi}{\bar{r}_+^2+a^2}$ we have respectively
	\begin{equation}\label{eq: proof lem: proof thm 3 exp decay, lem 0, eq 2}
		R_1(\omega,m,r)	=	 -\left(\omega-\frac{am\Xi}{r_+^2+a^2}\right)  + 
		\begin{cases}
			-\sqrt{\left(\omega-\frac{am\Xi}{r^2+a^2}\right)^2-\frac{\Delta}{(r^2+a^2)^2} \min_r \mathcal{T}(\omega,m,r)},~~r\geq r_{crit}\\
			+\sqrt{\left(\omega-\frac{am\Xi}{r^2+a^2}\right)^2-\frac{\Delta}{(r^2+a^2)^2}\min_r \mathcal{T}(\omega,m,r)},~~r\leq r_{crit}
		\end{cases}
	\end{equation}
	and
	\begin{equation}\label{eq: proof lem: proof thm 3 exp decay, lem 0, eq 3}
		\begin{aligned}
			R_1(\omega,m,r)	&	=	-\left(\omega-\frac{am^2\Xi}{r_+^2+a^2}\right)+
			\begin{cases}
				+\sqrt{\left(\omega-\frac{am\Xi}{r^2+a^2}\right)^2-\frac{\Delta}{(r^2+a^2)^2} \min_r \mathcal{T}(\omega,m,r)},	\quad r\geq r_{\textit{crit}}\\
				-\sqrt{\left(\omega-\frac{am\Xi}{r^2+a^2}\right)^2-\frac{\Delta}{(r^2+a^2)^2} \min_r \mathcal{T}(\omega,m,r)}, \quad r\leq r_{\textit{crit}}.
			\end{cases}
		\end{aligned}
	\end{equation}
	and for the enlarged superradiant frequencies~$am\omega \in [\frac{a^2m^2\Xi}{\bar{r}_+^2+a^2},\frac{a^2m^2\Xi}{r_+^2+a^2}]$ we have 	
	\begin{equation}\label{eq: proof lem: proof thm 3 exp decay, lem 0, eq 3.1}
		\begin{aligned}
			R_1(\omega,m,r) &	   = -\left(\omega-\frac{am\Xi}{r_+^2+a^2}\right)  -\sqrt{\left(\omega-\frac{a\Xi}{r^2+a^2}m\right)^2+ \frac{\Delta}{(r^2+a^2)^2}Α(\omega,m)},\\
			A(\omega,m)	&	=-\min_{r\in[r_+,\bar{r}_+]}\left( g_1^2\left(\omega-\frac{a\Xi}{r^2+a^2}m\right)^2\right)+\tilde{\epsilon}(\omega,m).
		\end{aligned}
	\end{equation}
	
	For the non-superradiant frequencies~$am\omega > \frac{a^2m^2\Xi}{r_+^2+a^2}$ and~$am\omega < \frac{a^2m^2\Xi}{\bar{r}_+^2+a^2}$ the result that~\eqref{eq: lem: proof thm 3 exp decay, lem 0, eq 1} belong in the symbol class~$S^1$ is immediate by simply differentiating~\eqref{eq: proof lem: proof thm 3 exp decay, lem 0, eq 2} and~\eqref{eq: proof lem: proof thm 3 exp decay, lem 0, eq 3}.

	For the enlarged superradiant frequencies~$am \omega\in [\frac{a^2m^2\Xi}{\bar{r}_+^2+a^2},\frac{a^2m^2\Xi}{r_+^2+a^2}]$ we have 
	\begin{equation}\label{eq: proof lem: proof thm 3 exp decay, lem 0, eq 3.2}
		\partial_\omega R_1 (\omega,m,r)	=	 -\frac{\frac{\Delta (r) \partial_\omega A(\omega ,m)}{\left(a^2+r^2\right)^2}+ 2\left(\omega-\frac{ a m \Xi }{a^2+r^2}\right)}{2 \sqrt{\frac{\Delta (r) A(\omega ,m)}{\left(a^2+r^2\right)^2}+\left(\omega -\frac{a m \Xi }{a^2+r^2}\right)^2}}-1.
	\end{equation}

	We note that there exists a constant~$C(a,M,l)$ such that for any~$r\in [r_+,\bar{r}_+]$ and for any~$am \omega\in [\frac{a^2m^2\Xi}{\bar{r}_+^2+a^2},\frac{a^2 m^2\Xi}{r_+^2+a^2}]$ we have 
	\begin{equation}\label{eq: proof lem: proof thm 3 exp decay, lem 0, eq 3.3}
		| \partial_\omega R_1 (\omega,m)| \leq C(a) ,\qquad | R_1 (\omega,m+1) -  R_1 (\omega,m)|\leq C(a)
	\end{equation}
	in view also of the definition of~$\tilde{\epsilon}(\omega,m)$, see Definition~\ref{def: proof thm 3 exp decay, def 0}. We conclude that for the enlarged superradiant frequencies~$am\omega \in [\frac{a^2m^2\Xi}{\bar{r}_+^2+a^2},\frac{a^2m^2\Xi}{r_+^2+a^2}]$ we indeed have that~$R_1 \in S^1$. It is easy to prove that for any~$r\in [r_+,\bar{r}_+]$ we have that~$g_1(r)iR_1(\cdot,\cdot)\in S^1$. Note that the constant~$C$ of~\eqref{eq: proof lem: proof thm 3 exp decay, lem 0, eq 3.3} blows up in the limit~$a\rightarrow 0$, in view of the definition of~$\epsilon(\omega,m)$, see Section~\ref{subsubsec: subsec: sec: morawetz estimate, subsec 7, subsubsec 1}.

	Now, we discuss~\eqref{eq: lem: proof thm 3 exp decay, lem 0, eq 2}. For~$(\omega,m)\in (\mathcal{SF})^c$ the result is trivial since~$\widetilde{g_2}\equiv g_2$. Therefore, we study the case~$(\omega,m)\in \mathcal{SF}$. For brevity we will use the notation~$\Omega(r)= \frac{a\Xi}{r^2+a^2}$. We have the following identity
	\begin{equation}\label{eq: proof lem: proof thm 3 exp decay, lem 0, eq 4}
		\begin{aligned}
				|\widetilde{g_2}-g_2| &	= \left| \sqrt{g_1^2(\omega-\Omega(r)m)^2+\tilde{\epsilon}(\omega,m)}-|g_1(\omega-\Omega(r)m)|\right|\\
				&	=	1_{|g_1(\omega-\Omega(r)m)|\leq 10} \left| \sqrt{g_1^2(\omega-\Omega(r)m)^2+\tilde{\epsilon}(\omega,m)}-|g_1(\omega-\Omega(r)m)|\right|\\
				&	\quad+ 1_{|g_1(\omega-\Omega(r)m)|\geq 10}\left| \sqrt{g_1^2(\omega-\Omega(r)m)^2+\tilde{\epsilon}(\omega,m)}-|g_1(\omega-\Omega(r)m)|\right|\\
				&	=1_{|g_1(\omega-\Omega(r)m)|\leq 10} \left| \sqrt{g_1^2(\omega-\Omega(r)m)^2+\tilde{\epsilon}(\omega,m)}-|g_1(\omega-\Omega(r)m)|\right|\\
				&	\quad +1_{|g_1(\omega-\Omega(r)m)|\geq 10}
				\left|g_1(\omega-\Omega(r)m)\right| \left(\sqrt{1+\frac{\tilde{\epsilon}(\omega,m)}{g_1^2(\omega-\Omega(r)m)^2}}-1\right).
		\end{aligned}
	\end{equation}
	
	Now, since~$\frac{\tilde{\epsilon}}{g_1^2(\omega-\Omega(r)m)}<1$ in the last term of~\eqref{eq: proof lem: proof thm 3 exp decay, lem 0, eq 4} we use Taylor's theorem and we write~\eqref{eq: proof lem: proof thm 3 exp decay, lem 0, eq 4} as follows
	\begin{equation}\label{eq: proof lem: proof thm 3 exp decay, lem 0, eq 5}
		\begin{aligned}
			|\widetilde{g_2}-g_2| &	=1_{|g_1(\omega-\Omega(r)m)|\leq 10} \left| \sqrt{g_1^2(\omega-\Omega(r)m)^2+\tilde{\epsilon}(\omega,m)}-|g_1(\omega-\Omega(r)m)|\right|\\
			&	\quad +1_{|g_1(\omega-\Omega(r)m)|\geq 10}
			\left|g_1(\omega-\Omega(r)m)\right| \left(\frac{1}{2}\frac{\tilde{\epsilon}(\omega,m)}{g_1^2(\omega-\Omega(r)m)^2}+\mathcal{O}\left(\frac{\tilde{\epsilon}(\omega,m)}{g_1^2(\omega-\Omega(r)m)^2}\right)^2\right)\\
			&	=1_{|g_1(\omega-\Omega(r)m)|\leq 10} \left| \sqrt{g_1^2(\omega-\Omega(r)m)^2+\tilde{\epsilon}(\omega,m)}-|g_1(\omega-\Omega(r)m)|\right|\\
			&	\quad +1_{|g_1(\omega-\Omega(r)m)|\geq 10}
		 \left(\frac{1}{2}\frac{\tilde{\epsilon}(\omega,m)}{g_1|\omega-\Omega(r)m|}+	\left|g_1(\omega-\Omega(r)m)\right|\mathcal{O}\left(\frac{\tilde{\epsilon}(\omega,m)}{g_1^2(\omega-\Omega(r)m)^2}\right)^2\right),
		\end{aligned}
	\end{equation}
	where there exists a constant~$C(a,M,l)>0$ such that for any~$r,\omega,m$ such that~$|g_1(r)(\omega-\Omega(r)m)|\geq 10$ we have that 
	\begin{equation}
		\mathcal{O}\left(\frac{\tilde{\epsilon}(\omega,m)}{g_1^2(\omega-\Omega(r)m)^2}\right)^2\leq  C \frac{\tilde{\epsilon}^2(\omega,m)}{g_1^2(\omega-\Omega(r)m)^2}. 
	\end{equation}

	Now, the result~\eqref{eq: lem: proof thm 3 exp decay, lem 0, eq 2} is immediate from~\eqref{eq: proof lem: proof thm 3 exp decay, lem 0, eq 4} and~\eqref{eq: proof lem: proof thm 3 exp decay, lem 0, eq 5} in view of the definition of the function~$\tilde{\epsilon}(\omega,m)$, see~\eqref{eq: subsubsec: subsec: sec: morawetz estimate, subsec 7, subsubsec 1, eq 1}. We conclude the lemma. 
\end{proof}

\section{The main Theorems}\label{sec: main theorems}

In this Section we present our main theorems.

\begin{customTheorem}{1}[detailed version]\label{main theorem, relat. non-deg}

Let~$l>0$,~$\mu^2_{\textit{KG}}\geq 0$ and
\begin{equation*}
	(a,M)\in \mathcal{MS}_{l,\mu_{KG}},
\end{equation*}
where for~$\mathcal{MS}_{l,\mu_{KG}}$ see~\eqref{eq: sec: main theorems, eq 2}.

Then, there exists a constant 
\begin{equation}
	C(a,M,l,\mu^2_{\textit{KG}})>0
\end{equation}
such that for any~$T\geq 3$ and for any~$\tau_1,\tau_2$ such that~$\tilde{T}\leq \tau_1<\tau_1+T< \tau_2$, where for~$\tilde{T}$ see~\eqref{eq: subsec: sec: intro, 3, eq 1.1}, then the following hold. (Recall from~\eqref{eq: subsec: sec: intro, 3, eq 1.1} that~$\tilde{T}=T^2-2T-1$).

Let~$\psi$ satisfy the Klein--Gordon equation~\eqref{eq: kleingordon} in~$D(\tau_1-\tilde{T},\tau_2)=\supp \chi_+ \cup \supp \chi_-$, where for the cut-offs
\begin{equation}
	\chi_+(t^\star)=\chi_{\tau_1,\tau_1+T^2}(t^\star),\qquad \chi_{-}(t^\star)=\chi_{\tau_1,\tau_1+T^2}(t^\star+\tilde{T})
\end{equation}
see Section~\ref{subsec: sec: carter separation, subsec 1}, and for their graphic representation see Figure~\ref{fig: cut-offs 1}. 

Then, the following estimate holds
\begin{equation}\label{eq: main theorem, relat. non-deg, eq 1}
\begin{aligned}
& \int_{\{t^\star=\tau\}} J^n_\mu[\psi]n^\mu+\int\int_{D(\tau_1+T,\tau_2)}\frac{1}{\Delta}\left|W\mathcal{G}_{\chi_+}\psi\right|^2+ \mathcal{E}(\mathcal{G}_{\chi_+}\psi,\psi) \\
&	\qquad\qquad \leq C\int_{\{t^\star=\tau_1\}} J^n_\mu[\psi]n^\mu+\frac{C}{T^2}\int\int_{D(\tau_1,\tau_1+T)} \mathcal{E}(\mathcal{G}_{\chi_{-}}\psi,\psi),\\
\end{aligned}    
\end{equation}
for any~$\tau_1\leq \tau\leq \tau_2$, where for the operator~$\mathcal{G}$ see Section~\ref{sec: G}. We used the notation~$\mathcal{G}_\chi=\chi\mathcal{G}\chi$. Note that~$W=\partial_t+\frac{a\Xi}{r^2+a^2}\partial_{\varphi}$. For the energy density~$\mathcal{E}(\cdot,\cdot)$ see~\eqref{eq: new energy}. 
\end{customTheorem}
\begin{proof}
See Section~\ref{sec: proof of Thm: rel-nondeg}.
\end{proof}

\begin{remark}
	For fixed~$(a,M,l)$ the constant in the RHS of~\eqref{eq: main theorem, relat. non-deg, eq 1} blow up in the limit~$a\rightarrow 0$. Note, however, that~$a=0$ is indeed allowed in the theorem, i.e. the constant for~$a=0$ is finite. This bad behaviour of our constant stems from the non-differentiability of the~$g_2$, specifically see Lemma~\ref{lem: proof thm 3 exp decay, lem 0} where we study the smoothed version~$\widetilde{g}_2$. 
\end{remark}

For a graphic representation of the cut-offs~$\chi_+,\chi_-$ of Theorem~\ref{main theorem, relat. non-deg} see Figure~\ref{fig: cut-offs 1}.
\begin{figure}[htbp]
	\centering
	\includegraphics[scale=1]{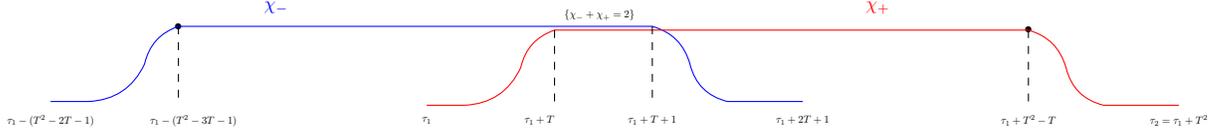}
	\caption{The cut-offs~$\chi_+,\chi_-$~(not to scale)}
	\label{fig: cut-offs 1}
\end{figure}

Exponential decay follows as a Corollary of Theorem~\ref{main theorem, relat. non-deg}.

\begin{customCorollary}{1.1}[detailed version]\label{cor: main theorem, relat. non-deg, cor 3}

Let the assumptions of Theorem~\ref{main theorem, relat. non-deg} be satisfied, where now~$\psi$ satisfies the Klein--Gordon equation~\eqref{eq: kleingordon} in the domain~$D(0,\infty)$ instead. Then, there exist constants
\begin{equation}
	C=C(a,M,l,\mu_{KG}),\qquad c=c(a,M,l,\mu_{KG})
\end{equation}
independent of~$\psi$, such that for any~$T\geq 3$ sufficiently large, then the following hold.

Let
\begin{equation}
	\chi_{\textit{data}}(t^\star)=\chi_{0,T^2}(t^\star).
\end{equation}
Then, for any~$\tau\geq 0$ we have the following
\begin{equation}\label{eq: cor: main theorem, relat. non-deg, cor 3, eq 1}
	\begin{aligned}
		&\int_{\{t^\star=\tau\}} J^n_\mu[\psi]n^\mu \leq C \Bigg(\int_{\{t^\star=0\}}  J^n_\mu[\psi]n^\mu+ \int_{\supp \chi_{\textit{data}}}d\tau^\prime  \int_{\{t^\star=\tau^\prime\}} \mathcal{E}(\mathcal{G}_{\chi_{\textit{data}}}\psi,\psi)\Bigg) \cdot e^{-c\tau}.\\
	\end{aligned}
\end{equation}

Furthermore, we have the following pointwise decay
\begin{equation}\label{eq: cor: main theorem, relat. non-deg, cor 3, eq 2}
	\begin{aligned}
		\sup_{\{t^\star=\tau\}}|\psi-\underline{\psi}|		\leq C\sqrt{E}e^{-c \tau}
	\end{aligned}
\end{equation}
where~$E=\int_{\{t^\star=0\}} J^n_\mu [\psi]n^\mu+J^n_\mu [n\psi]n^\mu +J^n[n^2\psi]n^\mu+J^n[n^3\psi]n^\mu$ and the constant~$\underline{\psi}$ is either~$\underline{\psi}=0$ if~$\mu_{\textit{KG}}^2>0$, or~$|\underline{\psi}|\leq |\underline{\psi}(0)|+C\sqrt{E}$, if~$\mu_{\textit{KG}}=0$. 
\end{customCorollary}

\begin{proof}
	See Section~\ref{sec: cor: main theorem, relat. non-deg, cor 3}. 
\end{proof}

Note the following Remarks

\begin{remark}
	If the solution of the Klein--Gordon equation~\eqref{eq: kleingordon} is in addition axisymmetric, then we obtain the result of Theorem~\ref{main theorem, relat. non-deg} for all subextremal parameters~$l>0$,$(a,M)\in\mathcal{B}_l$ and all Klein--Gordon masses~$\mu^2_{KG}\geq 0$. See Section~\ref{sec: proof of Thm: rel-nondeg, axisymmetry}. 
\end{remark}

\begin{remark}\label{rem: main theorem, relat. non-deg, rem 1}
Note that the integral quantities on the right hand side of~\eqref{eq: cor: main theorem, relat. non-deg, cor 3, eq 1} can be controlled by a Sobolev energy on the initial data hypersurface~$\{t^\star=0\}$, as follows 
\begin{equation}\label{eq: rem: main theorem, relat. non-deg, rem 1, eq 1}
	\begin{aligned}
		&	\int_{\{t^\star=\tau\}}  J^n_\mu[\psi]n^\mu+ \int_{\supp \chi_{\textit{data}}}d\tau^\prime  \int_{\{t^\star=\tau^\prime\}}  \mathcal{E}(\mathcal{G}_{\chi_{\textit{data}}}\psi,\psi) \\
		&	\qquad\qquad \leq B \int_{\{t^\star=0\}}   J^n_\mu[\psi]n^\mu+ B \int\int_{\supp\chi_{data}} \mathcal{E}(\mathcal{G}_{\chi_{\textit{data}}}\psi,\psi) \leq  B \int_{\{t^\star=0\}}  J^n_\mu[\psi]n^\mu+J^n_\mu[n\psi]n^\mu +J^n_\mu[n^2\psi]n^\mu,
	\end{aligned}
\end{equation}
where in~\eqref{eq: rem: main theorem, relat. non-deg, rem 1, eq 1} we only appeal to Parseval and pointwise bounds, without using that~$\psi$ is a solution of the Klein--Gordon equation. To prove~\eqref{eq: rem: main theorem, relat. non-deg, rem 1, eq 1} we appealed to the coarea formula, see Section~\ref{subsec: coarea formula}, and standard finite in time energy estimates for the Klein--Gordon equation~\eqref{eq: kleingordon}, see our companion~\cite{mavrogiannis4}.   
\end{remark}

We also have the following `relatively non degenerate' estimate for the inhomogeneous Klein--Gordon equation in a $\delta$-extended region bounded between the spacelike hypersurfaces~$\mathcal{H}^+_\delta,\bar{\mathcal{H}}^+_\delta$ and the hypersurfaces~$\{t^\star=\tau_1\},~\{t^\star=\tau_2\}$, see Definition~\ref{def: causal domain, def 2}.

\begin{customTheorem}{2}\label{thm: main thm extended region}

 Let~$l>0$,~$\mu^2_{\textit{KG}}\geq 0$ and
 \begin{equation*}
 	(a,M)\in \mathcal{MS}_{l,\mu_{KG}},
 \end{equation*}
 where for~$\mathcal{MS}_{l,\mu_{KG}}$ see~\eqref{eq: sec: main theorems, eq 2}.

 Then, there exist constants
 \begin{equation}
 	C(a,M,l,\mu^2_{\textit{KG}})>0, 	\qquad \delta(a,M,l,\mu^2_{\textit{KG}})>0,
 \end{equation}
 such that for any~$T\geq 3$ and for any~$\tau_1,\tau_2$ such that~$\tilde{T}\leq \tau_1<\tau_1+T< \tau_2$ then the following hold.

  Let~$\psi$ satisfy the inhomogeneous Klein--Gordon equation
 \begin{equation}\label{eq: inhomogeneous equation, KdS}
 	\Box_{g_{a,M,l}}\psi -\mu^2_{KG} \psi =F 
 \end{equation}
 on~$D_\delta(\tau_1-\tilde{T},\tau_2)$, where~$F$ is a sufficiently regular function.

Let~$\{t^\star=\tau\}$ be as in Definition~\ref{def: causal domain, def 2}. Then, the following holds
\begin{equation}
\begin{aligned}
& \int_{\{t^\star=\tau\}} J^n_\mu[\psi]n^\mu \\
&	+\int\int_{D_\delta(\tau_1+T,\tau_2)}\Bigg(\frac{1}{r}\frac{r^2+a^2}{\Delta}\left|W\mathcal{G}_{\chi_+}\psi\right|^2+ \frac{1}{r}\frac{\Delta}{(r^2+a^2)}|Z^\star\mathcal{G}_{\chi_+}\psi|^2+\frac{1}{r}\frac{r^2+a^2}{\Delta}|W\mathcal{G}_{\chi_+}\psi|^2+\frac{1}{r}|\slashed{\nabla}\mathcal{G}_{\chi_+}\psi|^2\\
&	\qquad\qquad\qquad\qquad\qquad+ J^n_\mu[\psi] n^\mu\Bigg)\\
\quad &\leq C(\delta)\int_{\{t^\star=\tau_1\}} J^n_\mu[\psi]n^\mu+\frac{C(\delta)}{T}\int\int_{D_\delta(\tau_1,\tau_1+T)} J_\mu^{W}[\mathcal{G}_{\chi_{-}}\psi]n^\mu \\
&	\qquad + C(\delta)\int\int_{\mathcal{M}_\delta} \Delta|\mathcal{G}\left(\eta\chi_+^2 F\right)|^2+ |\eta\chi_+^2F|^2,
\end{aligned}    
\end{equation} 
for any~$\tau_1\leq \tau\leq \tau_2$, where for~$\mathcal{G}$ see Definition~\ref{def: sec: G, def 1}, and we used the notation~$\mathcal{G}_\chi=\chi\mathcal{G}\chi$. Moreover, for the cut-offs $\eta=\eta^{(T)}_{\tau_1},~\chi_+=\chi_{\tau_1,\tau_1+T^2},~\chi_{-}(t^\star)=\chi_{\tau_1,\tau_1+T^2}(t^\star+\tilde{T})$, see Section~\ref{subsec: sec: carter separation, subsec 1} and for the graphic representation of the latter two cut-offs see Figure~\ref{fig: cut-offs 1}. 
\end{customTheorem}
\begin{proof}
See Section~\ref{sec: proof of Theorem in extended region}. 
\end{proof}

We have the following higher order Corollary

\begin{customCorollary}{2.1}\label{cor: thm: main thm extended region, cor 2}
	Let the assumptions of Theorem~\ref{thm: main thm extended region} hold. Then, for any~$j\geq 3$ there exists a constant~$C=C(j,a,M,l)>0$ and there exists a sufficiently small~$\delta(a,M,l,\mu_{KG},j)>0$, such that for any~$T\geq 3$, we have
\begin{equation}\label{eq: cor: higher order G estimate, eq 1}
	\begin{aligned}
	&	 E_{j-1}[\psi](\tau)+\int\int_{D_\delta(\tau_1+T,\tau_2)}        \frac{1}{\Delta}\sum_{0\leq i_1+i_2\leq j-2}\left|\slashed{\nabla}^{i_1}W^{1+i_2}\mathcal{G}_{\chi_+}\psi\right|^2+\int_{\tau_1+T}^{\tau_2}d\tau^\prime E_{\mathcal{G}_{\chi_+},j}[\psi] d\tau^\prime\\
	&   \qquad \leq C  E_{j-1}[\psi](\tau_1)+\frac{C}{T}\int_{\tau_1}^{\tau_1+T} E_{\mathcal{G}_{\chi_{-}},j}[\psi](\tau^\prime)d\tau^\prime  \\
	&	\qquad\qquad +C\int\int_{\mathcal{M}_\delta} \Delta\sum_{0\leq i_1+i_2\leq j-2}\left|\slashed{\nabla}^{i_1}W^{i_2}\mathcal{G}\left(\eta\chi_+^2 F\right)\right|^2 +\sum_{0\leq i_1+i_2+i_3\leq j-2} \left|\slashed{\nabla}^{i_1}W^{i_2}(Z^\star)^{i_3}\left(\eta\chi_+^2 F\right)\right|^2,\\
	\end{aligned}
	\end{equation}
	for any~$\tau_1\leq \tau\leq \tau_2$, with
	\begin{equation}\label{eq: cor: higher order G estimate, eq 2}
	\begin{aligned}
	E_{j}[\psi](\tau)	&	=  \sum_{1 \leq i_1+i_2+i_3\leq j}\int_{\{t^\star=\tau\}}\left|\slashed{\nabla}^{i_1}\partial_{t^\star}^{i_2}\left(Z^\star\right)^{i_3}\psi\right|^2dg_{\{t^\star=\tau\}},\\	
	E_{\mathcal{G}_\chi,j}[\psi](\tau) &   =\int_{\{t^\star=\tau\}}\sum_{0 \leq i_1+i_2\leq j-1} \Delta\left|\slashed{\nabla}^{i_1}W^{i_2}Z^\star\mathcal{G}_\chi\psi\right|^2+\sum_{1 \leq i_1+i_2\leq j-1}\left|\slashed{\nabla}^{i_1}W^{i_2}\mathcal{G}_\chi\psi\right|^2 \\
	&	\quad\quad\quad\quad\quad\quad\quad+\sum_{1 \leq i_1+i_2+i_3\leq j-1}\left|\slashed{\nabla}^{i_1}W^{i_2}(Z^\star)^{i_3}\psi\right|^2,
	\end{aligned}
	\end{equation} 
where~$W=\partial_t+\frac{a\Xi}{r^2+a^2}\partial_{\varphi}$. Moreover, for the spacetime domain~$\mathcal{M}_\delta$ see Section~\ref{subsec: causal domains}.
\end{customCorollary}
\begin{proof}
	See Section~\ref{sec: proof of Theorem in extended region}.
\end{proof}

\begin{remark}
	Note that at top oder in~\eqref{eq: cor: higher order G estimate, eq 1}, we only include ~$W,\slashed{\nabla}$ derivatives of~$\mathcal{G}_\chi\psi$, because of the bad behavior of high~$Z^\star$ derivatives of~$\mathcal{G}_\chi
	\psi$. Specifically, note from Proposition~\ref{prop: subsec: sec: G, subsec 1, prop 1} that~$(Z^\star)^2\mathcal{G}_\chi\psi$ can only be defined in a distributional sense and therefore~$\int\int |(Z^\star)^2\mathcal{G}_\chi\psi|^2$ is not well defined. 
\end{remark}

\section{The fixed frequency ode estimate:~Theorem~\ref{thm: subsec: sec: proof of Theorem 2, subsec 4.1, thm 1}}\label{sec: fixed frequency ode estimates}

The main result of the present Section is Theorem~\ref{thm: subsec: sec: proof of Theorem 2, subsec 4.1, thm 1}. As mentioned already in the introduction, at low order, we need the Morawetz estimate of Theorem~\ref{main theorem 1} with the sharp degeneration at~$r_{trap}(\omega,m,\ell)$, where~$\ell$ is the Carter frequency. Note, however, that the functions $g_2(\omega,m,r)$, $f_s(\omega,m,r)$, $h_s(\omega,m,r)$ that will be used in the present Section, see Definition~\ref{def: subsec: sec: G, subsec 1, def 1} and~\eqref{eq: proof: prop: for v, eq 5.01},~\eqref{eq: lem: sec: exp decay, subsec: energy estimate, lem 1, energy estimate, eq 3.9} respectively, do not depend on the Carter frequency~$\ell$.

For convenience we will use the notation 
\begin{equation*}
	(^{\prime} \:\dot{=}\:\frac{d}{d r^\star}).
\end{equation*}
In the main estimates of the present Section we track the weights in~$r$ at the top order of our estimates, for the readers interested in the asymptotically flat case.

\subsection{The constants}\label{subsec: sec: fixed frequency ode estimates, subsec 1}

We use the constants
\begin{equation}
	b(a,M,l,\mu_{\textit{KG}})>0,\qquad B(a,M,l,\mu_{\textit{KG}})>0
\end{equation}
with the algebra of constants
\begin{equation}
	b+b=b,\qquad b\cdot b=b,\qquad B+B=B,\qquad B\cdot B=B
\end{equation}

\subsection{Carter's radial ode with outgoing boundary conditions}

In the present Section we study smooth solutions of the inhomogeneous radial ode
\begin{equation}\label{eq: sec: proof of Thm: rel-nondeg, eq 2}
	u^{\prime\prime}+\left(\omega^2-V\right)u=H
\end{equation}
that satisfy outgoing boundary conditions
\begin{equation}\label{eq: subsec: sec: proof of Thm: rel-nondeg, subsec 2, eq 2}
	\begin{aligned}
		&   u^\prime=-i\left(\omega-\frac{am\Xi}{r_+^2+a^2}\right) u,\qquad u^\prime=i\left(\omega-\frac{am\Xi}{\bar{r}_+^2+a^2}\right) u,
	\end{aligned}
\end{equation}
at~$r^\star=-\infty$,~$r^\star=\infty$ respectively. We also assume that~$H(r)$ is a smooth function in~$(r_+,\bar{r}_+)$.

\subsection{The commuted quantity~\texorpdfstring{$v$}{v}}\label{subsec: sec: proof of Theorem 2, subsec 1.1}

Note the following definition
\begin{definition}\label{def: subsec: sec: proof of Theorem 2, subsec 1.1, def 1}
		Let~$l>0$ and~$(a,M)\in \mathcal{B}_l$. Let~$(\omega,m)\in\mathbb{R}\times\mathbb{Z}$. Let~$u$ be a smooth function. Moreover, let~$g_1(r^\star),g_2(\omega,m,r^\star)$ be as in Definition~\ref{def: subsec: sec: G, subsec 1, def 1}. Then, we define the following
	\begin{equation}\label{eq: v}
		v=\left(g_1(r^\star)\frac{d}{d r^\star}+i g_2(\omega,m,r^\star)\right)u.
	\end{equation}
\end{definition}

In view of the definitions of~$g_1,g_2$, see Definition~\ref{def: subsec: sec: G, subsec 1, def 1}, then we obtain that for a smooth~$u$ satisfying outgoing boundary condition~\eqref{eq: subsec: sec: proof of Thm: rel-nondeg, subsec 2, eq 2}, we also conclude that 
\begin{equation}\label{eq: subsec: sec: proof of Theorem 2, subsec 1.1, eq 2}
	v|_{\mathcal{H}^+}=v|_{\bar{\mathcal{H}}^+}=0,
\end{equation}
see Proposition~\ref{prop: subsec: sec: G, subsec 1, prop 1}.

\subsection{The main fixed frequency ode result, Theorem~\ref{thm: subsec: sec: proof of Theorem 2, subsec 4.1, thm 1}}

We prove the following result

\begin{theorem}\label{thm: subsec: sec: proof of Theorem 2, subsec 4.1, thm 1}
	
	Let~$l>0$ and~$(a,M)\in \mathcal{B}_l$ and~$\mu^2_{KG}\geq 0$.
	Then, there exists a sufficiently small
	\begin{equation}
		\epsilon(a,M,l,\mu_{KG})>0
	\end{equation}
	such that for any~$\omega\in \mathbb{R},m\in\mathbb{Z},\ell\in \mathbb{Z}_{\geq |m|}$ the following holds.

	 Let~$u$ be a smooth solution of the inhomogeneous radial ode~\eqref{eq: sec: proof of Thm: rel-nondeg, eq 2}, with smooth inhomogeneity~$H$, that moreover satisfies the outgoing boundary conditions~\eqref{eq: subsec: sec: proof of Thm: rel-nondeg, subsec 2, eq 2}. Let~$v$ be as in Definition~\ref{def: subsec: sec: proof of Theorem 2, subsec 1.1, def 1}. Then, the following energy estimate holds
		\begin{equation}
		\begin{aligned}
			&   \int_{r_+}^{r_++\epsilon}(r-r_+)^{-1}\left|v^\prime+i(\omega-\omega_+m) v\right|^2dr+\int_{\bar{r}_+-\epsilon}^{\bar{r}_+}(\bar{r}_+-r)^{-1}\left|v^\prime-i(\omega-\bar{\omega}_+m) v\right|^2dr\\
			&   + \int_{r_+}^{\bar{r}_+}\left( \frac{\tilde{\lambda}}{r^3} |v|^2+\frac{r^2+a^2}{r\Delta}\left(\omega-\frac{am\Xi}{r^2+a^2}\right)^2|v|^2+\frac{1}{r}|v^\prime|^2 +\Delta^2 \omega^2  |u|^2\right) dr \\
			&	\qquad\qquad \leq B \Bigg( \int_{r_+}^{r_++\epsilon}\frac{1}{\Delta^2}|u^\prime+i(\omega-\omega_+m)u|^2dr +\int_{\bar{r}_+-\epsilon}^{\bar{r}_+}\frac{1}{\Delta^2}|u^\prime-(\omega-\bar{\omega}_+m)u|^2 dr\\
			&	\qquad\qquad\qquad\qquad+ \int_{r_+}^{\bar{r}_+} \left(1_{\{|m|>0\}}|u|^2+ \mu^2_{\textit{KG}}|u|^2+ |u^\prime|^2+\left(1-\frac{r_{\textit{trap}}(\omega,m,\ell)}{r}\right)^2(\omega+\tilde{\lambda})|u|^2\right)dr\Bigg)\\		
			&	\qquad\qquad\qquad  + B\left(\omega^2+m^2\right)|u|^2(-\infty)+  B\left(\omega^2+m^2\right)|u|^2(+\infty)\\
			&	\qquad\qquad\qquad  + B \int_{\mathbb{R}} \left( \frac{1}{\Delta}\left|H\right|^2+r\left|\left(g_1\frac{d}{dr^\star}+ig_2\right)H\right|^2\right)dr^\star,
		\end{aligned}
	\end{equation}
	where~$\tilde{\lambda}=\lambda^{(a\omega)}_{m\ell}+(a\omega)^2$, see Definition~\ref{def: subsec: sec: frequencies, subsec 3, def 0}, where~$\omega_+=\frac{a\Xi}{r_+^2+a^2},~\bar{\omega}_+=\frac{a\Xi}{\bar{r}_+^2+a^2}$. For the trapping parameter~$r_{trap}=r_{trap}(\omega,m,\ell)$ see the Morawetz estimate of Theorem~\ref{main theorem 1}. 
\end{theorem}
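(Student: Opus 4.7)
The plan is to derive an ODE for $v = (g_1 \tfrac{d}{dr^\star} + ig_2)u$, apply a frequency-dependent multiplier to it, integrate by parts using the outgoing boundary conditions~\eqref{eq: subsec: sec: proof of Thm: rel-nondeg, subsec 2, eq 2} (which by Proposition~\ref{prop: subsec: sec: G, subsec 1, prop 1} give $v(\pm\infty)=0$), and absorb lower-order remainders via the Morawetz estimate of Theorem~\ref{main theorem 1}. Concretely, a direct computation from~\eqref{eq: sec: proof of Thm: rel-nondeg, eq 2} yields an ODE of the schematic form
\begin{equation*}
v^{\prime\prime} + (\omega^2 - V)v \;=\; -\,2\,\tfrac{g_2^\prime}{g_1}\,(i\,\text{factor})\,v \;+\; \text{(lower-order terms in }u,u^\prime) \;+\; \bigl(g_1\tfrac{d}{dr^\star} + ig_2\bigr)H.
\end{equation*}
The point is that the choice of $g_2$ in Definition~\ref{def: subsec: sec: G, subsec 1, def 1} is precisely tuned so that the principal commutator term is proportional to $g_2^\prime/g_1$ times a quantity with a definite sign, with $g_2^\prime/g_1$ bounded below as in~\eqref{eq: lem: subsec: sec: G, subsec 1, lem 1, eq 5}.

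For non-superradiant frequencies $(\omega,m)\in(\mathcal{SF})^c$, I will multiply the $v$-equation by $(|\omega|+|am\Xi|)\bar v$ and take real parts. Integration by parts, using $v(\pm\infty)=0$ to eliminate the boundary terms in $v$ (the boundary contributions reduce to the $|u|^2(\pm\infty)$ terms arising from the $|\textit{Op}(g_2)u|^2$ pieces), produces
\begin{equation*}
\int |v^\prime|^2 + 2\int \tfrac{g_2^\prime}{g_1}|v|^2 \;\lesssim\; \text{(lower-order in }u\text{)} + \text{(boundary)} + \text{(inhomogeneity)}.
\end{equation*}
By the lower bound~\eqref{eq: lem: subsec: sec: G, subsec 1, lem 1, eq 5}, the second term controls $\frac{r^2+a^2}{r\Delta}(\omega - \tfrac{am\Xi}{r^2+a^2})^2|v|^2$. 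The angular term $\tilde\lambda |v|^2/r^3$ and the near-horizon fluxes $(r-r_+)^{-1}|v^\prime+i(\omega-\omega_+m)v|^2$ and the corresponding one at $\bar r_+$ are then recovered by a second, redshift-type multiplier (e.g.\ multiplying the $v$-equation by a bounded function times $\bar v^\prime$ near each horizon) and by reading $|v^\prime|^2$ and $\tilde\lambda|v|^2$ off the equation using Cauchy--Schwarz. Lower-order terms in $u$ on the right are precisely of the form controlled by~\eqref{eq: main theorem 1, eq 1}, so Theorem~\ref{main theorem 1} closes the estimate in this regime.

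The main obstacle is the superradiant range $(\omega,m)\in\mathcal{SF}$, where by Proposition~\ref{prop: subsec: sec: G, subsec 1, prop 1} the function $g_2$ fails to be differentiable at $r_s$ with $am\omega = a^2m^2\Xi/(r_s^2+a^2)$, and the naive multiplier would pick up a bad contribution. Following the strategy sketched in Section~\ref{subsec: sec: intro: subsec 3.2}, I will instead use a multiplier of the form $f_s(\omega,m,r)\,|am\Xi|\,\bar v$, where $f_s$ is a smooth function chosen so that (i) $f_s(\omega,m,r_s)=0$, (ii) $f_s$ is monotone across $r_s$ with the correct sign to compensate the jump in $g_2^\prime$, and (iii) the resulting bulk term degenerates at $r_s$ in a controlled manner. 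The resulting estimate then has an $|am\Xi|^{-1}$-type degeneration at $r_s$ in the $(\omega-am\Xi/(r^2+a^2))^2|v|^2$ term, which I will remove by exploiting the quantitative non-trapping of superradiant frequencies: for $(\omega,m)\in\mathcal{SF}$ the parameter $r_{\textit{trap}}(\omega,m,\ell)$ equals $0$, so the Morawetz estimate~\eqref{eq: main theorem 1, eq 1} gives non-degenerate control of $(\omega^2+\tilde\lambda)|u|^2$, which (via Parseval-type relations between $u$ and $v$, cf.\ Section~\ref{subsec: sec: morawetz estimate, subsec 6.1}) covers the degeneration in the $v$-estimate near $r_s$.

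Finally, I will recover $|v^\prime|^2/r$ and the two near-horizon fluxes by combining the estimate just obtained with the $v$-equation itself (reading $v^{\prime\prime}$ off and using a Hardy-type inequality near $r_\pm$), and the top-order term $\Delta^2\omega^2|u|^2$ by inspecting the inhomogeneity $\bigl(g_1\tfrac{d}{dr^\star}+ig_2\bigr)H$, which contributes exactly the weight $r\,|(g_1 d/dr^\star+ig_2)H|^2$ appearing on the right-hand side. Summing the non-superradiant and superradiant contributions, and absorbing all remainders into the Morawetz right-hand side via~\eqref{eq: main theorem 1, eq 1}, yields the claimed estimate. The delicate point throughout is the construction of $f_s$ and the matching of the degeneration it introduces with the non-trapping control inherited from Theorem~\ref{main theorem 1}; this is where the dependence of the implicit constants on $\mu_{KG}$ and $a$ (cf.\ Remark~\ref{rem: subsec: sec: intro: subsec 5, rem 1}) enters.
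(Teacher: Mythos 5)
Your overall architecture matches the paper's: derive the ODE for $v$, extract the favorable term $2\,\tfrac{g_2^\prime}{g_1}|v|^2$, handle the superradiant range with a degenerating multiplier $f_s$ and quantitative non-trapping, recover the near-horizon fluxes and angular terms with a second multiplier, and close via the Morawetz estimate of Theorem~\ref{main theorem 1}. However two steps as written would not go through.

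First, you say you multiply the $v$-equation by $(|\omega|+|am\Xi|)\bar v$ and take \emph{real} parts to obtain both $\int |v'|^2$ and $2\int \tfrac{g_2'}{g_1}|v|^2$. This is wrong: $\Re\bigl(\bar v \cdot 2i\tfrac{g_2'}{g_1}v\bigr)=0$, so real parts annihilate precisely the favorable commutation term. In the paper the two contributions come from \emph{separate} multiplier identities: Lemma~\ref{lem: for v, non super} multiplies by $-\operatorname{sign}(g_2'/g_1)\bar v$ and takes \emph{imaginary} parts — $\Im(\bar v v'')$ integrates to zero via the boundary conditions, leaving exactly $2\int|g_2'/g_1||v|^2$ — and only then rescales by $(|\omega|+|am\Xi|)$; the terms $\tfrac{1}{r}|v'|^2$ and $\tfrac{\tilde\lambda}{r^3}|v|^2$ come from a different multiplier $\tfrac{1}{r}\bar v$ with \emph{real} parts (see the proof of Proposition~\ref{prop: all the derivatives of v}). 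Your single schematic inequality cannot be produced by one integration by parts, so as written the non-superradiant step fails.

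Second, your explanation of the origin of $\Delta^2\omega^2|u|^2$ on the left-hand side is incorrect: you attribute it to "inspecting the inhomogeneity $(g_1\tfrac{d}{dr^\star}+ig_2)H$," but $H$ is the forcing term and has no direct bearing on the $|u|^2$ bulk control. In the paper this term is obtained from the Poincar\'e-type Lemma~\ref{lem: control of the law order derivatives with v}: one writes $\int \tfrac{1}{r^4}\bigl(\tfrac{\Delta}{r^2+a^2}\bigr)^2|u|^2\,dr$ by inserting $\tfrac{d}{dr}(r-r_{\textit{trap}})$ and integrating by parts, which converts the desired non-degenerate $|u|^2$ term into the already-controlled $|v|^2$ bulk plus terms with the trapping degeneration $(1-r_{\textit{trap}}/r)^2|u|^2$ — and these are exactly what the Morawetz estimate provides. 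This transfer of non-degenerate control from $v$ back to $u$ is a separate and essential ingredient that your outline omits. Your superradiant step, on the other hand, is substantively right: the paper encodes "quantitative non-trapping" as $V_0(r)-\omega^2 \geq b\,\Delta\tilde\lambda$ near $r_s$, which is equivalent in effect to your observation that $r_{\textit{trap}}=0$ there.
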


\begin{proof}
	See Section~\ref{subsec: sec: proof of Theorem 2, subsec 4.1}. 
\end{proof}

To prove Theorem~\ref{thm: subsec: sec: proof of Theorem 2, subsec 4.1, thm 1} we need several preparatory definitions and results, which are detailed in the following Sections.

\subsection{Preparatory identities}\label{subsec: sec: proof of Theorem 2, subsec 2}

We have the following Lemmata

\begin{lemma}\label{prop: equation of v}
		Let~$l>0$ and~$(a,M)\in \mathcal{B}_l$ and~$\mu^2_{KG}\geq 0$. Let~$\omega\in \mathbb{R},m\in\mathbb{Z},\ell\in \mathbb{Z}_{\geq |m|}$. Let~$u$ be a smooth solution of the inhomogeneous radial ode~\eqref{eq: sec: proof of Thm: rel-nondeg, eq 2} that satisfies the outgoing boundary conditions~\eqref{eq: subsec: sec: proof of Thm: rel-nondeg, subsec 2, eq 2}. 
		
		Then, the radial function~$v$, see Definition~\ref{def: subsec: sec: proof of Theorem 2, subsec 1.1, def 1}, is a~$C^0(-\infty,+\infty)$ function and the derivative~$v^\prime$ is a piesewise~$C^1(-\infty,+\infty)$ function. Specifically, if~$(\omega,m)\in\mathcal{SF}$ then~$v$ is not differentiable at~$r=r_s(\omega,m)$, where~$am\omega=\frac{a^2m^2\Xi}{r_s^2+a^2}$.

		Moreover, the following equation holds 
	\begin{equation}\label{eq: equation of v}
		\begin{aligned}
			v^{\prime\prime} +\left(\omega^2-V\right)v =2i\frac{g_2^\prime}{g_1}v+\left(g_1^{\prime\prime} u^\prime +i g_2^{\prime\prime}u+\frac{v}{g_1}(V_{\textit{SL}}+V_{\mu_{\textit{KG}}})^\prime\right)+\left(2g_1^\prime H +(g_1\frac{d}{d r^\star}+ig_2)H\right)
		\end{aligned}
	\end{equation}
	in a distributional sense.
\end{lemma}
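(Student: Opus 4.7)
The plan is to proceed in two stages: first establish the regularity claims on $v$ and $v'$, and then derive the equation by direct computation, exploiting a crucial algebraic identity for $V_0$.

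For the regularity, I would appeal directly to Proposition~\ref{prop: subsec: sec: G, subsec 1, prop 1}. Since $g_1 \in C^\infty(r_+, \bar r_+)$ and $u$ is smooth, the only obstruction to regularity of $v = g_1 u' + ig_2 u$ comes from $g_2$. Items~(1)–(3) of Proposition~\ref{prop: subsec: sec: G, subsec 1, prop 1} give that $g_2(\omega,m,\cdot)$ is continuous on $(r_+, \bar r_+)$, is $C^1$ there for non-superradiant frequencies, and is piecewise $C^1$ with failure of differentiability only at $r=r_s$ (where $am\omega = a^2 m^2 \Xi/(r_s^2+a^2)$) for $(\omega,m)\in\mathcal{SF}$. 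Combining with smoothness of $u$ immediately yields the claimed regularity of $v$ and $v'$.

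For the ode, I would compute directly. Differentiating $v = g_1 u' + i g_2 u$ twice and using $u'' = (V-\omega^2)u + H$ and (upon differentiating) $u''' = V'u + (V-\omega^2)u' + H'$ to eliminate $u''$ and $u'''$, one obtains in a distributional sense
\begin{equation*}
v'' + (\omega^2-V)v = g_1'' u' + ig_2'' u + 2ig_2' u' + \bigl[2g_1'(V-\omega^2) + g_1 V'\bigr]u + 2g_1' H + \left(g_1\tfrac{d}{dr^\star}+ig_2\right)H.
\end{equation*}
The critical algebraic step is then to rewrite the bracketed term using the special structure of $V_0$. Specifically, from the expression~\eqref{eq: the potential V}–\eqref{eq: the potentials V0,VSl,Vmu} and the definitions of $g_1, g_2$, one verifies that
\begin{equation*}
V_0 - \omega^2 \;=\; \tfrac{\Delta}{(r^2+a^2)^2}\bigl(\tilde\lambda - 2am\omega\Xi - \min_{[r_+,\bar r_+]}\mathcal{T}\bigr) \;-\; \tfrac{g_2^2}{g_1^2},
\end{equation*}
and that $g_1^2$ times the first term is \emph{constant in $r$} (it equals the frequency-dependent but $r$-independent quantity $\tilde\lambda - 2am\omega\Xi - \min \mathcal{T}$). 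This constancy yields the identity
\begin{equation*}
2g_1'(V_0-\omega^2) + g_1 V_0' \;=\; -\tfrac{2g_2 g_2'}{g_1}.
\end{equation*}
Substituting back and using $2ig_2'u' - \tfrac{2g_2 g_2'}{g_1}u = 2i\tfrac{g_2'}{g_1}v$, while grouping the $V_{\textit{SL}} + V_{\mu_{\textit{KG}}}$ contributions via $v/g_1$, produces the stated equation~\eqref{eq: equation of v}.

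The main obstacle will be the careful bookkeeping in the algebraic identification, in particular verifying the constancy of $g_1^2(V_0 - \omega^2 + g_2^2/g_1^2)$ in $r$ at fixed $(\omega, m, \ell)$—this is where the very specific choice of the square-root structure of $g_2$ and the compatibility with the Kerr--de~Sitter radial potential play an essential role. Once this identity is in place, the derivation is a matter of routine reorganization of terms; the equation is to be interpreted distributionally across $r_s$ in the superradiant regime, consistent with the regularity discussion above.
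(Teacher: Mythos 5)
Your proof proposal follows essentially the same route as the paper: cite Proposition~\ref{prop: subsec: sec: G, subsec 1, prop 1} for the regularity of~$v$ via the (piecewise) $C^1$ structure of~$g_2$, differentiate~$v=g_1u'+ig_2u$ twice, substitute the ode for~$u$ and~$u'''$, and exploit the fact that $g_1^2\bigl(\omega-\tfrac{am\Xi}{r^2+a^2}\bigr)^2 - g_2^2 = \min_{[r_+,\bar r_+]}\mathcal{T}$ is constant in~$r$. The paper encodes this last fact by writing the term $-\tfrac{v}{g_1}\bigl(g_1^2(\omega-\tfrac{am\Xi}{r^2+a^2})^2 - g_2^2\bigr)'$ and noting it vanishes; you encode the same fact via the equivalent identity $2g_1'(V_0-\omega^2)+g_1V_0' = -2g_2g_2'/g_1$. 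Both are the same observation, and your version makes the cancellation slightly more explicit.

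One caveat worth flagging, which affects the paper's own write-up as much as yours: after the cancellation, a direct bookkeeping of your intermediate formula leaves the lower-order contribution in the form
\begin{equation*}
\bigl[2g_1'(V_{\textit{SL}}+V_{\mu_{\textit{KG}}}) + g_1(V_{\textit{SL}}+V_{\mu_{\textit{KG}}})'\bigr]\,u,
\end{equation*}
whereas the lemma (and your last sentence) records this as $\tfrac{v}{g_1}(V_{\textit{SL}}+V_{\mu_{\textit{KG}}})' = (u' + i\tfrac{g_2}{g_1}u)(V_{\textit{SL}}+V_{\mu_{\textit{KG}}})'$. These two expressions are not obviously equal (the former carries no $u'$ factor), so the phrase ``grouping the $V_{\textit{SL}}+V_{\mu_{\textit{KG}}}$ contributions via $v/g_1$'' is asserting rather than deriving the final form. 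The paper's proof has the same opacity at this step (and also a sign inconsistency, writing $v''-(\omega^2-V)v$ in the displayed computation against $v''+(\omega^2-V)v$ in the lemma statement). Since all of these are lower-order inhomogeneities controlled later by the Morawetz estimate, the discrepancy does not propagate, but if you want the derivation to be airtight you should carry the $V_{\textit{SL}}+V_{\mu_{\textit{KG}}}$ term through explicitly rather than matching it by pattern to the lemma's displayed formula.
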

\begin{proof}

We recall from Proposition~\ref{prop: subsec: sec: G, subsec 1, prop 1} that for the superradiant frequencies~$\mathcal{SF}$ that the function~$g_2(\omega,m,r)$ is~$C^0(r_+,\bar{r}_+)$ and the derivative~$g_2^\prime$ is piecewise~$C^0$. Specifically, the function~$g_2^\prime$ is not differentiable only at the point~$r=r_s\in (r_+,\bar{r}_+)$, where~$am\omega=\frac{a^2m^2\Xi}{r_s^2+a^2}$, see Proposition~\ref{prop: subsec: sec: G, subsec 1, prop 1}. It is easy to see from the Definition~\ref{def: subsec: sec: proof of Theorem 2, subsec 1.1, def 1} that, for the superradiant frequencies~$\mathcal{SF}$, the function~$v$ is also a~$C^0(-\infty,+\infty)$ function, where its derivative is piecewise~$C^0$.

For the non superradiant frequencies~$(\omega,m)\in(\mathcal{SF})^c$ we have that~$g_2(\omega,m,\cdot)\in C^1(r_+,\bar{r}_+)$, see Proposition~\ref{prop: subsec: sec: G, subsec 1, prop 1}. It is easy to see from Definition~\ref{def: subsec: sec: proof of Theorem 2, subsec 1.1, def 1} that, for the non-superradiant frequencies~$(\mathcal{SF})^c$, we have ~$v\in C^1(-\infty,+\infty)$.

Now, by recalling the definitions of~$g_1,g_2$, see Definition~\ref{def: subsec: sec: G, subsec 1, def 1}, we compute 
	\begin{equation}
		\begin{aligned}
			v^{\prime\prime} -\left(\omega^2-V\right)v = &\left(g_1^{\prime\prime} u^\prime +ig_2^{\prime\prime}u+\frac{v}{g_1}(V_{\textit{SL}}+V_{\mu_{\textit{KG}}})^\prime\right)+\left(2g_1^\prime H +(g_1\frac{d}{d r^\star}+ig_2)H\right)-\frac{v}{g_1}\left(g_1^2\left(\omega-\frac{am\Xi}{r^2+a^2}\right)^2-g_2^2\right)^\prime\\
			&   \quad +2i\frac{g_2^\prime}{g_1}v,
		\end{aligned}
	\end{equation}
	where the above is to be understood in a  distributional sense, since~$v$ is not differentiable in~$r$ for certain frequencies~$(\omega,m,\ell)$.   
\end{proof}

We need the following energy identity for~$v$.

\begin{lemma}\label{lem: for v, 0}
			Let~$l>0$ and~$(a,M)\in \mathcal{B}_l$ and~$\mu^2_{KG}\geq 0$. Let
			\begin{equation}
				(\omega,m)\in\mathcal{SF}.
			\end{equation}
			
			Let~$u$ be a smooth solution of the inhomogeneous radial ode~\eqref{eq: sec: proof of Thm: rel-nondeg, eq 2} that satisfies the outgoing boundary conditions~\eqref{eq: subsec: sec: proof of Thm: rel-nondeg, subsec 2, eq 2}. Let~$v$ be as in Definition~\ref{def: subsec: sec: proof of Theorem 2, subsec 1.1, def 1}.

Then, we have
	\begin{equation}\label{eq: prop: for v, eq 2}
		\begin{aligned}
			2\int_{\mathbb{R}}\frac{g_2^\prime}{g_1}f_s(r)|v|^2	dr^\star&	=
			- \int_{\mathbb{R}}f_s(r)\Im \left(\bar{v}\left(g_1^{\prime\prime}u^\prime+ig_2^{\prime\prime}u\right)\right)dr^\star\\
			&	\quad -\int_{\mathbb{R}} f^\prime_s(r) \Im\left(\bar{v}v^\prime\right) dr^\star\\
			&	\quad -\int_{\mathbb{R}} f_s(r)\Im\left(\bar{v}\left(2g_1^\prime H+\left(g_1\frac{d}{dr^\star}+ig_2\right)H\right)\right)dr^\star,
		\end{aligned}
	\end{equation} 
	for any~$f_s\in C^\infty [r_+,\bar{r}_+]$ with~$f_s(r_s)=0$, where~$r_s(\omega,m)$ is such that~$am\omega=\frac{a^2m^2\Xi}{r_s^2+a^2}$. 
\end{lemma}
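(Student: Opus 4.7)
The plan is to derive \eqref{eq: prop: for v, eq 2} as a direct multiplier identity for the distributional ode \eqref{eq: equation of v} satisfied by $v$, with test function $f_s(r)\bar v$. The vanishing hypothesis $f_s(r_s)=0$ is not a convenience but a necessity, tailored to neutralize the limited regularity of $v$ at $r_s$ for the superradiant frequencies treated in the lemma.

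First I would multiply \eqref{eq: equation of v} by $f_s(r)\bar v$ and take imaginary parts. Since the potential $V$ is real and so is $\tfrac{|v|^2}{g_1}(V_{\textit{SL}}+V_{\mu_{\textit{KG}}})^\prime$, the terms $f_s(\omega^2-V)|v|^2$ and $f_s\tfrac{|v|^2}{g_1}(V_{\textit{SL}}+V_{\mu_{\textit{KG}}})^\prime$ drop out. What remains is the pointwise identity
\begin{equation*}
\Im\bigl(f_s\bar{v}v^{\prime\prime}\bigr) = 2\frac{g_2^\prime}{g_1}f_s|v|^2 + f_s\,\Im\!\bigl(\bar{v}(g_1^{\prime\prime}u^\prime+ig_2^{\prime\prime}u)\bigr) + f_s\,\Im\!\Bigl(\bar{v}\bigl(2g_1^\prime H+(g_1\tfrac{d}{dr^\star}+ig_2)H\bigr)\Bigr).
\end{equation*}
Integrating in $r^\star$ over $\mathbb{R}$ and integrating the left-hand side by parts, the boundary terms at $r^\star=\pm\infty$ vanish by \eqref{eq: subsec: sec: proof of Theorem 2, subsec 1.1, eq 2} (outgoing boundary conditions force $v|_{\mathcal{H}^+}=v|_{\bar{\mathcal{H}}^+}=0$). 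After expanding $(f_s\bar v)^\prime$, the summand $\int f_s|v^\prime|^2\,dr^\star$ is real and drops from the imaginary part, so I obtain
\begin{equation*}
\Im\!\int_{\mathbb{R}} f_s\bar v v^{\prime\prime}\,dr^\star = -\int_{\mathbb{R}} f_s^\prime\,\Im(\bar v v^\prime)\,dr^\star.
\end{equation*}
Combining the two displays and rearranging produces exactly \eqref{eq: prop: for v, eq 2}.

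The only delicate point is the integration by parts performed above: by Proposition~\ref{prop: subsec: sec: G, subsec 1, prop 1} and Lemma~\ref{prop: equation of v}, in the superradiant regime the function $g_2^\prime$ has a jump discontinuity at $r=r_s$, hence so does $v^\prime$, and $v^{\prime\prime}$ a priori carries a Dirac contribution at $r_s^\star$. This is precisely where the hypothesis $f_s(r_s)=0$ intervenes: I would split the $r^\star$-integral into the two pieces $(-\infty,r_s^\star)$ and $(r_s^\star,+\infty)$, on each of which $v$ is classically $C^2$ and the integration by parts is standard, and observe that the interior boundary contribution $f_s(r_s)\bar v(r_s)\bigl[v^\prime(r_s^+)-v^\prime(r_s^-)\bigr]$ carries the overall factor $f_s(r_s)=0$ and therefore vanishes. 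This is the main (and only) obstacle; with it addressed, the remaining algebra is routine.
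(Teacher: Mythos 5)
Your proposal is correct and follows essentially the same route as the paper's proof: multiply the distributional equation for $v$ by $f_s\bar v$, take imaginary parts (the real terms involving $V$ and $(V_{\textit{SL}}+V_{\mu_{\textit{KG}}})'$ drop out), integrate by parts, and kill the boundary terms at $r^\star=\pm\infty$ via the outgoing conditions and the interior jump term at $r_s^\star$ via the hypothesis $f_s(r_s)=0$. Your version, splitting the integral at $r_s^\star$, is a transparent rephrasing of the paper's step of tracking the jump $\Im[f_s\bar v v']_{r_s-}^{r_s+}$ directly.
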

\begin{proof}

	We multiply the equation satisfied by~$v$, see Lemma~\ref{prop: equation of v}, with
	\begin{equation}
		f_s(r)\bar{v},\qquad f_s \in C^\infty_r[r_+,\bar{r}_+].
	\end{equation}
	We obtain 
	\begin{equation}\label{eq: proof: prop: for v, eq 1}
		\begin{aligned}
			f_s(r)\bar{v}\left(v^{\prime\prime} -\left(\omega^2-V\right)v\right) &=2i\frac{g_2^\prime}{g_1}f_s(r)|v|^2+i g_2^{\prime\prime}f_s(r)\bar{v}u+ f_s(r)g_1^{\prime\prime}\bar{v} u^\prime+ f_s(r)\frac{|v|^2}{g_1}(V_{\textit{SL}}+V_{\mu_{\textit{KG}}})^\prime\\
			&	\quad+f_s(r)\bar{v}\left(2g_1^\prime H +(g_1\frac{d}{d r^\star}+ig_2)H\right).
		\end{aligned}
	\end{equation}
	Then, we take imaginary parts to~\eqref{eq: proof: prop: for v, eq 1} and obtain 
	\begin{equation}\label{eq: proof: prop: for v, eq 2}
		\begin{aligned}
			\Im \left( f_s(r)\bar{v} v^{\prime\prime} \right)&	=2\frac{g_2^\prime}{g_1}f_s(r)|v|^2+ g_2^{\prime\prime}f_s(r)\Im\left(iu\bar{v}\right)+g_1^{\prime\prime} f_s(r)\Im \left(\bar{v}u^\prime\right)\\
			&	\quad +f_s(r)\Im\left(\bar{v}\left(2g_1^\prime H+\left(g_1\frac{d}{dr^\star}+ig_2\right)H\right)\right).
		\end{aligned}
	\end{equation} 
	Now, we integrate~\eqref{eq: proof: prop: for v, eq 2} with measure~$dr^\star$ and after the appropriate integration by parts we obtain 
	\begin{equation}\label{eq: proof: prop: for v, eq 3}
		\begin{aligned}
			\Im\left[f_s(r)\bar{v}v^\prime\right]^\infty_{-\infty}+\Im\left[f_s(r)\bar{v}v^\prime\right]^{r_s+}_{r_s-}-\int_{\mathbb{R}} f^\prime_s(r) \Im\left(\bar{v}v^\prime\right)dr^\star &	=2\int_{\mathbb{R}}dr^\star\frac{g_2^\prime}{g_1}f_s(r)|v|^2dr^\star+ \int_{\mathbb{R}} g_2^{\prime\prime}f_s(r)\Im\left(iu\bar{v}\right)dr^\star\\
			&	\quad +\int_{\mathbb{R}} g_1^{\prime\prime} f_s(r)\Im \left(\bar{v}u^\prime\right)dr^\star\\
			&	\quad +\int_{\mathbb{R}} f_s(r)\Im\left(\bar{v}\left(2g_1^\prime H+\left(g_1\frac{d}{dr^\star}+ig_2\right)H\right)\right)dr^\star.
		\end{aligned}
	\end{equation} 
	In view of that~$u$ satisfies outgoing boundary conditions~\eqref{eq: subsec: sec: proof of Thm: rel-nondeg, subsec 2, eq 2} then we have 
	\begin{equation}
		\left[f_s(r(r^\star))\bar{v} v^\prime\right]^{\infty}_{-\infty}=0
	\end{equation}
	see Proposition~\ref{prop: subsec: sec: G, subsec 1, prop 1}. Moreover, by the assumption that~$f_s(r_s)=0$ we have that~$	\left[f_s(r)\bar{v} v^\prime\right]^{r_s+}_{r_s-}=0$. Therefore, we obtain that equation~\eqref{eq: proof: prop: for v, eq 3} implies 
	\begin{equation}\label{eq: proof: prop: for v, eq 4}
		\begin{aligned}
			2\int_{\mathbb{R}}\frac{g_2^\prime}{g_1}f_s(r)|v|^2	dr^\star&	=
			- \int_{\mathbb{R}} g_2^{\prime\prime}f_s(r)\Im\left(iu\bar{v}\right)dr^\star-\int_{\mathbb{R}} f^\prime_s(r) \Im\left(\bar{v}v^\prime\right)dr^\star \\
			&	\quad -\int_{\mathbb{R}} g_1^{\prime\prime} f_s(r)\Im \left(\bar{v}u^\prime\right)dr^\star\\
			&	\quad -\int_{\mathbb{R}} f_s(r)\Im\left(\bar{v}\left(2g_1^\prime H+\left(g_1\frac{d}{dr^\star}+ig_2\right)H\right)\right)dr^\star .
		\end{aligned}
	\end{equation} 
	We conclude~\eqref{eq: prop: for v, eq 2} and the proof.

\end{proof}

\subsection{Preparatory energy estimate for~\texorpdfstring{$v$}{PDFstring} for the non superradiant frequencies~\texorpdfstring{$(\mathcal{SF})^c$}{PDFstring}}

We need the following estimate for the non-superradiant frequencies

\begin{lemma}\label{lem: for v, non super}
				Let~$l>0$ and~$(a,M)\in \mathcal{B}_l$ and~$\mu^2_{KG}\geq 0$. Let
					\begin{equation}
					(\omega,m)\in (\mathcal{SF})^c.
				\end{equation}
				Let~$u$ be a smooth solution of the inhomogeneous radial ode~\eqref{eq: sec: proof of Thm: rel-nondeg, eq 2} that satisfies the outgoing boundary conditions~\eqref{eq: subsec: sec: proof of Thm: rel-nondeg, subsec 2, eq 2}. Let~$v$ be as in Definition~\ref{def: subsec: sec: proof of Theorem 2, subsec 1.1, def 1}.

	We have
	\begin{equation}\label{eq: prop: for v, eq 1}
		\begin{aligned}
		&	b(a,M,l)\int_{r_+}^{\bar{r}_+}\frac{r^2+a^2}{\Delta}\left(\left(\omega-\frac{am\Xi}{r^2+a^2}\right)^2 +\frac{\Delta}{r^3} \left(\omega^2+\frac{(am\Xi)^2}{r^2}\right) \right) |v|^2dr \\
		&	\qquad\qquad \leq  \int_{\mathbb{R}} \left(|\omega|+|am\Xi|\right) |v| \left|g_1^{\prime\prime} u^\prime +ig_2^{\prime\prime}u\right|dr^\star  \\
		&   \qquad\qquad\qquad\qquad +\int_{\mathbb{R}}  r\left|2g_1^\prime H +(g_1\frac{d}{dr^\star}+ig_2)H\right|^2dr^\star .
		\end{aligned}
	\end{equation}
\end{lemma}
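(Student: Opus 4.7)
The plan is to exploit the favorable sign of the commutator term $2ig_2'/g_1 \cdot v$ that appears in the equation for $v$ from Lemma~\ref{prop: equation of v}. For non-superradiant frequencies $(\omega,m)\in(\mathcal{SF})^c$, Proposition~\ref{prop: subsec: sec: G, subsec 1, prop 1}(2) asserts that $g_2(\omega,m,\cdot)\in C^1(r_+,\bar{r}_+)$, Proposition~\ref{prop: subsec: sec: G, subsec 1, prop 1}(7) supplies the strict lower bound
$$\frac{|g_2'|}{g_1} \geq b\Big( \big|\omega - \tfrac{am\Xi}{r^2+a^2}\big| + \Delta\big(|\omega| + |am\Xi|/r^2\big)\Big),$$
and consequently $g_2'$ has a fixed sign $\sigma\in\{\pm 1\}$ throughout $(r_+,\bar{r}_+)$.

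I would multiply the equation for $v$ by $\sigma(|\omega|+|am\Xi|)\bar v$ and take imaginary parts. The purely real contributions $(\omega^2-V)|v|^2$ and $(|v|^2/g_1)(V_{\textit{SL}}+V_{\mu_{\textit{KG}}})'$ drop out automatically. Since $\Im(\bar v v'')=\tfrac{d}{dr^\star}\Im(\bar v v')$, integrating in $r^\star$ reduces the left-hand side to a total boundary contribution which vanishes because $v(\pm\infty)=0$ by Proposition~\ref{prop: subsec: sec: G, subsec 1, prop 1}(6) (a direct consequence of the outgoing conditions~\eqref{eq: subsec: sec: proof of Thm: rel-nondeg, subsec 2, eq 2} on $u$). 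This produces the energy identity
$$2(|\omega|+|am\Xi|)\int_{\mathbb{R}} \tfrac{|g_2'|}{g_1}|v|^2\,dr^\star = -\sigma(|\omega|+|am\Xi|)\int_{\mathbb{R}} \Im\big(\bar v (g_1''u'+ig_2''u)\big)\, dr^\star - \sigma(|\omega|+|am\Xi|)\int_{\mathbb{R}}\Im\big(\bar v [2g_1'H+(g_1 \tfrac{d}{dr^\star}+ig_2)H]\big)\,dr^\star.$$

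To recover the target LHS, I would invoke the lower bound on $|g_2'|/g_1$ together with the elementary pointwise bounds $|\omega-am\Xi/(r^2+a^2)|\leq C(|\omega|+|am\Xi|)$ and $(|\omega|+|am\Xi|)(|\omega|+|am\Xi|/r^2)\geq \omega^2 + (am\Xi)^2/r^2$, and absorb the prefactor $r^{-3}$ against the boundedness of $r$ on $[r_+,\bar{r}_+]$; changing variable via $dr^\star = (r^2+a^2)\Delta^{-1}dr$ then reproduces the weight $(r^2+a^2)/\Delta$ appearing in the statement. For the right-hand side, the term involving $g_1''u'+ig_2''u$ is controlled directly by the triangle inequality $|\Im(\bar v Z)|\le |v||Z|$, leaving it in the required form. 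For the inhomogeneity I would apply a weighted Cauchy--Schwarz arranged so that the $|v|^2$ contribution is absorbed into a small fraction of the favorable LHS (again invoking the lower bound on $|g_2'|/g_1$), leaving only the $r$-weighted $L^2$-type quantity in $2g_1'H + (g_1 d/dr^\star + ig_2)H$ as stated.

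The main technical hurdle is the weight accounting in the Cauchy--Schwarz absorption of the inhomogeneity: schematically one needs $(|\omega|+|am\Xi|)g_1/|g_2'|\leq Cr$. The non-superradiance condition $(\omega-\omega_+ m)(\omega-\bar\omega_+ m)\geq 0$ is exactly what ensures compatible behavior between $|\omega-am\Xi/(r^2+a^2)|$ and $\Delta(|\omega|+|am\Xi|/r^2)$ — in particular near the horizons $r=r_+,\bar{r}_+$ where each of these individually may vanish — so that their sum, weighted by $|\omega|+|am\Xi|$, dominates the required $r^{-1}$ quantity uniformly in $(\omega,m)$.
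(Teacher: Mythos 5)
Your outline matches the paper's proof step for step: multiply the $v$-equation by the correctly signed multiple of $\bar v$, take imaginary parts, kill the boundary terms via $v(\pm\infty)=0$, invoke the lower bound on $|g_2'|/g_1$ from Proposition~\ref{prop: subsec: sec: G, subsec 1, prop 1}(7), and apply Young's inequality to the inhomogeneity. Combining the $\sigma$ and the $(|\omega|+|am\Xi|)$ factors into a single multiplier, rather than multiplying by the scalar afterward as the paper does, is an inessential rearrangement.

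However, the justification you offer for the absorption step in your final paragraph is incorrect, and this matters because you identify it as ``the main technical hurdle.'' You claim that the non-superradiance condition guarantees
\begin{equation*}
	\frac{(|\omega|+|am\Xi|)\,g_1}{|g_2'|}\le Cr
\end{equation*}
uniformly in $(\omega,m)$ and $r$. This fails precisely at the borderline non-superradiant frequencies, which belong to $(\mathcal{SF})^c$ and are therefore covered by this lemma. Take $am\omega = a^2m^2\Xi/(r_+^2+a^2)$, i.e.\ $\omega=\omega_+ m$ with $m\neq 0$. By Definition~\ref{def: subsec: sec: G, subsec 1, def 1}, $g_2(\omega,m,r)=-|am\Xi|\,\frac{r^2-r_+^2}{(r_+^2+a^2)\sqrt{\Delta}}$, and a direct computation shows that as $r\to r_+$ one has
\begin{equation*}
	\frac{|g_2'|}{g_1}\sim \frac{|am\Xi|\, r_+\,\Delta}{(r_+^2+a^2)^3}\longrightarrow 0,
	\qquad\text{hence}\qquad
	\frac{(|\omega|+|am\Xi|)\,g_1}{|g_2'|}\sim\frac{C(a,M,l)}{\Delta}\longrightarrow\infty.
\end{equation*}
This is consistent with the lower bound~\eqref{eq: lem: subsec: sec: G, subsec 1, lem 1, eq 5}: both $|\omega-am\Xi/(r^2+a^2)|$ and $\Delta(|\omega|+|am\Xi|/r^2)$ vanish together at $r_+$ for this frequency, so their sum provides no rescue, contrary to what you assert. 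The same problem occurs near $\bar r_+$ for $\omega=\bar\omega_+ m$, and by continuity for nearby non-superradiant frequencies. The paper itself is terse at this step (``after appropriate Young's inequalities'') and does not give the weight bookkeeping; the correct resolution almost certainly uses the extra $\Delta$--powers hidden inside $v$ (which vanishes at the horizons at a definite rate enforced by the outgoing boundary condition) and inside the $g_1'$, $g_1$ factors of the combination $2g_1'H+(g_1\frac{d}{dr^\star}+ig_2)H$, and is \emph{not} the pointwise domination you propose. So the structural plan is right, but the specific mechanism you offer for closing the Young's inequality is false and would need to be replaced.
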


\begin{proof}
	We multiply the equation that~$v$ satisfies, see Lemma~\ref{prop: equation of v}, namely 
	\begin{equation}
		\begin{aligned}
			v^{\prime\prime} +\left(\omega^2-V\right)v =2i\frac{g_2^\prime}{g_1}v+\left(g_1^{\prime\prime} u^\prime +i g_2^{\prime\prime}u+\frac{v}{g_1}(V_{\textit{SL}}+V_{\mu_{\textit{KG}}})^\prime\right)+\left(2g_1^\prime H +(g_1\frac{d}{d r^\star}+ig_2)H\right),
		\end{aligned}
	\end{equation}
	with 
	\begin{equation}
		- \text{sign}\left(\frac{g_2^\prime}{g_1}\right)\bar{v}
	\end{equation}
	in view of that~$|\frac{dg_2}{dr^\star}|\geq\Delta (|\omega+|am\Xi|)$ for the non-superradiant frequencies, see Proposition~\ref{prop: subsec: sec: G, subsec 1, prop 1}, and then take imaginary parts. Finally, we integrate and obtain 
	\begin{equation}
		\begin{aligned}
			&  -  \text{sign}\left(\frac{g_2^\prime}{g_1}\right)\Im [\bar{v}v^\prime]^\infty_{-\infty} +2\int_{\mathbb{R}}\left|\frac{g_2^\prime}{g_1}\right||v|^2dr^\star \leq \int_{\mathbb{R}}\left( |v| \left|g_1^{\prime\prime} u^\prime +ig_2^{\prime\prime}u\right|+|v|\left|2g_1^\prime H +(g_1\frac{d}{d r^\star}+ig_2)H\right|\right)dr^\star.
		\end{aligned}
	\end{equation}

	Since~$u$ satisfies the outgoing boundary conditions~\eqref{eq: subsec: sec: proof of Thm: rel-nondeg, subsec 2, eq 2} we can immediately compute that~$v$ also satisfies the outgoing boundary conditions
	\begin{equation}
		v^\prime=
		\begin{cases}
			-i\left(\omega-\frac{am\Xi}{r_+^2+a^2}\right)v,\:\:r^\star=-\infty\\
			i\left(\omega-\frac{am\Xi}{\bar{r}_+^2+a^2}\right)v,\:\:r^\star=\infty.
		\end{cases}    
	\end{equation}
	Therefore, we compute 
	\begin{equation}\label{eq: prop: for v, eq 1, boundary conditions}
		\begin{aligned}
			\Im[\bar{v}v^\prime]^\infty_{-\infty}= \left(\omega-\frac{am\Xi}{\bar{r}_+^2+a^2}\right) |v|^2(\infty)+\left(\omega-\frac{am\Xi}{r_+^2+a^2}\right)|v|^2(-\infty)
		\end{aligned}
	\end{equation}
	where note that~$|v|(\pm \infty)=0$, by Proposition~\ref{prop: subsec: sec: G, subsec 1, prop 1}. We obtain
	\begin{equation}\label{eq: proof: prop: for v, eq -10}
		\begin{aligned}
			2\int_{\mathbb{R}}\left|\frac{g_2^\prime}{g_1}\right||v|^2 dr^\star&	\leq \int_{\mathbb{R}}\left( |v| \left|g_1^{\prime\prime} u^\prime +ig_2^{\prime\prime}u\right|+|v|\left|2g_1^\prime H +(g_1\frac{d}{d r^\star}+ig_2)H\right|\right)dr^\star.\\
		\end{aligned}
	\end{equation}

	We note from Proposition~\ref{prop: subsec: sec: G, subsec 1, prop 1} that the following holds
	\begin{equation}\label{eq: proof: lem: sec: exp decay, subsec: energy estimate, lem 1, energy estimate, eq 0.1}
		\frac{1}{g_1}\left|\frac{d g_2}{d r^\star}\right| \geq b\left(\left|\omega-\frac{am\Xi}{r^2+a^2}\right|+\Delta \left(|\omega|+\frac{|am\Xi|}{r^2}\right) \right). 
	\end{equation}

	We multiply both sides of~\eqref{eq: proof: prop: for v, eq -10} with 
	\begin{equation}
		|\omega|+|am\Xi|
	\end{equation}
	namely 
	\begin{equation}\label{eq: proof: lem: sec: exp decay, subsec: energy estimate, lem 1, energy estimate, eq 1.1}
		\begin{aligned}
			b \int_{\mathbb{R}} \left(|\omega|+|am\Xi|\right)\Big|\frac{g_2^\prime}{g_1}\Big||v|^2dr^\star	&	\leq \int_{\mathbb{R}}\Big( \left(|\omega|+|am\Xi|\right)|v| \left|g_1^{\prime\prime} u^\prime +ig_2^{\prime\prime}u\right|\\
			&	\qquad\qquad\qquad +\left(|\omega|+|am\Xi|\right)|v|\left|2g_1^\prime H +(g_1\frac{d}{d r^\star}+ig_2)H\right|\Big)dr^\star.
		\end{aligned}
	\end{equation}
	Now, in view of~\eqref{eq: proof: lem: sec: exp decay, subsec: energy estimate, lem 1, energy estimate, eq 0.1} inequality~\eqref{eq: proof: lem: sec: exp decay, subsec: energy estimate, lem 1, energy estimate, eq 1.1} implies the following
	\begin{equation}\label{eq: proof: lem: sec: exp decay, subsec: energy estimate, lem 1, energy estimate, eq 2.1}
		\begin{aligned}
			&	b\int_{r_+}^{\bar{r}_+}\frac{r^2+a^2}{\Delta}\left(\left(\omega-\frac{am\Xi}{r^2+a^2}\right)^2 +\frac{\Delta}{r^3} \left(\omega^2+\frac{(am\Xi)^2}{r^2}\right) \right) |v|^2 dr	\\
			&	\qquad\qquad \leq  \int_{\mathbb{R}}\Big( \left(|\omega|+|am\Xi|\right) |v| \left|g_1^{\prime\prime} u^\prime +ig_2^{\prime\prime}u\right| \\
			&   \qquad\qquad\qquad\qquad +\int_{\mathbb{R}}dr^\star  r\left|2g_1^\prime H +(g_1\frac{d}{dr^\star}+ig_2)H\right|^2\Big)dr^\star ,
		\end{aligned}
	\end{equation}
	after appropriate Young's inequalities on the right hand side of~\eqref{eq: proof: lem: sec: exp decay, subsec: energy estimate, lem 1, energy estimate, eq 1.1}. Now, in view of the boundary conditions of~\eqref{eq: subsec: sec: proof of Thm: rel-nondeg, subsec 2, eq 2}, inequality~\eqref{eq: proof: lem: sec: exp decay, subsec: energy estimate, lem 1, energy estimate, eq 2.1} implies the desired estimate~\eqref{eq: lem: sec: exp decay, subsec: energy estimate, lem 1, energy estimate, eq 1}, for a sufficiently small~$\epsilon>0$, after appropriate Young's inequalities.

\end{proof}

\subsection{Preparatory energy estimate for~\texorpdfstring{$v$}{PDFstring} for the superradiant frequencies~\texorpdfstring{$\mathcal{SF}$}{PDFstring}}

We need the following Lemma for the superradiant frequencies~$\mathcal{SF}$. Note that the proof of the following Lemma~\ref{lem: for v, super} is significantly more involved than the proof of the proof of Lemma~\ref{lem: for v, non super}. Specifically, the technical difficulty of non-differentiability of the function~$g_2(\omega,m,\cdot)$, see Proposition~\ref{prop: subsec: sec: G, subsec 1, prop 1}, is resolved with the use of quantitative non-trapping at of the superradiant frequencies, after using Lemma~\ref{lem: for v, 0} for an appropriately chosen smooth function~$f=f_s$.

\begin{lemma}\label{lem: for v, super}
		Let~$l>0$ and~$(a,M)\in \mathcal{B}_l$ and~$\mu^2_{KG}\geq 0$ and let~$\epsilon>0$ be sufficiently small. Let
		\begin{equation}
			(\omega,m)\in \mathcal{SF}. 
		\end{equation}
		
		Let~$u$ be a smooth solution of the inhomogeneous radial ode~\eqref{eq: sec: proof of Thm: rel-nondeg, eq 2} that satisfies the outgoing boundary conditions~\eqref{eq: subsec: sec: proof of Thm: rel-nondeg, subsec 2, eq 2}. Let~$v$ be as in Definition~\ref{def: subsec: sec: proof of Theorem 2, subsec 1.1, def 1}.

		Then, we have 
			\begin{equation}\label{eq: lem: sec: exp decay, subsec: energy estimate, lem 1, energy estimate, eq 1}
			\begin{aligned}
				&	b(a,M,l)\int_{r_+}^{\bar{r}_+}\left( \frac{r^2+a^2}{\Delta}\frac{1}{r} \left(\omega-\frac{am\Xi}{r^2+a^2}\right)^2 |v|^2 +\frac{\Delta}{r^3} \left(\omega^2+\frac{(am\Xi)^2}{r^2}\right) |v|^2\right)dr \\
				&\qquad\qquad \leq \int_{-\infty}^{(r_++\epsilon)^\star} \frac{r^2+a^2}{\Delta (r^2+a^2)^2}\left|u^\prime +i(\omega-\omega_+m)u\right|^2dr^\star\\
				&	\qquad\qquad\quad +\int_{(\bar{r}_+-\epsilon)^\star}^{+\infty}\frac{r^2+a^2}{\Delta (r^2+a^2)^2}\left|u^\prime -i(\omega-\bar{\omega}_+m) u\right|^2dr^\star\\
				&	\qquad\qquad\quad +\int_{\mathbb{R}}\left(\frac{\Delta}{(r^2+a^2)^2}\left(\omega^2+\frac{(am\Xi)^2}{r^2+a^2}+\mu^2_{KG}\right)|u|^2+\Delta |u^\prime|^2\right)dr^\star \\
					&	\qquad\qquad\quad  + \left(\omega^2+m^2\right)|u|^2(-\infty)+  \left(\omega^2+m^2\right)|u|^2(+\infty)\\
				&	\qquad\qquad\quad  + \int_{\mathbb{R}}\left( \frac{1}{\Delta}\left|H\right|^2+r\left|\left(g_1\frac{d}{dr^\star}+ig_2\right)H\right|^2\right)dr^\star.
			\end{aligned}
		\end{equation}
\end{lemma}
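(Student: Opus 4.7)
The plan is to apply the energy identity of Lemma~\ref{lem: for v, 0} with a carefully chosen weight of the form $f_s(\omega,m,r) = |am\Xi|\,h_s(\omega,m,r)$, where $h_s \in C^\infty[r_+,\bar{r}_+]$ vanishes precisely at $r=r_s$ (so that the formula of Lemma~\ref{lem: for v, 0} applies despite the nondifferentiability of $v$ there) and is sign-matched to $g_2'$. Since for $(\omega,m)\in\mathcal{SF}$ we have $g_2 = -\left|\omega-\frac{am\Xi}{r^2+a^2}\right| g_1$, the derivative $g_2'$ changes sign across $r_s$ with $\operatorname{sign}(g_2'/g_1) = -\operatorname{sign}(r-r_s)$ on a neighbourhood of $r_s$. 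Choosing $h_s$ proportional to $(r_s-r)$ near $r_s$, and extended smoothly and positively to the rest of $[r_+,\bar{r}_+]$ with additional weights absorbing factors of $\frac{r^2+a^2}{\Delta}$, makes the principal density $2\frac{g_2'}{g_1}f_s$ nonnegative and lower bounded by
\begin{equation*}
b(a,M,l)\,\frac{r^2+a^2}{\Delta}\frac{1}{r}\left(\omega-\frac{am\Xi}{r^2+a^2}\right)^2,
\end{equation*}
since $(\omega-\frac{am\Xi}{r^2+a^2})^2 \sim C(r-r_s)^2 \leq C'|r-r_s|$ on a neighborhood of $r_s$, which is exactly what $f_s\cdot g_2'/g_1$ provides there.

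With this choice, the three families of terms on the right hand side of Lemma~\ref{lem: for v, 0} are controlled as follows. The term $\int f_s\,\Im\bar{v}(g_1''u' + ig_2''u)\,dr^\star$ is bounded by Cauchy--Schwarz followed by Young's inequality, absorbing a small fraction of the bulk $|v|^2$-norm on the left and producing bulk $|u|^2,|u'|^2$ contributions that, after using $g_1'',g_2''$ are bounded in $r\in[r_+,\bar{r}_+]$ with at worst a $\Delta^{-1/2}$ blow up at the horizons, feed into the target RHS integrands $\Delta |u'|^2$ and $\frac{\Delta}{(r^2+a^2)^2}(\omega^2+\mu^2_{KG})|u|^2$. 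The term $\int f_s'\,\Im(\bar v v')$ is treated by integrating by parts one more time: $v$ vanishes at $r^\star=\pm\infty$ by Proposition~\ref{prop: subsec: sec: G, subsec 1, prop 1}, the interior boundary contributions at $r_s$ cancel because $f_s(r_s)=0$, and the resulting volume term is eliminated using the $v$-equation of Lemma~\ref{prop: equation of v} to produce further contributions in $|u|^2,|u'|^2,|H|^2$ and $|(g_1\partial_{r^\star}+ig_2)H|^2$. The inhomogeneity piece directly produces the $\frac{1}{\Delta}|H|^2$ and $r|(g_1\partial_{r^\star}+ig_2)H|^2$ terms on the target RHS after Cauchy--Schwarz. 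The boundary contributions $(\omega^2+m^2)|u|^2(\pm\infty)$ enter when expanding $|v|^2$ at $r^\star=\pm\infty$ via the outgoing boundary conditions~\eqref{eq: subsec: sec: proof of Thm: rel-nondeg, subsec 2, eq 2}, which control $|v^\prime\pm i(\omega-\omega_\pm m)v|$ in terms of $|u|$ together with the vanishing of $\Delta$ at the horizons.

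The main obstacle is precisely that the multiplier output degenerates at $r=r_s$, whereas the lower order weight $\frac{\Delta}{r^3}\left(\omega^2+\frac{(am\Xi)^2}{r^2}\right)|v|^2$ on the target left hand side does \emph{not} vanish at $r_s$, so the multiplier alone cannot supply it. This is where the quantitative non-trapping of superradiant frequencies enters: for $(\omega,m)\in\mathcal{SF}$ one has (as established in our companion~\cite{mavrogiannis4} and built into Theorem~\ref{main theorem 1}) that $r_{\textit{trap}}(\omega,m,\ell)\equiv 0$, so the nondegenerate Morawetz integrand $(\omega^2+\tilde\lambda)|u|^2 + |u'|^2$ is fully controlled by the RHS of~\eqref{eq: lem: sec: exp decay, subsec: energy estimate, lem 1, energy estimate, eq 1} on every compact subset of $(r_+,\bar{r}_+)$, in particular on a neighbourhood of $r_s$. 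The pointwise algebraic identity $|v| \leq g_1|u'| + |g_2|\,|u|$ together with $|g_2| \leq g_1 |\omega-\frac{am\Xi}{r^2+a^2}|$ (Definition~\ref{def: subsec: sec: G, subsec 1, def 1}) then transfers this control to $|v|^2$ near $r_s$, covering exactly the region where the multiplier weight vanishes. Patching the covering estimate near $r_s$ with the multiplier estimate elsewhere yields~\eqref{eq: lem: sec: exp decay, subsec: energy estimate, lem 1, energy estimate, eq 1}.
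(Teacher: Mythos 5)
There is a genuine gap in the step where you ``cover'' the degeneration of the multiplier weight at $r=r_s$. Your plan is to obtain a nondegenerate Morawetz bound for $u$ near $r_s$ (using $r_{\textit{trap}}\equiv 0$ for superradiant frequencies) and then transfer it to $v$ via the pointwise bound $|v|^2\lesssim g_1^2|u^\prime|^2 + g_2^2|u|^2$. But this transfer loses two powers of the frequency. Near $r_s$ one has $g_1^2\sim\Delta^{-1}$ and $g_2^2\sim (am\Xi)^2(r-r_s)^2\Delta^{-1}$, so the target integrand
\begin{equation*}
\frac{\Delta}{r^3}\left(\omega^2+\frac{(am\Xi)^2}{r^2}\right)|v|^2 \;\lesssim\; (am\Xi)^2|u^\prime|^2 \;+\; (am\Xi)^4(r-r_s)^2|u|^2,
\end{equation*}
whereas the Morawetz of Theorem~\ref{main theorem 1} controls only $\int |u^\prime|^2$ and $\int(\omega^2+\tilde\lambda)|u|^2\sim\int(am\Xi)^2|u|^2$; there is no spare factor of $(am\Xi)^2$ to absorb. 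Thus the pointwise transfer from $u$ to $v$ cannot yield the left-hand side of~\eqref{eq: lem: sec: exp decay, subsec: energy estimate, lem 1, energy estimate, eq 1} uniformly in $m$.

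The paper avoids this by working with $v$ directly rather than pulling back to $u$: it applies a current $Q^{h_s}$ (Definition~\ref{def: currents}) to the equation satisfied by $v$ from Lemma~\ref{prop: equation of v}, with $h_s$ supported on a neighbourhood of $\supp f_s^\prime$. The coercive bulk term that results is $h_s|v^\prime|^2 + h_s(V-\omega^2)|v|^2$, and the quantitative non-trapping of superradiant frequencies enters as the pointwise inequality $V_0(r)-\omega^2\geq b\,\Delta\,\tilde\lambda$ for $r\in(r_s-2\epsilon_s,r_s+2\epsilon_s)$ (equation~\eqref{eq: lem: sec: exp decay, subsec: energy estimate, lem 1, energy estimate, eq 4.9}), which produces a term $h_s(am\Xi)^2|v|^2$ near $r_s$ \emph{with the correct frequency weight already on $v$}, rather than on $u$. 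Combining that $Q^{h_s}$ estimate with the degenerate $f_s$-multiplier estimate then closes the argument at the right order. So the first half of your proposal (the choice of sign-adapted $f_s$ vanishing at $r_s$, the use of Lemma~\ref{lem: for v, 0}, and the treatment of the inhomogeneity and boundary terms via the outgoing conditions) is broadly in line with the paper; the covering mechanism, however, must be a Morawetz identity \emph{for $v$} invoking~\eqref{eq: lem: sec: exp decay, subsec: energy estimate, lem 1, energy estimate, eq 4.9}, not a pointwise reduction to the $u$-Morawetz. A secondary issue: you describe $g_1^{\prime\prime},g_2^{\prime\prime}$ as bounded up to a $\Delta^{-1/2}$ blow-up, but $g_2^{\prime\prime}$ contains a Dirac mass at $r_s$ (see~\eqref{eq: proof: prop: for v, eq -2}); this is harmless only because $f_s(r_s)=0$ kills it, which you should note explicitly.
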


\begin{proof}

The main technical difficulty is that we need to use Lemma~\ref{lem: for v, 0}, for a non constant function~$f_s(r)$, which produces `bad' terms on the RHS of our inequalities. There terms however can be absorbed by using an inequality that is a quantitative manifestation of the non-trapping of superradiant frequencies, see already~\eqref{eq: lem: sec: exp decay, subsec: energy estimate, lem 1, energy estimate, eq 4.9}.

We note from Definition~\ref{def: subsec: sec: G, subsec 1, def 1} that the following holds
\begin{equation}
	g_2(r)=-\left|\omega-\frac{am\Xi}{r^2+a^2}\right|\sqrt{\frac{(r^2+a^2)^2}{\Delta}}=-|am\Xi|\frac{|r_s^2-r^2|}{\sqrt{\Delta}}=-|am\Xi||r-r_s|\frac{r_s+r}{\sqrt{\Delta}},
\end{equation}
where~$\omega=\frac{am\Xi}{r_s^2+a^2}$. Note that~$g_2(\omega,m,r)$ is not differentiable in~$r$ at~$r_s\in (r_+,\bar{r}_+)$. Furthermore, recall from Proposition~\ref{prop: subsec: sec: G, subsec 1, prop 0} that for the superradiant frequencies~$(\omega,m)\in \mathcal{SF}$ we have
\begin{equation*}
	r_s(\omega,m)=r_{crit}(\omega,m). 
\end{equation*}
In the present proof we will use~$r_s$ instead of~$r_{crit}$, for notational brevity.

Note that the following hold in a distributional sense 
\begin{equation}\label{eq: proof: prop: for v, eq -2}
	\begin{aligned}
		\frac{d g_2}{dr}(r)	&	=  -\text{sign} \left(\omega-\frac{am\Xi}{r^2+a^2}\right)\cdot \frac{d}{dr}\left(\left(\omega-\frac{am\Xi}{r^2+a^2}\right)\frac{(r^2+a^2)}{\sqrt{\Delta}}\right)\\
		&	= -\text{sign} \left(r-r_s\right)\cdot \frac{d}{dr}\left(\left(\omega-\frac{am\Xi}{r^2+a^2}\right)\frac{(r^2+a^2)}{\sqrt{\Delta}}\right),\\
		\frac{d^2 g_2}{dr^2} (r)	&	=-|am\Xi|\left(2\delta_{r_s}\cdot\frac{r_s+r}{\sqrt{\Delta}}+2\text{sign}(r-r_s)\frac{r+r_s}{\sqrt{\Delta}}+|r-r_s|\frac{d^2 }{dr^2}\left(\frac{r+r_s}{\sqrt{\Delta}}\right)\right),
	\end{aligned}
\end{equation}
where~$\delta_{r_s}$ is the Dirac delta centered at~$r_s$.

Now, we note from the energy estimate of Lemma~\ref{lem: for v, 0} for the superradiant frequencies that the following holds
\begin{equation}\label{eq: proof: prop: for v, eq -1.5}
	\begin{aligned}
		2\int_{\mathbb{R}}\frac{g_2^\prime}{g_1}f_s(r)|v|^2dr^\star	&	=
		- \int_{\mathbb{R}} g_2^{\prime\prime}f_s(r)\Im\left(iu\bar{v}\right)dr^\star -\int_{\mathbb{R}} g_1^{\prime\prime} f_s(r)\Im \left(\bar{v}u^\prime\right)dr^\star \\
		&	\quad  -\int_{\mathbb{R}} f^\prime_s(r) \Im\left(\bar{v}v^\prime\right)dr^\star\\
		&	\quad -\int_{\mathbb{R}} f_s(r)\Im\left(\bar{v}\left(2g_1^\prime H+\left(g_1\frac{d}{dr^\star}+ig_2\right)H\right)\right)dr^\star ,
	\end{aligned}
\end{equation} 
where~$f_s$ is a smooth function that we define now.

\begin{center}
	\textbf{The definition of~$f_s(\omega,m,r)$}
\end{center}

Let~$\epsilon>0$ be sufficiently small~(see also inequality~\eqref{eq: lem: sec: exp decay, subsec: energy estimate, lem 1, energy estimate, eq 4.9} which is a quantitative manifestation of that the superradiant frequencies are non trapped). Moreover, we define the following function	
\begin{equation}\label{eq: proof: prop: for v, eq 5.02}
	\epsilon_s(\omega,m)= \epsilon \cdot  \min \{\left|1-\frac{r_s(\omega,m)}{r_+}\right|,\left|1-\frac{r_s(\omega,m)}{\bar{r}_+}\right|\}.
\end{equation}

It is easy to now construct a smooth (frequency dependent) decreasing function~$f_s: \mathbb{R} \times \mathbb{Z} \times [r_+,\bar{r}_+] \rightarrow \mathbb{R}$, with formula
\begin{equation}\label{eq: proof: prop: for v, eq 5.01}
	f_s(\omega,m,r)=
	\begin{cases}
		-\left(\omega-\frac{am\Xi}{r^2+a^2}\right)= am\Xi \frac{(r-r_s)(r_s+r)}{(r^2+a^2)(r_s^2+a^2)},\quad r\in (r_s-\epsilon_s(\omega,m),r_s+\epsilon_s(\omega,m)),\\
		-\left(\omega-\frac{am\Xi}{r_+^2+a^2}\right) = am\Xi \frac{(r_+-r_s)(r_s+r_+)}{(r_+^2+a^2)(r_s^2+a^2)},\quad r > r_s+2\epsilon_s(\omega,m),\\
		-\left(\omega-\frac{am\Xi}{\bar{r}_+^2+a^2}\right)= am\Xi \frac{(\bar{r}_+-r_s)(r_s+\bar{r}_+)}{(\bar{r}_+^2+a^2)(r_s^2+a^2)},\quad r < r_s-2\epsilon_s (\omega,m),
	\end{cases}
\end{equation}
such that for~$r\in (r_s-2\epsilon_s,r_s-\epsilon_s)\cup (r_s+\epsilon_s,r_s+2\epsilon_s)$ we have that 
\begin{equation}\label{eq: proof: prop: for v, eq 5.001}
	|\frac{d f_s}{dr}(\omega,m,r)|\leq B |am\Xi|,
\end{equation}
where the constant~$B$ is frequency independent. Furthermore, for any~$(\omega,m)\in\mathbb{R}\times\mathbb{Z},r\in [r_+,\bar{r}_+]$ and for any~$r_s(\omega,m)\in (r_+,\bar{r}_+)$ it is easy to see from the definition of the function~$f_s(r)$, see~\eqref{eq: proof: prop: for v, eq 5.01}, that the inequality~\eqref{eq: proof: prop: for v, eq 5.001} holds.

\begin{center}
	\textbf{Upper bounds for the first two terms on the RHS of~\eqref{eq: proof: prop: for v, eq -1.5}}
\end{center}

Now, by recalling the definition of the tortoise coordinate~$r^\star$, see Section~\ref{subsec: tortoise coordinate}, and the distributional derivatives~\eqref{eq: proof: prop: for v, eq -2}, we note that the first term on the right hand side of~\eqref{eq: proof: prop: for v, eq -1.5} is equal to 
\begin{equation}\label{eq: proof: prop: for v, eq 5}
	\begin{aligned}
		&	-\int_{\mathbb{R}} g_2^{\prime\prime}f_s(r)\Im\left(iu\bar{v}\right)dr^\star\\	&	\quad	=-\int_{r_+}^{\bar{r}_+} \frac{d}{dr} \left(\frac{\Delta}{r^2+a^2}\frac{d}{dr}g_2\right)f_s(r)\Im (iu\bar{v})dr  \\
		&	\quad = -\int_{r_+}^{\bar{r}_+} \left(\frac{d}{dr}\left(\frac{\Delta}{r^2+a^2}\right)\frac{d g_2}{dr} + \frac{\Delta}{r^2+a^2}\frac{d^2 g_2}{dr^2} \right) f_s(r) \Im (iu\bar{v})dr \\
		&	\quad = -\int_{r_+}^{\bar{r}_+} \frac{d}{dr}\left(\frac{\Delta}{r^2+a^2}\right)\left(|am\Xi|\cdot \text{sign} \left(r-r_s\right)\cdot \frac{d}{dr}\left(\left(\omega-\frac{am\Xi}{r^2+a^2}\right)\frac{(r^2+a^2)}{\sqrt{\Delta}}\right)\right)f_s(r)\Im (iu\bar{v})dr \\
		&	\qquad -	\int_{r_+}^{\bar{r}_+} \frac{\Delta}{r^2+a^2}|am\Xi|\left(2\delta_{r_s}\cdot\frac{r_s+r}{\sqrt{\Delta}}+2\text{sign}(r-r_s)\frac{r+r_s}{\sqrt{\Delta}}+|r-r_s|\frac{d^2 }{dr^2}\left(\frac{r+r_s}{\sqrt{\Delta}}\right)\right) f_s(r)\Im (iu\bar{v})dr \\
		&	\quad = -\int_{r_+}^{\bar{r}_+} \frac{d}{dr}\left(\frac{\Delta}{r^2+a^2}\right)\left(|am\Xi|\cdot \text{sign} \left(r-r_s\right)\cdot \frac{d}{dr}\left(\left(\omega-\frac{am\Xi}{r^2+a^2}\right)\frac{(r^2+a^2)}{\sqrt{\Delta}}\right)\right)f_s(r)\Im (iu\bar{v})dr\\
		&	\qquad -	\int_{r_+}^{\bar{r}_+} \frac{\Delta}{r^2+a^2}|am\Xi|\left(2\text{sign}(r-r_s)\frac{r+r_s}{\sqrt{\Delta}}+|r-r_s|\frac{d^2 }{dr^2}\left(\frac{r+r_s}{\sqrt{\Delta}}\right)\right) f_s(r)\Im (iu\bar{v})dr. \\
	\end{aligned}
\end{equation}

From~\eqref{eq: proof: prop: for v, eq 5} and from the definition of~$f_s$, see~\eqref{eq: proof: prop: for v, eq 5.01}, we note that the first two terms on the right hand side of~\eqref{eq: proof: prop: for v, eq -1.5} are bounded by 
\begin{equation}\label{eq: proof: prop: for v, eq 5.1}
	\begin{aligned}
		&	\left| \int_{\mathbb{R}} \left(g_2^{\prime\prime}f_s(r)\Im\left(iu\bar{v}\right) + g_1^{\prime\prime} f_s(r)\Im \left(\bar{v}u^\prime\right)\right|\right)dr^\star\\
		&\qquad  \leq  \epsilon B\int_{r_+}^{\bar{r}_+} \frac{(am\Xi)^2}{r^2}|v|^2 dr \\
		&	\qquad\qquad  +\frac{B}{\epsilon}\int_{-\infty}^{(r_++\epsilon)^\star}\frac{r^2+a^2}{\Delta (r^2+a^2)^2}\left|u^\prime +i(\omega-\omega_+m)u\right|^2dr^\star +\frac{B}{\epsilon}\int_{(\bar{r}_+-\epsilon)^\star}^{+\infty}\frac{r^2+a^2}{\Delta (r^2+a^2)^2}\left|u^\prime -i(\omega-\bar{\omega}_+m) u\right|^2dr^\star \\
		&	\qquad\qquad +\frac{B}{\epsilon}\int_{\mathbb{R}}\left(\frac{\Delta}{(r^2+a^2)^2}\left(\omega^2+\frac{(am\Xi)^2}{r^2+a^2}\right)|u|^2+\Delta |u^\prime|^2\right)dr^\star , \\
	\end{aligned}
\end{equation}
for a sufficiently small~$\epsilon>0$ independent of the frequencies~$\omega,m,\ell$.

\begin{center}
	\textbf{Upper bound for the third term on the RHS of~\eqref{eq: proof: prop: for v, eq -1.5}}
\end{center}

The third term on the right hand side of~\eqref{eq: proof: prop: for v, eq -1.5} admits the following bound 
\begin{equation}\label{eq: proof: prop: for v, eq 5.2}
	\int_{\mathbb{R}} |f^\prime_s (r)\bar{v}v^\prime|dr^\star  \leq B\int_{\mathbb{R}} \frac{\Delta}{r^3(r^2+a^2)}\left(\epsilon (am\Xi)^2|v|^2+\frac{1_{\supp f_s^\prime}}{\epsilon}|v^\prime|^2\right)dr^\star, 
\end{equation}
where we used the bound~\eqref{eq: proof: prop: for v, eq 5.001} and appropriate Young's inequalities.  

\begin{center}
	\textbf{An integrated estimate with degeneration at~$r=r_s$}
\end{center}

By recalling the distributional derivative~\eqref{eq: proof: prop: for v, eq -2}, the exam form of the function~$f_s(r)$,~see~\eqref{eq: proof: prop: for v, eq 5.01}, and the bounds~\eqref{eq: proof: prop: for v, eq 5.1},~\eqref{eq: proof: prop: for v, eq 5.2} we obtain from~\eqref{eq: proof: prop: for v, eq -1.5} the following
\begin{equation}\label{eq: proof: prop: for v, eq 6}
	\begin{aligned}
		&	\int_{\mathbb{R}} |am\Xi|^2 \left|1-\frac{r_s}{r}\right|\frac{1}{r}|v|^2dr^\star\\
		&	\qquad \leq  \epsilon B\int_{r_+}^{\bar{r}_+} \frac{(am\Xi)^2}{r^3}|v|^2 dr \\
		&	\qquad \qquad +\frac{B}{\epsilon}\int_{-\infty}^{(r_++\epsilon)^\star} \frac{r^2+a^2}{\Delta (r^2+a^2)^2}\left|u^\prime +i(\omega-\omega_+m)u\right|^2dr^\star+\frac{B}{\epsilon}\int_{(\bar{r}_+-\epsilon)^\star}^{+\infty} \frac{r^2+a^2}{\Delta (r^2+a^2)^2}\left|u^\prime -i(\omega-\bar{\omega}_+m) u\right|^2dr^\star\\
		&	\qquad\qquad +\frac{B}{\epsilon}\int_{\mathbb{R}}\left(\frac{\Delta}{(r^2+a^2)^2}\left(\omega^2+\frac{(am\Xi)^2}{r^2+a^2}\right)|u|^2+\Delta |u^\prime|^2 \right)dr^\star\\
		&	\qquad\qquad + \int_{\mathbb{R}}\left( |f^\prime_s(r)\bar{v}v^\prime|+ |am\Xi|f_s(r)\Im\left(\bar{v}\left(2g_1^\prime H+\left(g_1\frac{d}{dr^\star}+ig_2\right)H\right)\right)\right)dr^\star\\
		&	\qquad \leq  \epsilon B\int_{r_+}^{\bar{r}_+} \frac{(am\Xi)^2}{r^3}|v|^2 dr \\
		&	\qquad \qquad +\frac{B}{\epsilon}\int_{-\infty}^{(r_++\epsilon)^\star} \frac{r^2+a^2}{\Delta (r^2+a^2)^2}\left|u^\prime +i(\omega-\omega_+m)u\right|^2dr^\star+\frac{B}{\epsilon}\int_{(\bar{r}_+-\epsilon)^\star}^{+\infty}\frac{r^2+a^2}{\Delta (r^2+a^2)^2}\left|u^\prime -i(\omega-\bar{\omega}_+m) u\right|^2dr^\star \\
		&	\qquad\qquad +\frac{B}{\epsilon}\int_{\mathbb{R}}\left(\frac{\Delta}{(r^2+a^2)^2}\left(\omega^2+\frac{(am\Xi)^2}{r^2+a^2}\right)|u|^2+\Delta |u^\prime|^2\right)dr^\star \\
		&	\qquad\qquad + B\int_{\mathbb{R}}\left( \frac{\Delta}{r^3(r^2+a^2)}\left(\epsilon (am\Xi)^2|v|^2+\frac{1_{\supp f_s^\prime}}{\epsilon}|v^\prime|^2\right)+f_s(r)\Im\left(\bar{v}\left(2g_1^\prime H+\left(g_1\frac{d}{dr^\star}+ig_2\right)H\right)\right)\right)dr^\star.
	\end{aligned}
\end{equation}

Now, we want to estimate the term 
\begin{equation}
	\int_{\mathbb{R}} \frac{\Delta}{r^3(r^2+a^2)}\frac{1_{\supp f_s^\prime}}{\epsilon}|v^\prime|^2dr^\star
\end{equation}
of the right hand side of~\eqref{eq: proof: prop: for v, eq 6} and more importantly \underline{`cover' the degeneration at~$r_s$} on the left hand side of~\eqref{eq: proof: prop: for v, eq 6}, which as discussed will be achieved by using the \underline{quantitative~ manifestation} that the superradiant frequencies are non trapped, see already inequality~\eqref{eq: lem: sec: exp decay, subsec: energy estimate, lem 1, energy estimate, eq 4.9}. We proceed as follows.

\begin{center}
	\textbf{The use of quantitative non trapping of superradiant frequencies}
\end{center}

Let~$\tilde{h}_s(\omega,m,r)\geq 0$ be a smooth function in~$[r_+,\bar{r}_+]$ such that~$\supp(\tilde{h}_s(\omega,m,\cdot))=\supp(f_s^\prime(\omega,m,\cdot))$, where~$\tilde{h}_s(\omega,m,\cdot)=0$ only at~the boundary~$\partial (\supp(\tilde{h}_s))$ and~$\tilde{h}_s(\omega,m,r)>0$ for any point~$r\in(\partial\supp(f^\prime_s(\omega,m,\cdot)))^o$. Then, we define the function
\begin{equation}\label{eq: lem: sec: exp decay, subsec: energy estimate, lem 1, energy estimate, eq 3.9}
	h_s(\omega,m,r)  =\frac{1}{r(r^2+a^2)} \tilde{h}_s(\omega,m,r). 
\end{equation}
We use a current of the form~$Q^{h_s}$, see Definition~\ref{def: currents}, to the equation that~$v$ satisfies, see Lemma~\ref{prop: equation of v}, in view of that~$v$ is a~$C^0(r_+,\bar{r}_+)$, the derivative~$v^\prime$ is piecewise~$C^1(r_+,\bar{r}_+)$, see Lemma~\ref{prop: equation of v} and the RHS of~\eqref{eq: equation of v} of Lemma~\ref{prop: equation of v} is a distribution. We here note that in our companion~\cite{mavrogiannis4} the multipliers that were used in the current~$Q^h$ also depend on the frequency~$\ell$, but this is not the case in the present paper. We obtain
\begin{equation}\label{eq: lem: sec: exp decay, subsec: energy estimate, lem 1, energy estimate, eq 4}
	\begin{aligned}
		\int_{\mathbb{R}} \left(h_s|v^\prime|^2+\left(h_s(V-\omega^2)-\frac{1}{2}h_s^{\prime\prime}\right)|v|^2	\right)dr^\star&	=Q^{h_s}[v](\infty)+Q^{h_s}(r_s+)-Q^{h_s}(r_s-)-Q^{h_s}[v](-\infty) \\
		&\quad -\Re \int_{\mathbb{R}}\Bigg(h_s v\left(-2i\frac{g_2^\prime}{g_1}\bar{v}\right)+h_s v\left(g_1^{\prime\prime}\bar{u}^\prime+ig_2^{\prime\prime} \bar{u}+\frac{\bar{v}}{g_1}(V_{\textit{SL}}+V_{\mu_{\textit{KG}}})^\prime\right)\\
		&\qquad\qquad\quad+v h_s\left(2g_1^\prime\bar{H}+\overline{\left(g_1\frac{d}{d r^\star}+ig_2\right)H}\right)\Bigg)dr^\star\\
		&=[Q^{h_s}[v]]_{r_s-}^{r_s+}-\Re \int_{\mathbb{R}}\Bigg( v h_s\left(g_1^{\prime\prime}\bar{u}^\prime+ig_2^{\prime\prime} \bar{u}+\frac{\bar{v}}{g_1}(V_{\textit{SL}}+V_{\mu_{\textit{KG}}})^\prime\right)\\
		&\qquad\qquad\qquad\qquad+v h_s\left(2g_1^\prime\bar{H}+\overline{\left(g_1\frac{d}{d r^\star}+ig_2\right)H}\right)\Bigg)dr^\star
	\end{aligned}
\end{equation}
where we used that
\begin{equation}
\Re\left( i\frac{g_2^\prime}{g_1}|v|^2\right)=0,\qquad Q^{h_s}[v](+\infty)=0,\qquad Q^{h_s}[v](-\infty)=0,
\end{equation}
where that last two identities are proved in view of that~$v|_{\mathcal{H}^+}=v|_{\bar{\mathcal{H}}^+}=0$, see Section~\ref{subsec: sec: proof of Theorem 2, subsec 1.1}.

In view of the definition of the current~$Q^h$, see Definition~\ref{def: currents}, we have that the boundary terms at the RHS of~\eqref{eq: lem: sec: exp decay, subsec: energy estimate, lem 1, energy estimate, eq 4} can be bounded as follows 
\begin{equation}
	\begin{aligned}
			[Q^{h_s}[v]]_{(r_s-)^\star}^{(r_s+)^\star}	&	= [h_sg_2^\prime g_1\Re \left(iu \bar{u}^\prime\right)]_{(r_s-)^\star}^{(r_s+)^\star}= -[h_sg_2^\prime g_1\Im \left(u \bar{u}^\prime\right)]_{(r_s-)^\star}^{(r_s+)^\star}= -[h_sg_2^\prime g_1\Im \left(u \bar{u}^\prime\right)]_{(r_s-)^\star}^{(r_s+)^\star}\\
			&	\leq B \left( (\omega^2+(am)^2)|u|^2(-\infty)  + (\omega^2+(am)^2)|u|^2(+\infty)+ \int_{\mathbb{R}}(|\omega|+|am|)|u\bar{H}|dr^\star\right),
	\end{aligned}
\end{equation}
where we used the two identities~$\Im [u\bar{u}^\prime]_{-\infty}^{(r_s-)^\star}=\Im \int_{-\infty}^{(r_s)^\star}u\bar{H}$ and~$\Im [u\bar{u}^\prime]_{(r_s+)^\star}^{(+\infty}=\Im \int_{(r_s+)^\star}^{+\infty}u\bar{H}$.

Now, we find a lower bound for the second integrand term on the left hand side of inequality~\eqref{eq: lem: sec: exp decay, subsec: energy estimate, lem 1, energy estimate, eq 4}. Specifically, note that
\begin{equation}
	V(r_s)-\omega^2 =V_{\textit{SL}}(r_s)+V_{\textit{KG}}(r_s)+V_0(r_s)-\omega^2=V_{\textit{SL}}(r_s)+V_{\textit{KG}}(r_s)+\frac{\Delta}{(r^2+a^2)^2}(r=r_s)\left(\tilde{\lambda}-2m\omega a \Xi\right),
\end{equation}
where for~$V_0,V_{\textit{SL}}$ see Section~\ref{sec: carter separation, radial}, and therefore for a sufficiently small positive constant~$\epsilon>0$, we have
\begin{equation}
	\epsilon_s(\omega,m) = \epsilon \cdot \min \{\left|1-\frac{r_s}{r_+}\right|,\left|1-\frac{r_s}{\bar{r}_+}\right|\}
\end{equation}
such that
\begin{equation}\label{eq: lem: sec: exp decay, subsec: energy estimate, lem 1, energy estimate, eq 4.9}
	V_0(r)-\omega^2\geq b\Delta \tilde{\lambda},\qquad r\in (r_s-2\epsilon_s,r_s+2\epsilon_s).
\end{equation}
Note that inequality~\eqref{eq: lem: sec: exp decay, subsec: energy estimate, lem 1, energy estimate, eq 4.9} \underline{is a quantitative manifestation that the superradiant frequencies are non-trapped}, see~\cite{mavrogiannis4} as well.

Therefore,~\eqref{eq: lem: sec: exp decay, subsec: energy estimate, lem 1, energy estimate, eq 4} implies the following 
\begin{equation}\label{eq: lem: sec: exp decay, subsec: energy estimate, lem 1, energy estimate, eq 5}
	\begin{aligned}
		&    b\int_{\mathbb{R}} h_s \cdot|v^\prime|^2dr^\star+ b\int_{\left((r_s-2\epsilon_{s})^\star,(r_s+2\epsilon_{s})^\star\right)} h_s\cdot|am\Xi|^2|v|^2 dr^\star +b\int_{\mathbb{R}} h_s\frac{\Delta}{(r^2+a^2)^2}\left(\tilde{\lambda}-2m\omega a \Xi \right) |v|^2 dr^\star\\
		&   \quad \leq  \int_{\mathbb{R}} |\frac{1}{2}h_s^{\prime\prime}| |v|^2dr^\star-\Re \int_{\mathbb{R}} vh_s\left(g_1^{\prime\prime} \bar{u}^\prime+ g_2^{\prime\prime} i\bar{u} +\frac{\bar{v}}{g_1}\left(V_{\textit{SL}}+V_{\mu_{\textit{KG}}}\right)^\prime\right)dr^\star\\
		&	\qquad\qquad  +  (\omega^2+m^2)|u|^2(-\infty)  + (\omega^2+m^2)|u|^2(+\infty)+ \int_{\mathbb{R}}(|\omega|+|m|)|u\bar{H}|dr^\star \\
		&\qquad\qquad+\Re \int_{\mathbb{R}}v h_s\left(2g_1^\prime\bar{H}+\overline{\left(g_1\frac{d}{d r^\star}+ig_2\right)H}\right)dr^\star .
	\end{aligned}
\end{equation}
For the third integral term on the right hand side of~\eqref{eq: lem: sec: exp decay, subsec: energy estimate, lem 1, energy estimate, eq 5} we note that 
\begin{equation}\label{eq: lem: sec: exp decay, subsec: energy estimate, lem 1, energy estimate, eq 6}
	\begin{aligned}
		\int_{\mathbb{R}} h_s\cdot g_2^{\prime\prime}\Re\left( i\bar{u}v\right)dr^\star &	= \int_{\mathbb{R}} \left(h_s\cdot g_2^{\prime\prime}\right) \Im (\bar{u}v)dr^\star=\int_{r_+}^{\bar{r}_+} \frac{1}{r} \frac{1}{r^2+a^2}\tilde{h}_s\frac{d}{dr}\left(\frac{\Delta}{r^2+a^2}\frac{d}{dr}g_2 \right)\Im (\bar{u}v)dr\\
		&	=\int_{r_+}^{\bar{r}_+}  \frac{1}{r} \frac{1}{r^2+a^2}\tilde{h}_s\cdot\left(\frac{d}{dr}\frac{\Delta}{r^2+a^2}\cdot\frac{d g_2}{dr}+\frac{\Delta}{r^2+a^2}\frac{d^2 g_2}{dr^2} \right)\Im (\bar{u}v)dr,
	\end{aligned}
\end{equation}
where we used~$h_s=\frac{1}{r(r^2+a^2)}\tilde{h}_s$.

By recalling the distributional derivative~\eqref{eq: proof: prop: for v, eq -2} of~$g_2$ , equation~\eqref{eq: lem: sec: exp decay, subsec: energy estimate, lem 1, energy estimate, eq 6} implies the following
\begin{equation}\label{eq: lem: sec: exp decay, subsec: energy estimate, lem 1, energy estimate, eq 7}
	\begin{aligned}
		\text{RHS of }\eqref{eq: lem: sec: exp decay, subsec: energy estimate, lem 1, energy estimate, eq 6} &	\leq B \int_{r_+}^{\bar{r}_+} \frac{1}{r(r^2+a^2)}\tilde{h}_s |am\Xi \bar{u}v|dr +|am\Xi|\frac{\tilde{h}_s}{r}\Delta\left(\frac{1}{r^2+a^2}\right)^2  \frac{r_s+r}{\sqrt{\Delta}}\Im(\bar{u}v)\Big|_{r^\star=r_s^\star}\\
		&	= B\int_{r_+}^{\bar{r}_+} \frac{1}{r(r^2+a^2)} \tilde{h}_s|am\Xi \bar{u}v|dr +|am\Xi|\frac{\tilde{h}_s}{r}\Delta\left(\frac{1}{r^2+a^2}\right)^2  \frac{r_s+r}{\sqrt{\Delta}}\Im(v\bar{u})\Big|_{r^\star=r^\star_s}\\
		&	= B\int_{r_+}^{\bar{r}_+} \frac{1}{r(r^2+a^2)}\tilde{h}_s |am\Xi \bar{u}v|dr +2\cdot\left(\frac{\sqrt{\Delta} \tilde{h}_s}{r(r^2+a^2)}\right)(r_s)|am\Xi|\Im(v\bar{u})\Big|_{r^\star=r^\star_s}\\
		&	=	B\int_{r_+}^{\bar{r}_+} \frac{1\cdot \tilde{h}_s}{r(r^2+a^2)}|am\Xi \bar{u}v|dr \\
		&	\qquad +B\left( 2  |am\Xi|\Im \left(\frac{\sqrt{\Delta} \cdot \tilde{h}_s}{r(r^2+a^2)}\cdot v\bar{u}\right)\Big|_{r^\star=-\infty}+2\Im\int_{-\infty}^{r_s^\star} |am\Xi| \frac{d}{dr^\star}\left( \frac{\sqrt{\Delta}\cdot \tilde{h}_s}{r(r^2+a^2)}v\bar{u}\right)dr^\star\right),\\
	\end{aligned}	
\end{equation}		
where we used the fundamental theorem of calculus for the function
\begin{equation}
	\Im\left(\frac{\sqrt{\Delta}\cdot \tilde{h}_s}{r(r^2+a^2)} \cdot v\bar{u}\right).
\end{equation}
Therefore, in view of that 
\begin{equation}
	\Im\left(\frac{\sqrt{\Delta}\cdot \tilde{h}_s}{r(r^2+a^2)} \cdot v\bar{u}\right)(r^\star=-\infty)=0
\end{equation}
then~\eqref{eq: lem: sec: exp decay, subsec: energy estimate, lem 1, energy estimate, eq 7} implies the following 
\begin{equation}\label{eq: lem: sec: exp decay, subsec: energy estimate, lem 1, energy estimate, eq 7.1}		
	\begin{aligned}
		\text{RHS of }\eqref{eq: lem: sec: exp decay, subsec: energy estimate, lem 1, energy estimate, eq 6}	&	\leq 	\int_{r_+}^{\bar{r}_+} \frac{\sqrt{\Delta}\cdot \tilde{h}_s}{r(r^2+a^2)} |am\Xi \bar{u}v|dr \\
		&	\quad+2|am\Xi|\cdot\Im\int_{-\infty}^{r_s^\star} \Bigg(\left(\frac{\sqrt{\Delta}\cdot \tilde{h}_s }{r(r^2+a^2)}\right)^\prime v\bar{u}+\frac{\sqrt{\Delta}\cdot \tilde{h}_s}{r(r^2+a^2)}\bar{u}^\prime v\\
		&	\qquad\qquad\qquad\qquad\qquad\qquad\qquad+\frac{\sqrt{\Delta}\cdot \tilde{h}_s}{r(r^2+a^2)}\bar{u}\Big(g_1^\prime u^\prime +ig_2^\prime u +ig_2 u^\prime +g_1 u^{\prime\prime}\Big)\Bigg)dr^\star ,\\ 
		&	\leq 	\int_{r_+}^{\bar{r}_+} \frac{\sqrt{\Delta}\cdot \tilde{h}_s}{r(r^2+a^2)} |am\Xi \bar{u}v|dr \\
		&	\quad+2|am\Xi|\cdot\Im\int_{-\infty}^{r_s^\star}\Bigg(\left(\frac{\sqrt{\Delta}\cdot \tilde{h}_s}{r(r^2+a^2)}\right)^\prime v\bar{u}+\frac{\sqrt{\Delta}\cdot \tilde{h}_s}{r(r^2+a^2)}\bar{u}^\prime v\\
		&	\qquad\qquad\qquad\qquad\qquad\qquad\qquad+\frac{\sqrt{\Delta}\cdot \tilde{h}_s}{r(r^2+a^2)}\bar{u}\left(g_1^\prime u^\prime +ig_2^\prime u +ig_2 u^\prime +g_1 H\bar{u}\right)\Bigg)dr^\star  ,\\ 
		&	\leq 	B\int_{r_+}^{\bar{r}_+}\frac{1}{r^3}\left(\epsilon(am\Xi)^2|v|^2+\frac{1}{\epsilon}(am\Xi)^2|u|^2+ (am\Xi)^2|u|^2 +|u^\prime|^2+|am\Xi|^2|u|^2+|g_1 H|^2\right)dr.\\ 
	\end{aligned}
\end{equation}
Therefore, in view of~\eqref{eq: lem: sec: exp decay, subsec: energy estimate, lem 1, energy estimate, eq 6},~\eqref{eq: lem: sec: exp decay, subsec: energy estimate, lem 1, energy estimate, eq 7.1}, we note that inequality~\eqref{eq: lem: sec: exp decay, subsec: energy estimate, lem 1, energy estimate, eq 5} implies that 
\begin{equation}\label{eq: lem: sec: exp decay, subsec: energy estimate, lem 1, energy estimate, eq 8}
	\begin{aligned}
		&    b\int_{\mathbb{R}} h_s|v^\prime|^2dr^\star+ b\int_{\left((r_s-2\epsilon_{s})^\star,(r_s+2\epsilon_{s})^\star\right)} h_s|am\Xi|^2|v|^2 dr^\star +b\int_{\mathbb{R}} h_s\frac{\Delta}{(r^2+a^2)^2}\left(\tilde{\lambda}-2m\omega a \Xi \right)|v|^2dr^\star  \\
		&   \quad \leq  \int_{\mathbb{R}} |\frac{1}{2}h_s^{\prime\prime}| |v|^2dr^\star-\Re \int_{\mathbb{R}} vh_s\left(g_1^{\prime\prime} \bar{u}^\prime +\frac{\bar{v}}{g_1}\left(V_{\textit{SL}}+V_{\mu_{\textit{KG}}}\right)^\prime\right)dr^\star\\
		&\qquad\qquad  + B\int_{r_+}^{\bar{r}_+}\frac{1}{r^3}\left(\epsilon(am\Xi)^2|v|^2+\frac{1}{\epsilon}(am\Xi)^2|u|^2+ (am\Xi)^2|u|^2 +|u^\prime|^2+|am\Xi|^2|u|^2+|g_1 H|^2\right)dr	\\
		&	\qquad\qquad + \left(\omega^2+m^2\right)|u|^2(-\infty)+  \left(\omega^2+m^2\right)|u|^2(+\infty)+ \int_{r_+}^{\bar{r}_+}\frac{1}{\Delta^2}|H|^2dr\\
		&\qquad\qquad+\Re \int_{\mathbb{R}} v h_s\left(2g_1^\prime\bar{H}+\overline{\left(g_1\frac{d}{d r^\star}+ig_2\right)H}\right)dr^\star,
	\end{aligned}
\end{equation}
where recall~$h_s=\frac{1}{r^3(r^2+a^2)}\tilde{h}_s$.

By combining inequalities~\eqref{eq: proof: prop: for v, eq 6} and~\eqref{eq: lem: sec: exp decay, subsec: energy estimate, lem 1, energy estimate, eq 8} we conclude
\begin{equation}\label{eq: lem: sec: exp decay, subsec: energy estimate, lem 1, energy estimate, eq 9}
	\begin{aligned}
		b\int_{\mathbb{R}} |am\Xi|^2 \left( \frac{\Delta}{(r^2+a^2)^2} + \left|1-\frac{r_s}{r}\right|\frac{1}{r}\right)|v|^2  dr^\star&	\leq  \int_{-\infty}^{(r_++\epsilon)^\star} \frac{r^2+a^2}{\Delta (r^2+a^2)^2}\left|u^\prime +i(\omega-\omega_+m)u\right|^2dr^\star\\
		&	\qquad\qquad+\int_{(\bar{r}_+-\epsilon)^\star}^{+\infty} \frac{r^2+a^2}{\Delta (r^2+a^2)^2}\left|u^\prime -i(\omega-\bar{\omega}_+m) u\right|^2dr^\star\\
		&	\qquad\qquad +\int_{\mathbb{R}}\frac{\Delta}{(r^2+a^2)^2}\left(\omega^2+\frac{(am\Xi)^2}{r^2+a^2}\right)|u|^2+\Delta |u^\prime|^2 dr^\star\\ 
		&	\qquad\qquad  + \left(\omega^2+m^2\right)|u|^2(-\infty)+  \left(\omega^2+m^2\right)|u|^2(+\infty)\\
		&	\qquad\qquad   + \int_{\mathbb{R}}\left( \frac{1}{\Delta}\left|H\right|^2+r\left|\left(g_1\frac{d}{dr^\star}+ig_2\right)H\right|^2\right)dr^\star,
	\end{aligned}
\end{equation}
for a sufficiently small~$\epsilon>0$, after appropriate Young's inequalities. For the superradiant frequencies
\begin{equation}
	(\omega,m)\in \mathcal{SF}
\end{equation}
the following inequality holds uniformly
\begin{equation}
	\frac{\Delta}{r^3} \left(\omega^2+\frac{(am\Xi)^2}{r^2}\right)+\frac{1}{r}\left|\omega-\frac{am\Xi}{r^2+a^2}\right|^2 \lesssim  |am\Xi|^2 \left( \frac{\Delta}{(r^2+a^2)^2} + \left|1-\frac{r_s}{r}\right|\frac{1}{r}\right).
\end{equation}
Therefore, we control the following term on the left hand side of~\eqref{eq: lem: sec: exp decay, subsec: energy estimate, lem 1, energy estimate, eq 9} 
\begin{equation}\label{eq: lem: sec: exp decay, subsec: energy estimate, lem 1, energy estimate, eq 10}
	\begin{aligned}
		b\int_{\mathbb{R}}\left(	\frac{\Delta}{r^3} \left(\omega^2+\frac{(am\Xi)^2}{r^2}\right)|v|^2 + \frac{1}{r}\left(\omega-
		\frac{am\Xi}{r^2+a^2}\right)^2 |v|^2\right)dr^\star   &	\leq  \int_{-\infty}^{(r_++\epsilon)^\star} \frac{r^2+a^2}{\Delta (r^2+a^2)^2}\left|u^\prime +i(\omega-\omega_+m)u\right|^2dr^\star\\
		&	\qquad\qquad +\int_{(\bar{r}_+-\epsilon)^\star}^{+\infty} \frac{r^2+a^2}{\Delta (r^2+a^2)^2}\left|u^\prime -i(\omega-\bar{\omega}_+m) u\right|^2dr^\star\\
		&	\qquad\qquad +\int_{\mathbb{R}}\left(\frac{\Delta}{(r^2+a^2)^2}\left(\omega^2+\frac{(am\Xi)^2}{r^2+a^2}\right)|u|^2+\Delta |u^\prime|^2\right)dr^\star \\ 
			&	\qquad\qquad  + \left(\omega^2+m^2\right)|u|^2(-\infty)+  \left(\omega^2+m^2\right)|u|^2(+\infty)\\
		&	\qquad\qquad  + \int_{\mathbb{R}}\left( \frac{1}{\Delta}\left|H\right|^2+r\left|\left(g_1\frac{d}{dr^\star}+ig_2\right)H\right|^2\right)dr^\star,
	\end{aligned}
\end{equation} 
for any sufficiently small~$\epsilon>0$, which concludes the proof. 
\end{proof}

\subsection{Preparatory energy estimate for~\texorpdfstring{$v$}{PDFstring} for all~\texorpdfstring{$(\omega,m)\in\mathbb{R}\times\mathbb{Z}$}{PDFstring}}\label{subsec: sec: proof of Theorem 2, subsec 4}

Now, we prove our main estimate for~$v$. Moreover, we prove a Poincare type  inequality, see Lemma~\ref{lem: control of the law order derivatives with v}, which we use to prove that at the low order there is no loss of derivatives at trapping.

\begin{proposition}\label{prop: all the derivatives of v}
	
	Let~$l>0$ and~$(a,M)\in \mathcal{B}_l$ and~$\mu^2_{KG}\geq 0$. Moreover, let~$(\omega,m)\in\mathbb{R}\times\mathbb{Z}$. Then, for any sufficiently small~$\epsilon>0$ we have the following.

	Let~$u$ be a smooth solution of the inhomogeneous radial ode~\eqref{eq: sec: proof of Thm: rel-nondeg, eq 2} that satisfies the outgoing boundary conditions~\eqref{eq: subsec: sec: proof of Thm: rel-nondeg, subsec 2, eq 2}. Let~$v$ be as in Definition~\ref{def: subsec: sec: proof of Theorem 2, subsec 1.1, def 1}. Then, the following energy estimate holds
	\begin{equation}\label{eq: all the derivatives of v}
		\begin{aligned}
			&   \int_{r_+}^{r_++\epsilon}(r-r_+)^{-1}\left|v^\prime+i(\omega-\omega_+m) v\right|^2dr+\int_{\bar{r}_+-\epsilon}^{\bar{r}_+}(\bar{r}_+-r)^{-1}\left|v^\prime-i(\omega-\bar{\omega}_+m) v\right|^2dr\\
			&   + \int_{r_+}^{\bar{r}_+}\left( \frac{\tilde{\lambda}}{r^3} |v|^2+\frac{r^2+a^2}{r\Delta}\left(\omega-\frac{am\Xi}{r^2+a^2}\right)^2|v|^2+\frac{1}{r}|v^\prime|^2\right) dr \\
			&   \quad \leq  B \int_{-\infty}^{(r_++\epsilon)^\star} \frac{r^2+a^2}{\Delta (r^2+a^2)^2}\left|u^\prime +i(\omega-\omega_+m)u\right|^2dr^\star+B\int_{(\bar{r}_+-\epsilon)^\star}^{+\infty} \frac{r^2+a^2}{\Delta (r^2+a^2)^2}\left|u^\prime -i(\omega-\bar{\omega}_+m) u\right|^2 dr^\star\\
			&	\qquad +B\int_{\mathbb{R}}\left(\frac{\Delta}{(r^2+a^2)^2}\left(\omega^2+\frac{(am\Xi)^2}{r^2+a^2}+\mu^2_{KG}\right)|u|^2+\Delta |u^\prime|^2\right)dr^\star \\
				&	\qquad + \left(\omega^2+m^2\right)|u|^2(-\infty)+  \left(\omega^2+m^2\right)|u|^2(+\infty)\\
			& \qquad + B\int_{\mathbb{R}}\left( \frac{1}{\Delta}\left|H\right|^2+r\left|\left(g_1\frac{d}{dr^\star}+ig_2\right)H\right|^2\right)dr^\star,
		\end{aligned}
	\end{equation}
	where~$\tilde{\lambda}=\lambda^{(a\omega)}_{m\ell}+(a\omega)^2$, see Definition~\ref{def: subsec: sec: frequencies, subsec 3, def 0}, and for~$\omega_+=\frac{a\Xi}{r_+^2+a^2},~\bar{\omega}_+=\frac{a\Xi}{\bar{r}_+^2+a^2}$.
\end{proposition}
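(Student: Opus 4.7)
The plan is to combine three distinct multiplier estimates applied to the distributional ODE for $v$ of Lemma~\ref{prop: equation of v}, each contributing a different component on the left-hand side of~\eqref{eq: all the derivatives of v}. First, for the coercive superradiant-type bulk term $\int_{r_+}^{\bar{r}_+} \frac{r^2+a^2}{r\Delta}(\omega - am\Xi/(r^2+a^2))^2|v|^2 \, dr$ together with the lower-order contribution $\int \frac{\Delta}{r^3}(\omega^2 + m^2)|v|^2 \, dr$, I would split the frequency plane and apply Lemma~\ref{lem: for v, non super} for $(\omega,m) \in (\mathcal{SF})^c$ and Lemma~\ref{lem: for v, super} for $(\omega,m) \in \mathcal{SF}$. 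In both cases, the errors on the right-hand sides of those lemmas are absorbed by Cauchy--Schwarz on the $|v|\cdot|g_1'' u' + i g_2'' u|$ factors, with the remaining $u$-terms controlled by the Morawetz quantity already present on the RHS of~\eqref{eq: all the derivatives of v}.

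Second, to produce $\int \frac{1}{r}|v'|^2 \, dr$ together with the angular $\int \frac{\tilde\lambda}{r^3}|v|^2 \, dr$ control, I would apply a Lagrangian-type current $Q^h[v]$ from Definition~\ref{def: currents} with a smooth positive weight $h(r)$ comparable to $(r^2+a^2)/r^3$ and regular through both horizons. Integrating the resulting distributional identity
\begin{equation*}
\bigl(Q^h[v]\bigr)' = h|v'|^2 + \bigl(h(V - \omega^2) - \tfrac{1}{2}h''\bigr)|v|^2 + h\,\Re\bigl(v \,\overline{H_v}\bigr)
\end{equation*}
over $r^\star \in \mathbb{R}$, where $H_v$ denotes the full right-hand side of~\eqref{eq: equation of v}, and using $v(\pm\infty) = 0$ from Proposition~\ref{prop: subsec: sec: G, subsec 1, prop 1}, one extracts exactly the sought $|v'|^2$ term and--after switching to the $dr$ measure, which absorbs the factor $\Delta/(r^2+a^2)$--the $\tilde\lambda |v|^2/r^3$ term coming from the positive contribution $\tilde\lambda \Delta/(r^2+a^2)^2$ inside $V$. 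The pseudodifferential term $2i g_2'/g_1 \cdot v$ contributes zero under $\Re(\cdot \bar v)$; the sign-indefinite pieces of $V$ (namely $V_{SL}$, $V_{\mu_{KG}}$, $-2am\omega \Xi \cdot \Delta/(r^2+a^2)^2$, and $-(\omega - am\Xi/(r^2+a^2))^2$) are absorbed either by the first step or by the Morawetz RHS; the inhomogeneous contributions $g_1'' u'$, $i g_2'' u$, $(V_{SL} + V_{\mu_{KG}})'/g_1 \cdot v$, $2g_1' H$, and $(g_1 \tfrac{d}{dr^\star} + i g_2)H$ are absorbed via standard Young inequalities.

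Finally, for the redshift-weighted horizon fluxes $\int_{r_+}^{r_+ + \epsilon}(r - r_+)^{-1}|v' + i(\omega - \omega_+ m)v|^2 \, dr$ and its analogue at $\bar r_+$, I would work in a narrow neighborhood of each horizon where $g_2(\omega,m,\cdot)$ is smooth. Expanding $v = g_1 u' + i g_2 u$ and using the ODE~\eqref{eq: sec: proof of Thm: rel-nondeg, eq 2} for $u$ to replace $u''$ leads to a pointwise identification of $v' + i(\omega - \omega_+ m)v$ with $g_1 \bigl(u'' + i(\omega - \omega_+ m) u'\bigr)$ plus terms involving $u, u'$ with bounded coefficients near $\mathcal{H}^+$; the redshift weight $(r-r_+)^{-1}$ combined with $g_1^2 \sim (r^2+a^2)^2/\Delta$ matches the $\frac{r^2+a^2}{\Delta(r^2+a^2)^2}|u' + i(\omega - \omega_+ m)u|^2$ term on the RHS of~\eqref{eq: all the derivatives of v}.

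The main obstacle will be the careful distributional handling in the superradiant case: $v'$ has a jump at $r_s$ and $g_2''$ contains a Dirac mass, so the current estimate of the second step must be applied distributionally as in Lemma~\ref{lem: for v, super}, and the boundary contributions at $r_s$ produced by $[Q^h[v]]_{r_s^-}^{r_s^+}$ and by the $g_2''$-delta must be controlled via the quantitative non-trapping inequality~\eqref{eq: lem: sec: exp decay, subsec: energy estimate, lem 1, energy estimate, eq 4.9}, repeating the argument used there with $h$ instead of $h_s$. All low-order $|u|^2$ and $|u'|^2$ remainders in the error bounds close through the Morawetz estimate of Theorem~\ref{main theorem 1}, yielding the stated inequality after summation of the three multiplier estimates.
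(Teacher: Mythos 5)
Your first two steps match the paper's proof essentially verbatim: the paper combines Lemmata~\ref{lem: for v, non super} and~\ref{lem: for v, super} to get~\eqref{eq: proof: lem: estimate for v prime, eq 1}, and then multiplies the equation of Lemma~\ref{prop: equation of v} by $\frac{1}{r}\bar{v}$ and integrates by parts (which is the same computation as your $Q^h$ current with $h\sim 1/r$; your choice $h\sim(r^2+a^2)/r^3$ is comparable since $r$ is bounded on $[r_+,\bar{r}_+]$). The term $\int\frac{1}{r}\Re(g_1''\bar{v}u'+ig_2''\bar{v}u)\,dr^\star$ that appears in this second step is handled, as you suggest, by repeating the bound~\eqref{eq: proof: prop: for v, eq 5.1} of Lemma~\ref{lem: for v, super}.

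Your third step has a genuine gap. You claim that expanding $v=g_1 u'+ig_2 u$ and replacing $u''$ via the ODE leads to
\begin{equation*}
v'+i(\omega-\omega_+m)v = g_1\bigl(u''+i(\omega-\omega_+m)u'\bigr) + (g_1'+ig_2)u' + \bigl(ig_2'-(\omega-\omega_+m)g_2\bigr)u,
\end{equation*}
and that the coefficients in front of $u,u'$ are bounded near $\mathcal{H}^+$. This is false: since $g_1=\frac{r^2+a^2}{\sqrt\Delta}$, a direct computation gives $g_1'\sim\Delta^{-1/2}$, $g_2\sim g_1|\omega-\omega_+m|\sim\Delta^{-1/2}$, and $g_2'\sim\Delta^{-1/2}$ as $r\to r_+$, so every one of these coefficients blows up like $\Delta^{-1/2}$. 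Weighted by $(r-r_+)^{-1}$ and integrated against $dr$, the resulting $|u|^2$ and $|u'|^2$ contributions acquire an extra factor of $\Delta^{-1}$ relative to the non-degenerate Morawetz bulk, and they are not controlled by the RHS of~\eqref{eq: all the derivatives of v} without additional cancellations that you do not exhibit.

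The paper avoids this entirely: since steps 1 and 2 already control $\int\frac{1}{r}|v'|^2\,dr^\star$ and $\int\frac{1}{r}(\omega-\frac{am\Xi}{r^2+a^2})^2|v|^2\,dr^\star$ (both with the $dr^\star$ measure, which is non-degenerate at the horizons), the paper writes, near $r_+$, the algebraic identity $|v'|^2=|v'\pm i(\omega-\omega_+m)v\mp i(\omega-\omega_+m)v|^2$, applies Young's inequality, and observes that $(r-r_+)^{-1}dr\sim dr^\star$ there. This gives~\eqref{eq: proof: lem: estimate for v prime, eq 6} and immediately yields the redshift-weighted flux terms for $v$ on the left-hand side from quantities already under control. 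In other words, the horizon fluxes for $v$ are extracted from the $v$-bulk, not obtained by unwinding $v$ back to $u$. If you want to salvage your variant of step 3, you would have to exploit the exact cancellation built into the choice of sign for $g_2$ (Definition~\ref{def: subsec: sec: G, subsec 1, def 1} is arranged precisely so that $g_2-g_1(\omega-\omega_+m)\to 0$ at $r_+$ and $g_2+g_1(\omega-\bar\omega_+m)\to 0$ at $\bar{r}_+$), and track the rate of that vanishing; but the paper's route is shorter and needs no such asymptotic analysis.
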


\begin{proof}

First, we easily combine Lemmata~\ref{lem: for v, non super},~\ref{lem: for v, super} to obtain that for any sufficiently small~$\epsilon$ and all~$(\omega,m)\in\mathbb{R}\times \mathbb{Z},\ell\in\mathbb{Z}_{\geq |m|}$ the following holds 
\begin{equation}\label{eq: proof: lem: estimate for v prime, eq 1}
		\begin{aligned}
			&	b\int_{r_+}^{\bar{r}_+}\left( \frac{r^2+a^2}{\Delta}\frac{1}{r} \left(\omega-\frac{am\Xi}{r^2+a^2}\right)^2 |v|^2 +\frac{\Delta}{r^3} \left(\omega^2+\frac{(am\Xi)^2}{r^2}\right) |v|^2 \right)dr\\
			&\qquad\qquad \leq \int_{-\infty}^{(r_++\epsilon)^\star} \frac{r^2+a^2}{\Delta (r^2+a^2)^2}\left|u^\prime +i(\omega-\omega_+m)u\right|^2 dr^\star\\
			&	\qquad\qquad\quad +\int_{(\bar{r}_+-\epsilon)^\star}^{+\infty} \frac{r^2+a^2}{\Delta (r^2+a^2)^2}\left|u^\prime -i(\omega-\bar{\omega}_+m) u\right|^2dr^\star\\
			&	\qquad\qquad\quad +\int_{\mathbb{R}}\left(\frac{\Delta}{(r^2+a^2)^2}\left(\omega^2+\frac{(am\Xi)^2}{r^2+a^2}+\mu^2_{KG}\right)|u|^2+\Delta |u^\prime|^2\right)dr^\star \\
				&	\qquad\qquad\quad  + \left(\omega^2+m^2\right)|u|^2(-\infty)+  \left(\omega^2+m^2\right)|u|^2(+\infty)\\
			&	\qquad\qquad\quad  + \int_{\mathbb{R}}\left( \frac{1}{\Delta}\left|H\right|^2+r\left|\left(g_1\frac{d}{dr^\star}+ig_2\right)H\right|^2\right)dr^\star,
		\end{aligned}
	\end{equation}
where~$\omega_+=\frac{a\Xi}{r_+^2+a^2},~\bar{\omega}_+=\frac{a\Xi}{\bar{r}_+^2+a^2}$.

	Specifically, we have from~\eqref{eq: proof: lem: estimate for v prime, eq 1} the following 
	\begin{equation}\label{eq: proof: lem: estimate for v prime, eq 2}
		\begin{aligned}
			b \int_{r_+}^{\bar{r}_+}\frac{r^2+a^2}{ \Delta}\frac{1}{r}\left(\omega-\frac{am\Xi}{r^2+a^2}\right)^2|v|^2 dr	\leq &\int_{-\infty}^{(r_++\epsilon)^\star} \frac{r^2+a^2}{\Delta (r^2+a^2)^2}\left|u^\prime +i(\omega-\omega_+m)u\right|^2 dr^\star\\
			&	\quad +\int_{(\bar{r}_+-\epsilon)^\star}^{+\infty}\frac{r^2+a^2}{\Delta (r^2+a^2)^2}\left|u^\prime -i(\omega-\bar{\omega}_+m) u\right|^2 dr^\star \\
			&	\quad +\int_{\mathbb{R}}\left(\frac{\Delta}{(r^2+a^2)^2}\left(\omega^2+\frac{(am\Xi)^2}{r^2+a^2}+\mu^2_{KG}\right)|u|^2+\Delta |u^\prime|^2\right)dr^\star \\
			&	\quad  + \left(\omega^2+m^2\right)|u|^2(-\infty)+  \left(\omega^2+m^2\right)|u|^2(+\infty)\\
			&	\quad  +  \int_{\mathbb{R}}\left( \frac{1}{\Delta}\left|H\right|^2+r\left|\left(g_1\frac{d}{dr^\star}+ig_2\right)H\right|^2\right)dr^\star,
		\end{aligned}
	\end{equation}
	for any sufficiently small~$\epsilon>0$.

	Now, we multiply the equation that is satisfied by~$v$ namely~\eqref{eq: equation of v} with~$\frac{1}{r}\bar{v}$ and then take real parts. We obtain 
	\begin{equation}\label{eq: proof: lem: estimate for v prime, eq 3}
		\begin{aligned}
			&	\Re \left(\frac{1}{r}\bar{v}v^{\prime\prime}\right)+\left(-\frac{\Delta}{(r^2+a^2)^2}\left(\tilde{\lambda}-2m\omega a\Xi\right)+\left(\omega-\frac{am\Xi}{r^2+a^2}\right)^2\right)\frac{1}{r}|v|^2 \\
			&	\qquad =\Re \left(\frac{1}{r}\bar{v}i g_2^{\prime\prime} u\right) +\Re \left(\frac{1}{r}\bar{v}g_1^{\prime\prime} u^\prime\right)+|v|^2\frac{1}{g_1 r}\left(V_{\textit{SL}}+V_{\mu_{\textit{KG}}}\right)^\prime+\Re \left(\frac{1}{r}\bar{v}\left(2g_1^\prime H +\left(g_1\frac{d}{dr^\star}+ig_2\right)H\right)\right).
		\end{aligned}
	\end{equation}
	Then, we integrate~\eqref{eq: proof: lem: estimate for v prime, eq 3} and after appropriate integration by parts and rearranging we obtain 
	\begin{equation}\label{eq: proof: lem: estimate for v prime, eq 4}
		\begin{aligned}
			&	\int_{\mathbb{R}} \left(\frac{1}{r}|v^\prime|^2 +\frac{\Delta}{(r^2+a^2)^2}\frac{1}{r}\left(\tilde{\lambda}-2m\omega a \Xi\right)|v|^2\right)dr^\star\\
			&	\quad =\int_{\mathbb{R}}\frac{1}{r}\left(\omega-\frac{am\Xi}{r^2+a^2}\right)^2|v|^2dr^\star-\int_{\mathbb{R}} \frac{1}{r}\left(\Re \left(g_1^{\prime\prime}\bar{v} u^\prime+ig_2^{\prime\prime}\bar{v}u\right)+ |v|^2 \frac{1}{g_1}\left(V_{\textit{SL}}+V_{\mu_{\textit{KG}}}\right)^\prime \right)dr^\star \\
			&	\qquad-\int_{\mathbb{R}}\frac{1}{r^2}\Re\left(\bar{v}v^\prime\right)dr^\star -\int_{\mathbb{R}}\frac{1}{r}\left( \bar{v}2g_1^\prime H+\bar{v}\left(g_1\frac{d}{dr^\star}+ig_2\right)H \right)dr^\star.
		\end{aligned}
	\end{equation}

We note that for a sufficiently small~$\epsilon>0$ we obtain
	\begin{equation}\label{eq: proof: lem: estimate for v prime, eq 5}
		\begin{aligned}
			\int_{\mathbb{R}} \frac{1}{r}|v^\prime|^2dr^\star &	= \int_{-\infty}^{r^\star(r_++\epsilon)} \frac{1}{r}\left|v^\prime -i\left(\omega-\frac{am\Xi}{r_+^2+a^2}\right)v+i\left(\omega-\frac{am\Xi}{r_+^2+a^2}\right)v \right|^2 dr^\star +\int_{r^\star(r_++\epsilon)}^{r^\star(\bar{r}_+-\epsilon)} \frac{1}{r}|v^\prime|^2 dr^\star\\
			&	\qquad+\int_{r^\star (\bar{r}_+-\epsilon)}^\infty \frac{1}{r}\left|v^\prime +i\left(\omega-\frac{am\Xi}{\bar{r}_+^2+a^2}\right)v-i\left(\omega-\frac{am\Xi}{\bar{r}_+^2+a^2}\right)v\right|^2 dr^\star.
		\end{aligned}
	\end{equation}
	By appropriate Young's inequalities on~\eqref{eq: proof: lem: estimate for v prime, eq 5}, and in view of that that~$u$ satisfies the outgoing boundary conditions:	
	\begin{equation}
		\begin{aligned}
			&   \frac{d u}{d r^\star}=-i\left(\omega-\frac{am\Xi}{r_+^2+a^2}\right)u,\qquad \frac{d u}{d r^\star} =i\left(\omega-\frac{am\Xi}{\bar{r}_+^2+a^2}\right)u,
		\end{aligned}
	\end{equation}
	at~$r^\star=-\infty,r^\star=+\infty$ respectively, we obtain 
	\begin{equation}\label{eq: proof: lem: estimate for v prime, eq 6} 
		\begin{aligned}
			&	\int_{-\infty}^{r^\star(r_++\epsilon)} \frac{1}{r}\left|v^\prime -i\left(\omega-\frac{am\Xi}{r_+^2+a^2}\right)v \right|^2 dr^\star+\int_{r^\star (\bar{r}_+-\epsilon)}^\infty \frac{1}{r}\left|v^\prime +i\left(\omega-\frac{am\Xi}{\bar{r}_+^2+a^2}\right)v\right|^2 dr^\star \\
			&	\qquad \leq B \int_{\mathbb{R}}\frac{1}{r}\left(|v^\prime|+\left(\omega-\frac{am\Xi}{r^2+a^2}\right)^2|v|^2\right)dr^\star.
		\end{aligned}
	\end{equation}
	
	Finally, to control the term 
	\begin{equation}
		\int_{\mathbb{R}} \frac{1}{r}\Re \left(g_1^{\prime\prime}\bar{v} u^\prime+ig_2^{\prime\prime}\bar{v}u\right)dr^\star
	\end{equation}
	on the right hand side of~\eqref{eq: proof: lem: estimate for v prime, eq 4}, we use an estimate similar to the estimate~\eqref{eq: proof: prop: for v, eq 5.1} of Lemma~\ref{lem: for v, super} which recall controls a similar term.

	Therefore, we combine~\eqref{eq: proof: lem: estimate for v prime, eq 1},~\eqref{eq: proof: lem: estimate for v prime, eq 4},~\eqref{eq: proof: lem: estimate for v prime, eq 5},~\eqref{eq: proof: lem: estimate for v prime, eq 6} and conclude the result of the Proposition after using appropriate Young's inequalities. 
\end{proof}

We need the following Lemma, whose estimate is a Poincare type inequality.

\begin{lemma}\label{lem: control of the law order derivatives with v}
	
		Let~$l>0$ and~$(a,M)\in \mathcal{B}_l$. For any~$\epsilon_p,\epsilon>0$ sufficiently small we have the following. Let~$u$ be a smooth function in~$\mathbb{R}$, that satisfies the outgoing boundary conditions~\eqref{eq: subsec: sec: proof of Thm: rel-nondeg, subsec 2, eq 2}~(not necessarily a solution of Carter's radial ode~\eqref{eq: sec: proof of Thm: rel-nondeg, eq 2}). Let~$v$ be as in Definition~\ref{def: subsec: sec: proof of Theorem 2, subsec 1.1, def 1}. 
		
		Then, the following holds
	\begin{equation}
		\begin{aligned}
		b\frac{1}{\epsilon_p}\int_{r_+}^{\bar{r}_+}  \frac{1}{r^4}\left(\frac{\Delta}{r^2+a^2}\right)^2 |u|^2 dr &   \leq \frac{1}{\epsilon_p}\int_{r_+}^{\bar{r}_+}  \left|\frac{d}{dr}\frac{\Delta}{r^2+a^2}\right|\frac{\left|r-r_{\textit{trap}}\right|}{r^4}\frac{\Delta}{r^2+a^2} |u|^2 dr\\
		&   \quad+\frac{1}{\epsilon_p^2}\int_{r_+}^{\bar{r}_+}  \frac{1}{r^2}\left(\frac{r-r_{\textit{trap}}}{r^2}\right)^2 \frac{1}{g_1}\frac{\Delta}{r^2+a^2} |u|^2dr \\
		&   \quad+\int_{r_+}^{\bar{r}_+}  \frac{1}{g_1}\frac{1}{r^2}\frac{\Delta}{r^2+a^2}|(g_1\frac{d}{d r^\star}+ig_2)u|^2dr .
	\end{aligned}
	\end{equation}
\end{lemma}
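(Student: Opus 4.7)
The plan is to derive the inequality via integration by parts in $r$ with the primitive $(r-r_{\textit{trap}})$, together with a Young's inequality to couple $u$ and $v = (g_1\partial_{r^\star}+ig_2)u$. Writing $F(r) = \Delta/(r^2+a^2)$, I first observe that $F$ vanishes at both horizons $r_+$ and $\bar r_+$, while the outgoing boundary conditions~\eqref{eq: subsec: sec: proof of Thm: rel-nondeg, subsec 2, eq 2} give boundedness of $u$ there, so the boundary terms in the identity
\begin{equation*}
\int_{r_+}^{\bar r_+}\frac{F^2}{r^4}|u|^2\,dr = \Big[(r-r_{\textit{trap}})\tfrac{F^2}{r^4}|u|^2\Big]_{r_+}^{\bar r_+} - \int_{r_+}^{\bar r_+}(r-r_{\textit{trap}})\frac{d}{dr}\!\left(\frac{F^2}{r^4}|u|^2\right)dr
\end{equation*}
drop out. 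Expanding the derivative then splits the problem into a term involving $\partial_r(F^2/r^4)$ and a term involving $\partial_r|u|^2$.

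For the first piece, $\big|\partial_r(F^2/r^4)\big|\le 2F|F'|/r^4 + 4F^2/r^5$; the $2F|F'|/r^4$ contribution produces exactly the first integral on the RHS of the desired inequality, while the $4F^2/r^5$ part is controlled by the LHS itself since $|r-r_{\textit{trap}}|/r$ is uniformly bounded on $[r_+,\bar r_+]$ (this is the absorption step, yielding the small factor $b$). For the second piece, I use the key identity, derived from $u' = g_1^{-1}(v - ig_2 u)$ together with $\Re(i|u|^2)=0$,
\begin{equation*}
\frac{d|u|^2}{dr}=\frac{r^2+a^2}{\Delta}\cdot\frac{2}{g_1}\Re(v\bar u) = \frac{2}{Fg_1}\Re(v\bar u),
\end{equation*}
which turns the remaining integrand into a cross term between $u$ and $v$.

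To this cross term I apply Young's inequality with parameter $\epsilon_p$, writing
\begin{equation*}
2\left|\frac{(r-r_{\textit{trap}})F}{r^4 g_1}\Re(v\bar u)\right| \le \frac{1}{\epsilon_p}\cdot\frac{(r-r_{\textit{trap}})^2 F}{r^6 g_1}|u|^2 + \epsilon_p\cdot\frac{F}{r^2 g_1}|v|^2,
\end{equation*}
which produces precisely the second and third terms on the RHS of the lemma. Combining all pieces and then multiplying through by $1/\epsilon_p$ gives the stated inequality. The main obstacle is the absorption step in the first paragraph: one must verify that the constant arising from $|r-r_{\textit{trap}}|/r \leq \bar r_+/r_+$ combined with the $4F^2/r^5$ contribution from $\partial_r(F^2/r^4)$ is strictly smaller than the coefficient of the LHS, so that a genuinely positive constant $b$ remains; this is straightforward since all constants here depend only on $a,M,l$ and are independent of the frequencies $(\omega,m,\ell)$ and of $\epsilon_p$.
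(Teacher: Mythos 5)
Your proof follows essentially the same route as the paper: the same integration by parts with the $(r-r_{\textit{trap}})$ primitive (whose choice lets the boundary terms drop out since $F=\Delta/(r^2+a^2)$ vanishes at both horizons), the same rewriting $\frac{d|u|^2}{dr}=\frac{2}{Fg_1}\Re(v\bar u)$, and the same Young's inequality with parameter $\epsilon_p$ producing the $(r-r_{\textit{trap}})^2/\epsilon_p^2$-weighted and $|v|^2$ terms. Those steps are all correct.

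The gap is the absorption of the term $4\int\frac{F^2(r-r_{\textit{trap}})}{r^5}|u|^2\,dr$. You claim it is ``controlled by the LHS itself since $|r-r_{\textit{trap}}|/r$ is uniformly bounded,'' and in your final paragraph assert that checking $4\sup|r-r_{\textit{trap}}|/r<1$ ``is straightforward since all constants here depend only on $a,M,l$.'' That is a non sequitur: independence of $\epsilon_p$ and the frequencies does not make the constant small. For a direct pointwise absorption one needs $1-\frac{4(r-r_{\textit{trap}})}{r}$ to be uniformly positive, and this fails for generic subextremal parameters --- take $r$ near $\bar r_+$ with $r_{\textit{trap}}\in(R^-,R^+)$; then $4(r-r_{\textit{trap}})/r\approx 4(1-r_{\textit{trap}}/\bar r_+)$, which approaches $4$ when $\bar r_+\gg r_{\textit{trap}}$ (e.g.\ Schwarzschild--de~Sitter with small $\Lambda$, where $r_{\textit{trap}}=3M\ll\bar r_+$). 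So the pointwise absorption you invoke cannot yield a positive $b$.

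The missing ingredient is that the smallness of $\epsilon_p$ is essential here, not merely convenient. One way to repair the step: split with a second Young's inequality,
\begin{equation*}
\frac{4F^2|r-r_{\textit{trap}}|}{r^5}=4\cdot\frac{F}{r^2}\cdot\frac{F|r-r_{\textit{trap}}|}{r^3}\le 2\alpha\,\frac{F^2}{r^4}+\frac{2}{\alpha}\,\frac{F^2(r-r_{\textit{trap}})^2}{r^6},
\end{equation*}
where $\alpha=\alpha(a,M,l)$ is fixed small enough that $2\alpha<\tfrac12$, so the first piece absorbs into the LHS. After multiplying by $1/\epsilon_p$, the second piece $\frac{2}{\alpha\epsilon_p}\frac{F^2(r-r_{\textit{trap}})^2}{r^6}|u|^2$ is dominated by the right-hand term $\frac{1}{\epsilon_p^2}\frac{(r-r_{\textit{trap}})^2}{r^6}\frac{F}{g_1}|u|^2$ whenever $\epsilon_p\le\frac{\alpha}{2\sup_{[r_+,\bar r_+]}\sqrt{\Delta}}$, because $Fg_1=\sqrt{\Delta}$ is bounded. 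This is the step where ``$\epsilon_p$ sufficiently small'' is actually used; your proposal never invokes it, and without it the inequality is false.
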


\begin{proof}
	
We note
	\begin{equation}\label{eq: proof lem: control of the law order derivatives with v, eq 1}
		\int_{r_+}^{\bar{r}_+} \frac{1}{r^4} \left(\frac{\Delta}{r^2+a^2}\right)^2 |u|^2 dr^\star = \int_{r_+}^{\bar{r}_+}\frac{1}{r^4} \left(\frac{\Delta}{r^2+a^2}\right)^2\frac{d}{dr}(r-r_{\textit{trap}})|u|^2 dr,
	\end{equation}
	where for trapping parameter~$r_{\textit{trap}}(\omega,m,\ell)$ see Theorem~\ref{main theorem 1}. We integrate by parts, equation~\eqref{eq: proof lem: control of the law order derivatives with v, eq 1}, to obtain  
	\begin{equation}
		\begin{aligned}
			&- \int_{r_+}^{\bar{r}_+}\Bigg( 2\frac{1}{r^4}\frac{\Delta}{r^2+a^2}\frac{d}{dr}\left(\frac{\Delta}{r^2+a^2}\right)(r-r_{\textit{trap}})|u|^2-\frac{4}{r^5}\left(\frac{\Delta}{r^2+a^2}\right)^2(r-r_{\textit{trap}})|u|^2\\
			&	\qquad+\left(\frac{\Delta}{r^2+a^2}\right)^2\frac{(r-r_{\textit{trap}})}{r^4}\left(\bar{u}\frac{d}{dr}u+u\frac{d}{dr}\bar{u}\right) \Bigg)dr\\ 
			&  = - \int_{r_+}^{\bar{r}_+} \Bigg(2\frac{\Delta}{r^2+a^2}\frac{d}{dr}\left(\frac{\Delta}{r^2+a^2}\right)\frac{(r-r_{\textit{trap}})}{r^4}|u|^2-4\left(\frac{\Delta}{r^2+a^2}\right)^2\frac{(r-r_{\textit{trap}})}{r^5}|u|^2\\
			&   \quad\quad\quad\quad +\frac{1}{g_1(r^\star)}\left(\frac{\Delta}{(r^2+a^2)}\right)^2\frac{(r-r_{\textit{trap}})}{r^4}\left(\bar{u}g_1\frac{d}{dr}u+ug_1\frac{d}{dr}\bar{u}\right)\Bigg)dr\\
			&= - \int_{r_+}^{\bar{r}_+} \Big(2 \frac{\Delta}{r^2+a^2} \frac{d}{dr}\left(\frac{\Delta}{r^2+a^2}\right)\frac{(r-r_{\textit{trap}})}{r^4}|u|^2-4\left(\frac{\Delta}{r^2+a^2}\right)^2\frac{(r-r_{\textit{trap}})}{r^5}|u|^2\Big)dr\\
			&   \quad - \int_{r_+}^{\bar{r}_+} \frac{1}{g_1}\frac{(r-r_{\textit{trap}})}{r^4}\frac{\Delta}{r^2+a^2}\left( \bar{u}\left(g_1 u^\prime +ig_2(\omega,m,r^\star)u\right)+u \overline{\left(g_1 u^\prime +ig_2(\omega,m,r^\star)u\right)} \right)dr.
		\end{aligned}
	\end{equation}

	We finally use Young's inequality to obtain 
	\begin{equation}\label{eq: proof lem: control of the law order derivatives with v, eq 2}
		\begin{aligned}
			b\frac{1}{\epsilon}\int_{r_+}^{\bar{r}_+}  \frac{1}{r^4}\left(\frac{\Delta}{r^2+a^2}\right)^2 |u|^2 dr &   \leq \frac{1}{\epsilon_p}\int_{r_+}^{\bar{r}_+}  \left|\frac{d}{dr}\frac{\Delta}{r^2+a^2}\right|\frac{\left|r-r_{\textit{trap}}\right|}{r^4}\frac{\Delta}{r^2+a^2} |u|^2dr \\
			&   \quad+\frac{1}{\epsilon_p^2}\int_{r_+}^{\bar{r}_+}  \frac{1}{r^2}\left(\frac{r-r_{\textit{trap}}}{r^2}\right)^2 \frac{1}{g_1}\frac{\Delta}{r^2+a^2} |u|^2 dr \\
			&   \quad+\int_{r_+}^{\bar{r}_+}  \frac{1}{g_1}\frac{1}{r^2}\frac{\Delta}{r^2+a^2}|(g_1\frac{d}{d r^\star}+ig_2)u|^2dr .
		\end{aligned}
	\end{equation}
	We conclude the following 
	\begin{equation}\label{eq: proof lem: control of the law order derivatives with v, eq 3}
		\begin{aligned}
			b\frac{1}{\epsilon_p}\int_{r_+}^{\bar{r}_+}  \frac{1}{r^4}\left(\frac{\Delta}{r^2+a^2}\right)^2 |u|^2 dr &   \leq \frac{1}{\epsilon_p}\int_{r_+}^{\bar{r}_+}  \left|\frac{d}{dr}\frac{\Delta}{r^2+a^2}\right|\frac{\left|r-r_{\textit{trap}}\right|}{r^4}\frac{\Delta}{r^2+a^2} |u|^2 dr\\
			&   \quad+\frac{1}{\epsilon_p^2}\int_{r_+}^{\bar{r}_+}  \frac{1}{r^2}\left(\frac{r-r_{\textit{trap}}}{r^2}\right)^2 \frac{1}{g_1}\frac{\Delta}{r^2+a^2} |u|^2dr \\
			&   \quad+\int_{r_+}^{\bar{r}_+}  \frac{1}{g_1}\frac{1}{r^2}\frac{\Delta}{r^2+a^2}|(g_1\frac{d}{d r^\star}+ig_2)u|^2 dr ,
		\end{aligned}
	\end{equation}
	and the Lemma. 
\end{proof}

\subsection{The proof of Theorem~\ref{thm: subsec: sec: proof of Theorem 2, subsec 4.1, thm 1}}\label{subsec: sec: proof of Theorem 2, subsec 4.1}

\begin{proof}
	First, we recall the energy estimate of Proposition~\ref{prop: all the derivatives of v}
	\begin{equation}\label{eq: proof: prop: energy estimate for v, 1, eq 1}
		\begin{aligned}
			&   \int_{r_+}^{r_++\epsilon}(r-r_+)^{-1}\left|v^\prime+i(\omega-\omega_+m) v\right|^2dr+\int_{\bar{r}_+-\epsilon}^{\bar{r}_+}(\bar{r}_+-r)^{-1}\left|v^\prime-i(\omega-\bar{\omega}_+m) v\right|^2dr\\
			&   + \int_{r_+}^{\bar{r}_+}\left( \frac{\tilde{\lambda}}{r^3} |v|^2+\frac{r^2+a^2}{r\Delta}\left(\omega-\frac{am\Xi}{r^2+a^2}\right)^2|v|^2+\frac{1}{r}|v^\prime|^2\right) dr \\
			&   \quad \leq B\Bigg(\int_{-\infty}^{(r_++\epsilon)^\star} \frac{1}{\Delta}\left|u^\prime +i(\omega-\omega_+m)u\right|^2dr^\star+\int_{(\bar{r}_+-\epsilon)^\star}^{+\infty} \frac{1}{\Delta}\left|u^\prime -i(\omega-\bar{\omega}_+m) u\right|^2dr^\star\\
			&	\qquad\qquad\qquad\qquad +\int_{\mathbb{R}}\left(\frac{\Delta}{(r^2+a^2)^2}\left(\omega^2+\frac{(am\Xi)^2}{r^2+a^2}+\mu^2_{KG}\right)|u|^2+\Delta |u^\prime|^2\right)dr^\star\Bigg) \\
				&	\qquad + B\left(\omega^2+m^2\right)|u|^2(-\infty)+  B\left(\omega^2+m^2\right)|u|^2(+\infty)\\
			&	\qquad + B \int_{\mathbb{R}} \left( \frac{1}{\Delta}\left|H\right|^2+r\left|\left(g_1\frac{d}{dr^\star}+ig_2\right)H\right|^2\right)dr^\star ,
		\end{aligned}
	\end{equation}
	for a sufficiently small~$\epsilon>0$, where~$\tilde{\lambda}=\lambda^{(a\omega)}_{m\ell}+(a\omega)^2$, see Definition~\ref{def: subsec: sec: frequencies, subsec 3, def 0}.

	Now, we estimate the first bulk term on the right hand side of~\eqref{eq: proof: prop: energy estimate for v, 1, eq 1} by using the result of the Poincare type estimate of Lemma~\ref{lem: control of the law order derivatives with v} for any sufficiently small~$\epsilon_p>0$ to obtain 
		\begin{equation}\label{eq: proof: prop: energy estimate for v, 1, eq 2}
		\begin{aligned}
			&   \int_{r_+}^{r_++\epsilon}(r-r_+)^{-1}\left|v^\prime+i(\omega-\omega_+m) v\right|^2dr+\int_{\bar{r}_+-\epsilon}^{\bar{r}_+}(\bar{r}_+-r)^{-1}\left|v^\prime-i(\omega-\bar{\omega}_+m) v\right|^2dr\\
			&   + \int_{r_+}^{\bar{r}_+}\left( \frac{\tilde{\lambda}}{r^3} |v|^2+\frac{r^2+a^2}{r\Delta}\left(\omega-\frac{am\Xi}{r^2+a^2}\right)^2|v|^2+\frac{1}{r}|v^\prime|^2\right)dr+\int_{r_+}^{r_+}  \frac{1}{r^4}\omega^2 1_{\{(\omega,m,\tilde{\lambda}):r_{trap}\neq 0\}} \left(\frac{\Delta}{r^2+a^2}\right)^2 |u|^2 dr \\
			&	\qquad\qquad \leq B\Bigg( \int_{r_+}^{r_++\epsilon}\frac{1}{\Delta^2}|u^\prime+i(\omega-\omega_+m)u|^2 dr +\int_{\bar{r}_+-\epsilon}^{\bar{r}_+}\frac{1}{\Delta^2}|u^\prime-(\omega-\bar{\omega}_+m)u|^2dr \\
			&	\qquad\qquad\qquad\qquad+ \int_{r_+}^{\bar{r}_+} \left(1_{\{|m|>0\}}|u|^2+ \mu^2_{\textit{KG}}|u|^2+ |u^\prime|^2+\left(1-\frac{r_{\textit{trap}}(\omega,m,\tilde{\lambda})}{r}\right)^2(\omega+\lambda^{(a\omega)}_{m\ell})|u|^2\right)dr\Bigg)\\		
				&	\qquad\qquad\qquad  + B\left(\omega^2+m^2\right)|u|^2(-\infty)+  B\left(\omega^2+m^2\right)|u|^2(+\infty)\\
			&	\qquad\qquad\qquad  + B \int_{\mathbb{R}} \left( \frac{1}{\Delta}\left|H\right|^2+r\left|\left(g_1\frac{d}{dr^\star}+ig_2\right)H\right|^2\right)dr^\star ,
		\end{aligned}
	\end{equation}
	which is the desired result. Note that to use the Poincare estimate of Lemma~\ref{lem: control of the law order derivatives with v} to obtain~\eqref{eq: proof: prop: energy estimate for v, 1, eq 2} we used that the LHS of~\eqref{eq: proof: prop: energy estimate for v, 1, eq 1} in conjunction with the LHS of Theorem~\ref{main theorem 1} control the following 
	\begin{equation}
		\int_{r_+}^{\bar{r}_+}\Delta\left(\omega^2+a^2m^2\right)|u|^2dr.
	\end{equation}
	We conclude. 
\end{proof}

\section{Proof of Theorem \ref{main theorem, relat. non-deg}}\label{sec: proof of Thm: rel-nondeg}

The main ingredients in the proof of Theorem~\ref{main theorem, relat. non-deg} is the fixed frequency Theorem~\ref{thm: subsec: sec: proof of Theorem 2, subsec 4.1, thm 1} in conjunction with the Morawetz estimate of Theorem~\ref{main theorem 1}, proved in our companion~\cite{mavrogiannis4}, and several pseudodifferential commutation arguments on the Kerr--de~Sitter manifold, see Section~\ref{subsec: sec: proof of Theorem 2, subsec 5.1}.

\subsection{Assumption on the smoothness of~\texorpdfstring{$\psi$}{g}}\label{subsec: sec: proof of Thm: rel-nondeg, subsec 2}

Let~$T\geq 3$ and let
\begin{equation}
	0\leq \tilde{T}\leq \tau_1<\tau_1+T.
\end{equation}

In view of general density arguments, in the present Section, it suffices to make the assumption that the solution~$\psi$ of the Klein--Gordon equation~\eqref{eq: kleingordon} 
\begin{equation}\label{eq: subsec: sec: proof of Thm: rel-nondeg, subsec 2, eq 1}
\textit{arises~from~smooth~initial~data~on~the~hypersurface~$\{t^\star=\tau_1-\tilde{T}\}$}.
\end{equation}
This assumption will allow us to use the fixed frequency energy estimates of Theorem~\ref{thm: subsec: sec: proof of Theorem 2, subsec 4.1, thm 1}~(which recall refer to smooth solutions of Carter's radial ode satisfying outgoing boundary conditions), in view of Carter's Proposition~\ref{prop: Carters separation, radial part}.

\subsection{The constants and bad behavior in the limit~\texorpdfstring{$a\rightarrow 0$}{g}}\label{sec: subsec: sec: proof of Thm: rel-nondeg, subsec 2, sec 0}

We use the constants
\begin{equation}
	b(a,M,l,\mu_{\textit{KG}})>0,\qquad B(a,M,l,\mu_{\textit{KG}})>0
\end{equation}
with the algebra of constants
\begin{equation}
	b+b=b,\qquad b\cdot b=b,\qquad B+B=B,\qquad B\cdot B=B.
\end{equation}

Starting from Section~\ref{subsec: sec: proof of Theorem 2, subsec 5.5} the constant~$b$ will degenerate in the limit~$a\rightarrow 0$, while the constant~$B$ will blow up in the same limit. Note, however, that the constants~$b,B$ are finite and non zero for the Schwarzschild--de~Sitter case~$a=0$. This bad behavior of our constants stems from the commutation estimates and specifically from the bahavior of the symbol~$\widetilde{g}_2$, see Lemma~\ref{lem: proof thm 3 exp decay, lem 0}.

\subsection{Frequency localization}\label{subsec: sec: proof of Thm: rel-nondeg, subsec 2.1}

Let~$\psi$ be the smooth solution of the Klein--Gordon equation~\eqref{eq: kleingordon} in the domain~$D(\tau_1-\tilde{T},\tau_2)$ where~$\tau_2=\tau_1+T^2$, arising from the initial data~\eqref{eq: subsec: sec: proof of Thm: rel-nondeg, subsec 2, eq 1}. We define 
\begin{equation}\label{eq: sec: proof of Thm: rel-nondeg, eq 0}
\tilde{\psi}_{\tau_1,\tau_1+T^2}=\eta^{(T)}_{\tau_1}\chi^2_{\tau_1,\tau_1+T^2}\psi\: \dot{=}\:\eta\chi_+^2\psi,
\end{equation}
where for the smooth cut-offs~$\eta^{(T)}_{\tau_1},\chi_{\tau_1,\tau_1+T^2}$ see Section~\ref{subsec: sec: carter separation, subsec 1}.

In view of Proposition~\ref{prop: Carters separation, radial part} we frequency localize the function~\eqref{eq: sec: proof of Thm: rel-nondeg, eq 0} and obtain 
\begin{equation}\label{eq: sec: proof of Thm: rel-nondeg, eq 1}
	u^{(a\omega)}_{m\ell}=\sqrt{r^2+a^2}\Psi^{(a\omega)}_{m\ell},\qquad \Psi=\sqrt{r^2+a^2}\tilde{\psi}_{\tau_1,\tau_1+T^2}
\end{equation}
and note that the radial function~$u$ solves the following radial ode
\begin{equation}\label{eq: sec: proof of Thm: rel-nondeg, eq 1.1}
	u^{\prime\prime}+\left(\omega^2-V\right)u=H^{(a\omega)}_{m\ell}
\end{equation}
for all~$\omega,m,\ell$, with 
\begin{equation}\label{eq: sec: proof of Thm: rel-nondeg, eq 1.2}
	H^{(a\omega)}_{m\ell}=\frac{\Delta}{(r^2+a^2)^{3/2}} \left(\rho^2 F\right)^{(a\omega)}_{m\ell},\qquad F\:\dot{=}\:(\Box\eta)\cdot \left(\chi_+^2\cdot\psi\right) +2\nabla^a\eta\nabla_a \left(\chi_+^2\cdot\psi\right).
\end{equation}

\subsection{Asymptotic behavior}\label{subsec: sec: proof of Thm: rel-nondeg, subsec 3}

We note from Carter's Proposition~\ref{prop: Carters separation, radial part} that for~$u$ as in~\eqref{eq: sec: proof of Thm: rel-nondeg, eq 1} then for all~$\omega\in\mathbb{R},m\in\mathbb{Z},\ell\in\mathbb{Z}_{\geq |m|}$ the following outgoing boundary conditions hold
\begin{equation}\label{eq: subsec: sec: proof of Thm: rel-nondeg, subsec 3, eq 2}
	\begin{aligned}
		&   u^\prime=-i\left(\omega-\frac{am\Xi}{r_+^2+a^2}\right) u,\qquad u^\prime=i\left(\omega-\frac{am\Xi}{\bar{r}_+^2+a^2}\right) u,
	\end{aligned}
\end{equation}
in a limiting sense, at~$r^\star=-\infty$,~$r^\star=\infty$ respectively.

\subsection{The function~$v$ and its asymptotic behavior}\label{subsec: sec: proof of Thm: rel-nondeg, subsec 4}

Let $u^{(a\omega)}_{m\ell}$ be as in Section~\ref{subsec: sec: proof of Thm: rel-nondeg, subsec 2.1}. Then, in accordance to Definition~\ref{def: subsec: sec: proof of Theorem 2, subsec 1.1, def 1} of the previous Section, in the present Section we define
\begin{equation}
	v=\left(g_1\partial_{r^\star}+ig_2(\omega,m,r)\right)u,
\end{equation}
where for~$g_1,g_2$ see Definition~\ref{def: subsec: sec: G, subsec 1, def 1}.

In view of the assumptions of Section~\ref{subsec: sec: proof of Thm: rel-nondeg, subsec 2} and the asymptotic behavior of~$u$, see Section~\ref{subsec: sec: proof of Thm: rel-nondeg, subsec 3}, we use the definition of~$g_1,g_2$, see Definition~\ref{def: subsec: sec: G, subsec 1, def 1}, to conclude
\begin{equation}\label{eq: subsec: sec: proof of Thm: rel-nondeg, subsec 4, eq 2}
	\lim_{r^\star \rightarrow \pm \infty} v =0.
\end{equation}

\subsection{The control of the derivatives of \texorpdfstring{$\mathcal{G}\left(\eta\chi_+^2\psi\right)$}{psi}}\label{subsec: sec: proof of Theorem 2, subsec 5}

The following is an energy estimate for~$\mathcal{G}(\eta\chi_+^2\psi)$.

\begin{proposition}\label{prop: energy estimate for v, 1}

Let the assumptions of Theorem~\ref{main theorem, relat. non-deg} hold. Moreover, assume that~$\psi$ arises from smooth initial data, in accordance to the assumptions of Section~\ref{subsec: sec: proof of Thm: rel-nondeg, subsec 2}.

Then, we obtain the following estimate 
\begin{equation}\label{eq: prop: energy estimate for v, 1, eq 1}
    \begin{aligned}
        &   \int\int_{\mathcal{M}}dg\left( \frac{\Delta}{r(r^2+a^2)}|Z^\star\mathcal{G}(\eta\chi_+^2\psi)|^2+\frac{r^2+a^2}{r\Delta}|W\mathcal{G}(\eta\chi_+^2\psi)|^2+\frac{1}{r}|\slashed{\nabla}\mathcal{G}(\eta\chi_+^2\psi)|^2+1_{D(\tau_1+T,\tau_2)}J^n_\mu[\psi] n^\mu\right)\\
        &   \quad \leq \int_\mathbb{R}\int_{\mathbb{S}^2}\int_{r_+}^{\bar{r}_+}\frac{r^2+a^2}{\Delta}\Big( \Delta^{-1}\left|\frac{\Delta}{(r^2+a^2)^{3/2}}\rho^2 F\right|^2+r\left|\mathcal{G}\left(\frac{\Delta}{(r^2+a^2)^{3/2}}\rho^2 F\right)\right|^2 \Big)\\
        &\qquad +B\int_{\{t^\star=\tau_1\}} J_\mu^n[\psi]n^\mu+|\psi|^2,
    \end{aligned}
\end{equation}
for any~$\tau_1\geq 0$, where for the error term~$F$ see~\eqref{eq: sec: proof of Thm: rel-nondeg, eq 1.2}. For the cut-offs~$\eta,\chi_+$ see~\eqref{eq: sec: proof of Thm: rel-nondeg, eq 0}. For the regular vector field~$Z^\star$ see Section~\ref{subsec: boldsymbol partial r}. Also recall~$W=\partial_t+\frac{a\Xi}{r^2+a^2}\partial_{\varphi}$.   
\end{proposition}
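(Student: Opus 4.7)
The plan is to pass to frequency space, apply the fixed frequency energy estimate of Theorem~\ref{thm: subsec: sec: proof of Theorem 2, subsec 4.1, thm 1} to the commuted quantity $v$, sum/integrate over all frequencies $(\omega,m,\ell)\in\mathbb{R}\times\mathbb{Z}\times\mathbb{Z}_{\geq|m|}$, and then return to physical space by means of the Parseval identities of Section~\ref{subsec: sec: morawetz estimate, subsec 6.1}. By the density assumption of Section~\ref{subsec: sec: proof of Thm: rel-nondeg, subsec 2}, I may work throughout with a smooth solution $\psi$ arising from smooth data, so that the Carter decomposition applies and the required $L^2$ sums/integrals converge pointwise in $r$.

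First I follow Section~\ref{subsec: sec: proof of Thm: rel-nondeg, subsec 2.1} and frequency-project $\sqrt{r^2+a^2}\,\tilde{\psi}_{\tau_1,\tau_1+T^2}$ via Carter's separation, Proposition~\ref{prop: Carters separation, radial part}, obtaining for each $(\omega,m,\ell)$ a smooth radial function $u^{(a\omega)}_{m\ell}$ that solves the inhomogeneous radial ode~\eqref{eq: sec: proof of Thm: rel-nondeg, eq 1.1} with inhomogeneity~$H$ as in~\eqref{eq: sec: proof of Thm: rel-nondeg, eq 1.2} and satisfies the outgoing boundary conditions~\eqref{eq: subsec: sec: proof of Thm: rel-nondeg, subsec 3, eq 2}; by Section~\ref{subsec: sec: proof of Thm: rel-nondeg, subsec 4} the commuted function $v=(g_1\partial_{r^\star}+ig_2)u$ satisfies $v(\pm\infty)=0$. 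I apply Theorem~\ref{thm: subsec: sec: proof of Theorem 2, subsec 4.1, thm 1} to each such $v$ and integrate in $\omega$ while summing in $m,\ell$. The boundary terms $(\omega^2+m^2)|u|^2(\pm\infty)$ on the RHS vanish because $\tilde{\psi}_{\tau_1,\tau_1+T^2}$ has compact support in $t^\star$ and $v|_{\pm\infty}=0$, forcing $|u|(\pm\infty)=0$ for all frequencies in view of the outgoing boundary conditions and the definition of $g_2$. The remaining ``Morawetz bulk'' terms on the RHS of Theorem~\ref{thm: subsec: sec: proof of Theorem 2, subsec 4.1, thm 1}, once integrated/summed, are precisely the LHS of the Morawetz estimate~\eqref{eq: main theorem 1, eq 1} of Theorem~\ref{main theorem 1}, applied to $\psi$ with the cut-off $\chi=\eta\chi_+^2$. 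Since $F$ is supported in $D(\tau_1,\tau_1+T)$ (where $\eta$ is non-constant and $\chi_+\equiv 1$), the energy and inhomogeneity terms on the RHS of Theorems~\ref{main theorem 1} and the boundedness estimate~\eqref{eq: main theorem 1, eq 2} are controlled by $\int_{\{t^\star=\tau_1\}}J^n_\mu[\psi]n^\mu+|\psi|^2$ via standard finite-in-time energy estimates together with the Poincar\'e--Wirtinger inequality of Section~\ref{subsec: poincare wirtigner}. The bulk term $1_{D(\tau_1+T,\tau_2)}J^n_\mu[\psi]n^\mu$ on the LHS of~\eqref{eq: prop: energy estimate for v, 1, eq 1} is then produced by combining this with~\eqref{eq: main theorem 1, eq 2} and the coarea formula~\eqref{eq: subsec: coarea formula, eq 1}.

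It remains to convert the summed/integrated LHS of Theorem~\ref{thm: subsec: sec: proof of Theorem 2, subsec 4.1, thm 1} back into physical space. Here I invoke the Parseval identities of Section~\ref{subsec: sec: morawetz estimate, subsec 6.1}: the fixed frequency bulk $\sum_{m,\ell}\int d\omega\,\frac{1}{r}|v'|^2$ matches (up to the $\tfrac{\Delta}{r^2+a^2}$ Jacobian in $dg$ and the weight $\sqrt{r^2+a^2}$) the physical space integral of $\frac{\Delta}{r(r^2+a^2)}|Z^\star\mathcal{G}(\eta\chi_+^2\psi)|^2$; the term with $\frac{r^2+a^2}{r\Delta}(\omega-\tfrac{am\Xi}{r^2+a^2})^2|v|^2$ produces $\frac{r^2+a^2}{r\Delta}|W\mathcal{G}(\eta\chi_+^2\psi)|^2$; the $\frac{\tilde{\lambda}}{r^3}|v|^2$ term yields the angular contribution $\frac{1}{r}|\slashed{\nabla}\mathcal{G}(\eta\chi_+^2\psi)|^2$. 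The inhomogeneous Parseval on the RHS gives exactly the two terms $\Delta^{-1}|\tfrac{\Delta}{(r^2+a^2)^{3/2}}\rho^2 F|^2$ and $r|\mathcal{G}(\tfrac{\Delta}{(r^2+a^2)^{3/2}}\rho^2 F)|^2$ appearing in~\eqref{eq: prop: energy estimate for v, 1, eq 1}.

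The main obstacle will be the weight-juggling step: the Fourier-Carter object is $u=\sqrt{r^2+a^2}\Psi^{(a\omega)}_{m\ell}$ with $\Psi=\sqrt{r^2+a^2}\tilde{\psi}_{\tau_1,\tau_1+T^2}$, whereas the LHS of~\eqref{eq: prop: energy estimate for v, 1, eq 1} carries no such weight, so the passage from $v$ back to $\mathcal{G}(\eta\chi_+^2\psi)$ generates commutator terms of the form $[\mathcal{G},\sqrt{r^2+a^2}]$ that must be absorbed as lower order contributions into the Morawetz bulk~\eqref{eq: main theorem 1, eq 1} and the boundary term $\int_{\{t^\star=\tau_1\}}J^n_\mu[\psi]n^\mu$. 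A secondary technical point is the rigorous verification that the $(\omega^2+m^2)|u|^2(\pm\infty)$ contributions genuinely vanish upon summation: this is where the compactness of $\supp\tilde{\psi}_{\tau_1,\tau_1+T^2}$ in $t^\star$, combined with the decay of the Carter coefficients $\Psi^{(a\omega)}_{m\ell}$ at the horizons, is used.
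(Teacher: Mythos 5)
Your proposal follows the same skeleton as the paper's proof (project via Carter, apply the fixed-frequency Theorem~\ref{thm: subsec: sec: proof of Theorem 2, subsec 4.1, thm 1}, sum over $(\omega,m,\ell)$, Parseval back to physical space, invoke the Morawetz estimate of Theorem~\ref{main theorem 1}), but contains a genuine error in the handling of the boundary terms
\[
\int_\mathbb{R} d\omega\sum_{m,\ell}\left(\omega^2+m^2\right)|u|^2(\pm\infty).
\]
You assert these \emph{vanish} because ``$\tilde{\psi}_{\tau_1,\tau_1+T^2}$ has compact support in $t^\star$ and $v|_{\pm\infty}=0$, forcing $|u|(\pm\infty)=0$.'' This is false on two counts. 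First, compact support in $t^\star$ does not give compact support in the Boyer--Lindquist time $t$ near the horizons: the coordinate change $t\mapsto t^\star$ is singular as $r\to r_+,\bar{r}_+$, so the slab $\{\tau_1\le t^\star\le\tau_2\}$ extends over all $t\in\mathbb{R}$ there, and the trace of $\mathcal{F}_{\omega,m}$ on the horizons is generically nonzero. Second, the implication ``$v(\pm\infty)=0\Rightarrow u(\pm\infty)=0$'' is backwards. By Proposition~\ref{prop: subsec: sec: G, subsec 1, prop 1}(6), $v$ vanishes at $r^\star=\pm\infty$ precisely because the symbol $g_1\tfrac{d}{dr^\star}+ig_2$, applied to a function satisfying the outgoing boundary conditions~\eqref{eq: BC}, degenerates there: the coefficient $-ig_1(\omega-\omega_\pm m)+ig_2$ tends to zero, not $u$ itself. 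In fact $u(\pm\infty)$ must be retained: via the Parseval identities of Section~\ref{subsec: sec: carter separation, radial, subsec 4} these terms equal horizon energy fluxes of $\eta\chi_+^2\psi$,
\[
\int_{\mathcal{H}^+\cap D(\tau_1,\infty)}J^n[\eta\chi_+^2\psi]n^\mu +\int_{\bar{\mathcal{H}}^+\cap D(\tau_1,\infty)}J^n[\eta\chi_+^2\psi]n^\mu,
\]
and the paper controls them via the redshift/horizon flux estimate~\eqref{eq: main theorem 1, eq 5} of Theorem~\ref{main theorem 1}. Without this step your RHS would be missing a necessary contribution, and the claimed vanishing would be a genuine hole.

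On the secondary ``weight-juggling'' point you flag: you are right that the Parseval identities of Section~\ref{subsec: sec: morawetz estimate, subsec 6.1} naturally produce $\mathcal{G}(\sqrt{r^2+a^2}\,\eta\chi_+^2\psi)$ rather than $\mathcal{G}(\eta\chi_+^2\psi)$, but the discrepancy is benign: $\sqrt{r^2+a^2}$ depends only on $r$, so it commutes with the Fourier multiplier part $\textit{Op}(g_2)$, and the commutator $[g_1\partial_{r^\star},\sqrt{r^2+a^2}]$ is a smooth zeroth-order multiplication. Those errors sit inside the already-controlled Morawetz bulk, just as the paper's ``$\lesssim$'' absorbs them. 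That part of your concern is resolvable; the boundary-term claim is not.
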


\begin{proof}

In view of the assumptions on~$\psi$ then we recall from Carter's Proposition~\ref{prop: Carters separation, radial part} that for any~$\omega\in \mathbb{R},m\in\mathbb{Z},\ell\in \mathbb{Z}_{\geq |m|}$ the frequency localized radial function
\begin{equation}
	u^{(a\omega)}_{m\ell}(r),
\end{equation}
is smooth and satisfies the outgoing boundary conditions~\eqref{eq: subsec: sec: proof of Thm: rel-nondeg, subsec 3, eq 2}.

Therefore, we can use fixed frequency Theorem~\ref{thm: subsec: sec: proof of Theorem 2, subsec 4.1, thm 1} to obtain 
	\begin{equation}\label{eq: proof prop: energy estimate for v, 1, eq 1}
	\begin{aligned}
		&   \int_{\mathbb{R}}d\omega\sum_{m\ell}\Bigg(\int_{r_+}^{r_++\epsilon}(r-r_+)^{-1}\left|v^\prime+i(\omega-\omega_+m) v\right|^2dr+\int_{\bar{r}_+-\epsilon}^{\bar{r}_+}(\bar{r}_+-r)^{-1}\left|v^\prime-i(\omega-\bar{\omega}_+m) v\right|^2dr\\
		&   \qquad\qquad\qquad + \int_{r_+}^{\bar{r}_+}\left( \frac{\tilde{\lambda}}{r^3} |v|^2+\frac{r^2+a^2}{r\Delta}\left(\omega-\frac{am\Xi}{r^2+a^2}\right)^2|v|^2+\frac{1}{r}|v^\prime|^2\right)dr\Bigg)  \\
		&	\qquad\qquad \leq B \int_{\mathbb{R}}\sum_{m\ell} \Bigg( \int_{r_+}^{r_++\epsilon}\frac{1}{\Delta^2}|u^\prime+i(\omega-\omega_+m)u|^2dr +\int_{\bar{r}_+-\epsilon}^{\bar{r}_+}\frac{1}{\Delta^2}|u^\prime-(\omega-\bar{\omega}_+m)u|^2 dr\\
		&	\qquad\qquad\qquad\qquad\qquad+ \int_{r_+}^{\bar{r}_+} \left(1_{\{|m|>0\}}m^2|u|^2+ \mu^2_{\textit{KG}}|u|^2+ |u^\prime|^2+\left(1-\frac{r_{\textit{trap}}(\omega,m,\ell)}{r}\right)^2(\omega+\tilde{\lambda})|u|^2\right)dr\Bigg)\\		
		&	\qquad\qquad\qquad +B\int_{\mathbb{R}}d\omega \sum_{m\ell}\left(\omega^2+m^2\right)|u|^2(-\infty)+B\int_{\mathbb{R}}d\omega \sum_{m\ell}\left(\omega^2+m^2\right)|u|^2(+\infty)\\
		&	\qquad\qquad\qquad  + B \int_{\mathbb{R}}d\omega\sum_{m\ell} \int_{\mathbb{R}} \left( \Delta^{-1}\left|H\right|^2+r\left|\left(g_1\frac{d}{dr^\star}+ig_2\right)H\right|^2\right)dr^\star ,
	\end{aligned}
\end{equation}
for a sufficienty small~$\epsilon>0$, where we integrated~$\int_{\mathbb{R}}d\omega \sum_{m\ell}$. For the cut-off term~$H$ see~\eqref{eq: sec: proof of Thm: rel-nondeg, eq 1.2}.

We use the Parseval identities of Section~\ref{subsec: sec: morawetz estimate, subsec 6.1} to rewrite the top order terms of the LHS of~\eqref{eq: proof prop: energy estimate for v, 1, eq 1} as follows 
\begin{equation}\label{eq: proof prop: energy estimate for v, 1, eq 2}
	\begin{aligned}
		&   \int_{\mathbb{R}}d\omega\sum_{m\ell}\Bigg(\int_{r_+}^{r_++\epsilon}(r-r_+)^{-1}\left|v^\prime+i(\omega-\omega_+m) v\right|^2dr+\int_{\bar{r}_+-\epsilon}^{\bar{r}_+}(\bar{r}_+-r)^{-1}\left|v^\prime-i(\omega-\bar{\omega}_+m) v\right|^2dr\\
		&   \qquad\qquad\qquad + \int_{r_+}^{\bar{r}_+}\left( \frac{\tilde{\lambda}}{r^3} |v|^2+\frac{r^2+a^2}{r\Delta}\left(\omega-\frac{am\Xi}{r^2+a^2}\right)^2|v|^2+\frac{1}{r}|v^\prime|^2\right)dr\Bigg) \\
		&= \int_{\mathbb{R}}\int_{\mathbb{S}^2}\Bigg( \int_{r_+}^{r_++\epsilon}(r-r_+)^{-1}|\big(\partial_{r^\star}-(\partial_t+\omega_+ \partial_\varphi)\big)\mathcal{G}(\sqrt{r^2+a^2}\eta\chi_+^2\psi)|^2\\
		&	\qquad\qquad\qquad +\int_{\bar{r}_+-\epsilon}^{\bar{r}_+}(r-\bar{r}_+)^{-1}|\big(\partial_{r^\star}+(\partial_t+\bar{\omega}_+ \partial_\varphi)\big)\mathcal{G}(\sqrt{r^2+a^2}\eta\chi_+^2\psi)|^2\Bigg)\\
		&	+\int_{\mathbb{R}}\int_{\mathbb{S}^2}\int_{r_+}^{\bar{r}_+}\Bigg( \frac{1}{r^3}\left|^{d\sigma}\nabla\mathcal{G}(\sqrt{r^2+a^2}\eta\chi_+^2\psi)\right|^2+ \frac{r^2+a^2}{r\Delta}\left|\left(\partial_t-\frac{a\Xi}{r^2+a^2}\partial_\varphi\right)\mathcal{G}(\sqrt{r^2+a^2}\eta\chi_+^2\psi)\right|^2\\
		&	\qquad\qquad\qquad\qquad\qquad+\frac{1}{r}\left|\partial_{r^\star}\mathcal{G}(\sqrt{r^2+a^2}\eta\chi_+^2\psi)\right|^2 \Bigg).
	\end{aligned}
\end{equation}

Moreover, in view of the Parseval identities of Section~\ref{subsec: sec: carter separation, radial, subsec 4} we rewrite the horizon terms of the RHS of~\eqref{eq: proof prop: energy estimate for v, 1, eq 1} as follows
\begin{equation}\label{eq: proof prop: energy estimate for v, 1, eq 3}
	\begin{aligned}
		& \int_{\mathbb{R}}d\omega \sum_{m\ell}\left(\omega^2+m^2\right)|u|^2(-\infty)+\int_{\mathbb{R}}d\omega \sum_{m\ell}\left(\omega^2+m^2\right)|u|^2(+\infty)\\
		&	= \lim_{r^\star\rightarrow -\infty}\int_\mathbb{R}\int_{\mathbb{S}^2}\left(|\partial_t\sqrt{r^2+a^2}(\eta\chi_+^2\psi)|^2+|\partial_\varphi\sqrt{r^2+a^2}(\eta\chi_+^2\psi)|^2\right)\\
		&	\qquad+ \lim_{r^\star\rightarrow +\infty}\int_\mathbb{R}\int_{\mathbb{S}^2}\left(|\partial_t\sqrt{r^2+a^2}(\eta\chi_+^2\psi)|^2+|\partial_\varphi\sqrt{r^2+a^2}(\eta\chi_+^2\psi)|^2\right)
	\end{aligned}
\end{equation}
which note can be bounded by
\begin{equation}\label{eq: proof prop: energy estimate for v, 1, eq 4}
\int_{\mathcal{H}^+\cap D(\tau_1,\infty)}J^n[\eta\chi_+^2\psi]n^\mu +\int_{\bar{\mathcal{H}}^+\cap D(\tau_1,\infty)}J^n[\eta\chi_+^2\psi]n^\mu. 
\end{equation}

Therefore, in view of~\eqref{eq: proof prop: energy estimate for v, 1, eq 3},~\eqref{eq: proof prop: energy estimate for v, 1, eq 4}, we note that the lower order terms on the RHS of~\eqref{eq: proof prop: energy estimate for v, 1, eq 2} can be controlled from the Morawetz estimate of Theorem~\ref{main theorem 1}. Therefore, in view of the Parseval identity~\eqref{eq: proof prop: energy estimate for v, 1, eq 2} we obtain from~\eqref{eq: proof prop: energy estimate for v, 1, eq 1} the following 
\begin{equation}\label{eq: proof prop: energy estimate for v, 1, eq 5}
\begin{aligned}
&   \int\int_{\mathcal{M}}dg\left( \frac{\Delta}{r(r^2+a^2)}|Z^\star\mathcal{G}(\eta\chi_+^2\psi)|^2+\frac{r^2+a^2}{r\Delta}|W\mathcal{G}(\eta\chi_+^2\psi)|^2+\frac{1}{r}|\slashed{\nabla}\mathcal{G}(\eta\chi_+^2\psi)|^2+J^n_\mu[\eta\chi_+^2\psi] n^\mu \right)\\
&   \quad \leq B \int_{\mathbb{R}}dr^\star \int_{\mathbb{R}}d\omega\sum_{m\ell}\left(\Delta^{-1}|H|^2 +r\left|(g_1\frac{d}{d r^\star}+ig_2)H\right|^2\right) +B\int_{\{t^\star=\tau_1\}} J_\mu^n[\psi]n^\mu+|\psi|^2,
\end{aligned}
\end{equation}
where we used that~$\tilde{\psi}_{\tau_1,\tau_1+T^2}=\eta\chi_+^2\psi$, see~\eqref{eq: sec: proof of Thm: rel-nondeg, eq 0}.

Furthermore, it is easy to conclude the following 
\begin{equation}
	\begin{aligned}
		\int\int_\mathcal{M} J^n_\mu[\eta\chi_+^2\psi]n^\mu &	\geq b \int\int_\mathcal{M}\left(\left|\partial_{t^\star}(\eta\chi_+^2\psi)\right|^2+|\slashed{\nabla}(\eta\chi_+^2\psi)|^2\right)= \int\int_{\mathcal{M}}\left(\left|\partial_{t^\star}(\eta\chi_+^2)\psi+\eta\chi_+^2\partial_{t^\star}\psi\right|^2+|\eta\chi_+^2\slashed{\nabla}(\psi)|^2\right)
	\end{aligned}
\end{equation}
and therefore, by using that~$|\partial_{t^\star}(\eta\chi_+^2)|\lesssim T^{-1}$ in conjunction with the boundedness estimate of Theorem~\ref{main theorem 1} we conclude that 
\begin{equation}
	\int\int_{\mathcal{M}}(\eta\chi_+^2)^2\left(|\partial_{t^\star}\psi|^2+|\slashed{\nabla}\psi|^2\right)\leq B \int_{t^\star=\tau_1}J^n_\mu[\psi]n^\mu+|\psi|^2+ B\int\int_{\mathcal{M}} J^n_\mu[\eta\chi_+^2\psi]n^\mu
\end{equation}
which, in view of the support of the functions~$\eta,\chi_+$ and by using the boundedness estimate of Theorem~\ref{main theorem 1} one more time, we obtain
\begin{equation}\label{eq: proof prop: energy estimate for v, 1, eq 6}
	\int\int_{D(\tau_1+T,\tau_2)}\left(|\partial_{t^\star}\psi|^2+|\slashed{\nabla}\psi|^2\right)\leq B \int_{\{t^\star=\tau_1\}}J^n_\mu[\psi]n^\mu+|\psi|^2+ B\int\int_{\mathcal{M}} J^n_\mu[\eta\chi_+^2\psi]n^\mu.
\end{equation}

Therefore, by using the above~\eqref{eq: proof prop: energy estimate for v, 1, eq 6} and~\eqref{eq: proof prop: energy estimate for v, 1, eq 5}, in conjunction with the Morawetz estimate of Theorem~\ref{main theorem 1}~(in order to obtain the derivatives~$(Z^\star\psi)^2$ in the desired domain) we obtain the following
\begin{equation}\label{eq: proof prop: energy estimate for v, 1, eq 7}
	\begin{aligned}
		&   \int\int_{\mathcal{M}}dg\left( \frac{\Delta}{r(r^2+a^2)}|Z^\star\mathcal{G}(\eta\chi_+^2\psi)|^2+\frac{r^2+a^2}{r\Delta}|W\mathcal{G}(\eta\chi_+^2\psi)|^2+\frac{1}{r}|\slashed{\nabla}\mathcal{G}(\eta\chi_+^2\psi)|^2+1_{D(\tau_1+T,\tau_2)}J^n_\mu[\psi] n^\mu \right)\\
		&   \quad \leq B \int_{\mathbb{R}}dr^\star \int_{\mathbb{R}}d\omega\sum_{m\ell}\left(\Delta^{-1}|H|^2 +r\left|(g_1\frac{d}{d r^\star}+ig_2)H\right|^2\right) +B\int_{\{t^\star=\tau_1\}} J_\mu^n[\psi]n^\mu+|\psi|^2.
	\end{aligned}
\end{equation}

Finally, in view of the Parseval identities of Sections~\ref{subsec: sec: carter separation, radial, subsec 4},~\ref{subsec: sec: morawetz estimate, subsec 6.1} we have the following identity
\begin{equation}
	\begin{aligned}
		&	\int_{\mathbb{R}}dr^\star\int_{\mathbb{R}}d\omega\sum_{m\ell}\left( \Delta^{-1}|H|^2+r\left|(g_1\frac{d}{d r^\star}+ig_2)H\right|^2\right)\\
		&		\qquad=\int_\mathbb{R}\int_{\mathbb{S}^2}\int_{r_+}^{\bar{r}_+}\frac{r^2+a^2}{\Delta}\Big( \Delta^{-1}\left|\frac{\Delta}{(r^2+a^2)^{3/2}}\rho^2 F\right|^2+r\left|\mathcal{G}\left(\frac{\Delta}{(r^2+a^2)^{3/2}}\rho^2 F\right)\right|^2 \Big),
	\end{aligned}
\end{equation}
where~$\rho^2=r^2+a^2\cos^2\theta$ and for~$F$ see~\eqref{eq: sec: proof of Thm: rel-nondeg, eq 1.2}. We conclude. 
\end{proof}

\subsection{Bounding the inhomogeneities of Proposition~\ref{prop: energy estimate for v, 1}}\label{subsec: sec: proof of Theorem 2, subsec 5.5}

We need the following preparatory proposition whose purpose is to bound the inhomogeneous~$H$ terms on the RHS of Proposition~\ref{prop: energy estimate for v, 1}, where for~$H$ see~\eqref{eq: sec: proof of Thm: rel-nondeg, eq 1.2}. We will appeal to the pseudodifferential estimates of Lemma~\ref{lem: proof thm 3 exp decay, lem 1}.

Starting from the estimate~\eqref{eq: proof thm 3 exp decay, eq 3.2.1}  the constants~$b,B$ will respectively degenerate and blow up in the limit~$a\rightarrow 0$, as discussed in Section~\ref{sec: subsec: sec: proof of Thm: rel-nondeg, subsec 2, sec 0}. Specifically, the constant~$b$ of estimate~\eqref{eq: lem: subsec: sec: proof of Theorem 2, subsec 5.5, lem 1, eq 1} of Proposition~\ref{prop: subsec: sec: proof of Theorem 2, subsec 5.5, prop 1} degenerates in the limit~$a\rightarrow 0$ but, as discussed in Section~\ref{sec: subsec: sec: proof of Thm: rel-nondeg, subsec 2, sec 0}, the constants do not degenerate exactly on the Schwarzschild--de~Sitter case~$a=0$. This is related to the pseudodifferential commutations we need to employ, specifically see the properties of the symbol~$\widetilde{g}_2$ in Lemma~\ref{lem: proof thm 3 exp decay, lem 0}.

\begin{proposition}\label{prop: subsec: sec: proof of Theorem 2, subsec 5.5, prop 1}
	
	Let the assumptions of Theorem~\ref{main theorem, relat. non-deg} hold. Moreover, assume that~$\psi$ is in accordance to the assumptions of Section~\ref{subsec: sec: proof of Thm: rel-nondeg, subsec 2}, specifically~$\psi$ arises from smooth initial data. Then, we obtain the following estimate 
			\begin{equation}\label{eq: lem: subsec: sec: proof of Theorem 2, subsec 5.5, lem 1, eq 1}
				\begin{aligned}
					& b\int\int_{\mathcal{M}}\Bigg( \frac{1}{r}\frac{\Delta}{(r^2+a^2)}|Z^\star\mathcal{G}(\eta\chi_+^2\psi)|^2+\frac{1}{r}\frac{r^2+a^2}{\Delta}|W\mathcal{G}(\eta\chi_+^2\psi)|^2+\frac{1}{r}|\slashed{\nabla}\mathcal{G}(\eta\chi_+^2\psi)|^2+1_{D(\tau_1+T,\tau_2)}J^n_\mu[\psi]n^\mu\Bigg)\\
					\quad &\leq \int_{\{t^\star=\tau_1\}}  \left(J^n_\mu[\psi]n^\mu+|\psi|^2\right)+\frac{1}{T^2}\int\int_{D(\tau_1,\tau_1+T)} J_\mu^{W}[\mathcal{G}(\chi_+^2\psi)]n^\mu,
				\end{aligned}    
			\end{equation}
			for any~$T\geq 3$ and for any~$\tilde{T}< \tau_1<\tau_1+T< \tau_2$. For the cut-offs~$\eta,\chi_+$ see~\eqref{eq: sec: proof of Thm: rel-nondeg, eq 0}. For the regular vector field~$Z^\star$ see Section~\ref{subsec: boldsymbol partial r}, where~$W=\partial_t+\frac{a\Xi}{r^2+a^2}\partial_{\varphi}$.
\end{proposition}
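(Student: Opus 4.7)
The plan is to start from the estimate of Proposition~\ref{prop: energy estimate for v, 1} and bound its two inhomogeneous terms involving $F=(\Box\eta)\chi_+^2\psi+2\nabla^a\eta\nabla_a(\chi_+^2\psi)$. The crucial geometric features are that $\Box\eta$ and $\nabla\eta$ are supported in $D(\tau_1,\tau_1+T)$, with $|\Box\eta|\lesssim T^{-2}$ and $|\nabla\eta|\lesssim T^{-1}$, as recorded in Section~\ref{subsec: sec: carter separation, subsec 1}. The final right hand side of~\eqref{eq: lem: subsec: sec: proof of Theorem 2, subsec 5.5, lem 1, eq 1} consists of a purely initial data term plus a $T^{-2}$-weighted bulk term of $\mathcal{G}(\chi_+^2\psi)$ over $D(\tau_1,\tau_1+T)$; the $T^{-2}$ decay must come from the two derivatives on $\eta$, while the $\mathcal{G}$-bulk term can only arise by commuting $\mathcal{G}$ through $F$.

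First I would bound the term $\int\int\frac{r^2+a^2}{\Delta}\cdot\Delta^{-1}\big|\frac{\Delta}{(r^2+a^2)^{3/2}}\rho^2 F\big|^2$. Modulo fixed geometric factors this integrand is bounded pointwise by $\Delta|F|^2$, so expanding $F$ via Leibniz and using the support of $\nabla\eta,\Box\eta$ the full integral is dominated by $T^{-2}\int\int_{D(\tau_1,\tau_1+T)}(|\psi|^2+|\partial\psi|^2)$. Applying the boundedness and Morawetz estimates~\eqref{eq: main theorem 1, eq 1},~\eqref{eq: main theorem 1, eq 3} of Theorem~\ref{main theorem 1} on the slab $D(\tau_1,\tau_1+T)$, one converts this into $B\int_{\{t^\star=\tau_1\}}(J^n_\mu[\psi]n^\mu+|\psi|^2)$ with room to spare, since the $T^{-2}$ weight beats the $T$-length of the slab.

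The main work is the second term $\int\int\frac{r(r^2+a^2)}{\Delta}\big|\mathcal{G}(\tfrac{\Delta}{(r^2+a^2)^{3/2}}\rho^2 F)\big|^2$, which requires commuting the pseudodifferential operator $\mathcal{G}$ through the $\eta$-dependent factors in $F$. Here I would first replace $\mathcal{G}$ by its more regular approximant $\widetilde{\mathcal{G}}$ of Definition~\ref{def: proof thm 3 exp decay, def 1}, incurring a bounded error controlled by the pointwise bound~\eqref{eq: lem: proof thm 3 exp decay, lem 0, eq 2} of Lemma~\ref{lem: proof thm 3 exp decay, lem 0}, and then apply the Coifman--Meyer commutation Lemma~\ref{lem: sec: appendix, pseudodifferential com, lem 0} using the symbol-class membership $\widetilde{g}_2\in S^1$ established in Lemma~\ref{lem: proof thm 3 exp decay, lem 0}. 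Schematically $\widetilde{\mathcal{G}}((\partial\eta)\cdot g)=(\partial\eta)\widetilde{\mathcal{G}}g+[\widetilde{\mathcal{G}},\partial\eta]g$, so that the principal contribution produces $|\partial\eta|^2|\mathcal{G}(\chi_+^2\psi)|^2$, supported in $D(\tau_1,\tau_1+T)$ and already weighted by $T^{-2}$; after comparing the various components of $|\mathcal{G}(\chi_+^2\psi)|^2$ with $J^W_\mu[\mathcal{G}(\chi_+^2\psi)]n^\mu$ via the causality of $W$, see Lemma~\ref{lem: causal vf E,1}, this yields precisely the bulk term $\frac{1}{T^2}\int\int_{D(\tau_1,\tau_1+T)}J^W_\mu[\mathcal{G}(\chi_+^2\psi)]n^\mu$ on the right hand side of~\eqref{eq: lem: subsec: sec: proof of Theorem 2, subsec 5.5, lem 1, eq 1}. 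The commutator terms $[\widetilde{\mathcal{G}},\partial\eta]g$, together with the Leibniz errors coming from commuting $\mathcal{G}$ through the smooth radial factor $\frac{\Delta}{(r^2+a^2)^{3/2}}\rho^2$, are of strictly lower order in the Fourier variables and are therefore absorbed by the Morawetz estimate~\eqref{eq: main theorem 1, eq 1} of Theorem~\ref{main theorem 1}.

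The main technical obstacle is the pseudodifferential commutation bookkeeping: one must carefully track that the Coifman--Meyer constant from Lemma~\ref{lem: sec: appendix, pseudodifferential com, lem 0} applied with the symbol $\widetilde{g}_2$ blows up in the limit $a\to 0$, in accordance with the discussion in Section~\ref{sec: subsec: sec: proof of Thm: rel-nondeg, subsec 2, sec 0} and item (5) of Lemma~\ref{lem: proof thm 3 exp decay, lem 0}, while remaining finite for every fixed $(a,M)\in \mathcal{MS}_{l,\mu_{KG}}$ including the Schwarzschild--de~Sitter case $a=0$ (where $\widetilde{g}_2\equiv g_2$ is an honest multiplier). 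One must also verify that all the remainder terms produced by iterated commutations and by the approximation $\widetilde{\mathcal{G}}\approx \mathcal{G}$ are genuinely of order strictly lower than the top order terms on the left hand side of~\eqref{eq: lem: subsec: sec: proof of Theorem 2, subsec 5.5, lem 1, eq 1}, so that a small-constant-plus-boundary absorption argument closes the estimate.
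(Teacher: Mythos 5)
Your overall architecture is sound: start from Proposition~\ref{prop: energy estimate for v, 1}, split the inhomogeneous terms, pass from $\mathcal{G}$ to $\widetilde{\mathcal{G}}$, invoke the Coifman--Meyer Lemma~\ref{lem: sec: appendix, pseudodifferential com, lem 0} and the symbol facts of Lemma~\ref{lem: proof thm 3 exp decay, lem 0}, extract the $T^{-2}$ weight from $|\Box\eta|,|\nabla\eta|^2$, and recognize that the $a\to 0$ blow-up of constants comes from $\widetilde{g}_2\in S^1$. This is precisely what the paper does.

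The gap is in the sentence claiming the commutator errors are ``of strictly lower order in the Fourier variables and are therefore absorbed by the Morawetz estimate~\eqref{eq: main theorem 1, eq 1}.'' The Coifman--Meyer estimate applied to $[\widetilde{\mathcal{G}},\frac{\Delta\rho^2}{(r^2+a^2)^{3/2}}g^{ab}\partial_a\eta\nabla_b](\chi_+^2\psi)$ gives a bound of the form $B\int\int_{\mathcal{M}}J^n_\mu[\chi_+^2\psi]n^\mu$, i.e.\ a \emph{non-degenerate} first-derivative bulk of $\psi$ over the whole slab. The Morawetz estimate of Theorem~\ref{main theorem 1} controls first derivatives only with the factor $(1-r_{\textit{trap}}/r)^2$, so it cannot absorb such a term by itself: this is exactly the trapping degeneration that the entire relatively-non-degenerate-estimate machinery is designed to circumvent, and claiming it can be absorbed by~\eqref{eq: main theorem 1, eq 1} is circular. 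The paper's mechanism is the auxiliary Lemma~\ref{lem: subsec: sec: proof of Theorem 2, subsec 5.5, lem 1}, which combines the Morawetz estimate with the Poincar\'e-type Lemma~\ref{lem: control of the law order derivatives with v} to trade the $(1-r_{\textit{trap}}/r)^2$ degeneration for a small factor $\epsilon_p$ multiplying the top-order $\mathcal{G}$-quantities $\frac{1}{g_1 r^2}\frac{\Delta}{r^2+a^2}|\partial_t\mathcal{G}(\ldots)|^2$ and $\frac{1}{g_1 r^2}\frac{\Delta}{r^2+a^2}|a\Xi\partial_\varphi\mathcal{G}(\ldots)|^2$; these are then absorbed into the left-hand side of~\eqref{eq: lem: subsec: sec: proof of Theorem 2, subsec 5.5, lem 1, eq 1} by choosing $\epsilon_p$ small, see the chain~\eqref{eq: proof thm 3 exp decay, eq 3.2.4-2}--\eqref{eq: proof thm 3 exp decay, eq 3.4}. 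Your closing remark about a ``small-constant-plus-boundary absorption argument'' hints at the right shape, but without identifying the Poincar\'e device that produces the small constant, the commutator term remains an uncontrolled non-degenerate bulk and the estimate does not close. You should add a step of the form: bound the commutator bulk by $J^n_\mu[\eta\chi_+^2\psi]n^\mu$ modulo an initial-data term, then invoke Lemma~\ref{lem: control of the law order derivatives with v} (or equivalently Lemma~\ref{lem: subsec: sec: proof of Theorem 2, subsec 5.5, lem 1}) to split this into $\frac{B}{\epsilon_p}\cdot(\text{data})+\epsilon_p\cdot(\text{top-order }\mathcal{G}\text{ bulk})$, and absorb the second piece.
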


\begin{proof}
	Since the present Proposition satisfies the same assumptions as Proposition~\ref{prop: energy estimate for v, 1}, we immediately have the result~\eqref{eq: prop: energy estimate for v, 1, eq 1} of Proposition~\ref{prop: energy estimate for v, 1}, which we recall here:
	\begin{equation}\label{eq: proof thm 3 exp decay, eq 0}
		\begin{aligned}
			&   b\int\int_{\mathcal{M}}\left( \frac{1}{r}\frac{\Delta}{(r^2+a^2)}|Z^\star\mathcal{G}(\eta\chi_+^2\psi)|^2+\frac{1}{r}\frac{r^2+a^2}{\Delta}|W\mathcal{G}(\eta\chi_+^2\psi)|^2+\frac{1}{r}|\slashed{\nabla}\mathcal{G}(\eta\chi_+^2\psi)|^2+1_{D(\tau_1+T,\tau_2)}J^n_\mu[\psi] n^\mu \right)\\
			\quad &\leq \int_\mathbb{R}\int_{\mathbb{S}^2}\int_{r_+}^{\bar{r}_+}\frac{r^2+a^2}{\Delta}\Big( \Delta^{-1}\left|\frac{\Delta}{(r^2+a^2)^{3/2}}\rho^2 F\right|^2+r\left|\mathcal{G}\left(\frac{\Delta}{(r^2+a^2)^{3/2}}\rho^2 F\right)\right|^2 \Big)\\ &\qquad+\int_{\{t^\star=\tau_1\}} \left(J^n_\mu[\psi]n^\mu+|\psi|^2\right),\\
		\end{aligned}    
	\end{equation}
	where for the error terms~$H$ see~\eqref{eq: sec: proof of Thm: rel-nondeg, eq 1.2}. In what follows we seek to appropriately bound the~$H$ error terms on the RHS of~\eqref{eq: proof thm 3 exp decay, eq 0}.

	\begin{center}
		\textbf{An auxilliary estimate}
	\end{center}

	We will only use the estimate~\eqref{eq: proof thm 3 exp decay, eq 0.1} of the following Lemma to prove inequalities~\eqref{eq: proof thm 3 exp decay, eq 3.2.4-2},~\eqref{eq: proof thm 3 exp decay, eq 3.4-4}.

\begin{lemma}\label{lem: subsec: sec: proof of Theorem 2, subsec 5.5, lem 1}
	Let the assumptions of Theorem~\ref{main theorem, relat. non-deg} hold. Moreover, assume that~$\psi$ is in accordance to the assumptions of Section~\ref{subsec: sec: proof of Thm: rel-nondeg, subsec 2}, specifically~$\psi$ arises from smooth initial data. Then, we obtain the following estimate
	\begin{equation}\label{eq: proof thm 3 exp decay, eq 0.1}
		\begin{aligned}
			\int\int_{\mathcal{M}} J^n_\mu[\eta\chi_+^2\psi]n^\mu &\leq \frac{B}{\epsilon_p}\int_{\{t^\star=\tau_1\}} J^n_\mu[\psi]n^\mu+|\psi|^2 +B\epsilon_p\int_{\mathbb{R}}\int_{\mathbb{S}^2}\int_{r_+}^{\bar{r}_+}\frac{1}{g_1}\frac{1}{r^2}\frac{\Delta}{r^2+a^2}\left|\partial_t\mathcal{G}(\sqrt{r^2+a^2}\eta\chi_+^2\psi)\right|^2\\
			&	\qquad +B\epsilon_p \int_{\mathbb{R}}\int_{\mathbb{S}^2}\int_{r_+}^{\bar{r}_+}\frac{1}{g_1}\frac{1}{r^2}\frac{\Delta}{r^2+a^2}\left|a\Xi\partial_\varphi\mathcal{G}(\sqrt{r^2+a^2}\eta\chi_+^2\psi)\right|^2.
		\end{aligned}
	\end{equation}
	for any sufficiently small~$\epsilon_p>0$. 
\end{lemma}

\begin{proof}
The assumptions of the present Lemma allow us access to the results of Theorem~\ref{main theorem 1}, specifically the Morawetz and boundedness estimates. Next, we use the Poincare type Lemma~\ref{lem: control of the law order derivatives with v} in order to `cover' the lower order degeneratation at~$r_{trap}(\omega,m,\ell)$ at the LHS of the Morawetz estimate, at the expence of introducing an appropriate top order term on the RHS. Specifically, we obtain the following estimate 
	\begin{equation}\label{eq: proof lem: subsec: sec: proof of Theorem 2, subsec 5.5, lem 1, eq 1}
		\begin{aligned}
			&	\int_{\mathbb{R}}d\omega\sum_{m,
				\ell} \Bigg( \int_{r_+}^{r_++\epsilon}\frac{1}{\Delta^2}|u^\prime+i(\omega-\omega_+m)u|^2dr +\int_{\bar{r}_+-\epsilon}^{\bar{r}_+}\frac{1}{\Delta^2}|u^\prime-(\omega-\bar{\omega}_+m)u|^2dr \\
			&	\qquad\qquad\qquad+ \int_{r_+}^{\bar{r}_+} \left(1_{\{|m|>0\}}|u|^2+ \mu^2_{\textit{KG}}|u|^2+ |u^\prime|^2+(\omega^2+\lambda^{(a\omega)}_{m\ell})|u|^2\right)dr\Bigg) \\
			&	\leq \frac{B}{\epsilon_p}\int_\mathbb{R} d\omega\sum_{m,\ell} \int_{r_+}^{\bar{r}_+}\left(1-\frac{r_{trap}}{r}\right)^2|u|^2 +B\epsilon_p\int_{\mathbb{R}}d\omega\sum_{m\ell} \int_{r_+}^{\bar{r}_+} dr  \frac{1}{g_1}\frac{1}{r^2}\frac{\Delta}{r^2+a^2}(\omega^2+(am\Xi)^2)|v|^2.
		\end{aligned}
	\end{equation}
	Recall from Section~\ref{subsec: sec: proof of Thm: rel-nondeg, subsec 4} that~$v=(g_1\partial_{r^\star}+ig_2)u$, and~$u$ is the frequency localization of~$\eta\chi_+^2\psi$, see Section~\ref{subsec: sec: proof of Thm: rel-nondeg, subsec 2.1}. 
	
	Therefore, by using the Morawetz estimate of Theorem~\ref{main theorem 1} one last time to bound the first term on the RHS of the above~\eqref{eq: proof lem: subsec: sec: proof of Theorem 2, subsec 5.5, lem 1, eq 1} we obtain
		\begin{equation}\label{eq: proof lem: subsec: sec: proof of Theorem 2, subsec 5.5, lem 1, eq 2}
		\begin{aligned}
			&	\int_{\mathbb{R}}d\omega\sum_{m,
				\ell} \Bigg( \int_{r_+}^{r_++\epsilon}\frac{1}{\Delta^2}|u^\prime+i(\omega-\omega_+m)u|^2dr +\int_{\bar{r}_+-\epsilon}^{\bar{r}_+}\frac{1}{\Delta^2}|u^\prime-(\omega-\bar{\omega}_+m)u|^2dr \\
			&	\qquad\qquad\qquad+ \int_{r_+}^{\bar{r}_+} \left(1_{\{|m|>0\}}|u|^2+ \mu^2_{\textit{KG}}|u|^2+ |u^\prime|^2+(\omega^2+\lambda^{(a\omega)}_{m\ell})|u|^2\right)dr\Bigg) \\
			&	\leq \frac{B}{\epsilon_p}\int_{\{t^\star=\tau_1\}}J^n_\mu[\psi]n^\mu+|\psi|^2 +B\epsilon_p\int_{\mathbb{R}}d\omega\sum_{m\ell} \int_{r_+}^{\bar{r}_+} dr  \frac{1}{g_1}\frac{1}{r^2}\frac{\Delta}{r^2+a^2}(\omega^2+(am\Xi)^2)|v|^2.
		\end{aligned}
	\end{equation}

	Now, we use the Parseval identities of Section~\ref{subsec: sec: morawetz estimate, subsec 6.1} to obtain 
	\begin{equation}\label{eq: proof lem: subsec: sec: proof of Theorem 2, subsec 5.5, lem 1, eq 3}
		\begin{aligned}
			\int_{\mathbb{R}}d\omega\sum_{m\ell} \int_{r_+}^{\bar{r}_+} dr  \frac{1}{g_1}\frac{1}{r^2}\frac{\Delta}{r^2+a^2}(\omega^2+(am\Xi)^2)|v|^2	&=\int_{\mathbb{R}}\int_{\mathbb{S}^2}\int_{r_+}^{\bar{r}_+}\frac{1}{g_1}\frac{1}{r^2}\frac{\Delta}{r^2+a^2}\left|\partial_t\mathcal{G}(\sqrt{r^2+a^2}\eta\chi_+^2\psi)\right|^2\\
			&	\qquad +\int_{\mathbb{R}}\int_{\mathbb{S}^2}\int_{r_+}^{\bar{r}_+}\frac{1}{g_1}\frac{1}{r^2}\frac{\Delta}{r^2+a^2}\left|a\Xi\partial_\varphi\mathcal{G}(\sqrt{r^2+a^2}\eta\chi_+^2\psi)\right|^2,
		\end{aligned}
	\end{equation}
	and moreover we use the Parseval identities of Section~\ref{subsec: sec: carter separation, radial, subsec 4} to write the LHS of~\eqref{eq: proof lem: subsec: sec: proof of Theorem 2, subsec 5.5, lem 1, eq 2} in physical space and obtain the desired estimate. 
\end{proof}

	\begin{center}
		\textbf{Upper bound for the bulk of the RHS of}~\eqref{eq: proof thm 3 exp decay, eq 0} 
	\end{center}
	
	To bound the bulk term on the RHS of~\eqref{eq: proof thm 3 exp decay, eq 0}  we will use the pseudodifferential commutator estimates of Lemma~\ref{lem: proof thm 3 exp decay, lem 1} multiple times.

	By using the operator~$\widetilde{\mathcal{G}}$, see Definition~\ref{def: proof thm 3 exp decay, def 1} and the coarea formula we estimate the bulk of the right hand side of~\eqref{eq: proof thm 3 exp decay, eq 0} as follows 
	\begin{equation}\label{eq: proof thm 3 exp decay, eq 3.2}
		\begin{aligned}
			\boxed{\text{Bulk on RHS of }\eqref{eq: proof thm 3 exp decay, eq 0}} &	\leq B \int\int_{\mathcal{M}} r\left|\mathcal{G}\left(\frac{\Delta\rho^2}{(r^2+a^2)^{3/2}} (\Box\eta) \chi_+^2\psi\right)\right|^2+r\left|\mathcal{G}\left(\frac{\Delta\rho^2}{(r^2+a^2)^{3/2}} \nabla^a\eta\nabla_a \left(\chi_+^2\psi\right)\right)\right|^2\\
			&	\qquad\qquad\qquad\qquad\quad +\Delta^{-2}\left|\frac{\Delta\rho^2}{(r^2+a^2)^{3/2}} \left((\Box\eta) \chi_+^2\psi +2\nabla^a\eta\nabla_a (\chi_+^2\psi)\right)\right|^2\\
			&	=B	\int\int_{\mathcal{M}} r\left|(\mathcal{G}-\widetilde{\mathcal{G}}+\widetilde{\mathcal{G}})\left(\frac{\Delta\rho^2}{(r^2+a^2)^{3/2}} (\Box\eta) \chi_+^2\psi\right)\right|^2\\
			&	\qquad\qquad\qquad\qquad\quad+r\left|(\mathcal{G}-\widetilde{\mathcal{G}}+\widetilde{\mathcal{G}})\left(\frac{\Delta\rho^2}{(r^2+a^2)^{3/2}} g^{ab}\partial_a\eta\nabla_b (\chi_+^2\psi)\right)\right|^2\\
			&	\quad +B\int\int_{\mathcal{M}}\Delta^{-2}\left|\frac{\Delta\rho^2}{(r^2+a^2)^{3/2}} \left((\Box\eta)\chi_+^2\psi +2\nabla^a\eta\nabla_a(\chi_+^2 \psi)\right)\right|^2.\\
		\end{aligned}
	\end{equation}
	For the cut-offs~$\eta,\chi_+$ see Section~\ref{subsec: sec: carter separation, subsec 1}.

	Now, by using the pseudodifferential estimate~\eqref{eq: thm 3 exp decay, eq 2.1} of Lemma~\ref{lem: proof thm 3 exp decay, lem 1} we obtain from~\eqref{eq: proof thm 3 exp decay, eq 3.2} the following 
	\begin{equation}\label{eq: proof thm 3 exp decay, eq 3.2.1} 
		\begin{aligned}
			&\boxed{\text{Bulk on RHS of }\eqref{eq: proof thm 3 exp decay, eq 0}}\\
			&	\leq B	\int\int_{\mathcal{M}} r\left|(\mathcal{G}-\widetilde{\mathcal{G}})\left(\frac{\Delta\rho^2}{(r^2+a^2)^{3/2}} (\Box\eta) \chi_+^2\psi\right)\right|^2 + r\left|\widetilde{\mathcal{G}}\left(\frac{\Delta\rho^2}{(r^2+a^2)^{3/2}} (\Box\eta) \chi_+^2\psi\right)\right|^2\\
			&	\qquad\qquad\qquad\qquad\quad+r\left|(\mathcal{G}-\widetilde{\mathcal{G}})\left(\frac{\Delta\rho^2}{(r^2+a^2)^{3/2}} g^{ab}\partial_a\eta\nabla_b (\chi_+^2\psi)\right)\right|^2  +r\left|\widetilde{\mathcal{G}}\left(\frac{\Delta\rho^2}{(r^2+a^2)^{3/2}} g^{ab}\partial_a\eta\nabla_b (\chi_+^2\psi)\right)\right|^2\\
			&	\quad +B\int\int_{\mathcal{M}}\Delta^{-2}\left|\frac{\Delta\rho^2}{(r^2+a^2)^{3/2}} \left((\Box\eta)\chi_+^2\psi +2\nabla^a\eta\nabla_a(\chi_+^2 \psi)\right)\right|^2\\
			&	\leq B	\int\int_{\mathcal{M}} r\left|[\widetilde{\mathcal{G}},\frac{\Delta\rho^2}{(r^2+a^2)^{3/2}} \chi_+^2] \Box\eta\psi\right|^2+B\int\int_{\mathcal{M}} r\left|[\widetilde{\mathcal{G}},\frac{\Delta\rho^2}{(r^2+a^2)^{3/2}} g^{ab}\partial_a\eta\nabla_b] (\chi_+^2\psi)\right|^2 \\
			&	\qquad + \int\int_{\mathcal{M}} r\left|\frac{\Delta\rho^2}{(r^2+a^2)^{3/2}} \chi_+^2 \widetilde{\mathcal{G}}(\Box\eta\psi)\right|^2+B\int\int_{\mathcal{M}} r\left|\left(\frac{\Delta\rho^2}{(r^2+a^2)^{3/2}} g^{ab}\partial_a\eta\nabla_b\widetilde{\mathcal{G}}(\chi_+^2\psi)\right)\right|^2\\
			&	\qquad+	B\int\int_{\mathcal{M}} r\left|\left(\frac{\Delta\rho^2}{(r^2+a^2)^{3/2}} (\Box\eta) \chi_+^2\psi\right)\right|^2 +r\left|\left(\frac{\Delta\rho^2}{(r^2+a^2)^{3/2}} g^{ab}\partial_a\eta\nabla_b (\chi_+^2\psi)\right)\right|^2\\
			&	\qquad +B\int\int_{\mathcal{M}}\Delta^{-2}\left|\frac{\Delta\rho^2}{(r^2+a^2)^{3/2}} \left((\Box\eta)\chi_+^2\psi +2\nabla^a\eta\nabla_a(\chi_+^2 \psi)\right)\right|^2\\
			&	\leq  B	\int\int_{\mathcal{M}} r\left|[\widetilde{\mathcal{G}},\frac{\Delta\rho^2}{(r^2+a^2)^{3/2}} \chi_+^2] \Box\eta\psi\right|^2+B\int\int_{\mathcal{M}} r\left|[\widetilde{\mathcal{G}},\frac{\Delta\rho^2}{(r^2+a^2)^{3/2}} g^{ab}\partial_a\eta\nabla_b] (\chi_+^2\psi)\right|^2 \\
			&	\qquad +B \int\int_{\mathcal{M}} r\left|\frac{\Delta\rho^2}{(r^2+a^2)^{3/2}} \Box\eta\cdot \widetilde{\mathcal{G}}(\chi_+^2\psi)\right|^2\\
			&	\qquad +B\int\int_{\mathcal{M}} r\left|\left(\frac{\Delta\rho^2}{(r^2+a^2)^{3/2}} g^{ab}\partial_a\eta\nabla_b(\widetilde{\mathcal{G}}-\mathcal{G}+\mathcal{G})(\chi_+^2\psi)\right)\right|^2\\
			&	\qquad+	B\int\int_{\mathcal{M}} r\left|\left(\frac{\Delta\rho^2}{(r^2+a^2)^{3/2}} (\Box\eta) \chi_+^2\psi\right)\right|^2 +r\left|\left(\frac{\Delta\rho^2}{(r^2+a^2)^{3/2}} g^{ab}\partial_a\eta\nabla_b (\chi_+^2\psi)\right)\right|^2\\
			&	\qquad +B\int\int_{\mathcal{M}}\Delta^{-2}\left|\frac{\Delta\rho^2}{(r^2+a^2)^{3/2}} \left((\Box\eta)\chi_+^2\psi +2\nabla^a\eta\nabla_a(\chi_+^2 \psi)\right)\right|^2.\\
		\end{aligned}
	\end{equation}
	The constants on the RHS of the above estimate blows up in the limit~$a\rightarrow 0$, in view of the estimates of Lemma~\ref{lem: proof thm 3 exp decay, lem 1}. We immediately bound the right hand side of~\eqref{eq: proof thm 3 exp decay, eq 3.2.1} as follows
	\begin{equation}\label{eq: proof thm 3 exp decay, eq 3.2.3} 
		\begin{aligned}
			&	\boxed{\text{Bulk on RHS of }\eqref{eq: proof thm 3 exp decay, eq 0}}\leq \\
			&	\leq B \int\int_{\mathcal{M}} r\left|\left(\frac{\Delta\rho^2}{(r^2+a^2)^{3/2}} g^{ab}\partial_a\eta\nabla_b\mathcal{G}(\chi_+^2\psi)\right)\right|^2\\
			&	\qquad + B \int\int_{\mathcal{M}} r\left|[\widetilde{\mathcal{G}},\frac{\Delta\rho^2}{(r^2+a^2)^{3/2}} g^{ab}\partial_a\eta\nabla_b] (\chi_+^2\psi)\right|^2  + r\left|\left(\frac{\Delta\rho^2}{(r^2+a^2)^{3/2}} g^{ab}\partial_a\eta\nabla_b(\widetilde{\mathcal{G}}-\mathcal{G})(\chi_+^2\psi)\right)\right|^2\\
			& \qquad\qquad\qquad\qquad\qquad + r\left|[\widetilde{\mathcal{G}},\frac{\Delta\rho^2}{(r^2+a^2)^{3/2}} \chi_+^2] \Box\eta\psi\right|^2+ r\left|\frac{\Delta\rho^2}{(r^2+a^2)^{3/2}} \Box\eta\cdot \widetilde{\mathcal{G}}(\chi_+^2\psi)\right|^2\\
			&	\qquad\qquad\qquad\qquad\qquad + r\left|\left(\frac{\Delta\rho^2}{(r^2+a^2)^{3/2}} (\Box\eta) \chi_+^2\psi\right)\right|^2 +r\left|\left(\frac{\Delta\rho^2}{(r^2+a^2)^{3/2}} g^{ab}\partial_a\eta\nabla_b (\chi_+^2\psi)\right)\right|^2\\
			&	\qquad\qquad\qquad\qquad\qquad + \Delta^{-2}\left|\frac{\Delta\rho^2}{(r^2+a^2)^{3/2}} \left((\Box\eta)\chi_+^2\psi +2\nabla^a\eta\nabla_a(\chi_+^2 \psi)\right)\right|^2.\\
		\end{aligned}
	\end{equation}

	Now, by using the support of the functions~$\eta,\chi$ and that
	\begin{equation}\label{eq: proof thm 3 exp decay, eq 3.2.3.0} 
		|Z^\star\eta|+|\partial_t\eta|+|\slashed{\nabla}\eta|\leq  \frac{B}{T},\qquad |\partial_{t^\star}^2\eta|\leq \frac{B}{T^2},\qquad |Z^\star\chi_+|+|\partial_t\chi_+|+|\slashed{\nabla}\chi_+|\leq  \frac{B}{T}
	\end{equation}
	see Section~\ref{subsec: sec: carter separation, subsec 1} and the~$H^1$ boundedness result
	\begin{equation}\label{eq: proof thm 3 exp decay, eq 3.2.3.1} 
		\int_{\{t^\star=\tau_2\}} |\psi|^2 +J^n_\mu[\psi]n^\mu \leq B \int_{\{t^\star=\tau_1\}} |\psi|^2 +J^n_\mu[\psi]n^\mu
	\end{equation}
	of Theorem~\ref{main theorem 1} we \underline{bound the zero order terms} of the right hand side of~\eqref{eq: proof thm 3 exp decay, eq 3.2.3} and obtain the following 
	\begin{equation}\label{eq: proof thm 3 exp decay, eq 3.2.4} 
		\begin{aligned}
			&\boxed{\text{Bulk on RHS of }\eqref{eq: proof thm 3 exp decay, eq 0}}\leq \\
			&	\leq B \int\int_{\mathcal{M}} r\left|\left(\frac{\Delta\rho^2}{(r^2+a^2)^{3/2}} g^{ab}\partial_a\eta\nabla_b\mathcal{G}(\chi_+^2\psi)\right)\right|^2\\
			&	\qquad + B \int\int_{\mathcal{M}} r\left|[\widetilde{\mathcal{G}},\frac{\Delta\rho^2}{(r^2+a^2)^{3/2}} g^{ab}\partial_a\eta\nabla_b] (\chi_+^2\psi)\right|^2  + r\left|\left(\frac{\Delta\rho^2}{(r^2+a^2)^{3/2}} g^{ab}\partial_a\eta\nabla_b(\widetilde{\mathcal{G}}-\mathcal{G})(\chi_+^2\psi)\right)\right|^2\\
			&	\qquad +B \int_{\{t^\star=\tau_1\}}  J^n_\mu[\psi]n^\mu +|\psi|^2\\
			&	\leq \frac{B}{T^2}\int\int_{D(\tau_1,\tau_1+T^2)} J_\mu^{W}[\mathcal{G}(\chi_+^2\psi)]n^\mu\\
			&	\qquad + B \int\int_{\mathcal{M}} r\left|[\widetilde{\mathcal{G}},\frac{\Delta\rho^2}{(r^2+a^2)^{3/2}} g^{ab}\partial_a\eta\nabla_b] (\chi_+^2\psi)\right|^2  +\frac{1}{T^2}\int\int_{\mathcal{M}} J^n_\mu[\chi^2_+\psi]+ |\chi_+^2\psi|^2 \\
			&	\qquad +\frac{B}{T^2}\int\int_{D(\tau_1,\tau_1+T^2)} J_\mu^{W}[\mathcal{G}(\chi_+^2\psi)]n^\mu\\
			&	\leq B \int\int_{\mathcal{M}} r\left|\left(\frac{\Delta\rho^2}{(r^2+a^2)^{3/2}} g^{ab}\partial_a\eta\nabla_b\mathcal{G}(\chi_+^2\psi)\right)\right|^2\\
			&	\qquad + B \int\int_{\mathcal{M}} r\left|[\widetilde{\mathcal{G}},\frac{\Delta\rho^2}{(r^2+a^2)^{3/2}} g^{ab}\partial_a\eta\nabla_b] (\chi_+^2\psi)\right|^2  \\
			&	\qquad +B \int_{\{t^\star=\tau_1\}}  J^n_\mu[\psi]n^\mu +|\psi|^2,\\
		\end{aligned}
	\end{equation}
	where in the second to last inequality of~\eqref{eq: proof thm 3 exp decay, eq 3.2.4}  we use the bound on the derivatives and the support of the cut-offs~\eqref{eq: proof thm 3 exp decay, eq 3.2.3.0} and in the last inequality we use the~$H^1$ boundedness result~\eqref{eq: proof thm 3 exp decay, eq 3.2.3.1}.

	Now, by using the bounds on the derivatives of the cut-offs~\eqref{eq: proof thm 3 exp decay, eq 3.2.3.0} we obtain from~\eqref{eq: proof thm 3 exp decay, eq 3.2.4}  the following 
	\begin{equation}\label{eq: proof thm 3 exp decay, eq 3.2.4-1}
		\begin{aligned}
			\boxed{\text{Bulk on RHS of }\eqref{eq: proof thm 3 exp decay, eq 0}} &	\leq B \int\int_{\mathcal{M}} r\left|[\widetilde{\mathcal{G}},\frac{\Delta\rho^2}{(r^2+a^2)^{3/2}} g^{ab}\partial_a\eta\nabla_b] (\chi_+^2\psi)\right|^2  \\
			&	\qquad + \frac{B}{T^2}\int\int_{D(\tau_1,\tau_1+T^2)} J_\mu^{W}[\mathcal{G}(\chi_+^2\psi)]n^\mu \\
			&	\qquad +B \int_{\{t^\star=\tau_1\}}  J^n_\mu[\psi]n^\mu +|\psi|^2.\\
		\end{aligned}
	\end{equation}

	Now, in view of the pseudodifferential commutator estimates of Lemma~\ref{lem: proof thm 3 exp decay, lem 1} and the coarea formula, see Section~\ref{subsec: coarea formula}, we bound the commutation terms of~\eqref{eq: proof thm 3 exp decay, eq 3.2.4-1} as follows 
	\begin{equation}\label{eq: proof thm 3 exp decay, eq 3.2.4-2}
		\begin{aligned}
			\int\int_{\mathcal{M}} r\left|[\widetilde{\mathcal{G}},\frac{\Delta\rho^2}{(r^2+a^2)^{3/2}} g^{ab}\partial_a\eta\nabla_b] (\chi_+^2\psi)\right|^2 &	\leq B \int\int_{\mathcal{M}} J^n_\mu [\chi_+^2\psi]n^\mu \\
			&	\leq  B \int_{\{t^\star=\tau_1\}} J^n_\mu [\psi] n^\mu +|\psi|^2 + \int\int_{D(\tau_1,\tau_2)} J^n_\mu [\eta\chi_+^2\psi]n^\mu \\
			&	\leq \frac{B}{\epsilon_p}\int_{\{t^\star=\tau_1\}} J^n_\mu[\psi]n^\mu+|\psi|^2\\
			&	\qquad +B\epsilon_p\int_{\mathbb{R}}\int_{\mathbb{S}^2}\int_{r_+}^{\bar{r}_+}\frac{1}{g_1}\frac{1}{r^2}\frac{\Delta}{r^2+a^2}\left|\partial_t\mathcal{G}(\sqrt{r^2+a^2}\eta\chi_+^2\psi)\right|^2\\
			&	\qquad +B\epsilon_p \int_{\mathbb{R}}\int_{\mathbb{S}^2}\int_{r_+}^{\bar{r}_+}\frac{1}{g_1}\frac{1}{r^2}\frac{\Delta}{r^2+a^2}\left|a\Xi\partial_\varphi\mathcal{G}(\sqrt{r^2+a^2}\eta\chi_+^2\psi)\right|^2.
		\end{aligned}
	\end{equation}
	where we used the~$H^1$ boundedness result~\eqref{eq: proof thm 3 exp decay, eq 3.2.3.1} and the estimate~\eqref{eq: proof thm 3 exp decay, eq 0.1} of Lemma~\ref{lem: subsec: sec: proof of Theorem 2, subsec 5.5, lem 1} mentioned in the beginning of the proof.

	From~\eqref{eq: proof thm 3 exp decay, eq 3.2.4-1} and~\eqref{eq: proof thm 3 exp decay, eq 3.2.4-2} we obtain the following
	\begin{equation}\label{eq: proof thm 3 exp decay, eq 3.3}
		\begin{aligned}
			\boxed{\text{Bulk on RHS of }\eqref{eq: proof thm 3 exp decay, eq 0}}	&	\leq  \frac{B}{\epsilon_p}\int_{\{t^\star=\tau_1\}} J^n_\mu[\psi]n^\mu+|\psi|^2\\
			&	\qquad +B\epsilon_p\int_{\mathbb{R}}\int_{\mathbb{S}^2}\int_{r_+}^{\bar{r}_+}\frac{1}{g_1}\frac{1}{r^2}\frac{\Delta}{r^2+a^2}\left|\partial_t\mathcal{G}(\sqrt{r^2+a^2}\eta\chi_+^2\psi)\right|^2\\
			&	\qquad +B\epsilon_p \int_{\mathbb{R}}\int_{\mathbb{S}^2}\int_{r_+}^{\bar{r}_+}\frac{1}{g_1}\frac{1}{r^2}\frac{\Delta}{r^2+a^2}\left|a\Xi\partial_\varphi\mathcal{G}(\sqrt{r^2+a^2}\eta\chi_+^2\psi)\right|^2\\
			& \qquad +\frac{B}{T^2}\int\int_{D(\tau_1,\tau_1+T^2)} J_\mu^{W}[\mathcal{G}(\chi_+^2\psi)]n^\mu, \\
		\end{aligned}
	\end{equation}
	where~$W=\partial_t+\frac{a\Xi}{r^2+a^2}\partial_{\varphi}$. 
	
	\begin{center}
		\textbf{Putting everything together}
	\end{center}

	Therefore, inequality~\eqref{eq: proof thm 3 exp decay, eq 3.3} for a sufficiently small~$\epsilon_p>0$ implies
	\begin{equation}\label{eq: proof thm 3 exp decay, eq 3.4}
		\begin{aligned}
			& b\int\int_{\mathcal{M}}\Bigg( \frac{1}{r}\frac{\Delta}{(r^2+a^2)}(Z^\star\mathcal{G}(\eta\chi_+^2\psi))^2+\frac{1}{r}\frac{r^2+a^2}{\Delta}(W\mathcal{G}(\eta\chi_+^2\psi))^2+\frac{1}{r}|\slashed{\nabla}\mathcal{G}(\eta\chi_+^2\psi)|^2+1_{D(\tau_1+T,\tau_2)} J^n_\mu[\psi] n^\mu\Bigg)\\
			\quad &\leq \int_{\{t^\star=\tau_1\}}  J^n_\mu[\psi]n^\mu+|\psi|^2+\frac{1}{T^2}\int\int_{D(\tau_1,\tau_1+T)} J_\mu^{W}[\mathcal{G}(\chi_+^2\psi)]n^\mu,
		\end{aligned}    
	\end{equation}
	for~$T\geq 3$ and~$\tilde{T}\leq  \tau_1<\tau_1+T<\tau_1+2T<\tau_2$. 
\end{proof}

\subsection{Modifying the LHS of Proposition~\ref{prop: subsec: sec: proof of Theorem 2, subsec 5.5, prop 1}}\label{subsec: sec: proof of Theorem 2, subsec 6}

We want to appropriately modify the LHS of the estimate of Proposition~\ref{prop: subsec: sec: proof of Theorem 2, subsec 5.5, prop 1}. We will do so by appealing to the pseudifferential estimates of Lemma~\ref{lem: proof thm 3 exp decay, lem 1}. 

\begin{proposition}\label{prop: subsec: sec: proof of Theorem 2, subsec 5.5, prop 2}
	Let the assumptions of Theorem~\ref{main theorem, relat. non-deg} hold. Moreover, assume that~$\psi$ is in accordance to the assumptions of Section~\ref{subsec: sec: proof of Thm: rel-nondeg, subsec 2}, specifically~$\psi$ arises from smooth initial data on the hypersurface~$\{t^\star=0\}$. Then, we obtain the following estimate 
\begin{equation}
	\begin{aligned}
		& b\int\int_{D(\tau_1+T,\tau_2)}dg\Bigg( \frac{1}{r}\frac{\Delta}{(r^2+a^2)}|Z^\star\mathcal{G}_{\chi_+}\psi|^2+\frac{1}{r}\frac{r^2+a^2}{\Delta}|W\mathcal{G}_{\chi_+}\psi|^2+\frac{1}{r}|\slashed{\nabla}\mathcal{G}_{\chi_+}\psi|^2+ J^n_\mu[\psi] n^\mu\Bigg)\\
		\quad &\leq \int_{\{t^\star=\tau_1\}} J^n_\mu[\psi]n^\mu+|\psi|^2+\frac{1}{T^2}\int\int_{D(\tau_1,\tau_1+T)} J_\mu^{W}[\mathcal{G}(\chi_+^2\psi)]n^\mu,\\
	\end{aligned}    
\end{equation}
for~$T\geq 3$ and~$\tilde{T}\leq \tau_1<\tau_1+T<\tau_1+2T < \tau_2$. For the cut-offs~$\eta,\chi_+$ see~\eqref{eq: sec: proof of Thm: rel-nondeg, eq 0}. For the regular vector field~$Z^\star$ see Section~\ref{subsec: boldsymbol partial r}. Also, recall~$W=\partial_t+\frac{a\Xi}{r^2+a^2}\partial_{\varphi}$.
\end{proposition}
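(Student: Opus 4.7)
The plan is to derive Proposition~\ref{prop: subsec: sec: proof of Theorem 2, subsec 5.5, prop 2} from Proposition~\ref{prop: subsec: sec: proof of Theorem 2, subsec 5.5, prop 1} by exchanging $\mathcal{G}(\eta\chi_+^2\psi)$ for $\mathcal{G}_{\chi_+}\psi = \chi_+\mathcal{G}(\chi_+\psi)$ on the LHS by means of a pseudodifferential commutator identity. The observation driving the argument is that on $D(\tau_1+T,\tau_2)$ the cut-off $\eta=\eta^{(T)}_{\tau_1}$ is identically $1$, and in fact all of its derivatives vanish identically there.

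First I would write the global Leibniz identity
\begin{equation*}
\mathcal{G}(\eta\chi_+^2\psi)=\mathcal{G}\bigl((\eta\chi_+)\cdot(\chi_+\psi)\bigr)=(\eta\chi_+)\,\mathcal{G}(\chi_+\psi)+[\mathcal{G},\eta\chi_+](\chi_+\psi)
\end{equation*}
and restrict to $D(\tau_1+T,\tau_2)$, where $\eta\chi_+=\chi_+$ pointwise, to obtain
\begin{equation*}
\mathcal{G}_{\chi_+}\psi=\mathcal{G}(\eta\chi_+^2\psi)-[\mathcal{G},\eta\chi_+](\chi_+\psi)\qquad\text{on }D(\tau_1+T,\tau_2).
\end{equation*}
Because $\eta\equiv 1$ on this subregion together with all its derivatives, applying any first-order regular operator $Y\in\{Z^\star,W,\slashed{\nabla}\}$ preserves the identity, giving $Y\mathcal{G}_{\chi_+}\psi=Y\mathcal{G}(\eta\chi_+^2\psi)-Y[\mathcal{G},\eta\chi_+](\chi_+\psi)$. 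Squaring and integrating against the weights on the LHS of Proposition~\ref{prop: subsec: sec: proof of Theorem 2, subsec 5.5, prop 2} bounds that quantity by twice the LHS of Proposition~\ref{prop: subsec: sec: proof of Theorem 2, subsec 5.5, prop 1} restricted to $D(\tau_1+T,\tau_2)$, plus the weighted commutator integrals. The first piece is controlled, by nonnegativity, by the full LHS of Proposition~\ref{prop: subsec: sec: proof of Theorem 2, subsec 5.5, prop 1} and hence by its RHS, while the $J^n_\mu[\psi]n^\mu$ flux term on the LHS of the target estimate is inherited directly from the $1_{D(\tau_1+T,\tau_2)}J^n_\mu[\psi]n^\mu$ term on the LHS of Proposition~\ref{prop: subsec: sec: proof of Theorem 2, subsec 5.5, prop 1}.

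The commutator contribution will be handled by splitting $\mathcal{G}=\widetilde{\mathcal{G}}+(\mathcal{G}-\widetilde{\mathcal{G}})$. The symbol estimates of Lemma~\ref{lem: proof thm 3 exp decay, lem 0} place $\widetilde{\mathcal{G}}$ in the Coifman--Meyer class, so the commutation bounds of Lemma~\ref{lem: proof thm 3 exp decay, lem 1} imply that $[\widetilde{\mathcal{G}},\eta\chi_+]$ is a zeroth order operator whose first-order derivatives are controlled in $L^2$ by low-order Sobolev norms of $\chi_+\psi$; the pointwise bound on $\mathcal{G}-\widetilde{\mathcal{G}}$ from Lemma~\ref{lem: proof thm 3 exp decay, lem 0} takes care of the residual. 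All of these contributions are dominated by a multiple of $\int\int_{\mathcal{M}} J^n_\mu[\chi_+\psi]n^\mu$, which, via the support of $\chi_+$, the bound $|\frac{d}{dt^\star}\chi_+|\leq B/T$, and the $H^1$ boundedness statement of Theorem~\ref{main theorem 1}, is absorbed into $\int_{\{t^\star=\tau_1\}}\bigl(J^n_\mu[\psi]n^\mu+|\psi|^2\bigr)$ on the RHS of the target estimate.

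The main obstacle will be ensuring that the degenerate weights on the LHS interact correctly with the pseudodifferential commutator estimates for $[\widetilde{\mathcal{G}},\eta\chi_+]$ near the horizons, where $g_1$ and the symbol $\widetilde{g_2}$ degenerate. This is precisely the difficulty already navigated in the proof of Proposition~\ref{prop: subsec: sec: proof of Theorem 2, subsec 5.5, prop 1} when bounding terms of the form $[\widetilde{\mathcal{G}},\tfrac{\Delta\rho^2}{(r^2+a^2)^{3/2}}g^{ab}\partial_a\eta\nabla_b](\chi_+^2\psi)$, and the substitution of $\eta\chi_+$ in place of $\eta\chi_+^2$ amounts to a routine variant of the same machinery; as in Proposition~\ref{prop: subsec: sec: proof of Theorem 2, subsec 5.5, prop 1} the resulting constants will blow up as $a\to 0$ while remaining finite at $a=0$.
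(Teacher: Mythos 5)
Your setup---the Leibniz identity $\mathcal{G}_{\chi_+}\psi = \mathcal{G}(\eta\chi_+^2\psi) - [\mathcal{G},\eta\chi_+](\chi_+\psi)$ on $D(\tau_1+T,\tau_2)$, the regularization by $\widetilde{\mathcal{G}}$, and the appeal to Lemma~\ref{lem: proof thm 3 exp decay, lem 1}---is essentially the paper's own route. However, the final absorption step contains a genuine gap. You claim that the commutator bulk, once dominated by $\int\int_{\mathcal{M}} J^n_\mu[\chi_+\psi]n^\mu$, can be absorbed into $\int_{\{t^\star=\tau_1\}}(J^n_\mu[\psi]n^\mu+|\psi|^2)$ using the support of $\chi_+$, the bound $|\partial_{t^\star}\chi_+|\lesssim T^{-1}$, and the $H^1$ boundedness of Theorem~\ref{main theorem 1}. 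This does not close: $\chi_+$ equals $1$ on a slab of temporal length $\sim T^2$, so boundedness alone gives only $\int\int_{\mathcal{M}}J^n_\mu[\chi_+\psi]n^\mu \lesssim T^2\int_{\{t^\star=\tau_1\}}(J^n_\mu[\psi]n^\mu+|\psi|^2)$, and the $T^{-1}$ smallness of $\partial\chi_+$ is of no help for the part of $Z^\star[\mathcal{G},\eta\chi_+](\chi_+\psi)$ coming from the commutator $[Z^\star,\mathcal{G}]$, namely $-[Z^\star,\mathcal{G}](\eta\chi_+^2\psi)+\eta\chi_+[Z^\star,\mathcal{G}](\chi_+\psi)$. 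Its $\Delta$-weighted $L^2$ norm is comparable to $\int\int J^n_\mu[\eta\chi_+^2\psi]n^\mu$ with no gain in $T$; compare estimates~\eqref{eq: thm 3 exp decay, eq 4} and~\eqref{eq: thm 3 exp decay, eq 5.2} of Lemma~\ref{lem: proof thm 3 exp decay, lem 1}, neither of which carries a $T$-factor.

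The mechanism you are missing is the auxiliary Lemma~\ref{lem: subsec: sec: proof of Theorem 2, subsec 5.5, lem 1} (and in particular its estimate~\eqref{eq: proof thm 3 exp decay, eq 0.1}), proved within the proof of Proposition~\ref{prop: subsec: sec: proof of Theorem 2, subsec 5.5, prop 1}. That Lemma combines the Morawetz estimate of Theorem~\ref{main theorem 1} with the Poincar\'e-type inequality of Lemma~\ref{lem: control of the law order derivatives with v}, trading the degeneration at $r_{\textit{trap}}$ so as to bound $\int\int_{\mathcal{M}}J^n_\mu[\eta\chi_+^2\psi]n^\mu$ by $\epsilon_p^{-1}\int_{\{t^\star=\tau_1\}}(J^n_\mu[\psi]n^\mu+|\psi|^2)$ \emph{plus $\epsilon_p$ times the top-order $\mathcal{G}$-bulk appearing on the left-hand side of Proposition~\ref{prop: subsec: sec: proof of Theorem 2, subsec 5.5, prop 1}}; one then takes $\epsilon_p$ small to absorb that top-order piece into the left. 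Without invoking this step, the non-degenerate commutator bulk cannot be controlled, and the inequality does not close. You should insert this Poincar\'e-absorption argument at the point where you assert the commutator contribution is bounded by initial data.
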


\begin{proof}
	
	In Proposition~\ref{prop: subsec: sec: proof of Theorem 2, subsec 5.5, prop 1} we proved, with the assumptions of Theorem~\ref{main theorem, relat. non-deg}, that the following holds 
	\begin{equation}\label{eq: proof thm 3 exp decay, eq -1}
		\begin{aligned}
			& b\int\int_{\mathcal{M}}\Bigg( \frac{1}{r}\frac{\Delta}{(r^2+a^2)}|Z^\star\mathcal{G}(\eta\chi_+^2\psi)|^2+\frac{1}{r}\frac{r^2+a^2}{\Delta}|W\mathcal{G}(\eta\chi_+^2\psi)|^2+\frac{1}{r}|\slashed{\nabla}\mathcal{G}(\eta\chi_+^2\psi)|^2+1_{D(\tau_1+T,\tau_2)} J^n_\mu[\psi] n^\mu\Bigg)\\
			\quad &\leq \int_{\{t^\star=\tau_1\}}  J^n_\mu[\psi]n^\mu+|\psi|^2+\frac{1}{T^2}\int\int_{D(\tau_1,\tau_1+T)} J_\mu^{W}[\mathcal{G}(\chi_+^2\psi)]n^\mu,
		\end{aligned}    
	\end{equation}
	for~$T\geq 3$ and~$\tilde{T}\leq \tau_1<\tau_1+T<\tau_1+2T< \tau_2$, where for the cut-offs~$\eta,\chi_+$ see~\eqref{eq: sec: proof of Thm: rel-nondeg, eq 0} and the Figure~\ref{fig: cut-offs 0}.

	We want to modify the LHS of~\eqref{eq: proof thm 3 exp decay, eq -1} in order to appropriately display only the pseudodifferential operator
	\begin{equation}
		\mathcal{G}_{\chi_+}
	\end{equation}
	where we use the notation
	\begin{equation}
		\mathcal{G}_{\chi}=\chi\mathcal{G}\chi,
	\end{equation}
	for a smooth function~$\chi:\mathcal{M}\rightarrow \mathbb{R}$. Specifically, we want to prove that the following hold
	\begin{equation}\label{eq: proof thm 3 exp decay, eq 3.4.0}
		\begin{aligned}
			b \int\int_{\mathcal{M}} \left| \slashed{\nabla}\left(\eta\mathcal{G}_{\chi_+}\psi\right)  \right|^2 &	\leq  \text{LHS of}~\eqref{eq: proof thm 3 exp decay, eq -1}+\text{RHS of}~\eqref{eq: proof thm 3 exp decay, eq -1},\\
			b \int\int_{\mathcal{M}} \Delta\left| Z^\star\left(\eta\mathcal{G}_{\chi_+}\psi\right)  \right|^2 &	\leq  \text{LHS of}~\eqref{eq: proof thm 3 exp decay, eq -1}+\text{RHS of}~\eqref{eq: proof thm 3 exp decay, eq -1},\\
			b \int\int_{\mathcal{M}} \frac{1}{\Delta}\left| W\left(\eta\mathcal{G}_{\chi_+}\psi\right)\right|^2 & \leq  \text{LHS of}~\eqref{eq: proof thm 3 exp decay, eq -1}+\text{RHS of}~\eqref{eq: proof thm 3 exp decay, eq -1}.\\
		\end{aligned}
	\end{equation}

	We only prove the second estimate of~\eqref{eq: proof thm 3 exp decay, eq 3.4.0}, which is the most elaborate, and note that the proofs of the first and second inequality of~\eqref{eq: proof thm 3 exp decay, eq 3.4.0} are similar.  
	
	\begin{center}
		\textbf{Proof of the second estimate of~\eqref{eq: proof thm 3 exp decay, eq 3.4.0}}
	\end{center}

	First, we note the following 
	\begin{equation}\label{eq: proof thm 3 exp decay, eq 3.4-1}
		\begin{aligned}
			\int\int_{\mathcal{M}} \Delta \left| Z^\star \left(\eta \mathcal{G}_{\chi_+} \psi\right) \right|^2 &	= 	\int\int_{\mathcal{M}} \Delta \left| Z^\star \left(\eta \chi_+ \mathcal{G}(\chi_+\psi) \right) \right|^2 \\
			&	=	\int\int_{\mathcal{M}} \Delta \left| Z^\star \left( \mathcal{G}(\eta\chi_+^2\psi) + [\eta\chi_+,\mathcal{G}](\chi_+\psi)  \right) \right|^2\\
			&	\lesssim \int\int_{\mathcal{M}} \Delta \left| Z^\star \mathcal{G}(\eta\chi_+^2\psi) \right|^2	+ \int\int_{\mathcal{M}} \Delta \left| Z^\star \left([\eta\chi_+,\mathcal{G}](\chi_+\psi)\right) \right|^2\\
			&	\lesssim \text{LHS of}~\eqref{eq: proof thm 3 exp decay, eq -1}	+ \int\int_{\mathcal{M}} \Delta \left| Z^\star \left([\eta\chi_+,\mathcal{G}](\chi_+\psi)\right) \right|^2.
		\end{aligned}
	\end{equation}
	Moreover, we bound  the last term on the RHS of~\eqref{eq: proof thm 3 exp decay, eq 3.4-1} as follows
	\begin{equation}\label{eq: proof thm 3 exp decay, eq 3.4-2}
		\begin{aligned}
			&	\int\int_{\mathcal{M}} \Delta \left| Z^\star \left([\eta\chi_+,\mathcal{G}](\chi_+\psi)\right) \right|^2\\
			&	\quad  = \int\int_{\mathcal{M}}\Delta \Bigg| Z^\star(\eta\chi_+)\mathcal{G}(\chi_+\psi) -\mathcal{G}\left(Z^\star(\eta\chi_+)\cdot \chi_+\psi\right)-[\mathcal{G},\eta\chi_+](Z^\star(\chi_+\psi))-[Z^\star,\mathcal{G}](\eta\chi_+^2\psi)+\eta\chi_+ [Z^\star,\mathcal{G}](\chi_+\psi) \Bigg|^2\\
			&	\quad \leq B \int\int_{\mathcal{M}} \Delta (Z^\star(\eta\chi_+))^2(\mathcal{G}(\chi_+\psi))^2 + \Delta\left([\mathcal{G},Z^\star(\eta\chi_+)] (\chi_+\psi)\right)^2+\Delta\left(Z^\star(\eta\chi_+)\cdot\mathcal{G}( \chi_+\psi)\right)^2\\
			&	\qquad\qquad + B\int\int_{\mathcal{M}} \Delta \left([\mathcal{G},\eta\chi_+](Z^\star(\chi_+\psi))\right)^2 + \Delta \left([Z^\star,\mathcal{G}](\eta\chi_+^2\psi)\right)^2 + \Delta \left(\eta\chi_+ [Z^\star,\mathcal{G}](\chi_+\psi)\right)^2\\
			&	\quad \leq B \int_{\{t^\star=\tau_1\}} J^n_\mu[\psi]n^\mu +|\psi|^2 + B\int\int_{\mathcal{M}}J^n_\mu[\eta\chi_+^2\psi]n^\mu,
		\end{aligned}
	\end{equation}
	where the first equality of~\eqref{eq: proof thm 3 exp decay, eq 3.4-2} holds pointwise for the integrands~(after a lengthy computation) and the last estimate of~\eqref{eq: proof thm 3 exp decay, eq 3.4-2} is proved by using the pseudodifferential commutations of Lemma~\ref{lem: proof thm 3 exp decay, lem 1} and by recalling the support of the cut--off functions, see Section~\ref{subsec: sec: carter separation, subsec 1} and the~$H^1$ boundedness result of Theorem~\ref{main theorem 1}.

	Now we want to bound the last bulk term on the RHS of~\eqref{eq: proof thm 3 exp decay, eq 3.4-2}. We will use the estimate~\eqref{eq: proof thm 3 exp decay, eq 0.1} of Lemma~\ref{lem: subsec: sec: proof of Theorem 2, subsec 5.5, lem 1}, for a sufficiently small~$\epsilon_p>0$. We immediately obtain
	\begin{equation}\label{eq: proof thm 3 exp decay, eq 3.4-4}
		b \int\int_{\mathcal{M}} J^n_\mu[\eta\chi_+^2\psi]n^\mu  \leq \text{LHS of}~\eqref{eq: proof thm 3 exp decay, eq -1}+\text{RHS of}~\eqref{eq: proof thm 3 exp decay, eq -1}.\\
	\end{equation}
	Therefore, by~\eqref{eq: proof thm 3 exp decay, eq 3.4-4},~\eqref{eq: proof thm 3 exp decay, eq 3.4-2} we conclude the second inequality of~\eqref{eq: proof thm 3 exp decay, eq 3.4.0}.

	\begin{center}
		\textbf{Putting everything together}
	\end{center}
	
	Now, by using the now proved~\eqref{eq: proof thm 3 exp decay, eq 3.4.0} then our main inequality~\eqref{eq: proof thm 3 exp decay, eq -1} implies
	\begin{equation}
		\begin{aligned}
			& b\int\int_{D(\tau_1+T,\tau_2)}dg\Bigg( \frac{1}{r}\frac{\Delta}{(r^2+a^2)}|Z^\star\mathcal{G}_{\chi_+}\psi|^2+\frac{1}{r}\frac{r^2+a^2}{\Delta}|W\mathcal{G}_{\chi_+}\psi|^2+\frac{1}{r}|\slashed{\nabla}\mathcal{G}_{\chi_+}\psi|^2\Bigg)+b\int\int_{D(\tau_1+T,\tau_2)} J^n_\mu[\psi] n^\mu\\
			\quad &\leq \int_{\{t^\star=\tau_1\}} J^n_\mu[\psi]n^\mu+|\psi|^2+\frac{1}{T^2}\int\int_{D(\tau_1,\tau_1+T)} J_\mu^{W}[\mathcal{G}(\chi_+^2\psi)]n^\mu,\\
		\end{aligned}    
	\end{equation}
	where~$T\geq 3$ and~$\tilde{T}\leq \tau_1<\tau_1+T<\tau_1+2T< \tau_2$. For the timelike vector field field~$W=\partial_t+\frac{a\Xi}{r^2+a^2}\partial_{\varphi}$ see Lemma~\ref{lem: causal vf E,1}. 
\end{proof}

\subsection{Modifying the RHS of Proposition~\ref{prop: subsec: sec: proof of Theorem 2, subsec 5.5, prop 2}}\label{subsec: sec: proof of Theorem 2, subsec 7}

We want to appropriately modify the RHS of the estimate of Proposition~\ref{prop: subsec: sec: proof of Theorem 2, subsec 5.5, prop 2} to display~$\mathcal{G}(\chi_-\psi)$. We will do so by appealing to the pseudodifferential estimates of Lemma~\ref{lem: proof thm 3 exp decay, lem 1}.

\begin{proposition}\label{prop: subsec: sec: proof of Theorem 2, subsec 5.5, prop 3}
	Let the assumptions of Theorem~\ref{main theorem, relat. non-deg} hold. Moreover, assume that~$\psi$ is in accordance to the assumptions of Section~\ref{subsec: sec: proof of Thm: rel-nondeg, subsec 2}, specifically~$\psi$ arises from smooth initial data. Then, we obtain the following estimate 
			\begin{equation}
				\begin{aligned}
					& b\int\int_{D(\tau_1+T,\tau_2)}dg\Bigg( \frac{1}{r}\frac{\Delta}{(r^2+a^2)}|Z^\star\mathcal{G}_{\chi_+}\psi|^2+\frac{1}{r}\frac{r^2+a^2}{\Delta}|W\mathcal{G}_{\chi_+}\psi|^2+\frac{1}{r}|\slashed{\nabla}\mathcal{G}_{\chi_+}\psi|^2 + J^n_\mu[\psi] n^\mu \Bigg)\\
					\quad &\leq \int_{\{t^\star=\tau_1\}}  J^n_\mu[\psi]n^\mu+|\psi|^2+\frac{1}{T^2}\int\int_{D(\tau_1,\tau_1+T)} J_\mu^{W}[\mathcal{G}_{\chi_{-}}\psi]n^\mu,\\
				\end{aligned}    
			\end{equation}
	for~$T\geq 3$ and~$\tilde{T}\leq \tau_1<\tau_1+T<\tau_1+2T<\tau_2$. For the cut-offs~$\eta,\chi_+$ see~\eqref{eq: sec: proof of Thm: rel-nondeg, eq 0}. For the regular vector field~$Z^\star$ see Section~\ref{subsec: boldsymbol partial r}, where~$W=\partial_t+\frac{a\Xi}{r^2+a^2}\partial_{\varphi}$.
\end{proposition}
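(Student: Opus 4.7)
The plan is to start from Proposition~\ref{prop: subsec: sec: proof of Theorem 2, subsec 5.5, prop 2}, whose LHS already matches the one we want, and show that its bulk term $\frac{1}{T^{2}}\int\int_{D(\tau_{1},\tau_{1}+T)}J^{W}_{\mu}[\mathcal{G}(\chi_{+}^{2}\psi)]n^{\mu}$ on the RHS can be replaced by $\frac{1}{T^{2}}\int\int_{D(\tau_{1},\tau_{1}+T)}J^{W}_{\mu}[\mathcal{G}_{\chi_{-}}\psi]n^{\mu}$ modulo errors absorbable into the existing initial-energy term. The crucial elementary observation is that by the choice $\tilde{T}=T^{2}-2T-1$ and the definitions of $\chi_{\pm}$ in Section~\ref{subsec: sec: carter separation, subsec 1}, the cut-off $\chi_{-}$ is identically $1$ on $D(\tau_{1},\tau_{1}+T)$; therefore $\mathcal{G}_{\chi_{-}}\psi=\chi_{-}\mathcal{G}(\chi_{-}\psi)=\mathcal{G}(\chi_{-}\psi)$ throughout the region where the RHS bulk integral is taken.

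I would then write the algebraic identity $\chi_{+}^{2}\psi=\chi_{-}\chi_{+}^{2}\psi+(1-\chi_{-})\chi_{+}^{2}\psi$ and treat each piece separately. For the first piece, I commute the multiplication operator $\chi_{+}^{2}$ past $\mathcal{G}$ using the Coifman--Meyer type pseudodifferential commutator estimates of Lemma~\ref{lem: proof thm 3 exp decay, lem 1} (applied to $\widetilde{\mathcal{G}}$ and then correcting via the bound $|\widetilde{g_{2}}-g_{2}|\leq C$ of Lemma~\ref{lem: proof thm 3 exp decay, lem 0}), obtaining $\mathcal{G}(\chi_{-}\chi_{+}^{2}\psi)=\chi_{+}^{2}\mathcal{G}(\chi_{-}\psi)+[\mathcal{G},\chi_{+}^{2}](\chi_{-}\psi)$. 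Since $|\chi_{+}^{2}|\leq 1$, the first summand satisfies $J^{W}[\chi_{+}^{2}\mathcal{G}(\chi_{-}\psi)]\lesssim J^{W}[\mathcal{G}(\chi_{-}\psi)]=J^{W}[\mathcal{G}_{\chi_{-}}\psi]$ pointwise on $D(\tau_{1},\tau_{1}+T)$. The commutator term, using $|\partial_{t^{\star}}\chi_{+}^{2}|\lesssim 1/T$ from Section~\ref{subsec: sec: carter separation, subsec 1}, is controlled in $L^{2}(D(\tau_{1},\tau_{1}+T))$ by $\tfrac{1}{T^{2}}\int_{\{t^{\star}=\tau_{1}\}}J^{n}_{\mu}[\psi]n^{\mu}+|\psi|^{2}$ through the $H^{1}$ boundedness of Theorem~\ref{main theorem 1} applied to $\chi_{-}\psi$.

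For the second piece $\mathcal{G}((1-\chi_{-})\chi_{+}^{2}\psi)$, the input function is supported in $\{t^{\star}\geq \tau_{1}+T+1\}$ (from the support properties of $1-\chi_{-}$ and $\chi_{+}$), hence vanishes on $D(\tau_{1},\tau_{1}+T)$; but $\mathcal{G}$ being pseudodifferential in $(\omega,m)$ is nonlocal in $t$ and $\varphi$, so this piece must still be bounded in the region of integration. Here I apply the commutation lemma once more, together with the $H^{1}$ boundedness of Theorem~\ref{main theorem 1} and the multiplier structure of $\widetilde{\mathcal{G}}$, to bound its $J^{W}$-energy on $D(\tau_{1},\tau_{1}+T)$ in terms of $\int_{\{t^{\star}=\tau_{1}\}}(J^{n}_{\mu}[\psi]n^{\mu}+|\psi|^{2})$; the corrections between $\mathcal{G}$ and $\widetilde{\mathcal{G}}$ are again handled via Lemma~\ref{lem: proof thm 3 exp decay, lem 0}.

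I expect the main obstacle to be precisely the second step: because $\mathcal{G}$ does not propagate at finite speed in $(t,\varphi)$, one cannot directly discard $\mathcal{G}((1-\chi_{-})\chi_{+}^{2}\psi)$ on the separated domain $D(\tau_{1},\tau_{1}+T)$. The argument will instead rely on the symbol-class membership $\widetilde{g_{2}}\in S^{1}$ established in Lemma~\ref{lem: proof thm 3 exp decay, lem 0}, which allows the Coifman--Meyer estimates of Appendix~\ref{sec: appendix, pseudodifferential commutation} to exchange one tangential derivative for a factor of the cut-off derivative ($O(1/T)$); combining this with the finite-in-time $H^{1}$ boundedness of the Klein--Gordon equation then yields the required absorbable bound.
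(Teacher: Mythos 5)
Your overall strategy -- exploit $\chi_-\equiv 1$ on $D(\tau_1,\tau_1+T)$, apply the Coifman--Meyer commutator machinery of Lemma~\ref{lem: proof thm 3 exp decay, lem 1} (together with the $\mathcal{G}\leftrightarrow\widetilde{\mathcal{G}}$ correction of Lemma~\ref{lem: proof thm 3 exp decay, lem 0}), and dump all commutator errors into $H^1$-controllable quantities via the boundedness estimate of Theorem~\ref{main theorem 1} -- is exactly the paper's, so there is no difference in the essential mechanism. The organization, however, is genuinely different: the paper does not split $\chi_+^2\psi=\chi_-\chi_+^2\psi+(1-\chi_-)\chi_+^2\psi$. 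Instead it inserts $\chi_-^4$ directly into the bulk integrand on $D(\tau_1,\tau_1+T)$ (justified because $\chi_-\equiv1$ there) and then chains the commutations $[W\widetilde{\mathcal{G}},\chi_-^2]$, $[W\widetilde{\mathcal{G}},\chi_+^2]$, $[W\widetilde{\mathcal{G}},\chi_-]$ (using~\eqref{eq: thm 3 exp decay, eq 5}--\eqref{eq: thm 3 exp decay, eq 5.2}) in sequence, using $\chi_+^2\le1$ to bound the surviving top-order piece $\chi_+^2W\widetilde{\mathcal{G}}(\chi_-^2\psi)$ pointwise, and then peeling off one more $\chi_-$ to land on $W\widetilde{\mathcal{G}}_{\chi_-}\psi$. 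This avoids ever introducing the factor $(1-\chi_-)$, which is where your two-piece decomposition runs into trouble.

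Two points where your write-up needs repair. First, the claim that $J^W[\chi_+^2\mathcal{G}(\chi_-\psi)]\lesssim J^W[\mathcal{G}(\chi_-\psi)]$ holds \emph{pointwise} is false as stated: $J^W$ involves first derivatives, so expanding $\partial(\chi_+^2 g)$ produces a $(\partial\chi_+^2)\,g$ term that is zeroth order in $g$ and must be handled by Cauchy--Schwarz and a zeroth-order estimate; this is easy but not pointwise. Second, and more seriously, your treatment of $\mathcal{G}\bigl((1-\chi_-)\chi_+^2\psi\bigr)$ is incomplete in a way that the final sentence only gestures at. After inserting $\chi_-^2$ and commuting you are left with a term $W\mathcal{G}\bigl(\chi_-^2(1-\chi_-)\chi_+^2\psi\bigr)$, which is a genuinely second-order expression in $\psi$: none of the estimates~\eqref{eq: thm 3 exp decay, eq 2}--\eqref{eq: thm 3 exp decay, eq 5.2} control it directly, and $H^1$ boundedness does not suffice. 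You must factor one more $\chi_-$ out (writing $\chi_-^2(1-\chi_-)=\bigl(\chi_-(1-\chi_-)\bigr)\cdot\chi_-$ and commuting the bounded multiplier $\chi_-(1-\chi_-)\le1$ past $W\widetilde{\mathcal{G}}$) so that the surviving top-order piece is $\chi_-(1-\chi_-)W\widetilde{\mathcal{G}}(\chi_-\psi)$, which \emph{can} be absorbed into the target quantity $J^W[\mathcal{G}_{\chi_-}\psi]$. This is precisely the step the paper's chained-commutator argument supplies implicitly, and without it your second piece is not closed. (A further minor point: the support statement that $(1-\chi_-)\chi_+^2$ lives in $\{t^\star\ge\tau_1+T+1\}$ requires $T^2\ge3T+1$, i.e.\ $T\ge4$; at $T=3$ the ascending transition of $\chi_-$ also intersects $\supp\chi_+$ just above $\tau_1$, which is harmless but should be noted.)
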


\begin{proof}
In view of Proposition~\ref{prop: subsec: sec: proof of Theorem 2, subsec 5.5, prop 2}, it suffices to prove the following estimate
	\begin{equation}\label{eq: proof thm 3 exp decay, eq 4}
	\begin{aligned}
		&	\int_{\tau_1}^{\tau_1+T}\int_{\{t^\star=\tau^\prime\}} J_\mu^{W}[\mathcal{G}(\chi_+^2\psi)]n^\mu \leq B \cdot T  \int_{\{t^\star=\tau_1\}} J^n_\mu[\psi]n^\mu+|\psi|^2 + B \int_{\tau_1}^{\tau_1+T}\int_{\{t^\star=\tau^\prime\}}J_\mu^{W}[\mathcal{G}_{\chi_{-}}\psi]n^\mu d\tau^\prime,
	\end{aligned}
\end{equation}
for~$\tilde{T}\leq \tau_1$, where for the cut offs~$\chi_-,\chi_+$ see Section~\ref{subsec: sec: carter separation, subsec 1}, and for their graphic representation see Figure~\ref{fig: cut-offs 0}. Recall the vector field~$W=\partial_t+\frac{a\Xi}{r^2+a^2}\partial_{\varphi}$. 
\begin{figure}[htbp]
	\centering
	\includegraphics[scale=1]{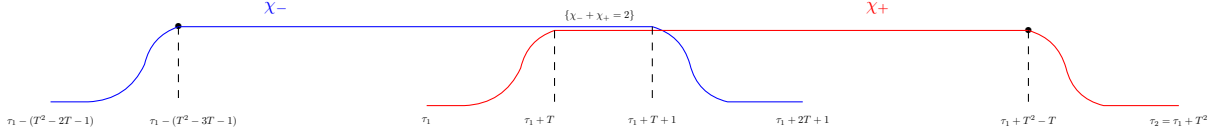}
	\caption{The cut-offs~$\chi_+,\chi_{-}$}
	\label{fig: cut-offs 0}
\end{figure} 	
	
		To prove~\eqref{eq: proof thm 3 exp decay, eq 4} we first note that~$\chi_-\Big|_{\{\tau_1,\tau_1+T\}} \equiv 1$. Therefore, we write
	\begin{equation}\label{eq: proof: lem: proof thm 3 exp decay, lem 2, eq 1}
		\begin{aligned}
			&	\int_{\tau_1}^{\tau_1+T}\int_{\{t^\star=\tau^\prime\}} J_\mu^{W}[\mathcal{G}(\chi_+^2\psi)]n^\mu d\tau^\prime\\
			&	\qquad \leq B \int\int_{\mathcal{M}} 1_{\{\tau_1\leq \tau\leq \tau_1+T\}}\cdot \chi_{-}^4\cdot\Bigg( \Delta\left|Z^\star\mathcal{G}(\chi_+^2\psi)\right|^2+\sum_{i_1+i_2=1}\left|W^{i_1}\slashed{\nabla}^{i_2}\mathcal{G}(\chi_+^2\psi)\right|^2  \Bigg)\\
			&	\qquad \leq  B \int\int_{\mathcal{M}} J^W_\mu [\chi_+^2\psi]+|\chi_+\psi|^2 + 1_{\{\tau_1\leq \tau\leq\tau_1+T\}}J^W_\mu [\mathcal{G}_{\chi_-}\psi]n^\mu, 
		\end{aligned}
	\end{equation}
	where we used the Coifman--Meyer and pseudodifferential estimates of Lemma~\ref{lem: proof thm 3 exp decay, lem 1}, specifically we used the pseudodifferential estimates~\eqref{eq: thm 3 exp decay, eq 5},~\eqref{eq: thm 3 exp decay, eq 5.1},~\eqref{eq: thm 3 exp decay, eq 5.2}. Note that the cut-offed functions~$\chi_+^2\psi,\chi_-\psi$ vanish in the domain for~$\{t^\star<0\}$ and moreover belong in the space~$H^1(\mathcal{M})$ and therefore we can safely appeal to the pseudodifferential estimate of Lemma~\ref{lem: proof thm 3 exp decay, lem 1}.

	Now, in view of the support of the function~$\chi_+$, see Section~\ref{subsec: sec: carter separation, subsec 1} and Figure~\ref{fig: cut-offs 0}, and the boundedness estimate of Theorem~\ref{main theorem 1} we conclude from~\eqref{eq: proof: lem: proof thm 3 exp decay, lem 2, eq 1} that~\eqref{eq: proof thm 3 exp decay, eq 4} holds. 
\end{proof}

\subsection{Completing the proof of Theorem~\ref{main theorem, relat. non-deg}}\label{subsec: sec: proof of Theorem 2, subsec 8}

We use the result of Proposition~\ref{prop: subsec: sec: proof of Theorem 2, subsec 5.5, prop 3} and Theorem~\ref{main theorem 1} to obtain
\begin{equation}\label{eq: proof thm 3 exp decay, eq 7}
	\begin{aligned}
		& b\int_{\{t^\star=\tau\}} J^n_\mu[\psi]n^\mu   \\
		&	+b\int\int_{D(\tau_1+T,\tau_2)}dg\Bigg( \frac{1}{r}\frac{\Delta}{(r^2+a^2)}|Z^\star\mathcal{G}_{\chi_+}\psi|^2+\frac{1}{r}\frac{r^2+a^2}{\Delta}|W\mathcal{G}_{\chi_+}\psi|^2+\frac{1}{r}|\slashed{\nabla}\mathcal{G}_{\chi_+}\psi|^2+  J^n_\mu[\psi] n^\mu \Bigg)\\
		\quad &\leq \int_{\{t^\star=\tau_1\}} J^n_\mu[\psi]n^\mu +|\psi|^2+\frac{1}{T^2}\int\int_{D(\tau_1,\tau_1+T)} J_\mu^{W}[\mathcal{G}_{\chi_{-}}\psi]n^\mu,\\
	\end{aligned}    
\end{equation}
where~$T\geq 3$,~$T^2-2Τ-1\leq \tau_1\leq\tau_1+T\leq\tau \leq \tau_2$.

The estimate~\eqref{eq: proof thm 3 exp decay, eq 7} concludes the proof of Theorem~\ref{main theorem, relat. non-deg} for the case of non zero Klein--Gordon mass~$\mu^2_{KG}>0$.

To conclude the result of Theorem~\ref{main theorem, relat. non-deg} we need to remove the zeroth order term~$|\psi|^2$ from the hypersurface term of the RHS of~\eqref{eq: proof thm 3 exp decay, eq 7}, for the case of the wave equation~$\mu_{KG}=0$.

Recall from Section~\ref{subsec: poincare wirtigner} the following Poincare--Wirtinger inequality
\begin{equation}\label{eq: proof thm 3 exp decay, eq 7.0.1}
		\int_{\{t^\star=\tau_1\}} |\psi-\underline{\psi}(\tau_1)|^2\leq B \int_{\{t^\star=\tau\}} J^n_\mu[\psi]n^\mu,
\end{equation}
where~$\underline{\psi}(\tau)=\frac{1}{|\{t^\star=\tau\}|}\int_{\{t^\star=\tau\}}\psi$.

We will now make an assumption that will be removed at the end of the present proof. Specifically, we make the assumption that the mean of~$\psi$ at the hypersurface~$\{t^\star=\tau_1\}$ vanishes, namely
\begin{equation}\label{eq: proof thm 3 exp decay, eq 7.1}
	\underline{\psi}(\tau_1)=0.
\end{equation}

By using the assumption~\eqref{eq: proof thm 3 exp decay, eq 7.1} in~\eqref{eq: proof thm 3 exp decay, eq 7} we conclude that we have 
\begin{equation}
\begin{aligned}
& b\int_{\{t^\star=\tau\}} J^n_\mu[\psi]n^\mu   \\
&	+b\int\int_{D(\tau_1+T,\tau_2)}dg\Bigg( \frac{1}{r}\frac{\Delta}{(r^2+a^2)}|Z^\star\mathcal{G}_{\chi_+}\psi|^2+\frac{1}{r}\frac{r^2+a^2}{\Delta}|W\mathcal{G}_{\chi_+}\psi|^2+\frac{1}{r}|\slashed{\nabla}\mathcal{G}_{\chi_+}\psi|^2+  J^n_\mu[\psi] n^\mu \Bigg)\\
\qquad &\leq \int_{\{t^\star=\tau_1\}} J^n_\mu[\psi]n^\mu+|\psi-\underline{\psi}(\tau_1)|^2 +\frac{1}{T}\int\int_{D(\tau_1,\tau_1+T)} J_\mu^{W}[\mathcal{G}_{\chi_{-}}\psi]n^\mu\\
\qquad  &\leq \int_{\{t^\star=\tau_1\}} J^n_\mu[\psi]n^\mu +\frac{1}{T^2}\int\int_{D(\tau_1,\tau_1+T)} J_\mu^{W}[\mathcal{G}_{\chi_{-}}\psi]n^\mu,\\
\end{aligned}    
\end{equation}
where we used the Poincare--Wirtinger inequality~\eqref{eq: proof thm 3 exp decay, eq 7.0.1} on the RHS of~\eqref{eq: proof thm 3 exp decay, eq 7}, and therefore we obtain
\begin{equation}\label{eq: proof thm 3 exp decay, eq 8}
	\begin{aligned}
		& b\int_{\{t^\star=\tau\}} J^n_\mu[\psi]n^\mu   \\
		&	+b\int\int_{D(\tau_1+T,\tau_2)}dg\Bigg( \frac{1}{r}\frac{\Delta}{(r^2+a^2)}|Z^\star\mathcal{G}_{\chi_+}\psi|^2+\frac{1}{r}\frac{r^2+a^2}{\Delta}|W\mathcal{G}_{\chi_+}\psi|^2+\frac{1}{r}|\slashed{\nabla}\mathcal{G}_{\chi_+}\psi|^2+  J^n_\mu[\psi] n^\mu \Bigg)\\
		\qquad  &\leq \int_{\{t^\star=\tau_1\}} J^n_\mu[\psi]n^\mu +\frac{1}{T^2}\int\int_{D(\tau_1,\tau_1+T)} J_\mu^{W}[\mathcal{G}_{\chi_{-}}\psi]n^\mu.\\
	\end{aligned}    
\end{equation}
We note the following identity
\begin{equation}\label{eq: proof thm 3 exp decay, eq 8.1}
	[Z^\star, \mathcal{G}_{\chi_-}] \psi = \chi_- \left([Z^\star,\mathcal{G}](\chi_-\psi)-\mathcal{G}[Z^\star,\chi_-]\psi\right) +Z^\star\chi_- \cdot \mathcal{G}(\chi_-\psi).
\end{equation}
Similar, and in fact simpler, identities hold if we replace~$Z^\star$ with the operators~$\partial_t,\partial_{\varphi},\slashed{\nabla}$. Similar identities also hold if we replace~$\mathcal{G}_{\chi_-}$ with~$\mathcal{G}_{\chi_+}$.

Now, by using~\eqref{eq: proof thm 3 exp decay, eq 8.1} in~\eqref{eq: proof thm 3 exp decay, eq 8} in conjunction with the Morawetz and boundedness estimates of Theorem~\ref{main theorem 1}, to bound the terms on the RHS of~\eqref{eq: proof thm 3 exp decay, eq 8.1} generated by the commutation, we obtain the following 
\begin{equation}\label{eq: proof thm 3 exp decay, eq 8.2}
	\begin{aligned}
		& b\int_{\{t^\star=\tau\}} J^n_\mu[\psi]n^\mu   \\
		&	+b\int\int_{D(\tau_1+T,\tau_2)}dg\Bigg( \frac{1}{r}\frac{\Delta}{(r^2+a^2)}|\mathcal{G}_{\chi_+}Z^\star\psi|^2+\frac{1}{r}\frac{r^2+a^2}{\Delta}|\mathcal{G}_{\chi_+}W\psi|^2+\frac{1}{r}|\mathcal{G}_{\chi_+}\slashed{\nabla}\psi|^2+  J^n_\mu[\psi] n^\mu \Bigg)\\
		\qquad  &\leq \int_{\{t^\star=\tau_1\}} J^n_\mu[\psi]n^\mu +\frac{1}{T^2}\int\int_{D(\tau_1,\tau_1+T)} \left|\mathcal{G}_{\chi_{-}}Z^\star\psi\right|^2+ \left|\mathcal{G}_{\chi_{-}}W\psi\right|^2+\left|\mathcal{G}_{\chi_{-}}\slashed{\nabla}\psi\right|^2,
	\end{aligned}    
\end{equation}
for any~$\tau\geq \tau_1$, where note that we commuted all of the derivatives with the operators~$\mathcal{G}_{\chi_\pm}$. Note that in~\eqref{eq: proof thm 3 exp decay, eq 8.2} we used the following abuse of notation, for convenience: when we write $|\mathcal{G}_{\chi_+}\slashed{\nabla}\psi|^2$ we mean~$\slashed{g}^{ab}\mathcal{G}_{\chi_+}\partial_a\psi\mathcal{G}_{\chi_+}\partial_b\psi$, where~$\slashed{g}$ is the metric of the~$r$ sphere, see Section~\ref{subsec: admissible hypersurfaces}.

For the case of the wave equation~$\mu_{KG}=0$ if~$\psi$ is a solution then~$\psi+c$ is a solution, for any constant~$c\in \mathbb{C}$. Therefore, if~$\psi$ is a solution then so is~$\psi-\underline{\psi}(\tau_1)$. Therefore, by replacing~$\psi$ with~$\psi-\underline{\psi}(\tau_1)$ in~\eqref{eq: proof thm 3 exp decay, eq 8.2} we can see that~\eqref{eq: proof thm 3 exp decay, eq 8.2} holds for any initial mean. To prove that~\eqref{eq: proof thm 3 exp decay, eq 8} holds for any initial mean we use the energy estimate~\eqref{eq: proof thm 3 exp decay, eq 8.2} and go back to~\eqref{eq: proof thm 3 exp decay, eq 8} by using again the identity~\eqref{eq: proof thm 3 exp decay, eq 8.1} in conjunction with the Morawetz and boundedness estimates of Theorem~\ref{main theorem 1}.

\begin{remark}
Note that we concluded the proof of the main Theorem~\ref{main theorem, relat. non-deg} from estimate~\eqref{eq: proof thm 3 exp decay, eq 7}, with a constant that blows up as~$\mu_{KG}\rightarrow 0$, since there is a zeroth order term~$|\psi|^2$ on the RHS of~\eqref{eq: proof thm 3 exp decay, eq 7} that we cannot remove. On the other hand, in the case~$\mu^2_{KG}=0$ we can remove the zeroth order term from the RHS of~\eqref{eq: proof thm 3 exp decay, eq 7}, in view of the process described above. Therefore, as already mentioned in Remark~\ref{rem: subsec: sec: intro: subsec 5, rem 1}, the constant of the main Theorem~\ref{main theorem, relat. non-deg} blows up in the limit~$\mu_{KG}\rightarrow 0$, but we obtain the result of the main Theorem~\ref{main theorem, relat. non-deg} for the wave case~$\mu_{KG}=0$ separately. 
\end{remark}

\section{Proof of Corollary~\ref{cor: main theorem, relat. non-deg, cor 3}}\label{sec: cor: main theorem, relat. non-deg, cor 3}

In this Section we prove Corollary~\ref{cor: main theorem, relat. non-deg, cor 3}. 

\subsection{Proof of the energy decay~\eqref{eq: cor: main theorem, relat. non-deg, cor 3, eq 1} of Corollary~\ref{cor: main theorem, relat. non-deg, cor 3}}

The relatively non degenerate estimate of Theorem~\ref{main theorem, relat. non-deg} implies the following
\begin{equation}\label{eq: sec: cor: main theorem, relat. non-deg, cor 3, eq 0}
	\begin{aligned}
		& \int_{\{t^\star=\tau\}} J^n_\mu[\psi]n^\mu+\int\int_{D(\tau_1+T,+\infty)}  J^n_\mu [\psi]n^\mu +\mathcal{E}(\mathcal{G}_{\chi_+}\psi,\psi) \\
		&	\qquad\qquad \leq C\int_{\{t^\star=\tau_1\}} J^n_\mu[\psi]n^\mu+\frac{C}{T}\int\int_{D(\tau_1,\tau_1+T)} \mathcal{E}(\mathcal{G}_{\chi_{-}}\psi,\psi),\\
	\end{aligned}    
\end{equation}
for any~$T\geq 3$,~$3\leq\tilde{T}\leq\tau_1\leq\tau_1+T\leq\tau_2$, and for any~$\tau\geq \tau_1$, where we neglected the bulk integrand term~$\frac{1}{\Delta}(W\mathcal{G}_\chi\psi)^2$ on the LHS. For the cut-offs
\begin{equation}
	\chi_+(t^\star)=\chi_{\tau_1,\tau_1+T^2}(t^\star),\qquad \chi_{-}(t^\star)=\chi_{\tau_1,\tau_1+T^2}(t^\star+\tilde{T})
\end{equation}
see Section~\ref{subsec: sec: carter separation, subsec 1}, and for their graphic representation see Figure~\ref{fig: cut-offs 1}.

We define the sequence
\begin{equation}
 t_i =T+i\tilde{T},\qquad i=0,1,2,\dots.
\end{equation}
In view of the relatively non degenerate estimate~\eqref{eq: sec: cor: main theorem, relat. non-deg, cor 3, eq 0} we note that there exists a constant~$C_m$ such that for any~$T$ sufficiently large and for any~$i=0,1,2,\dots$ the following holds
\begin{equation}\label{eq: sec: cor: main theorem, relat. non-deg, cor 3, eq 0.1}
	\begin{aligned}
		& \int_{\{t^\star=t_i+T\}} J^n_\mu[\psi]n^\mu+\int_{t_i+T}^\infty \int_{\{t^\star=\tau\}}  J^n_\mu [\psi]n^\mu +\mathcal{E}(\mathcal{G}_i\psi,\psi) \\
		&	\qquad\qquad \leq C_m\int_{\{t^\star=t_i\}} J^n_\mu[\psi]n^\mu+\frac{C_m}{T}\int_{t_i}^{t_i+T}\int_{\{t^\star=\tau\}} \mathcal{E}(\mathcal{G}_{i-1}\psi,\psi),\\
	\end{aligned}    
\end{equation}
where 
\begin{equation}
\chi_i=\chi_{\textit{data}}(z-i\tilde{T}),\qquad \chi_{\textit{data}}(t^\star):=\chi_0(t^\star)=\chi_{0,T^2}(t^\star),
\end{equation}
for~$i=0,1,2\dots$ and we denote:
\begin{equation}
	\mathcal{G}_i := \chi_i \mathcal{G}\chi_i.
\end{equation}

Moreover, we recall the boundedness estimate of Theorem~\ref{main theorem 1}
\begin{equation}\label{eq: sec: cor: main theorem, relat. non-deg, cor 3, eq 0.2}
	\int_{\{t^\star=t^{\prime\prime}\}} J^n_\mu [\psi] n^\mu \leq C_b 	\int_{\{t^\star=t^{\prime}\}} J^n_\mu [\psi] n^\mu
\end{equation}
for some constant~$C_b>0$ and for any~$0<t^\prime\leq t^{\prime\prime}$.

By using the mean value theorem for integrals we note that there exists a
\begin{equation}
	\tilde{t}_1\in [t_1+2T,t_2-T]
\end{equation}
such that 
\begin{equation}\label{eq: sec: cor: main theorem, relat. non-deg, cor 3, eq 0.3}
	(t_2-T-(t_1+2T)) \int_{\{t^\star=\tilde{t}_1\}} J^n_\mu [\psi] n^\mu \leq \int_{t_1+T}^\infty \int_{\{t^\star=\tau\}} J^n_\mu [\psi] n^\mu.
\end{equation}

By using~\eqref{eq: sec: cor: main theorem, relat. non-deg, cor 3, eq 0.3} in the relatively non degenerate estimate~\eqref{eq: sec: cor: main theorem, relat. non-deg, cor 3, eq 0.1}, for~$i=1$, we obtain the following 
\begin{equation}\label{eq: sec: cor: main theorem, relat. non-deg, cor 3, eq 0.4}
	\begin{aligned}
		(t_2-t_1-3T) \int_{\{t^\star=\tilde{t}_1\}} J^n_\mu [\psi] n^\mu  \leq C_m\int_{\{t^\star=t_1\}} J^n_\mu[\psi]n^\mu+\frac{C_m}{T}\int_{t_1}^{t_1+T}\int_{\{t^\star=\tau\}} \mathcal{E}(\mathcal{G}_{0}\psi,\psi).
	\end{aligned}
\end{equation}
Now, by using the boundedness estimate~\eqref{eq: sec: cor: main theorem, relat. non-deg, cor 3, eq 0.2}, for~$t^{\prime\prime}=t_2,t^\prime=\tilde{t}_1$ in the LHS of~\eqref{eq: sec: cor: main theorem, relat. non-deg, cor 3, eq 0.4} we obtain 
\begin{equation}\label{eq: sec: cor: main theorem, relat. non-deg, cor 3, eq 0.5}
	(t_2-t_1-3T) \frac{1}{C_b} \int_{\{t^\star=t_2\}} J^n_\mu [\psi] n^\mu  \leq C_m\int_{\{t^\star=t_1\}} J^n_\mu[\psi]n^\mu+\frac{C_m}{T}\int_{t_1}^{t_1+T}\int_{\{t^\star=\tau\}} \mathcal{E}(\mathcal{G}_{0}\psi,\psi).
\end{equation}
By taking~$T>0$ sufficiently large, we note~$(t_2-t_1-3T)=T^2-5T-1>T$ and therefore inequality~\eqref{eq: sec: cor: main theorem, relat. non-deg, cor 3, eq 0.5} implies the following 
\begin{equation}\label{eq: sec: cor: main theorem, relat. non-deg, cor 3, eq 0.6}
	\int_{\{t^\star=t_2\}} J^n_\mu [\psi] n^\mu \leq \frac{ C_m\cdot C_b}{T} \int_{\{t^\star=t_1\}} J^n_\mu[\psi]n^\mu+\frac{C_m\cdot C_b}{T^2}\int_{t_1}^{t_1+T}\int_{\{t^\star=\tau\}} \mathcal{E}(\mathcal{G}_{0}\psi,\psi).
\end{equation}

Finally, we bound the hypersurface term on the RHS of the relatively non degenerate estimate~\eqref{eq: sec: cor: main theorem, relat. non-deg, cor 3, eq 0.1}, for~$i=2$, by using~\eqref{eq: sec: cor: main theorem, relat. non-deg, cor 3, eq 0.6}, and we bound the bulk term on the RHS of~\eqref{eq: sec: cor: main theorem, relat. non-deg, cor 3, eq 0.1} by using the relatively non degenereate estimate~\eqref{eq: sec: cor: main theorem, relat. non-deg, cor 3, eq 0.1} for~$i=1$ to obtain 
\begin{equation}\label{eq: sec: cor: main theorem, relat. non-deg, cor 3, eq 0.7}
	\begin{aligned}
		& \int_{\{t^\star=t_2+T\}} J^n_\mu[\psi]n^\mu+\int_{t_2+T}^\infty \int_{\{t^\star=\tau\}}  J^n_\mu [\psi]n^\mu +\mathcal{E}(\mathcal{G}_2\psi,\psi) \\
		&	\qquad\qquad \leq C_m \Big(\frac{ C_m\cdot C_b}{T} \int_{\{t^\star=t_1\}} J^n_\mu[\psi]n^\mu+\frac{C_m\cdot C_b}{T^2}\int_{t_1}^{t_1+T}\int_{\{t^\star=\tau\}} \mathcal{E}(\mathcal{G}_{0}\psi,\psi) \Big) \\
		&\qquad\qquad\qquad +\frac{C_m}{T} \Big(  C_m\int_{\{t^\star=t_1\}} J^n_\mu[\psi]n^\mu+\frac{C_m}{T}\int_{t_1}^{t_1+T}\int_{\{t^\star=\tau\}} \mathcal{E}(\mathcal{G}_{0}\psi,\psi)\Big). 
	\end{aligned}    
\end{equation}
We use the boundedness estimate~\eqref{eq: sec: cor: main theorem, relat. non-deg, cor 3, eq 0.2} one final time and we obtain 
\begin{equation}\label{eq: sec: cor: main theorem, relat. non-deg, cor 3, eq 0.8}
	\begin{aligned}
		& \int_{\{t^\star=t_2+T\}} J^n_\mu[\psi]n^\mu+\int_{t_2+T}^\infty \int_{\{t^\star=\tau\}}  J^n_\mu [\psi]n^\mu +\mathcal{E}(\mathcal{G}_2\psi,\psi) \\
		&	\qquad\qquad \leq C_m \Big(\frac{ C_m\cdot C_b^2}{T} \int_{\{t^\star=t_0\}} J^n_\mu[\psi]n^\mu+\frac{C_m\cdot C_b}{T^2}\int_{t_1}^{t_1+T}\int_{\{t^\star=\tau\}} \mathcal{E}(\mathcal{G}_{0}\psi,\psi) \Big) \\
		&\qquad\qquad\qquad +\frac{C_m}{T} \Big(  C_m\cdot C_b\int_{\{t^\star=t_0\}} J^n_\mu[\psi]n^\mu+\frac{C_m}{T}\int_{t_1}^{t_1+T}\int_{\{t^\star=\tau\}} \mathcal{E}(\mathcal{G}_{0}\psi,\psi)\Big) \\
		&	\qquad\qquad =  \left(\frac{ C_m^2\cdot C_b^2}{T} +\frac{C_m^2C_b}{T} \right) \int_{\{t^\star=t_0\}} J^n_\mu[\psi]n^\mu+\left(\frac{C_m^2\cdot C_b}{T^2}+ \frac{C_m^2}{T^2} \right) \int_{t_1}^{t_1+T}\int_{\{t^\star=\tau\}} \mathcal{E}(\mathcal{G}_{0}\psi,\psi). \\
	\end{aligned}    
\end{equation}

We now take~$T$ sufficiently large so that 
\begin{equation}
	\frac{C_m^2C_b^2}{T} + \frac{C_m^2C_b}{T}+\frac{C_m^2C_b}{T^2}+\frac{C_m^2}{T^2}<\frac{1}{20}.
\end{equation}
Therefore, if
\begin{equation}
	\int_{\{t^\star=t_0\}} J^n_\mu[\psi]n^\mu\leq E_{data},\qquad \int_{t_1}^{t_1+T}\int_{\{t^\star=\tau\}} \mathcal{E}(\mathcal{G}_{0}\psi,\psi)\leq E_{data}
\end{equation}
for some constant~$E_{data}$ that will be determined later, then the estimate~\eqref{eq: sec: cor: main theorem, relat. non-deg, cor 3, eq 0.8} implies that 
\begin{equation}
	 \int_{\{t^\star=t_2+T\}} J^n_\mu[\psi]n^\mu+\int_{t_2+T}^\infty \int_{\{t^\star=\tau\}}  J^n_\mu [\psi]n^\mu +\mathcal{E}(\mathcal{G}_2\psi,\psi) \leq \frac{E_{data}}{2}
\end{equation}
We inductively obtain that for any even number~$i$ we have that 
\begin{equation}\label{eq: sec: cor: main theorem, relat. non-deg, cor 3, eq 0.9}
	 \int_{\{t^\star=t_i+T\}} J^n_\mu[\psi]n^\mu+\int_{t_i+T}^\infty \int_{\{t^\star=\tau\}}  J^n_\mu [\psi]n^\mu +\mathcal{E}(\mathcal{G}_i\psi,\psi) \leq \frac{E_{data}}{2^{i-1}},\qquad i=2,4,6\dots
\end{equation}

By using the relatively non degenerate estimate~\eqref{eq: sec: cor: main theorem, relat. non-deg, cor 3, eq 0.1}, for odd numbers~$i=1,3,5,\dots$, and the established estimate~\eqref{eq: sec: cor: main theorem, relat. non-deg, cor 3, eq 0.9}, it is easy to extend the estimate~\eqref{eq: sec: cor: main theorem, relat. non-deg, cor 3, eq 0.9} for any natural number~$i$ and obtain that the following holds  
\begin{equation}\label{eq: sec: cor: main theorem, relat. non-deg, cor 3, eq 0.10}
	\int_{\{t^\star=t_i+T\}} J^n_\mu[\psi]n^\mu+\int_{t_i+T}^\infty \int_{\{t^\star=\tau\}}  J^n_\mu [\psi]n^\mu +\mathcal{E}(\mathcal{G}_i\psi,\psi) \leq \frac{E_{data}}{2^{i-1}},\qquad i=1,2,3,\dots
\end{equation}

Now, we note that we can take 
\begin{equation}
	E_{data}=  C \Bigg(\int_{\{t^\star=0\}}  J^n_\mu[\psi]n^\mu+ \int_{\supp \chi_{\textit{data}}}d\tau^\prime  \int_{\{t^\star=\tau^\prime\}} \mathcal{E}(\mathcal{G}_{\chi_{\textit{data}}}\psi,\psi)\Bigg)
\end{equation}
for some constant~$C=C(a,M,l,\mu^2_{KG})$. 

Therefore, we obtain the desired energy decay estimate~\eqref{eq: cor: main theorem, relat. non-deg, cor 3, eq 1} from~\eqref{eq: sec: cor: main theorem, relat. non-deg, cor 3, eq 0.10} and the boundedness estimate~\eqref{eq: sec: cor: main theorem, relat. non-deg, cor 3, eq 0.2}. We can include~$\tau\geq 0$ by taking~$C$ larger if necessary.

\subsection{Proof of the pointwise estimate~\eqref{eq: cor: main theorem, relat. non-deg, cor 3, eq 2} of Corollary~\ref{cor: main theorem, relat. non-deg, cor 3}}

It is convenient to use the algebra of constants already discussed in Section~\ref{subsec: sec: fixed frequency ode estimates, subsec 1}. First, we will need an appropriate exponential decay energy estimate at the level of~$H^2$. Specifically, we can commute the Klein--Gordon equation with~$\partial_t,\partial_\varphi,N$ appropriately many times obtain from~\eqref{eq: cor: main theorem, relat. non-deg, cor 3, eq 1} that the following holds
\begin{equation}\label{eq: sec: cor: main theorem, relat. non-deg, cor 3, eq 1.1}
	\int_{\{t^\star=\tau\}} J^n_\mu [\psi]n^\mu +J^n_\mu [n\psi]n^\mu +J^n[n^2\psi]n^\mu \leq B e^{-c\tau} E
\end{equation}
where
\begin{equation}
	E=\int_{\{t^\star=0\}} J^n_\mu [\psi]n^\mu+J^n_\mu [n\psi]n^\mu +J^n[n^2\psi]n^\mu+J^n[n^3\psi]n^\mu.
\end{equation}

Now, for
\begin{equation}
	\underline{\psi}(\tau) = \frac{1}{|\{t^\star=\tau\}|}  \int_{\{t^\star=\tau\}} \psi(\tau,r,\theta,\varphi) v(r,\theta)dr d\theta d\varphi
\end{equation}
it is straightforward to note that 
\begin{equation}\label{eq: sec: cor: main theorem, relat. non-deg, cor 3, eq 1}
\begin{aligned}
|\partial_\tau \underline{\psi}(\tau)|^2 &	=\left| \partial_\tau \frac{1}{|\{t^\star=\tau\}|}  \int_{\{t^\star=\tau\}} \psi(\tau,r,\theta,\varphi) v(r,\theta)dr d\theta d\varphi \right|^2 \leq B  \left| \int_{\{t^\star=\tau\}} \partial_\tau \psi(\tau,r,\theta,\varphi) v(r,\theta)dr d\theta d\varphi \right|^2 \\
&	\leq B \int_{\{t^\star=\tau\}} (\partial_t \psi)^2,
\end{aligned}
\end{equation}
where in the last estimate we used a H\"older inequality for integrals, in view of that for any~$\tau\geq 0 $ we have~$|\{t^\star=\tau\}|=\int_{r_+}^{\bar{r}_+}\int_{\mathbb{S}^2}dr \sin\theta d\theta d\varphi \sim 1$, where also note that~$|\{t^\star=\tau\}|$ is independent of~$\tau$. Therefore, in view of~\eqref{eq: sec: cor: main theorem, relat. non-deg, cor 3, eq 1} and of the exponential decay of the energy~\eqref{eq: cor: main theorem, relat. non-deg, cor 3, eq 1} we obtain
\begin{equation}\label{eq: sec: cor: main theorem, relat. non-deg, cor 3, eq 2}
|\partial_\tau \underline{\psi}(\tau)|^2 \leq B \Bigg(\int_{\{t^\star=0\}}  J^n_\mu[\psi]n^\mu+ \int_{\supp \chi_{\textit{data}}}d\tau^\prime  \int_{\{t^\star=\tau^\prime\}} \mathcal{E}(\mathcal{G}_{\chi_{\textit{data}}}\psi,\psi)\Bigg) e^{-c\tau} = B\cdot E_0 e^{-c\tau}.
\end{equation}
for some~$c=c(a,M,l,\mu_{KG})>0$, where
\begin{equation}
	E_0 := \int_{\{t^\star=0\}}  J^n_\mu[\psi]n^\mu+ \int_{\supp \chi_{\textit{data}}}d\tau^\prime  \int_{\{t^\star=\tau^\prime\}} \mathcal{E}(\mathcal{G}_{\chi_{\textit{data}}}\psi,\psi)\leq B \int_{\{t^\star=0\}} J^n_\mu [\psi] n^\mu + J^n_\mu [n\psi] n^\mu.
\end{equation}

Second, we prove that the function~$\underline{\psi}(\tau)$ converges for~$\tau\rightarrow \infty$. Specifically, by using the fundamental theorem of calculus we obtain the following 
\begin{equation}\label{eq: sec: cor: main theorem, relat. non-deg, cor 3, eq 3}
\begin{aligned}
|\underline{\psi}(t_2) -\underline{\psi}(t_1)|\leq \int_{t_1}^{t_2} |\partial_\tau \underline{\psi}(\tau)|d\tau \leq \int_{t_1}^\infty |\partial_\tau \underline{\psi}(\tau)|d\tau \leq B \sqrt{E_0} e^{- c  t_1}
\end{aligned}
\end{equation}
for any~$t_1\leq t_2$, where in the last inequality we used the energy decay estimate~\eqref{eq: cor: main theorem, relat. non-deg, cor 3, eq 1}. In view of inequality~\eqref{eq: sec: cor: main theorem, relat. non-deg, cor 3, eq 3} we apply a Cauchy sequence limiting argument and conclude that the function~$\underline{\psi}(\tau)$ indeed converges pointwise with an exponential rate to a constant
\begin{equation}
	\underline{\psi}
\end{equation}
namely the following holds
\begin{equation}\label{eq: sec: cor: main theorem, relat. non-deg, cor 3, eq 3.1}
	|\underline{\psi}(\tau)- \underline{\psi}|\leq B \sqrt{E_0} e^{-c \tau}
\end{equation}
for all~$\tau\geq 0$. Moreover, from~\eqref{eq: sec: cor: main theorem, relat. non-deg, cor 3, eq 3} and in view of that~$\underline{\psi}(\tau)\rightarrow \underline{\psi}$ we obtain that 
\begin{equation}
	|\underline{\psi}| \leq |\underline{\psi}(0)|+ B \sqrt{E_0}.
\end{equation}
Note that in the case where the Klein--Gordon mass is non zero~$\mu^2_{KG}>0$ we have
\begin{equation}
	\underline{\psi}=0. 
\end{equation}

By the Poincare--Wirtinger inequality of Section~\ref{subsec: poincare wirtigner}, we obtain
\begin{equation}\label{eq: sec: cor: main theorem, relat. non-deg, cor 3, eq 4}
\begin{aligned}
\int_{\{t^\star=\tau\}} \left|\psi -\underline{\psi}(\tau) \right|^2 \leq B\int_{\{t^\star=\tau\}} J^n_\mu [\psi]n^\mu 
\end{aligned}
\end{equation}
where in the last inequality we used the energy decay~\eqref{eq: cor: main theorem, relat. non-deg, cor 3, eq 1} and the Poincare--Wirtinger inequality of Section~\ref{subsec: poincare wirtigner}.

Now, for any~$s>\frac{n}{2}$ we recall the classical Sobolev embedding
\begin{equation}\label{eq: sec: cor: main theorem, relat. non-deg, cor 3, eq 5.1}
	\|\phi\|_{L^\infty(I)}\lesssim \|\phi\|_{H^s(I)}.
\end{equation}
where~$I\subset R^n$ is compact, see the book by Brezis~\cite{Brezis}. We use the Sobolev embedding~\eqref{eq: sec: cor: main theorem, relat. non-deg, cor 3, eq 5.1} for
\begin{equation}
	\phi=\psi-\underline{\psi}(\tau),\qquad n=3,\qquad s=2,\qquad I=\{t^\star=\tau\}
\end{equation}
in conjunction with~\eqref{eq: sec: cor: main theorem, relat. non-deg, cor 3, eq 1} to obtain the following
\begin{equation}\label{eq: sec: cor: main theorem, relat. non-deg, cor 3, eq 6}
\begin{aligned}
\sup_{\{t^\star=\tau\}}\left|\psi -\underline{\psi}(\tau) \right|^2  \leq B\int_{\{t^\star=\tau\}} J^n_\mu [\psi]n^\mu +J^n_\mu[n\psi]n^\mu + J^n_\mu[n^2\psi]n^\mu,
\end{aligned}
\end{equation}
where we also used the Poincare--Wirtinger~\eqref{eq: sec: cor: main theorem, relat. non-deg, cor 3, eq 4}.

Now, we note that 
\begin{equation}\label{eq: sec: cor: main theorem, relat. non-deg, cor 3, eq 7}
	\begin{aligned}
		\sup_{\{t^\star=\tau\}}\left|\psi -\underline{\psi}\right|^2 &	\leq B \left(  \sup_{\{t^\star=\tau\}}\left|\psi -\underline{\psi}(\tau) \right|^2 + \sup_{\{t^\star=\tau\}}\left|\underline{\psi}(\tau) -\underline{\psi}\right|^2 \right) \\
		&	\leq B\int_{\{t^\star=\tau\}}\left( J^n_\mu [\psi]n^\mu +J^n_\mu[n\psi]n^\mu + J^n_\mu[n^2\psi]n^\mu\right)+B E_0 e^{-c \tau}
	\end{aligned}
\end{equation}
where in the last inequality we used~\eqref{eq: sec: cor: main theorem, relat. non-deg, cor 3, eq 6} and~\eqref{eq: sec: cor: main theorem, relat. non-deg, cor 3, eq 3.1}. 

Finally, we recall the exponential decay~\eqref{eq: sec: cor: main theorem, relat. non-deg, cor 3, eq 1.1} and therefore we obtain from~\eqref{eq: sec: cor: main theorem, relat. non-deg, cor 3, eq 7} that the following holds 
\begin{equation}
	\begin{aligned}
			\sup_{\{t^\star=\tau\}}\left|\psi -\underline{\psi}\right|^2 \leq B e^{-c\tau} \int_{\{t^\star=0\}} J^n_\mu [\psi]n^\mu +J^n_\mu[n\psi]n^\mu+J^n[n^2\psi]n^\mu+J^n[n^3\psi]n^\mu.
	\end{aligned}
\end{equation}

We conclude~\eqref{eq: cor: main theorem, relat. non-deg, cor 3, eq 2} and the proof of the Corollary.

\section{Proof of Theorem~\ref{thm: main thm extended region} and Corollary~\ref{cor: thm: main thm extended region, cor 2}}\label{sec: proof of Theorem in extended region}

In Theorem~\ref{thm: main thm extended region} we study the inhomogeneous wave equation
\begin{equation}\label{eq: sec: proof of Theorem in extended region, eq 1}
	\Box_{g_{a,M,l}}\psi=F,
\end{equation}
where~$F$ is a sufficiently regular function. 

Let~$T\geq 3$ and let~$3\leq\tilde{T}\leq \tau_1<\tau_1+T\leq \tau_2$. Let
\begin{equation}
\eta(t^\star)=\eta^{(T)}_{\tau_1}(t^\star),\qquad \chi_+(t^\star) =\chi_{\tau_1,\tau_1+T^2}(t^\star),\qquad \chi_{-}(t^\star)=\chi_{\tau_1,\tau_1+T^2}(t^\star+\tilde{T}),	
\end{equation}
see Section~\ref{sec: carter separation, radial}. We appropriately cut-off the solutions of the inhomogeneous wave equation~\eqref{eq: sec: proof of Theorem in extended region, eq 1} and we study the following equation
\begin{equation}\label{eq: sec: proof of Theorem in extended region, eq 2}
\Box_{g_{a,M,l}}\left(\eta\chi_+^2\psi\right)=\eta \chi_+^2 F+ \chi_+^2\psi\Box_{g_{a,M,l}}\eta+2\nabla^\alpha\eta\nabla_\alpha(\chi_+^2\psi).
\end{equation}

We use Carter's separation of variables, see Proposition~\ref{prop: Carters separation, radial part}, for the cut-offed equation~\eqref{eq: sec: proof of Theorem in extended region, eq 2} and obtain the fixed frequency ode
\begin{equation}\label{eq: sec: proof of Theorem in extended region, eq 3}
	u^{\prime\prime}+\left(\omega^2-V\right)u=H,
\end{equation}
where~$u$ here is the frequency localization of the function~$\eta\chi_+^2\psi$, as explained in Section~\ref{sec: carter separation, radial}, and
\begin{equation}\label{eq: sec: proof of Theorem in extended region, eq 4}
	H^{(a\omega)}_{m\ell} =\frac{\Delta}{(r^2+a^2)^{3/2}}\left(\rho^2 \tilde{F}\right)^{(a\omega)}_{m\ell},\qquad \tilde{F}=\eta\chi_+^2 F+ \chi_+^2\psi\Box_{g_{a,M,l}}\eta+2\nabla^\alpha\eta\nabla_\alpha(\chi_+^2\psi). 
\end{equation}

\begin{proof}[\textbf{Proof of Theorem~\ref{thm: main thm extended region}}]

Since the function~$u$ is a smooth solution of the inhomogeneous ode~\eqref{eq: sec: proof of Theorem in extended region, eq 3}, and the boundary conditions~\eqref{eq: BC} are satisfied, see Proposition~\ref{prop: Carters separation, radial part}, we can use the fixed frequency result of Theorem~\ref{thm: subsec: sec: proof of Theorem 2, subsec 4.1, thm 1} and obtain 
\begin{equation}\label{eq: proof thm: main thm extended region, eq 0}
	\begin{aligned}
		&   \int_{r_+}^{r_++\epsilon}(r-r_+)^{-1}\left|v^\prime+i(\omega-\omega_+m) v\right|^2dr+\int_{\bar{r}_+-\epsilon}^{\bar{r}_+}(\bar{r}_+-r)^{-1}\left|v^\prime-i(\omega-\bar{\omega}_+m) v\right|^2dr\\
		&   + \int_{r_+}^{\bar{r}_+}\left( \frac{\tilde{\lambda}}{r^3} |v|^2+\frac{r^2+a^2}{r\Delta}\left(\omega-\frac{am\Xi}{r^2+a^2}\right)^2|v|^2+\frac{1}{r}|v^\prime|^2 +\Delta^2 \omega^2 1_{\{(\omega,m,\tilde{\lambda}):r_{trap}\neq 0\}} |u|^2\right)dr  \\
		&	\qquad\qquad \leq B \Bigg( \int_{r_+}^{r_++\epsilon}\frac{1}{\Delta^2}|u^\prime+i(\omega-\omega_+m)u|^2dr +\int_{\bar{r}_+-\epsilon}^{\bar{r}_+}\frac{1}{\Delta^2}|u^\prime-i(\omega-\bar{\omega}_+m)u|^2 dr \\
		&	\qquad\qquad\qquad\qquad+ \int_{r_+}^{\bar{r}_+} \left(1_{\{|m|>0\}}|u|^2+ \mu^2_{\textit{KG}}|u|^2+ |u^\prime|^2+\left(1-\frac{r_{\textit{trap}}(\omega,m,\ell)}{r}\right)^2(\omega+\lambda^{(a\omega)}_{m\ell})|u|^2\right)dr\Bigg)\\		
		&	\qquad\qquad\qquad +B\left(\omega^2+m^2\right)|u|^2(-\infty)+B\left(\omega^2+m^2\right)|u|^2(+\infty)\\
		&	\qquad\qquad\qquad  + B \int_{\mathbb{R}} \left( \frac{1}{\Delta}\left|H\right|^2+r\left|\left(g_1\frac{d}{dr^\star}+ig_2\right)H\right|^2\right)dr^\star ,
	\end{aligned}
\end{equation}
where for~$H$ see~\eqref{eq: sec: proof of Theorem in extended region, eq 4}. We sum~\eqref{eq: proof thm: main thm extended region, eq 0} over~$\int_{\mathbb{R}}d\omega \sum_{m,\ell}$ and follow verbatim the steps of the proof of Theorem~\ref{main theorem, relat. non-deg}, see Section~\ref{sec: proof of Thm: rel-nondeg}, to obtain the folllowing 
\begin{equation}\label{eq: proof thm: main thm extended region, eq 0.1}
	\begin{aligned}
		& \int_{\{t^\star=\tau_2\}} J^n_\mu[\psi]n^\mu \\
		&	+\int\int_{D(\tau_1+T,\tau_2)}\Bigg(\frac{1}{r}\frac{r^2+a^2}{\Delta}\left|W\mathcal{G}_{\chi_+}\psi\right|^2+ \frac{1}{r}\frac{\Delta}{(r^2+a^2)}|Z^\star\mathcal{G}_{\chi_+}\psi|^2+\frac{1}{r}\frac{r^2+a^2}{\Delta}|W\mathcal{G}_{\chi_+}\psi|^2+\frac{1}{r}|\slashed{\nabla}\mathcal{G}_{\chi_+}\psi|^2\\
		&	\qquad\qquad\qquad\qquad\qquad+ J^n_\mu[\psi] n^\mu\Bigg)\\
		\quad &\leq B\int_{\{t^\star=\tau_1\}} J^n_\mu[\psi]n^\mu+\frac{B}{T}\int\int_{D(\tau_1,\tau_1+T)} J_\mu^{W}[\mathcal{G}_{\chi_{-}}\psi]n^\mu \\
		&	\qquad + B\int\int_{\mathcal{M}}  |\eta\chi_+^2F|^2+\Delta|\mathcal{G}\left(\eta\chi_+^2 F\right)|^2,
	\end{aligned}    
\end{equation} 
where note that in the above~\eqref{eq: proof thm: main thm extended region, eq 0.1} we kept track of the errors terms. Recall from the similar steps of Section~\ref{sec: proof of Thm: rel-nondeg}  that, in order to obtain~\eqref{eq: proof thm: main thm extended region, eq 0.1}, we also need to use the Morawetz and boundedness estimates of Theorem~\ref{main theorem 1}.

Now, we want to extend~\eqref{eq: proof thm: main thm extended region, eq 0.1} in the domain~$D_\delta(\tau_1,+\infty)$, see Section~\ref{subsec: causal domains}, for some sufficiently small $\delta(a,M,l,\mu_{KG},j)>0$. We recall from Definition~\ref{def: sec: G, def 1} that~$\mathcal{G}$ identically vanishes in~$\{r\leq r_+\}\cup\{\bar{r}_+\leq r\}$, so the terms of~\eqref{eq: proof thm: main thm extended region, eq 0.1} that include the operator~$\mathcal{G}$ are easy to extend to~$D_\delta(\tau_1,+\infty)$. To extend the low order terms to the desired domain~$D_\delta (\tau_1,+\infty)$ we use Corollary~\ref{cor: main theorem 1, cor 1}.

We conclude that for~$l>0$ and~$(a,M)\in\mathcal{B}_l$ there exists a sufficiently small~$\delta(a,M,l,j)>0$ such that for~$T>0$ sufficiently large and~$\tilde{T}< \tau_1<\tau_1+T< \tau_2$ we obtain that 
\begin{equation}\label{eq: proof thm: main thm extended region, eq 1}
\begin{aligned}
& b\int_{\{t^\star=\tau_2\}}J^n_\mu[\psi]n^\mu  \\
&	+b\int\int_{D_\delta(\tau_1+T,\tau_2)}dg\Bigg( \frac{1}{r}\frac{\Delta}{(r^2+a^2)}|Z^\star\mathcal{G}_{\chi_+}\psi|^2+\frac{1}{r}\frac{r^2+a^2}{\Delta}|W\mathcal{G}_{\chi_+}\psi|^2+\frac{1}{r}|\slashed{\nabla}\mathcal{G}_{\chi_+}\psi|^2+ J^n_\mu[\psi] n^\mu\Bigg)\\
\quad &\leq \int_{\{t^\star=\tau_1\}} J^n_\mu[\psi]n^\mu+|\psi|^2+\frac{1}{T}\int\int_{D_\delta(\tau_1,\tau_1+T)} J_\mu^{W}[\mathcal{G}_{\chi_{-}}\psi]n^\mu\\
&\qquad\qquad +\int\int_{\mathcal{M}_\delta} |\eta\chi_+^2 F|^2+\Delta|\mathcal{G}\left(\eta\chi_+^2 F \right)|^2
\end{aligned}    
\end{equation}
which concludes the proof of Theorem~\ref{thm: main thm extended region}. 
\end{proof}

Now, we prove the higher order Corollary~\ref{cor: thm: main thm extended region, cor 2}
\begin{proof}[\textbf{Proof of Corollary~\ref{cor: thm: main thm extended region, cor 2}}]

This is follows easily as a Corollary of Theorem~\ref{thm: main thm extended region} and of the higher order Corollary~\ref{cor: main theorem 1, cor 1}~(a corollary of the Morawetz estimate of Theorem~\ref{main theorem 1}). 
\end{proof}

\section{The axisymmetric case: A physical space proof}\label{sec: proof of Thm: rel-nondeg, axisymmetry}

We use the generic constants
\begin{equation}
	b(a,M,l,\mu_{\textit{KG}})>0,\qquad B(a,M,l,\mu_{\textit{KG}})>0.
\end{equation}
with the algebra of constants
\begin{equation}
	b+b=b\qquad b\cdot b=b,\qquad B+B=B,\qquad B\cdot B=B. 
\end{equation}

Recall from Section~\ref{subsec: sec: morawetz estimate, subsec 5} that
\begin{equation}
	\mathcal{G}=\mathcal{G}\big|_{m=0}
\end{equation}
is a physical space vector field, and it takes the form
\begin{equation}
	\mathcal{G}:=\mathcal{G}_{m=0}=\: g_1(r)\partial_{r^\star}+G_2(r)\partial_{t^\star}
\end{equation}
where 
\begin{equation}
	G_2=
	\begin{cases}
		\sqrt{G_2^2},\qquad r\geq r_{\Delta,\textit{frac}}\\
		-\sqrt{G_2^2},\qquad r\leq r_{\Delta,\textit{frac}},
	\end{cases}
\end{equation}
with~$G_2^2=\frac{(r^2+a^2)^2}{\Delta}-\min_{[r_+,\bar{r}_+]}\frac{(r^2+a^2)^2}{\Delta}$.

We need the following Lemma

\begin{lemma}\label{lem: sec: expo decay on axisymmetry, lem 1}
	Let~$l>0$,~$(a,M)\in\mathcal{B}_l$. 
	
	We have that 
	\begin{equation}
		\frac{2V_{\textit{axi}}G_2^\prime}{g_1(r)}\geq b \frac{r^2+a^2}{r\Delta},
	\end{equation}
	where~$V_{axi}=\frac{(r^2+a^2)^2}{\Delta},~g_1(r)=\frac{r^2+a^2}{\sqrt{\Delta}}$, and moreover~$G_2$ is twice differentiable at~$r_{\Delta,frac}$.
\end{lemma}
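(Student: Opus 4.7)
The plan is to first reduce the inequality to a statement purely about $G_2'$. Observe the algebraic identity $V_{\textit{axi}} = g_1^2$, so the left-hand side collapses to $2 g_1 G_2'$, and after dividing by $g_1 = \tfrac{r^2+a^2}{\sqrt{\Delta}}$ the target inequality becomes $G_2' \geq \tfrac{b}{2 r\sqrt{\Delta}}$. Differentiating the defining identity $G_2^2 = g_1^2 - g_{1,\min}^2$ with respect to $r^\star$ yields $G_2 \, G_2' = g_1 g_1'$. The sign convention on $G_2$ (see Section~\ref{subsec: sec: morawetz estimate, subsec 5}) matches $\operatorname{sgn}(g_1')$, which is positive for $r > r_{\Delta,\textit{frac}}$ and negative for $r < r_{\Delta,\textit{frac}}$, so $G_2' \geq 0$; combining with $g_1 \geq |G_2|$ gives the pointwise bound $G_2' \geq |g_1'|$, which will be the workhorse away from $r_{\Delta,\textit{frac}}$.

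For twice differentiability at $r_{\Delta,\textit{frac}}$: since $V_{\textit{axi}}$ is real-analytic with unique interior critical point (a minimum) in $(r_+,\bar r_+)$, the derivative $V_{\textit{axi}}'$ changes sign there and so has a zero of odd order $2k-1$. Hence $V_{\textit{axi}} - V_{\min} = (r^\star - r^\star_{\Delta,\textit{frac}})^{2k} h(r^\star)$ with $h$ real-analytic and strictly positive near $r^\star_{\Delta,\textit{frac}}$, and the sign convention forces $G_2 = (r^\star - r^\star_{\Delta,\textit{frac}})^k \sqrt{h}$, which is smooth. In the generic (and expected) case $k=1$, a direct computation yields $G_2'(r^\star_{\Delta,\textit{frac}}) = \sqrt{\tfrac{1}{2}\ddot V_{\textit{axi}}(r^\star_{\Delta,\textit{frac}})} > 0$; I will verify non-degeneracy $\ddot V_{\textit{axi}}(r^\star_{\Delta,\textit{frac}}) > 0$ by computing $\tfrac{d}{dr}\bigl(4r\Delta - (r^2+a^2)\Delta'\bigr)$ at $r_{\Delta,\textit{frac}}$ using Lemma~\ref{lem: sec: properties of Delta, lem 1, derivatives of Delta} and the critical-point relation $(r^2+a^2)\Delta' = 4r\Delta$ to express everything in terms of $\Delta''$ and the subextremal root data.

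For the quantitative lower bound I split $(r_+, \bar r_+)$ at a small neighborhood $U$ of $r_{\Delta,\textit{frac}}$. On $U$, continuity gives $G_2' \geq c_1 > 0$ while $\tfrac{1}{2r\sqrt{\Delta}}$ is uniformly bounded, so the inequality holds for $b$ small. On $(r_+,\bar r_+)\setminus U$ I invoke $G_2' \geq |g_1'|$ and compute explicitly $|g_1'| = \tfrac{|4r\Delta - (r^2+a^2)\Delta'|}{2(r^2+a^2)\sqrt{\Delta}}$, so the required inequality reduces to
\begin{equation*}
r \bigl|4r\Delta - (r^2+a^2)\Delta'\bigr| \geq b\,(r^2+a^2).
\end{equation*}
At the horizons this becomes $r_\pm (r_\pm^2+a^2)|\Delta'(r_\pm)| \geq b(r_\pm^2+a^2)$, which holds by Lemma~\ref{lem: sec: properties of Delta, lem 1, derivatives of Delta}; on the rest of the compact complement of $U$ the left-hand side is continuous and vanishes only at $r_{\Delta,\textit{frac}} \in U$, hence is bounded below by a positive constant. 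Choosing $b$ smaller than the minimum of the constants produced in both regions concludes the proof.

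The main obstacle is the behavior near $r_{\Delta,\textit{frac}}$, where the comparison $G_2' \geq |g_1'|$ degenerates since $g_1'$ vanishes there. The key point is the strict separation $G_2'(r^\star_{\Delta,\textit{frac}}) > 0 = |g_1'(r^\star_{\Delta,\textit{frac}})|$, which amounts to the non-degeneracy of the minimum of $V_{\textit{axi}}$ and is the only nontrivial computation needed.
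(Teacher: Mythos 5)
Your approach is sound and fills out, in considerable detail, the argument the paper compresses into "direct from the definitions." The algebraic reductions are correct: $V_{\textit{axi}} = g_1^2$ collapses the left side to $2g_1 G_2'$, and the target becomes $G_2' \geq \tfrac{b}{2r\sqrt{\Delta}}$; differentiating $G_2^2 = g_1^2 - g_{1,\min}^2$ together with the sign convention gives $G_2' = g_1 g_1'/G_2 > 0$ away from $r_{\Delta,\textit{frac}}$, and $g_1 > |G_2|$ then yields $G_2' \geq |g_1'|$; and the explicit formula $|g_1'| = \tfrac{|4r\Delta - (r^2+a^2)\Delta_r'|}{2(r^2+a^2)\sqrt{\Delta}}$ correctly reduces the estimate on the complement of a neighborhood of $r_{\Delta,\textit{frac}}$ to $r|4r\Delta - (r^2+a^2)\Delta_r'| \geq b(r^2+a^2)$, which is non-vanishing at the horizons by Lemma~\ref{lem: sec: properties of Delta, lem 1, derivatives of Delta} and non-vanishing on the compact complement by uniqueness of $r_{\Delta,\textit{frac}}$. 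The Taylor expansion argument for twice-differentiability and for $G_2'(r^\star_{\Delta,\textit{frac}}) = \sqrt{\tfrac12\ddot{V}_{\textit{axi}}} > 0$ is also correct, provided the minimum is non-degenerate.

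The one genuine gap is that you announce ``I will verify non-degeneracy $\ddot V_{\textit{axi}}(r^\star_{\Delta,\textit{frac}}) > 0$'' and never do. This is not a throwaway step: it is, by your own admission, the only place where the pointwise comparison $G_2' \geq |g_1'|$ degenerates, and without it the whole estimate near $r_{\Delta,\textit{frac}}$ is unproven, as is the claim that $k=1$ rather than $k\geq 2$ (in the latter case $G_2'(r^\star_{\Delta,\textit{frac}})=0$ and the argument fails). Fortunately the computation is short: setting $P(r) := 4r\Delta - (r^2+a^2)\tfrac{d\Delta}{dr}$, direct expansion using~\eqref{eq: prototype Delta} gives the cubic identity
\begin{equation*}
P(r) = 2\Xi r^3 - 6Mr^2 + 2\Xi a^2 r + 2Ma^2,
\end{equation*}
with $P(0) = 2Ma^2 \geq 0$, $P(r_+) = -(r_+^2+a^2)\tfrac{d\Delta}{dr}(r_+) < 0$, and $P(\bar r_+) = -(\bar r_+^2+a^2)\tfrac{d\Delta}{dr}(\bar r_+) > 0$ by Lemma~\ref{lem: sec: properties of Delta, lem 1, derivatives of Delta}. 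The two resulting sign changes force all three roots of $P$ to be real, and the ordering of the roots of a cubic with positive leading coefficient then localizes the middle root strictly below $r_+$ (since $P(0)\geq 0 > P(r_+)$) and the largest root strictly inside $(r_+,\bar r_+)$ (since $P(r_+)<0<P(\bar r_+)$), so the zero of $P$ at $r_{\Delta,\textit{frac}}$ is simple, i.e.\ $P'(r_{\Delta,\textit{frac}}) > 0$; for $a=0$ this specializes to $P(r)=2r^2(r-3M)$ with a simple zero at $r=3M$. This supplies the non-degeneracy and completes your proof.
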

\begin{proof}
	Direct from the definitions of Section~\ref{subsec: sec: morawetz estimate, subsec 5}.
\end{proof}

We now state the following theorem, for axisymmetric solutions~$\psi$ of the Klein--Gordon equation~\eqref{eq: kleingordon}. Note that there is no need to assume the assumption~(MS).

\begin{theorem}

Let~$l>0$,~$(a,M)\in\mathcal{B}_l$ and~$\mu^2_{\textit{KG}}\geq 0$. Then, there exists a constant 
\begin{equation}
	C(a,M,l,\mu^2_{\textit{KG}})>0
\end{equation}
such the following holds.

Let~$\psi$ satisfy the Klein--Gordon equation~\eqref{eq: kleingordon} in~$D(0,\infty)$. Then, we have the following
	\begin{equation}
	\begin{aligned}
	\int_{\{t^\star=\tau\}}\mathcal{E}(\mathcal{G}\psi,\psi)+ \int\int_{D(\tau_1,\tau_2)} \mathcal{E}(\mathcal{G}\psi,\psi) \leq C\int_{\{t^\star=\tau_1\}} \mathcal{E}(\mathcal{G}\psi,\psi),
	\end{aligned}
\end{equation}
for any~$0\leq \tau_1\leq \tau\leq \tau_2$, where for the operator~$\mathcal{G}$ in the case of axisymmetry see Section~\ref{sec: G}. For the energy density~$\mathcal{E}(\cdot,\cdot)$ see~\eqref{eq: new energy}. 
\end{theorem}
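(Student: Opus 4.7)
The plan is to adapt the physical-space approach used for Schwarzschild--de~Sitter in~\cite{mavrogiannis}. The key structural feature is that for axisymmetric $\psi$ the pseudodifferential operator $\mathcal{G}$ reduces to a genuine vector field $\mathcal{G} = g_1(r)\partial_{r^\star} + G_2(r)\partial_{t^\star}$, which vanishes continuously at both horizons $\mathcal{H}^+, \bar{\mathcal{H}}^+$ and is orthogonal to $\partial_{t^\star}$ at $r = r_{\Delta,\text{frac}}$ (where $G_2 = 0$). Consequently no cut-offs, no Fourier decomposition, and no pseudodifferential commutations are needed; in particular the averaged bulk term on the right-hand side of Theorem~\ref{main theorem, relat. non-deg} is absent and the estimate reduces to the clean Gr\"onwall form of~\eqref{eq: SdS relatively non degenerate}.

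First I would commute the Klein--Gordon equation~\eqref{eq: kleingordon} with $\mathcal{G}$ to derive
\begin{equation*}
\Box_g(\mathcal{G}\psi) - \mu^2_{KG}\,\mathcal{G}\psi = [\Box_g, \mathcal{G}]\psi,
\end{equation*}
and compute the principal part of the commutator in Boyer--Lindquist coordinates. The vector field $\mathcal{G}$ has been designed precisely so that, after an integration by parts, the only dangerous top-order contribution from pairing with the multiplier $W\mathcal{G}\psi$, where $W = \partial_{t^\star} + \frac{a\Xi}{r^2+a^2}\partial_{\varphi^\star}$, is a single bulk term with coefficient $\frac{2V_{\text{axi}}\,G_2^\prime}{g_1}$. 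By Lemma~\ref{lem: sec: expo decay on axisymmetry, lem 1} this coefficient is bounded below by $b\,\frac{r^2+a^2}{r\Delta}$, so the term is coercive and exactly of the form needed to appear on the left-hand side of the target estimate.

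Second, I would apply the divergence theorem (Lemma~\ref{lem: divergence theorem}) to the energy current $J^W_\mu[\mathcal{G}\psi]$ on $D(\tau_1,\tau_2)$. The horizon fluxes are nonnegative since $W$ is null on $\mathcal{H}^+\cup\bar{\mathcal{H}}^+$ (Lemma~\ref{lem: causal vf E,1}) and $\mathcal{G}$ vanishes there, so these boundary terms can simply be dropped. The deformation tensor term $\mathrm{K}^W[\mathcal{G}\psi]$ vanishes by the axisymmetric structure (second statement of Lemma~\ref{lem: causal vf E,1}). The inhomogeneity integral $\iint W\mathcal{G}\psi \cdot [\Box_g,\mathcal{G}]\psi$ rearranges, after integration by parts in $r^\star$ and $t^\star$, into the favorable bulk $+b\iint \frac{r^2+a^2}{r\Delta}|W\mathcal{G}\psi|^2$ plus error terms involving at most one derivative of $\psi$ and no derivatives of $\mathcal{G}\psi$. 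These lower-order errors are absorbed by the Morawetz estimate of Theorem~\ref{main theorem 1}, which applies unconditionally in the axisymmetric case because the mode stability hypothesis (MS) is automatic when $m=0$ (the Fourier decomposition in $\varphi$ collapses and superradiance is absent). The combined estimate upgrades control of $(W\mathcal{G}\psi)^2$ to the full energy density $\mathcal{E}(\mathcal{G}\psi,\psi)$ after an additional standard elliptic/commutator argument and a coarea application (Section~\ref{subsec: coarea formula}).

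The main obstacle is the precise identification of the sign and coefficient of the top-order bulk term from $[\Box_g,\mathcal{G}]$ when expanded in the modified $t^\star$ gauge. The computation parallels~\cite{mavrogiannis} in the Schwarzschild--de~Sitter case, but here involves mixing between $\partial_t$ and $\partial_\varphi$ from the rotation; axisymmetry ensures that $\partial_{\varphi^\star}$-commutator contributions vanish, so the arithmetic of $G_2'/g_1$ is what remains and reduces, via Lemma~\ref{lem: sec: expo decay on axisymmetry, lem 1}, to the coercive estimate above. Once this sign is established, the iteration to the final statement is immediate, since no pseudodifferential cut-offs appear and the estimate is genuinely local in $t^\star$.
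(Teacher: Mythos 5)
Your proposal is correct and follows essentially the same route as the paper's proof in Section~\ref{sec: proof of Thm: rel-nondeg, axisymmetry}: commuting with the physical-space vector field $\mathcal{G}|_{m=0}$, identifying the coercive bulk coefficient $2V_{\textit{axi}}G_2'/g_1$ via Lemma~\ref{lem: sec: expo decay on axisymmetry, lem 1}, applying the $W$-multiplier with $\mathrm{K}^W[\mathcal{G}\psi]=0$ by axisymmetry and nonnegative horizon fluxes, and absorbing lower-order errors with Theorem~\ref{main theorem 1} plus elliptic estimates. One small imprecision: the second-order commutator terms $\partial_{t^\star}^2\psi$ and $\partial_{r^\star}^2\psi$ have coefficients that vanish identically by the construction of $g_1,G_2$, so they are eliminated algebraically rather than by the integration by parts you invoke; this does not affect the validity of the argument.
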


\begin{proof}

	We sketch the next steps, since they are similar to the arguments of \cite{mavrogiannis}. 	
	
	Let~$\psi$ be an axisymmetric solution of the Klein--Gordon equation \eqref{eq: kleingordon}, namely 
	\begin{equation}\label{eq: the equation for psi in kerr de sitter for axisymmetry}
		\Box_{g_{a,M,l}}\psi -\mu_{\textit{KG}}^2\psi=0,\qquad \partial_{\varphi^\star} \psi=0.
	\end{equation}
	Note that for axisymmetric solutions~$\psi$ of the Klein--Gordon equation~\eqref{eq: kleingordon} the following holds 
	\begin{equation}
		\begin{aligned}
			\Box \phi \equiv -\frac{(r^2+a^2)^2}{\rho^2 \Delta}\left(\partial^2_{t^\star}{\phi} -(r^2+a^2)^{-1}\partial_{r^\star}\left((r^2+a^2)\partial_{r^\star}\phi\right) \right) +\frac{a^2\sin^2\theta}{\rho^2 \Delta_\theta}\partial_{t^\star}^2{\phi}+\frac{1}{\rho^2 \sin\theta}\partial_\theta\left(\Delta_\theta \sin\theta \partial_\theta \phi\right).
		\end{aligned}
	\end{equation}
	Therefore, the Klein--Gordon equation~\eqref{eq: kleingordon} in the case of axisymmetry, reduces to the following equation
	\begin{equation}
		\begin{aligned}
			\mathcal{R}\psi=\rho^2\mu_{\textit{KG}}^2\psi,
		\end{aligned}
	\end{equation}
	\begin{equation}\label{eq: proof energy estimate for axisymmetry exp decay, 2}
		\mathcal{R}\Psi \equiv -\frac{(r^2+a^2)^2}{\Delta}\left(\partial_{t^\star}^2\Psi -(r^2+a^2)^{-1}\partial_{r^\star}\left((r^2+a^2)\partial_{r^\star}\Psi\right) \right) +\frac{a^2\sin^2\theta}{\Delta_\theta}\partial_{t^\star}^2{\Psi}+\frac{1}{\sin\theta}\partial_\theta\left(\Delta_\theta \sin\theta \partial_\theta \Psi\right),
	\end{equation}
	where~$\rho^2=r^2+a^2\cos^2\theta$. 
	
	Now, we note the following commutation 
	\begin{equation}\label{eq: proof energy estimate for axisymmetry exp decay, 3}
		\begin{aligned}
			\left[\mathcal{R},g_1\partial_{r^\star}+G_2\partial_t\right]\psi &= \partial_{r^\star}^2\psi \left(2V_{\textit{axi}} g_1^\prime -g_1 V_{\textit{axi}}^\prime\right)+2V_{\textit{axi}} G_2^\prime \frac{\partial_{t^\star}{\mathcal{G}\psi}}{g_1}+\partial_{t^\star}^2{\psi}\left(-2 G_2^\prime\frac{G_2}{g_1}V_{\textit{axi}}+g_1 V_{\textit{axi}}^\prime\right)\\
			&  \quad+\partial_{t^\star}{\psi}\left(V_{\textit{axi}}G_2^{\prime\prime}+V_{\textit{axi}}(r^2+a^2)^{-1}(r^2+a^2)^\prime  G_2^\prime\right)\\
			&  \quad +\partial_{r^\star}\psi\Big(V_{\textit{axi}} g_1^{\prime\prime}+V_{\textit{axi}}(r^2+a^2)^{-1}(r^2+a^2)^\prime g_1^\prime \\
			&\quad\quad\quad\quad\quad -g_1 V_{\textit{axi}}^\prime (r^2+a^2)^{-1} (r^2+a^2)^2-g_1 V_{\textit{axi}} \left((r^2+a^2)^{-1}(r^2+a^2)^\prime\right)^\prime\Big),
		\end{aligned}
	\end{equation}
	where~$V_{\textit{axi}}	=\frac{(r^2+a^2)^2}{\Delta}. $

	Τhe components of the second order terms~$\partial_{t^\star}^2{\psi},\partial_{r^\star}^2\psi$ in~\eqref{eq: proof energy estimate for axisymmetry exp decay, 3} are identically zero, see Section~\ref{sec: G}. We obtain
	\begin{equation}\label{eq: proof energy estimate for axisymmetry exp decay, 4}
		\begin{aligned}
			\left[\mathcal{R},g_1\partial_{r^\star}+G_2\partial_t\right]\psi &= 2V_{\textit{axi}}G_2^\prime \frac{\partial_{t^\star}{\mathcal{G}\psi}}{g_1}+\partial_{t^\star}{\psi}E_1(r^\star) +\partial_{r^\star}\psi E_2(r^\star),
		\end{aligned}
	\end{equation}
	where 
	\begin{equation}
		\begin{aligned}
			E_1(r^\star) & =\left(V_{\textit{axi}}G_2^{\prime\prime}+V_{\textit{axi}}(r^2+a^2)^{-1}(r^2+a^2)^\prime G_2^\prime\right),\\
			E_2(r^\star) &  = \Big(V_{\textit{axi}} g_1^{\prime\prime}+V_{\textit{axi}}(r^2+a^2)^{-1}(r^2+a^2)^\prime g_1^\prime \\
			&\quad\quad\quad\quad -g_1 V_{\textit{axi}}^\prime (r^2+a^2)^{-1} (r^2+a^2)^2-g_1 V_{\textit{axi}}\left((r^2+a^2)^{-1}(r^2+a^2)^\prime\right)^\prime\Big).
		\end{aligned}
	\end{equation}

	Therefore, in view of~\eqref{eq: proof energy estimate for axisymmetry exp decay, 4} we compute 
	\begin{equation}\label{eq: commutation in axisymmetry}
		\begin{aligned}
			\Box\mathcal{G}\psi	&	=\frac{1}{\rho^2}\left[\mathcal{R},\mathcal{G}\right]\psi-\mathcal{G}\left(\frac{1}{\rho^2}\right) \mathcal{R}\psi+\mathcal{G}\Box\psi\\
			&	=\frac{1}{\rho^2}\left[\mathcal{R},\mathcal{G}\right]\psi-\mathcal{G}\left(\frac{1}{\rho^2}\right)\rho^2 \mu_{\textit{KG}}^2\psi +\mu_{\textit{KG}}^2 \mathcal{G}\psi\\
			&	=\frac{1}{\rho^2}\Bigg(2V_{\textit{axi}}G_2^\prime \frac{\partial_{t^\star}{\mathcal{G}\psi}}{g_1}+\partial_{t^\star}{\psi}\left(V_{\textit{axi}} G_2^{\prime\prime}+V_{\textit{axi}}(r^2+a^2)^{-1}(r^2+a^2)^\prime  G_2^\prime\right)\\
			&  \qquad\quad +\frac{d}{d r^\star}\psi\Big(V_{\textit{axi}}  g_1^{\prime\prime}+V_{\textit{axi}}(r^2+a^2)^{-1}(r^2+a^2)^\prime  g_1^\prime\\
			&  \qquad\qquad\qquad\quad-g_1 V_{\textit{axi}}^\prime (r^2+a^2)^{-1} (r^2+a^2)^2 -g_1 V_{\textit{axi}}\left((r^2+a^2)^{-1}(r^2+a^2)^\prime\right)^\prime\Big)\Bigg)\\
			&	\quad -\mathcal{G}\left(\frac{1}{\rho^2}\right)\rho^2 \mu_{\textit{KG}}^2\psi +\mu_{\textit{KG}}^2 \mathcal{G}\psi.\\
			&
		\end{aligned}
	\end{equation}
	We use the divergence Theorem, see Lemma~\ref{lem: divergence theorem}, with multiplier~$W=\partial_{t^\star}+\frac{a\Xi}{r^2+a^2} \partial_{\varphi^\star}$, see Lemma~\ref{lem: causal vf E,1}, to the equation~\eqref{eq: commutation in axisymmetry}, where recall~$\rm{K}^{\partial_{t^\star}+\frac{a\Xi}{r^2+a^2} \partial_{\varphi^\star}}=0$, see Lemma~\ref{lem: causal vf E,1}, in view of axisymmetry
	\begin{equation}
		\partial_{\varphi^\star}\psi=0.
	\end{equation}
	We conclude that
	\begin{equation}\label{eq: energy estimate for axisymmetry exp decay, 1}
		\begin{aligned}
			\int_{\{t^\star=\tau_2\}}J^{W}_\mu[\mathcal{G}\psi]n^\mu+ \int\int_{D(\tau_1,\tau_2)} 2\frac{V_{\textit{axi}} G_2^\prime}{\rho^2 g_1}\left|\partial_{t^\star}{\mathcal{G}\psi}\right|^2  &  =\int_{\{t^\star=\tau_1\}} J^{W}_\mu[\mathcal{G}\psi]n^\mu\\
			&   +\int\int_{D(\tau_1,\tau_2)}\textit{l.o.t.},
		\end{aligned}
	\end{equation}
	for all~$0\leq\tau_1\leq\tau_2$ where
	\begin{equation}\label{eq: energy estimate for axisymmetry exp decay, 2}
		\begin{aligned}
			\textit{l.o.t.}&	=\frac{1}{\rho^2}\left(E_1(r^\star)\partial_t\psi+E_2\partial_{r^\star}\psi\right)\left(\partial_{t^\star}+\frac{a\Xi}{r^2+a^2} \partial_{\varphi^\star}\right)\mathcal{G}\psi-\mathcal{G}\left(\frac{1}{\rho^2}\right)\rho^2 \mu_{\textit{KG}}^2\psi +\mu_{\textit{KG}}^2 \mathcal{G}\psi
		\end{aligned}
	\end{equation}
	with~$\rho^2=r^2+a^2\cos^2\theta$. Moreover, note that 
	\begin{equation}
		\int\int_{D(\tau_1,\tau_2)}\textit{l.o.t.} \leq \frac{B}{\epsilon}\int\int_{D(\tau_1,\tau_2)} |\partial_{t^\star}\psi|^2+|Z^\star\psi|^2 +\mu^2_{\textit{KG}}|\psi|^2+B\epsilon \int\int_{D(\tau_1,\tau_2)} \frac{r^2+a^2}{\Delta r} |W\mathcal{G}\psi|^2 ,
	\end{equation}
	where for the vector field~$Z^\star$ see Section~\ref{subsec: boldsymbol partial r},

	Now we use~\eqref{eq: energy estimate for axisymmetry exp decay, 1} and Lemma~\ref{lem: sec: expo decay on axisymmetry, lem 1} in conjunction with a Poincare type estimate, which we do not present here (for a fixed frequency such result see Lemma~\ref{lem: control of the law order derivatives with v}) to obtain the following inequality
	\begin{equation}\label{eq: energy estimate for axisymmetry exp decay, 1.5}
		\begin{aligned}
			\int_{\{t^\star=\tau_2\}}J^{W}_\mu[\mathcal{G}\psi]n^\mu+ \int\int_{D(\tau_1,\tau_2)} \frac{r^2+a^2}{\Delta r}\left|\partial_{t^\star}{\mathcal{G}\psi}\right|^2&  \leq B\int_{\{t^\star=\tau_1\}} J^W_\mu[\mathcal{G}\psi]n^\mu\\
		\end{aligned}
	\end{equation}
	for all~$0\leq\tau_1\leq\tau_2$.

	We use elliptic estimates to generate the following derivatives
	\begin{equation}
		\Delta|Z^\star\mathcal{G}\psi|^2+|\slashed{\nabla}\mathcal{G}\psi|^2
	\end{equation}
	on the left hand side of equation~\eqref{eq: energy estimate for axisymmetry exp decay, 1.5}, by multiplying the equation that~$\mathcal{G}_{m=0}\psi$ satisfies, see~\eqref{eq: commutation in axisymmetry}, with~$\mathcal{G}_{m=0}\psi$, and then using integration by parts. Then, we can generate a large non-degenerate~$\Delta(\partial_{t^\star}\psi)^2$ integrand term on the left hand side of equation~\eqref{eq: energy estimate for axisymmetry exp decay, 1} by using the positivity of the term
	\begin{equation}
		\frac{r^2+a^2}{\Delta r}\left|\partial_{t^\star}{\mathcal{G}\psi}\right|^2. 
	\end{equation}

	We obtain the following
	\begin{equation}\label{eq: energy estimate for axisymmetry exp decay, 3}
		\begin{aligned}
			&   \int_{\{t^\star=\tau_2\}}J_\mu^{W}[\mathcal{G}\psi]n^\mu +J^{n}_\mu[\psi]n^\mu\\
			&   \quad+\int\int_{D(\tau_1,\tau_2)} \frac{r^2+a^2}{\Delta r}|\partial_{t^\star}\mathcal{G}\psi|^2+\frac{\Delta}{r (r^2+a^2)}|Z^\star\mathcal{G}\psi|^2+\frac{1}{r}|\slashed{\nabla}\mathcal{G}\psi|^2+|\partial_{t^\star}\psi|^2+|Z^\star\psi|^2+|\slashed{\nabla}\psi|^2\\
			&   \quad\quad \leq B\int_{\{t^\star=\tau_1\}}J_\mu^W[\mathcal{G}\psi]n^\mu + J^{n}_\mu[\psi]n^\mu,
		\end{aligned}
	\end{equation}
	which immediately concludes 
	\begin{equation}
		\begin{aligned}
			\int_{\{t^\star=\tau\}}\mathcal{E}(\mathcal{G}\psi,\psi)+ \int\int_{D(\tau_1,\tau_2)} \mathcal{E}(\mathcal{G}\psi,\psi) \leq C\int_{\{t^\star=\tau_1\}} \mathcal{E}(\mathcal{G}\psi,\psi),
		\end{aligned}
	\end{equation}
	for all~$\tau_1\leq\tau \leq\tau_2$. 
\end{proof}

It is easy to use an iteration argument, see~\cite{mavrogiannis}, to obtain 
\begin{equation}
	\begin{aligned}
		&\int_{\{t^\star=\tau\}} J^n_\mu[\psi]n^\mu+  \int\int_{\{t^\star\geq \tau\}} \mathcal{E}(\mathcal{G}\psi,\psi)  \leq C \Bigg(\int_{\{t^\star=0\}}  J^n_\mu[\psi]n^\mu+\mathcal{E}(\mathcal{G}\psi,\psi) \Bigg) \cdot e^{-c\tau}.\\
	\end{aligned}
\end{equation}
for some constants~$c,C>0$. We can also obtain pointwise bounds from the above.

\appendix

\section{A classical Coifman--Meyer commutation estimate}\label{sec: appendix, pseudodifferential commutation}

We define the symbol class and the operator class we use in this paper. 

\begin{definition}\label{def: sec: appendix, pseudodifferential commutation, def 1}
We define the symbol class 	
\begin{equation}\label{eq: sec: appendix, pseudodifferential commutation, eq 1}
S^1=\{\sigma:\mathbb{R}\times \mathbb{Z}\rightarrow \mathbb{R}:\: \exists C>0~\forall 0\leq i_1+i_2\leq 1~ \left|\partial^{i_1}_\omega\Delta^{i_2}_m \sigma(\omega,m) \right|\leq C\left(1+|\omega|\right)^{1-i_1-i_2}\left(1+|m|\right)^{1-i_1-i_2} \},
\end{equation}
where~$\Delta_m$ is the finite difference operator. 
\end{definition}

Now, we define the following function space
\begin{equation}
	X= \{\Psi: \mathbb{R}\times [0,2\pi)\rightarrow \mathbb{C}:~\Psi~\text{is smooth},~\supp\Psi \subset (a,b)\times [0,2\pi)~\text{for some}~|a|+|b|<\infty\}.
\end{equation}

Let~$\sigma\in S^1$ , see Definition~\ref{eq: sec: appendix, pseudodifferential commutation, eq 1}. We define the following Fourier multiplier operator
\begin{equation}\label{eq: def: sec: appendix, pseudodifferential commutation, def 3}
	\left(\textit{Op}(\sigma)\Psi\right)(t,\varphi):=\int_{\mathbb{R}}\sum_m  g(\omega,m) e^{i\omega t}e^{-im\varphi} \mathcal{F}_{\omega,m} (\Psi) d\omega,
\end{equation}
for any~$\Psi\in X$, where here we define
\begin{equation}
	\begin{aligned}
	\mathcal{F}_{\omega,m}(\Psi)(r,\theta) 	= \int_{\mathbb{R}}\int_0^{2\pi}e^{-i\omega t} e^{im \varphi}\Psi (t,\varphi,r,\theta) d\varphi dt.
	\end{aligned}
\end{equation}
The function~$Op(\sigma)\Psi:\mathbb{R}\times[0,2\pi)\rightarrow\mathbb{C}$ is also smooth.

We define the operator class
\begin{equation}
	OPS^1=\{Op(\sigma):X\rightarrow C^\infty(\mathbb{R}\times[0,2\pi)\rightarrow\mathbb{C}),~\sigma\in S^1\}. 
\end{equation}

The following result is a classical pseudodifferential commutator estimate

\begin{lemma}\label{lem: sec: appendix, pseudodifferential com, lem 0}
	Let~$P\in \textit{OPS}^1$. Then, there exists a constant~$C(P)>0$ such that if~$\chi:\mathbb{R}\rightarrow \mathbb{R}$ is a smooth function, then we have the following. 
	
	For any smooth~$f\in X$ we have the following pseudodifferential commutation
 \begin{equation}\label{eq: lem: sec: appendix, pseudodifferential com, lem 0, eq 1}
 	\|[P,\chi]f\|_{L^2(\mathbb{R}\times[0,2\pi))}\leq C \|\chi\|_{\text{Lip}}\cdot \|f\|_{L^2(\mathbb{R}\times [0,2\pi))},
 \end{equation}
where~$\|\chi\|_{\text{Lip}}=\sup_{t\in\mathbb{R}} \sup_{h\not{=}0}\frac{|\chi(t+h)-\chi(t)|}{|h|}$. 
\end{lemma}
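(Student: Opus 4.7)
The plan is to express the commutator $[P,\chi]$ as an integral operator, factor out the Lipschitz constant of $\chi$ via the mean-value theorem, and reduce to $L^2$-boundedness of an operator whose symbol has order zero.

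First, I would compute the Schwartz kernel of $P = \text{Op}(\sigma)$. Since $P$ is defined as in~\eqref{eq: def: sec: appendix, pseudodifferential commutation, def 3}, its Schwartz kernel is the oscillatory integral
\begin{equation*}
K_P(t,\varphi;t',\varphi') = \frac{1}{2\pi}\int_\mathbb{R}\sum_{m\in\mathbb{Z}} \sigma(\omega,m)\, e^{i\omega(t-t')}e^{-im(\varphi-\varphi')}d\omega,
\end{equation*}
so that the commutator kernel is $(\chi(t')-\chi(t))K_P(t,\varphi;t',\varphi')$. Using the identity
\begin{equation*}
\chi(t')-\chi(t) = (t'-t)\int_0^1 \chi'(t+s(t'-t))\,ds,
\end{equation*}
and observing that the factor $(t'-t)$ multiplying the oscillating exponential $e^{i\omega(t-t')}$ corresponds, after a formal integration by parts in $\omega$, to applying $-i\partial_\omega$ to $\sigma$, we obtain that the operator $\tilde Q$ with kernel $(t'-t)K_P(t,\varphi;t',\varphi')$ has symbol $\tilde\sigma = -i\partial_\omega\sigma$. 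By the defining inequalities of $S^1$, the new symbol $\tilde\sigma$ satisfies $|\partial_\omega^{i_1}\Delta_m^{i_2}\tilde\sigma|\leq C(1+|\omega|)^{-i_1-i_2}(1+|m|)^{-i_1-i_2}$ for $0\leq i_1+i_2\leq 0$, i.e. $\tilde\sigma$ is a bounded symbol of order zero.

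Second, this decomposition yields
\begin{equation*}
[P,\chi]f(t,\varphi) = \int_0^1 T_s f(t,\varphi)\,ds,\qquad T_s f(t,\varphi) := \int K_{\tilde Q}(t,\varphi;t',\varphi')\,\chi'(t+s(t'-t))\,f(t',\varphi')\,dt'd\varphi'.
\end{equation*}
By Minkowski's integral inequality in $s$, it suffices to prove that $\|T_s\|_{L^2\to L^2}\leq C\|\chi\|_{\text{Lip}}$ uniformly in $s\in[0,1]$. Since $|\chi'|\leq \|\chi\|_{\text{Lip}}$ pointwise, each $T_s$ is controlled by $\|\chi\|_{\text{Lip}}$ times an operator of the form $\tilde Q$ post-composed with a measurable change of variable in the kernel. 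The $L^2$-boundedness of the order-zero operator $\tilde Q$ itself on $L^2(\mathbb{R}\times[0,2\pi))$ follows by Plancherel from the uniform bound $|\tilde\sigma|\leq C$, once one works on the Fourier side in $(t,\varphi)\leftrightarrow(\omega,m)$.

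The main obstacle is to make the last step rigorous when the multiplier $\chi'(t+s(t'-t))$ depends on both $t$ and $t'$, so that the kernel of $T_s$ is not a pure pseudodifferential kernel. The cleanest way to handle this is to treat $T_s$ as an integral operator with kernel $L_s(t,\varphi;t',\varphi') := K_{\tilde Q}(t,\varphi;t',\varphi')\chi'(t+s(t'-t))$ and apply a Schur/Cotlar--Stein argument: the order-zero decay properties of $K_{\tilde Q}$ away from the diagonal survive multiplication by the bounded factor $\chi'(\cdot)$, giving $\int |L_s(t,\varphi;t',\varphi')|\,dt'd\varphi' + \int|L_s|\,dtd\varphi \leq C\|\chi\|_{\text{Lip}}$. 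Alternatively — and this is how I would present the final argument in the paper — one simply invokes the classical Coifman--Meyer commutator theorem for order-one pseudodifferential operators, noting that the mixed $\mathbb{R}_t\times\mathbb{T}_\varphi$ setting reduces, via partial Fourier series in $\varphi$, to a family of one-dimensional commutator estimates indexed by $m\in\mathbb{Z}$ with constants uniform in $m$ thanks to the $S^1$ bounds.
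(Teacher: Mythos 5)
Your opening moves are on the right track: writing the commutator kernel as $(\chi(t')-\chi(t))K_P(t,\varphi;t',\varphi')$, applying the mean-value identity $\chi(t')-\chi(t)=(t'-t)\int_0^1 \chi'(t+s(t'-t))\,ds$, and recognizing that multiplication by $(t'-t)$ corresponds on the symbol side to $-i\partial_\omega$. You also correctly observe that $S^1$ controls only one derivative of $\sigma$, so $\tilde\sigma=\partial_\omega\sigma$ is merely bounded with no further decay. But the Schur/Cotlar--Stein closing argument you sketch has a genuine gap. An $L^\infty$ symbol on $\mathbb{R}$ (e.g.\ the Hilbert transform, $\sigma(\omega)=\operatorname{sgn}\omega$) does \emph{not} have an integrable kernel, and the associated kernel decay $|K(t,t')|\lesssim|t-t'|^{-1}$ is borderline non-integrable, so the asserted bound $\int|L_s(t,\varphi;t',\varphi')|\,dt'd\varphi'\lesssim\|\chi\|_{\text{Lip}}$ cannot hold in general. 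Worse, the off-diagonal kernel estimates needed to run a Cotlar--Stein argument rely on further symbol derivatives — precisely the regularity that $S^1$, which stops at one derivative, does not provide. The multiplicative factor $\chi'(t+s(t'-t))$, being a bounded function, does nothing to restore integrability. As written, the Schur route would fail already for $P=\partial_t$ with $\sigma(\omega,m)=i\omega$, where $\tilde\sigma\equiv i$ and $\tilde Q$ is the identity; there the commutator is simply multiplication by $\chi'(t)$ and the estimate is trivial, but that is a coincidence of the zeroth-order part, not something the Schur argument sees.

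Your final ``alternatively'' paragraph is the part that actually closes the argument, and it coincides with what the paper does: the paper's proof of this lemma simply cites Taylor's book (Ch.~4, where the commutator estimates are proved via paraproduct/Littlewood--Paley decompositions rather than direct kernel manipulations) together with Ruzhansky's book for the torus factor, and the Appendix of \cite{zworskibook}. Your reduction via partial Fourier series in $\varphi$ to a one-parameter family of one-dimensional Coifman--Meyer/Calder\'on commutator estimates — with constants uniform in $m$ because $|\partial_\omega\sigma(\cdot,m)|\leq C$ uniformly — is a reasonable way to package that citation, and $\chi=\chi(t)$ commuting with the $\varphi$-Fourier transform makes the reduction legitimate. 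In a final write-up you should drop the Schur/Cotlar paragraph (or replace it with a genuine Littlewood--Paley decomposition of $[P,\chi]$, as in Taylor) and go straight to the classical citation.
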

\begin{proof}
The proof follows by slighlty adapting the proofs found in the book of Taylor~[\cite{TaylorPseudodifferentialBook},~Chapter 4]~(also see~\cite{Dyatlov5}), in conjunction with pseudodifferential estimates on the torus~$[0,2\pi)$, see~\cite{Ruzhansky} and specifically see [Theorem 5.2,\cite{Ruzhansky}]. See also the Appendix of~\cite{zworskibook}, where in fact the authors discuss pseudodifferential estimates on general smooth manifolds.
\end{proof}

\begin{remark}
	We emphasize that the constant~$C=C(P)$, on the RHS of the pseudodifferential estimate~\eqref{eq: lem: sec: appendix, pseudodifferential com, lem 0, eq 1}, does not depend on the functions~$\chi,f$. 
\end{remark}

\section{Pseudodifferential commutation estimates on the Kerr--de~Sitter spacetime}\label{subsec: sec: proof of Theorem 2, subsec 5.1}

In Lemma~\ref{lem: proof thm 3 exp decay, lem 1} we prove several commutation properties for the operators~$\mathcal{G},\widetilde{\mathcal{G}}$, see Definitions~\ref{def: sec: G, def 1}, \ref{def: proof thm 3 exp decay, def 1} respectively.

We will prove the estimates of Lemma~\ref{lem: proof thm 3 exp decay, lem 1} by using the standard theory for pseudodifferential operators~(which we discussed in Section~\ref{sec: appendix, pseudodifferential commutation}) and Lemma~\ref{lem: proof thm 3 exp decay, lem 0}.

\begin{lemma}\label{lem: proof thm 3 exp decay, lem 1}
	Let~$l>0$ and~$(a,M)\in\mathcal{B}_l$. Let~$\mathcal{G},\widetilde{\mathcal{G}}$ be as in Definitions~\ref{def: sec: G, def 1}, \ref{def: proof thm 3 exp decay, def 1} respectively.

	Then, there exist constants
	\begin{equation*}
		B(a,M,l)>0,\qquad C(a,M,l)>0
	\end{equation*}
	where the constant~$C=C(a,M,l)>0$ blows up in the limit~$a\rightarrow 0$ but is finite for the value~$a=0$, such that the following hold. 
	
	Let $\chi$ be either equal to~$\chi^2_+$ or~$\eta\chi_+^2$, see Section~\ref{subsec: sec: carter separation, subsec 1}. Let~$f:~\mathcal{M}\rightarrow\mathbb{R}$ be smooth such that
	\begin{equation}
		\supp f\subset \{0\leq t^\star <+\infty\}
	\end{equation}
	and either~$f\in L^2(\mathcal{M})$ or~$f\in H^1(\mathcal{M})$, depending on the particular inequality below. Then, we have the following:
	\begin{equation}\label{eq: thm 3 exp decay, eq 2}
		\int\int_{\mathcal{M}} \left|[\widetilde{\mathcal{G}},\chi]f \right|^2 \leq C	\int\int_{\mathcal{M}} |f|^2,
	\end{equation} 
	\begin{equation}\label{eq: thm 3 exp decay, eq 2.1}
		\int\int_{\mathcal{M}} |(\mathcal{G}-\widetilde{\mathcal{G}}) f|^2\leq B	\int\int_{\mathcal{M}} |f|^2,
	\end{equation}
	\begin{equation}\label{eq: thm 3 exp decay, eq 3}
		\begin{aligned}
			[\partial_t,\mathcal{G}]f=0,\quad [\partial_{\varphi},\mathcal{G}]f=0,\quad  [\partial_\theta,\mathcal{G}]f=0,
		\end{aligned}
	\end{equation}	
	\begin{equation}\label{eq: thm 3 exp decay, eq 4}
		\begin{aligned}
			\int\int_{\mathcal{M}} |[\partial_{r^\star},\mathcal{G}]f|^2\leq B \int\int_{\mathcal{M}} J^n_\mu[f]n^\mu,\qquad \int\int_{\mathcal{M}} \Delta|[Z^\star,\mathcal{G}]f|^2\leq B \int\int_{\mathcal{M}} J^n_\mu[f]n^\mu, 
		\end{aligned}
	\end{equation}	
	\begin{equation}\label{eq: thm 3 exp decay, eq 5}
		\begin{aligned}
			\int\int_{\mathcal{M}} \left|[\partial_\theta\widetilde{\mathcal{G}},\chi]f \right|^2 \leq C 	\int\int_{\mathcal{M}} J^n_\mu [f]n^\mu,
		\end{aligned}
	\end{equation}	
	\begin{equation}\label{eq: thm 3 exp decay, eq 5.1}
		\int\int_{\mathcal{M}} \frac{1}{\Delta}\left| [W\widetilde{\mathcal{G}},\chi] f\right|^2 \leq C\int\int_{\mathcal{M}} J^n_\mu [f]n^\mu +|f|^2,
	\end{equation}
	\begin{equation}\label{eq: thm 3 exp decay, eq 5.2}
		\int\int_{\mathcal{M}} \Delta\left|[Z^\star\mathcal{G},\chi]f \right|^2 \leq B \int\int_{\mathcal{M}} J^n_\mu [f]n^\mu+|f|^2,
	\end{equation}
	where for the vector field~$Z^\star$ see Section~\ref{subsec: boldsymbol partial r}. All the spacetimes volume forms of the spacetime domains of the present Lemma are to be understood as the volume form~\eqref{eq: subsec: volume forms of spacelike hypersurfaces, eq 0} defined in Section~\ref{subsec: admissible hypersurfaces}. 
\end{lemma}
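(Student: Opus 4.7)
The plan is to reduce every estimate in the lemma to a combination of three ingredients: the Coifman--Meyer commutator bound of Lemma~\ref{lem: sec: appendix, pseudodifferential com, lem 0} applied slice-by-slice in $(r,\theta)$, Plancherel in the Boyer--Lindquist $(t,\varphi)$-Fourier variables, and the symbol-class and difference bounds for $\widetilde{g_2}$ and $\widetilde{g_2}-g_2$ recorded in Lemma~\ref{lem: proof thm 3 exp decay, lem 0}. The time cut-offs $\chi_+^2$ and $\eta\chi_+^2$ depend only on $t^\star$, and $t^\star-t$ is a function of $r$ alone, so at every fixed $r$ they are smooth functions of $t$ whose Lipschitz norm is bounded uniformly (in $T,\tau_1,\tau_2$) by the standard cut-off bounds of Section~\ref{subsec: sec: carter separation, subsec 1}. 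For \eqref{eq: thm 3 exp decay, eq 2}, parts (4)--(5) of Lemma~\ref{lem: proof thm 3 exp decay, lem 0} guarantee that $\widetilde{g_2}(\cdot,\cdot,r)\in S^1$ with a constant independent of $r\in[r_+,\bar r_+]$, so the Coifman--Meyer bound gives $\|[\textit{Op}(\widetilde{g_2}),\chi]f\|_{L^2_{t,\varphi}}\leq C\|f\|_{L^2_{t,\varphi}}$ at each fixed $(r,\theta)$; the $g_1(r)\partial_{r^\star}$-piece of $\widetilde{\mathcal{G}}$ contributes only the pointwise term $g_1(\partial_{r^\star}\chi)f$, and integration in $r,\theta$ against the volume form of Section~\ref{subsec: admissible hypersurfaces} yields the claim. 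For \eqref{eq: thm 3 exp decay, eq 2.1} the pointwise bound $|\widetilde{g_2}-g_2|\leq C$ from Lemma~\ref{lem: proof thm 3 exp decay, lem 0}(6) combined with Plancherel gives an $L^2\to L^2$ bound directly. The identities \eqref{eq: thm 3 exp decay, eq 3} are immediate since $\partial_t,\partial_\varphi$ become multiplication by scalars in Fourier and so commute with any multiplier $\widetilde{g_2}(\omega,m,r)$, while $\partial_\theta$ commutes with every ingredient of $\mathcal{G}$ because $\mathcal{G}$ has no $\theta$-dependence.

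For \eqref{eq: thm 3 exp decay, eq 4} I would expand $[\partial_{r^\star},\mathcal{G}]=g_1'(r)\partial_{r^\star}+i\,\textit{Op}(\partial_r g_2)$, read off from Definition~\ref{def: subsec: sec: G, subsec 1, def 1} that $\partial_r g_2$ is pointwise dominated by $B(|\omega|+|m|)$ uniformly in $r$, and then use Plancherel to bound $\|\textit{Op}(\partial_r g_2)f\|_{L^2}$ by $\|\partial_tf\|_{L^2}+\|\partial_\varphi f\|_{L^2}$; combined with the $g_1'\partial_{r^\star}f$ contribution, the right-hand side is $\lesssim J^n_\mu[f]n^\mu$. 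The $Z^\star$-version is proved identically after accounting for the hybrid cut-off $\chi_{\textit{hyb}}$ defining $Z^\star$ in Section~\ref{subsec: boldsymbol partial r}; the extra $\Delta$-weight on the left absorbs the transition between $\partial_r|_{\textit{star}}$ and $\partial_r|_{\textit{BL}}$ near the horizons, where the symbol $\partial_r g_2$ picks up only an integrable singularity that is neutralised by $\Delta$.

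For the composed commutators \eqref{eq: thm 3 exp decay, eq 5}--\eqref{eq: thm 3 exp decay, eq 5.2} the key observation is that each outer operator $D\in\{\partial_\theta,W,Z^\star\}$ either commutes exactly with $\widetilde{\mathcal{G}}$ (in the $\partial_\theta$ case) or produces a first-order commutator $[D,\widetilde{\mathcal{G}}]$ already controlled by \eqref{eq: thm 3 exp decay, eq 4}. I would write
\begin{equation*}
[D\widetilde{\mathcal{G}},\chi]=(D\chi)\widetilde{\mathcal{G}}+D\,[\widetilde{\mathcal{G}},\chi]=(D\chi)\widetilde{\mathcal{G}}+[\widetilde{\mathcal{G}},\chi]\,D+[D,\widetilde{\mathcal{G}}]\chi-\chi[D,\widetilde{\mathcal{G}}],
\end{equation*}
apply \eqref{eq: thm 3 exp decay, eq 2} with $f$ replaced by $Df$ for the middle term, and bound $\|Df\|_{L^2}$ by $J^n_\mu[f]n^\mu$; the $|f|^2$ term on the right-hand side of \eqref{eq: thm 3 exp decay, eq 5.1}--\eqref{eq: thm 3 exp decay, eq 5.2} absorbs the zeroth-order remainders $(D\chi)\widetilde{\mathcal{G}}f$ after using \eqref{eq: thm 3 exp decay, eq 2.1} to trade $\widetilde{\mathcal{G}}$ for $\mathcal{G}$ and the horizon-degeneracy of $D\chi$. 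The main technical obstacle will be \eqref{eq: thm 3 exp decay, eq 5.1}: the weight $\Delta^{-1}$ on the left, together with the horizon-degenerate vector field $W=\partial_t+\frac{a\Xi}{r^2+a^2}\partial_\varphi$, forces me to re-apply the Coifman--Meyer template to the composite symbol $(W\widetilde{g_2})=i(\omega-\tfrac{am\Xi}{r^2+a^2})\widetilde{g_2}$ after rescaling by $\sqrt{\Delta}/(r^2+a^2)\sim g_1^{-1}$, and to verify that the rescaled symbol still lies in $S^1$ uniformly in $r$. All constants $C(a,M,l)$ blowing up as $a\to 0$ enter through the superradiant regularisation in Lemma~\ref{lem: proof thm 3 exp decay, lem 0}(5), and are finite exactly at $a=0$, in agreement with the statement of the lemma.
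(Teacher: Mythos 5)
Your proposal is correct for \eqref{eq: thm 3 exp decay, eq 2.1} and \eqref{eq: thm 3 exp decay, eq 3}, and the reduction strategy for \eqref{eq: thm 3 exp decay, eq 5}--\eqref{eq: thm 3 exp decay, eq 5.2} is broadly sound once the basic commutators are in hand. There is, however, a genuine gap in your treatment of \eqref{eq: thm 3 exp decay, eq 2} and \eqref{eq: thm 3 exp decay, eq 4}, and it is the central point of the lemma.

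For \eqref{eq: thm 3 exp decay, eq 2} you split $\widetilde{\mathcal{G}}=g_1\partial_{r^\star}+i\,\textit{Op}(\widetilde{g_2})$ and treat each piece separately, claiming that the commutator of the $g_1\partial_{r^\star}$-piece is $g_1(\partial_{r^\star}\chi)f$ and is bounded, and that the $S^1$ constant of $\widetilde{g_2}(\cdot,\cdot,r)$ is uniform in $r\in[r_+,\bar r_+]$. Neither claim survives at the horizons. The weight $g_1(r)=\frac{r^2+a^2}{\sqrt{\Delta}}$ blows up there, and since $\chi$ depends only on $t^\star=t+A(r)$, one has $\partial_{r^\star}\chi\sim\chi'(t^\star)(1+\mathcal{O}(\Delta))$ near $r_+,\bar r_+$, so $g_1\partial_{r^\star}\chi\sim\Delta^{-1/2}$ and the square-integrand picks up $\Delta^{-1}$, which is not integrable. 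Likewise $\widetilde{g_2}(\omega,m,r)\to g_1(r)(\omega-\omega_{\pm}m)$ as $r\to r_{\pm}$, so the zeroth-order $S^1$ bound for $\widetilde{g_2}$ alone necessarily degenerates. The lemma is true only because of a \emph{cancellation} between these two blow-ups: near the horizon one must rewrite $\partial_{r^\star}=h_1(r)\big(\partial_t+\frac{a\Xi}{r^2+a^2}\partial_{\varphi}\big)+h_2(r)X$ with $h_2=\mathcal{O}(\Delta)$ (so $g_1h_2$ is bounded) and then apply Coifman--Meyer to the \emph{combined} symbol $g_1 h_1(\omega-\tfrac{a\Xi m}{r^2+a^2})+\widetilde{g_2}$, whose membership in $S^1$ with an $r$-uniform constant is precisely the content of item (4) of Lemma~\ref{lem: proof thm 3 exp decay, lem 0} --- item (5) does not give you $r$-uniformity near the horizons, which is why item (4) is stated separately for the composite symbol. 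Your decomposition never brings these two pieces back together, so the cancellation is lost.

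The same issue reappears in \eqref{eq: thm 3 exp decay, eq 4}. You state that $\partial_r g_2$ is ``pointwise dominated by $B(|\omega|+|m|)$ uniformly in $r$''; this is false near the horizons, where $\tfrac{dg_2}{dr}\sim(\omega-\omega_{\pm}m)\tfrac{dg_1}{dr}\sim(\omega-\omega_{\pm}m)\Delta^{-3/2}$. What the paper does instead is to observe that the commutator integrand has the factored structure $\tfrac{dg_1}{dr}\,\partial_{r^\star}\mathcal{F}_{\omega,m}(f)+i\tfrac{dg_2}{dr}\,\mathcal{F}_{\omega,m}(f)\sim\tfrac{dg_1}{dr}\big(\partial_{r^\star}\mp i(\omega-\omega_{\pm}m)\big)\mathcal{F}_{\omega,m}(f)$ near $r_{\pm}$, and that, because $\supp f\subset\{t^\star\geq 0\}$, the bracketed operator applied to $\mathcal{F}_{\omega,m}(f)$ gains a factor of $\Delta$. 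It is that extra $\Delta$, together with the $\Delta$-weight already present on the left of the $Z^\star$-inequality, that neutralises the singular factor $\tfrac{dg_1}{dr}$; treating $\textit{Op}(\partial_r g_2)$ as a benign first-order multiplier misses this and the bound fails.
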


\begin{proof}
	
	In the present proof the constant~$B$, will only depend on~$a,M,l$ and moreover will not blow up in the limit~$a\rightarrow 0$.

	Furthrmore, we will use~$C$ to denote constants that only depend on~$a,M,l,\mu_{KG}$, specifically they do not depend on the coordinate~$r$, and will blow up in the limit~$a\rightarrow 0$, but are finite for~$a=0$.

	Let~$\alpha>0$ be sufficiently small. We first prove inequality~\eqref{eq: thm 3 exp decay, eq 2}, where note that we divide our analysis $\alpha$--close the horizons~$\mathcal{H}^+,\bar{\mathcal{H}}^+$ and away from the horizons.

We note that for~$\chi(t^\star)=\chi(t^\star(t,r))$ as in the assumptions of the present Lemma we have that there exists a constant~$B(a,M,l)>0$ such that for any~$r\in [r_+,\bar{r}_+]$ we have that
	\begin{equation}\label{eq: lem: proof thm 3 exp decay, lem 1, eq 1.0}
		\|\chi\|_{\text{Lip}}(r)\leq B,
	\end{equation}
	since there exists a constant~$B$ such that for any~$r\in [r_+,\bar{r}_+]$ and for any~$t\in \mathbb{R}$ we have~$|\partial_t\chi(t,r)|\leq B$. We used the Lipschitz norm~$\|\chi\|_{\text{Lip}}(r)=\sup_{t\in\mathbb{R}} \sup_{h\not{=}0}\frac{|\chi(t+h,r)-\chi(t,r)|}{|h|}$. Note that~$\partial_\varphi\chi \equiv 0$.

	\begin{center}
		\textbf{The inequality}~\eqref{eq: thm 3 exp decay, eq 2} \textbf{away from the horizons}~$\mathcal{H}^+,~\bar{\mathcal{H}}^+$
	\end{center}

	First we prove inequality~\eqref{eq: thm 3 exp decay, eq 2} away from the horizons. We recall from Lemma~\ref{lem: proof thm 3 exp decay, lem 0}, that there exists a constant~$C>0$ such that for any~$0\leq i_1+i_2\leq 1$ and for any~$r\in(r_+,\bar{r}_+)$ we obtain that
	\begin{equation}
		\left|\partial^{i_1}_\omega\Delta^{i_2}_m \widetilde{g_2}(\omega,m,r) \right|\leq C\left(1+|\omega|\right)^{1-i_1-i_2}\left(1+|m|\right)^{1-i_1-i_2}, 
	\end{equation}
	for~$i_1+i_2\leq 1$ and therefore
	\begin{equation}\label{eq: lem: proof thm 3 exp decay, lem 1, eq 1.02}
		\widetilde{g_2}(\cdot,\cdot,r)\in	S^1=\{\sigma:\mathbb{R}\times \mathbb{Z}\rightarrow \mathbb{R}:\: \exists C>0~\forall 0\leq i_1+i_2\leq 1~ \left|\partial^{i_1}_\omega\Delta^{i_2}_m \sigma(\omega,m) \right|\leq C\left(1+|\omega|\right)^{1-i_1-i_2}\left(1+|m|\right)^{1-i_1-i_2} \}.
	\end{equation}
	Note that the constant~$C>0$ of~\eqref{eq: lem: proof thm 3 exp decay, lem 1, eq 1.02} does not depend on~$r$ and blows up in the limit~$a\rightarrow 0$ but is finite for the value~$a=0$. Therefore, in view of the assumptions on the functions~$f,\chi$, we can use the Coifman--Meyer commutator of Lemma~\ref{lem: sec: appendix, pseudodifferential com, lem 0} and conclude that 
	\begin{equation}\label{eq: lem: proof thm 3 exp decay, lem 1, eq 1}
		\begin{aligned}
			\int_{r_++\alpha}^{\bar{r}_+-\alpha}\int_0^\pi \int_{-\infty}^{\infty}\int_0^{2\pi}\left|[\widetilde{\mathcal{G}},\chi]f \right|^2  \sin\theta d\varphi dt d\theta  dr &	\leq C \int_{r_++\alpha}^{\bar{r}_+-\alpha} \|\chi\|_{Lip}(r) \int_0^\pi\int_{-\infty}^\infty \int_0^{2\pi}  |f|^2\sin\theta d\varphi dt d\theta  dr\\
			& \leq C \int\int_{\mathcal{M}}|f|^2,
		\end{aligned}
	\end{equation}
	where in the last inequality we used the bound~\eqref{eq: lem: proof thm 3 exp decay, lem 1, eq 1.0}.

	\begin{center}
		\textbf{The inequality}~\eqref{eq: thm 3 exp decay, eq 2} \textbf{near the horizons}~$\mathcal{H}^+,~\bar{\mathcal{H}}^+$
	\end{center}

	Now, we prove inequality~\eqref{eq: thm 3 exp decay, eq 2} near the horizons. We will use the result of Lemma~\ref{lem: proof thm 3 exp decay, lem 0}. Note that there exist smooth functions~$h_1(r),h_2(r)$ such that the following holds 
	\begin{equation}\label{eq: lem: proof thm 3 exp decay, lem 1, eq 1.1}
		\partial_{r^\star}=h_1(r)\left(\partial_t +\frac{a\Xi}{r^2+a^2}\partial_{\varphi}\right) +h_2(r) X
	\end{equation}
	with~$h_1(r)=1+\mathcal{O}(\Delta)$ and~$h_2(r)=\mathcal{O}(\Delta)$, where~$X$ here is a smooth vector field on the domain~$\{r_+\leq r\leq \bar{r}_+\}$ where~$[X,\partial_t]=[X,\partial_\varphi]=0$.

	Therefore, by using~\eqref{eq: lem: proof thm 3 exp decay, lem 1, eq 1.1} we note the following 
	\begin{equation}\label{eq: lem: proof thm 3 exp decay, lem 1, eq 1.2}
		\begin{aligned}
			\int_{r_+}^{r_++\alpha} \int_0^\pi\int_{\mathbb{R}}\int_0^{2\pi} |[\widetilde{\mathcal{G}},\chi]f|^2& = \int_{r_+}^{r_++\alpha} \int_0^\pi\int_{\mathbb{R}}\int_0^{2\pi} |[g_1\partial_{r^\star}+i\textit{Op}(\widetilde{g_2}),\chi]f|^2\\
			&	 =\int_{r_+}^{r_++\alpha}\int_0^\pi \int_{\mathbb{R}}\int_0^{2\pi} \left|[g_1 h_1(r)\left(\partial_t +\frac{a\Xi}{r^2+a^2}\partial_{\varphi}\right) +i\textit{Op}(\widetilde{g_2})+g_1 h_2(r) X,\chi]f\right|^2\\
			&	 \leq B \int_{r_+}^{r_++\alpha}\int_0^\pi \int_{\mathbb{R}} \int_0^{2\pi} \left|[g_1 h_1(r)\left(\partial_t +\frac{a\Xi}{r^2+a^2}\partial_{\varphi}\right) +i\textit{Op}(\widetilde{g_2}),\chi]f\right|^2\\
			&	\qquad   + B\int_{r_+}^{r_++\alpha}\int_0^\pi \int_{\mathbb{R}} \int_0^{2\pi}\left|[g_1 h_2(r) X,\chi]f\right|^2,
		\end{aligned} 
	\end{equation}
	where in~\eqref{eq: lem: proof thm 3 exp decay, lem 1, eq 1.2} we suppressed the volume form~$v(r,\theta) d\varphi dt d\theta dr $ for brevity, where for~$v(r,\theta)$ see Section~\ref{subsec: admissible hypersurfaces}.

	The second term on the last line of~\eqref{eq: lem: proof thm 3 exp decay, lem 1, eq 1.2} is easily bounded by a physical space commutation
	\begin{equation}
		\int_{r_+}^{r_++\alpha}\int_0^\pi \int_{\mathbb{R}}\int_0^{2\pi} \left|[g_1 h_2(r) X,\chi]f\right|^2 \leq B \int\int_{\mathcal{M}}|f|^2.
	\end{equation}
	To bound the first term on the last line of~\eqref{eq: lem: proof thm 3 exp decay, lem 1, eq 1.2} we use Lemma~\ref{lem: proof thm 3 exp decay, lem 0} where recall that we proved that for any~$r\in [r_+,r_+]$ the symbol
	\begin{equation}
		- g_1(r)i\left(\omega-\frac{am\Xi}{r_+^2+a^2}\right) + i\widetilde{g_2}(\omega,m,r),
	\end{equation}
	belongs in the symbol class
	\begin{equation}\label{eq: lem: proof thm 3 exp decay, lem 1, eq 1.2.1}
		S^1=\{\sigma:\mathbb{R}\times \mathbb{Z}\rightarrow \mathbb{R}:\: \exists C>0~\forall 0\leq i_1+i_2\leq 1~ \left|\partial^{i_1}_\omega\Delta^{i_2}_m \sigma(\omega,m) \right|\leq C\left(1+|\omega|\right)^{1-i_1-i_2}\left(1+|m|\right)^{1-i_1-i_2} \},
	\end{equation}
	where the constant~$C>0$ of~\eqref{eq: lem: proof thm 3 exp decay, lem 1, eq 1.2.1} blows up in the limit~$a\rightarrow 0$ but is finite for the value~$a=0$. Therefore, in view of the assumptions for the functions~$f,\chi$~(see~\eqref{eq: lem: proof thm 3 exp decay, lem 1, eq 1.0}) we can use the Coifman--Meyer commutator estimate of Lemma~\ref{lem: sec: appendix, pseudodifferential com, lem 0} to obtain
	\begin{equation}\label{eq: lem: proof thm 3 exp decay, lem 1, eq 1.3}
		\int_{r_+}^{r_++\alpha} \int_0^\pi\int_{-\infty}^\infty\int_0^{2\pi} \left|[g_1 h_1(r)\left(\partial_t +\frac{a\Xi}{r^2+a^2}\partial_{\varphi}\right) +i\textit{Op}(\widetilde{g_2}),\chi]f\right|^2 \leq C \int\int_{\mathcal{M}}|f|^2. 
	\end{equation}

	By following similar considerations we also conclude that 
	\begin{equation}\label{eq: lem: proof thm 3 exp decay, lem 1, eq 1.4}
		\int_{\bar{r}_+-\alpha}^{\bar{r}_+} \int_0^\pi\int_{\mathbb{R}} \int_0^{2\pi} |[\widetilde{\mathcal{G}},\chi]f|^2 \leq C \int_{\bar{r}_+-\alpha}^{\bar{r}_+} \int_{\mathbb{R}} \int_0^{2\pi} |f|^2.
	\end{equation}
	Therefore, from~\eqref{eq: lem: proof thm 3 exp decay, lem 1, eq 1},~\eqref{eq: lem: proof thm 3 exp decay, lem 1, eq 1.3},~\eqref{eq: lem: proof thm 3 exp decay, lem 1, eq 1.4} we conclude that 
	\begin{equation}\label{eq: lem: proof thm 3 exp decay, lem 1, eq 2}
		\int_{r_+}^{\bar{r}_+} \int_0^\pi\int_{\mathbb{R}} \int_0^{2\pi} \left|[\widetilde{\mathcal{G}},\chi]f \right|^2 dr dt d\slashed{g} \leq C \int\int_{\mathcal{M}}|f|^2.
	\end{equation}
	Therefore,  we conclude inequality~\eqref{eq: thm 3 exp decay, eq 2}. 
	
	\begin{center}
		\textbf{The inequality}~\eqref{eq: thm 3 exp decay, eq 2.1} 
	\end{center}

	The second inequality of~\eqref{eq: thm 3 exp decay, eq 2} is immediate by noting the following 
	\begin{equation}\label{eq: lem: proof thm 3 exp decay, lem 1, eq 4}
		\begin{aligned}
			\int\int_{\mathcal{M}}\left|(\mathcal{G}-\widetilde{\mathcal{G}})f\right|^2&	= \int\int_{\mathcal{M}}\left|(\textit{Op}(g_2)-\textit{Op}(\widetilde{g_2}))f\right|^2 \\
			&	\leq B \int_{r_+}^{\bar{r}_+}dr\int_0^\pi \sin\theta d\theta \int_{\mathbb{R}}d\omega\sum_m \left| \left(g_2-\widetilde{g_2}\right)\mathcal{F}_{\omega,m}(f)\right|^2.
		\end{aligned}
	\end{equation}
	where note that above we used the coarea formula, see Section~\ref{subsec: coarea formula}. It is a straightforward computation to conclude inequality~\eqref{eq: thm 3 exp decay, eq 2.1} from~\eqref{eq: lem: proof thm 3 exp decay, lem 1, eq 4} after using the property~\eqref{eq: lem: proof thm 3 exp decay, lem 0, eq 2} from Lemma~\ref{lem: proof thm 3 exp decay, lem 0} for the difference~$\widetilde{g_2}-g_2$.

	\begin{center}
		\textbf{The identities}~\eqref{eq: thm 3 exp decay, eq 3} 
	\end{center}
	
	The identities~\eqref{eq: thm 3 exp decay, eq 3} follow from pointwise considerations, in view of the definition of~$\mathcal{G}$, see Definition~\ref{def: sec: G, def 1}.

	\begin{center}
		\textbf{The inequalities of}~\eqref{eq: thm 3 exp decay, eq 4}
	\end{center}
	
	We now prove the estimates~\eqref{eq: thm 3 exp decay, eq 4}. We only proceed to prove the last, which is the most elaborate, and note that the remaining inequality can be proved similarly. We proceed as follows 
	\begin{equation}\label{eq: lem: proof thm 3 exp decay, lem 1, eq 5}
		\begin{aligned}
			&	\int\int_\mathcal{M} \Delta |[Z^\star,\mathcal{G}]f|^2 =	\int\int_\mathcal{M} \Delta |Z^\star\mathcal{G} f-\mathcal{G}Z^\star f|^2	\\
			&	\qquad \sim \int_{\mathbb{R}}dr^\star\int_0^\pi \int_{\mathbb{R}}d\omega \sum_m \Delta^2 \left| Z^\star(g_1\partial_{r^\star} +ig_2 )\mathcal{F}_{\omega,m} (f) - (g_1\partial_{r^\star} +ig_2 )Z^\star \mathcal{F}_{\omega,m} (f) \right|^2\\
			&	\qquad = 	\int_{\mathbb{R}}dr^\star \int_0^\pi \int_{\mathbb{R}}d\omega \sum_m \Delta^2 \left| Z^\star (g_1) \partial_{r^\star}\mathcal{F}_{\omega,m} (f)+iZ^\star(g_2) \mathcal{F}_{\omega,m} (f)\right|^2\\
			&	\qquad = \int_{\mathbb{R}}dr^\star \int_0^\pi\int_{\mathbb{R}}d\omega \sum_m \Delta^2 \left| \frac{d g_1}{dr}  \partial_{r^\star}\mathcal{F}_{\omega,m} (f)+i\frac{d g_2}{dr} \mathcal{F}_{\omega,m} (f)\right|^2\\
			&	\qquad =	 \int_{\mathbb{R}}dr^\star\int_0^\pi \int_{\mathbb{R}}d\omega \sum_m 1_{\mathcal{SF}}\Delta^2 \left| \frac{d g_1}{dr}  \partial_{r^\star}\mathcal{F}_{\omega,m} (f)+i\frac{d g_2}{dr} \mathcal{F}_{\omega,m} (f)\right|^2 \\
			&	\qquad\qquad +  \int_{\mathbb{R}}dr^\star \int_0^\pi\int_{\mathbb{R}}d\omega \sum_m 1_{(\mathcal{SF})^c}\Delta^2 \left| \frac{d g_1}{dr}  \partial_{r^\star}\mathcal{F}_{\omega,m} (f)+i\frac{d g_2}{dr} \mathcal{F}_{\omega,m} (f)\right|^2
		\end{aligned}
	\end{equation}
	where in the first similarity we used the coarea formula, see Section~\ref{subsec: coarea formula}, and Parseval identities. For~$\mathcal{F}_{\omega,m}$ see~\eqref{eq: subsec: sec: carter separation, radial, eq -2}.

	By recalling the definitions of~$g_1,g_2$, see Definition~\ref{def: subsec: sec: G, subsec 1, def 1}, we bound the \underline{non-superradiant frequency terms} of the last line of~\eqref{eq: lem: proof thm 3 exp decay, lem 1, eq 5}, namely the last term of the last line of~\eqref{eq: lem: proof thm 3 exp decay, lem 1, eq 5}. We note that the following holds 
	\begin{equation}\label{eq: lem: proof thm 3 exp decay, lem 1, eq 6}
		\begin{aligned}
			&	\int_{\mathbb{R}}dr^\star \int_0^\pi\int_{\mathbb{R}}d\omega\sum_m 1_{(\mathcal{SF})^c} \Delta^2\left| \frac{d g_1}{dr}  \partial_{r^\star}\mathcal{F}_{\omega,m} (f)+i\frac{d g_2}{dr} \mathcal{F}_{\omega,m} (f)\right|^2 \\
			&	\qquad\qquad \leq B 		\int_{\mathbb{R}}dr^\star\int_0^\pi\int_{\mathbb{R}}\sum_m 1_{(\mathcal{SF})^c}\frac{\Delta^2}{\Delta^3} \left| \left(\partial_{r^\star}-i\left(\omega-\frac{am\Xi}{r_+^2+a^2}\right)\right) \mathcal{F}_{\omega,m} (f)\right|^2\\
			&	\qquad\qquad \leq B 	\int_{\mathbb{R}}dr^\star\int_0^\pi\int_{\mathbb{R}}\sum_m 1_{(\mathcal{SF})^c}\Delta \left( |\omega\mathcal{F}_{\omega,m} (f)|^2+|am\mathcal{F}_{\omega,m} (f)|^2 +|Z^\star \mathcal{F}_{\omega,m} (f)|^2 \right),
		\end{aligned}
	\end{equation}
	where in the last inequality we used that~$\left(\partial_{r^\star} - i\left(\omega-\frac{am\Xi}{r_+^2+a^2}\right)\right) \mathcal{F}_{\omega,m}(f) \sim \mathcal{O}(\Delta) \left(\omega^2+(am)^2+Z^\star\right) \mathcal{F}_{\omega,m}(f) $ near~$r_+,\bar{r_+}$, since~$\supp f\subset \{0\leq t^\star<+\infty\}$.

	Finally, we bound the \underline{superradiant frequency term} of the last line of~\eqref{eq: lem: proof thm 3 exp decay, lem 1, eq 5}, namely the second to last term on the lat line of~\eqref{eq: lem: proof thm 3 exp decay, lem 1, eq 5}. We recall from~\eqref{eq: proof: prop: for v, eq -2} that for the superradiant frequencies we obtain that~$\frac{d}{dr}(g_2)= -\text{sign} \left(r-r_s\right)\cdot \frac{d}{dr}\left(\left(\omega-\frac{am\Xi}{r^2+a^2}\right) g_1(r)\right)$. We bound
	\begin{equation}\label{eq: lem: proof thm 3 exp decay, lem 1, eq 6.1}
		\begin{aligned}
			&		 \int_{\mathbb{R}}dr^\star\int_0^\pi\int_{\mathbb{R}} d\omega\sum_m 1_{\mathcal{SF}}\Delta^2\left| \frac{d g_1}{dr}  \partial_{r^\star}\mathcal{F}_{\omega,m} (f)-i \text{sign} \left(r-r_s\right)\cdot \frac{d}{dr}\left(\left(\omega-\frac{am\Xi}{r^2+a^2}\right) g_1(r)\right) \mathcal{F}_{\omega,m} (f)\right|^2 \\ 
			&	\leq B \int_{\mathbb{R}}dr^\star\int_0^\pi\int_{\mathbb{R}} \sum_m 1_{\mathcal{SF}} \Delta^2\left| \frac{d g_1}{dr} \left( \partial_{r^\star}-i \text{sign} \left(r-r_s\right)\cdot \left(\omega-\frac{am\Xi}{r^2+a^2} \right) \right)\mathcal{F}_{\omega,m} (f)\right|^2  + 	\\
			&	\qquad\qquad +  B\int_{\mathbb{R}}dr^\star\int_0^\pi\int_{\mathbb{R}}\sum_m 1_{\mathcal{SF}} \Delta^2 \left|  \frac{d}{dr}\left(\omega-\frac{am\Xi}{r^2+a^2}\right)g_1(r) \mathcal{F}_{\omega,m} (f)\right|^2 \\ 
			&	\leq B \int_{\mathbb{R}}dr^\star\int_0^\pi\int_{\mathbb{R}} \sum_m 1_{\mathcal{SF}} \sqrt{\Delta}\left| \left( \partial_{r^\star}-i \text{sign} \left(r-r_s\right)\cdot \left(\omega-\frac{am\Xi}{r^2+a^2} \right) \right)\mathcal{F}_{\omega,m} (f)\right|^2  \\
			&	\qquad\qquad + 	  B\int_{\mathbb{R}}dr^\star\int_0^\pi\int_{\mathbb{R}}\sum_m 1_{\mathcal{SF}} \Delta^2 \left|  \frac{d}{dr}\left(\omega-\frac{am\Xi}{r^2+a^2}\right)g_1(r) \mathcal{F}_{\omega,m} (f)\right|^2, \\ 
		\end{aligned}
	\end{equation}
	where recall that
	\begin{equation}
		\begin{aligned}
			&  \int_{\mathbb{R}}dr^\star\int_0^\pi\int_{\mathbb{R}} \sum_m 1_{\mathcal{SF}} \left|\left( \partial_{r^\star}-i \text{sign} \left(r-r_s\right)\cdot \left(\omega-\frac{am\Xi}{r^2+a^2} \right) \right)\mathcal{F}_{\omega,m} (f)\right|^2 \\
			&	\qquad  \leq B    \int_{\mathbb{R}}dr^\star\int_0^\pi\int_{\mathbb{R}} \sum_m 1_{\mathcal{SF}} \Delta \left( |\omega\mathcal{F}_{\omega,m} (f)|^2+|am\mathcal{F}_{\omega,m} (f)|^2 +|Z^\star \mathcal{F}_{\omega,m} (f)|^2 \right),
		\end{aligned}
	\end{equation}
	where, again, in the last inequality we used that~$\left(\partial_{r^\star} - i\left(\omega-\frac{am\Xi}{r_+^2+a^2}\right)\right) \mathcal{F}_{\omega,m}(f) \sim \mathcal{O}(\Delta) \left(\omega^2+(am)^2+Z^\star\right) \mathcal{F}_{\omega,m}(f) $ near~$r_+,\bar{r}_+$, since~$\supp f\subset \{0\leq t^\star <+\infty\}$.

	Therefore, by~\eqref{eq: lem: proof thm 3 exp decay, lem 1, eq 6},~\eqref{eq: lem: proof thm 3 exp decay, lem 1, eq 6.1} we conclude that for the superradiant frequencies the following holds pointwise
	\begin{equation}\label{eq: lem: proof thm 3 exp decay, lem 1, eq 7}
		\begin{aligned}
			&			 \int_{\mathbb{R}}dr^\star \int_0^\pi\int_{\mathbb{R}} \sum_m 1_{\mathcal{SF}} \Delta^2\left| \frac{d g_1}{dr}  \partial_{r^\star}\mathcal{F}_{\omega,m} (f)-i \text{sign} \left(r-r_s\right)\cdot \frac{d}{dr}\left(\left(\omega-\frac{am\Xi}{r^2+a^2}\right) g_1(r)\right) \mathcal{F}_{\omega,m} (f)\right|^2 \\ 
			&	\qquad\leq B \int_{\mathbb{R}}dr^\star \int_0^\pi\int_{\mathbb{R}} \sum_m 1_{\mathcal{SF}} \Delta \left( |\omega\mathcal{F}_{\omega,m} (f)|^2+|am\mathcal{F}_{\omega,m} (f)|^2 +|Z^\star \mathcal{F}_{\omega,m} (f)|^2 \right).
		\end{aligned}
	\end{equation}
	Therefore, by using~\eqref{eq: lem: proof thm 3 exp decay, lem 1, eq 5} and Parseval identities, in view of the pointwise bounds~\eqref{eq: lem: proof thm 3 exp decay, lem 1, eq 6},~\eqref{eq: lem: proof thm 3 exp decay, lem 1, eq 7} we conclude the last estimate of~\eqref{eq: thm 3 exp decay, eq 4}.

	\begin{center}
		\textbf{The inequalities}~\eqref{eq: thm 3 exp decay, eq 5},~\eqref{eq: thm 3 exp decay, eq 5.1},~\eqref{eq: thm 3 exp decay, eq 5.2}
	\end{center}

	The inequality~\eqref{eq: thm 3 exp decay, eq 5} is immediate since
	\begin{equation}
		[\mathcal{G},\partial_\theta]=0
	\end{equation}
	and therefore
	\begin{equation}
		\begin{aligned}
			&	\int\int_{\mathcal{M}} \left|[\partial_\theta\widetilde{\mathcal{G}},\chi]f \right|^2 = 	\int\int_{\mathcal{M}} \frac{1}{r^2}\left|[\widetilde{\mathcal{G}},\chi]\partial_\theta f \right|^2\leq B \int\int_{\mathcal{M}} J^n_\mu[\psi]n^\mu,
		\end{aligned}
	\end{equation}
	where and in the last inequality we used the pseudodifferential commutation~\eqref{eq: thm 3 exp decay, eq 2} already proved earlier.

	For the inequality~\eqref{eq: thm 3 exp decay, eq 5.2} we proceed as follows
	\begin{equation}\label{eq: lem: proof thm 3 exp decay, lem 1, eq 8}
		\begin{aligned}
			\int\int_{\mathcal{M}}\Delta \left|  \left[Z^\star \mathcal{G},\chi\right] f  \right|^2 &	= 	\int\int_{\mathcal{M}}\Delta \left|  \left[[Z^\star,\mathcal{G}]+\mathcal{G}Z^\star,\chi\right] f  \right|^2\\
			&	\leq B \int\int_{\mathcal{M}}\Delta \left|  \left[[Z^\star,\mathcal{G}],\chi\right] f  \right|^2 + \int\int_{\mathcal{M}}\Delta \left|  \left[\mathcal{G}Z^\star,\chi\right] f  \right|^2 \\
			&	=	 B\int\int_{\mathcal{M}}\Delta \left|  \left[[Z^\star,\mathcal{G}],\chi\right] f  \right|^2 + B\int\int_{\mathcal{M}} \Delta \left| \mathcal{G}(f Z^\star \chi)+ \chi [\mathcal{G},Z^\star]f  \right|^2 \\
			&	\leq B \int\int_{\mathcal{M}}\Delta \left|  \left[[Z^\star,\mathcal{G}],\chi\right] f  \right|^2 + \left|\chi [\mathcal{G},Z^\star]f  \right|^2 +  \Delta |\mathcal{G}(fZ^\star \chi)|^2\\
			&	\leq B \int\int_{\mathcal{M}} J^n_\mu [f]n^\mu +|f|^2,
		\end{aligned}
	\end{equation}
	where in the last inequality we use the already proved~\eqref{eq: thm 3 exp decay, eq 4} to bound the first integral, and note that the last integral is bounded easily. The inequality~\eqref{eq: thm 3 exp decay, eq 5.1} follows similarly. 
\end{proof}

\bibliographystyle{plain}
\bibliography{MyBibliography}

\end{document}